\numberwithin{equation}{section}
\def\N{\mathbb{N}}
\def\R{\mathbb{R}}
\def\bbP{\mathbb{P}}
\def\bbE{\mathbb{E}}
\def\ee{\mathrm{e}}
\DeclareMathOperator{\sgn}{sgn}
\newtheorem{theorem}{Theorem}[section]
\newtheorem{cor}[theorem]{Corollary}
\newtheorem{lemm}[theorem]{Lemma}
\newtheorem{prop}[theorem]{Proposition}
\theoremstyle{definition}
\newtheorem{definition}[theorem]{Definition}
\theoremstyle{remark}
\newtheorem{rema}[theorem]{Remark}
\title{Strong Existence and Uniqueness for Singular SDEs Driven by Stable Processes}
\author{Leonid Mytnik}
\address{The Faculty of Data and Decision Sciences, 
	Technion – Israel Institute
	of Technology, Haifa 3200003, Israel
}
\email{leonid@ie.technion.ac.il}
\author{Johanna Weinberger}
\address{The Faculty of Data and Decision Sciences, 
	Technion – Israel Institute
	of Technology, Haifa 3200003, Israel}
\email{wjohanna@campus.technion.ac.il}
\date{\today}
\begin{document}
\begin{abstract}
	We consider the one-dimensional stochastic differential equation
	\begin{equation*}
		X_t = x_0 + L_t + \int_0^t \mu(X_s)ds, \quad t \geq 0,
	\end{equation*} 
	where $\mu$ is a finite measure of Kato class $K_{\eta}$ with $\eta \in (0,\alpha-1]$ and $(L_t)_{t \geq 0}$ is a symmetric $\alpha$-stable
	process with $\alpha \in (1,2)$.  We derive weak and strong well posedness for this equation when $\eta \leq\alpha-1$ and $\eta < \alpha-1$, respectively and show that the condition $\eta \leq \alpha-1$ is sharp for weak existence. We furthermore reformulate the equation in terms of 
	the local time of the solution $(X_{t})_{t \geq 0}$ and prove its well posedness. To this end, we also derive a Tanaka-type formula for a  symmetric, $\alpha$-stable processes with $\alpha \in (1,2)$ that is perturbed by an adapted, right-continuous process of finite variation.  
\end{abstract}
\maketitle

\setcounter{tocdepth}{1}
\tableofcontents

\section{Introduction}
In this paper, we study solutions of the one-dimensional stochastic differential equation (SDE)
\begin{equation}\label{eq:singularSDE}
	X_t = x_0 + L_t + \int_0^t \mu(X_s)ds, \quad t \geq 0.
\end{equation} 
Here, $(L_t)_{t \geq 0}$ is a one-dimensional symmetric $\alpha$-stable L\'evy process, $\alpha \in (1,2)$ and $\mu$
is a measure of Kato class (see Definition~\ref{def:Kato}). 
Since $\mu$ is not necessarily absolutely continuous with respect to the Lebesgue measure, one needs to give a
rigorous definition of the above equation and in particular of the term $\int_0^t \mu(X_s)ds$. 
Here, we shall use the definition from \cite{Kim14}, which relies on an approximation scheme. 
Due to the non-regular nature of $\mu$  the equation \eqref{eq:singularSDE} is often called a
 singular SDE.  In the last decade, singular SDEs and SDEs driven by
 stable processes generated interest among many researchers, because they arise in many
 models in applications. For instance, there are many models in physical sciences that can not be captured by Brownian noise but by $\alpha$-stable noise (see \cite{Woyczynski01}).  
 The setting of equation \eqref{eq:singularSDE} captures, for instance, the movement of symmetric $\alpha$-stable processes with $\alpha \in (1,2)$ that experience an upward drift in certain fractal like sets. 

A natural first question regarding \eqref{eq:singularSDE} is the existence and uniqueness of solutions.  Weak existence and uniqueness for \eqref{eq:singularSDE} when $\mu \in K_{\alpha-1}$ was already established 
in \cite{Kim14} in dimension $d \geq 2$. We closely follow their arguments to extend the result to $d = 1$ when $\mu\in K_{\alpha-1}$ is a finite measure. Under slightly stricter conditions on $\mu$ we then prove pathwise uniqueness by combining the techniques from \cite{Athreya20} and \cite{Xiong19}. By a Yamada-Watanabe argument this is sufficient to prove strong existence and uniqueness. \par 
Besides well posedness, we are also interested in reformulating  the drift term $\int_0^t\mu(X_s)ds$ in terms of the 
local time of the solution $X$ and prove that this reformulation is equivalent to the original equation~\eqref{eq:singularSDE}
under certain conditions on $\mu$.
In the case where $(L_t)_{t \geq 0}$ is replaced by a standard Brownian motion $(B_t)_{t \geq 0}$ this was 
already done in \cite{LeGall84}, where the author, among other things, studied equations of the form
\begin{equation}\label{eq:brown_sde}
	X_t = x_0 +B_t + \int_\R \ell_t^x\mu(dx), \quad t\geq 0.
\end{equation}
Hereby,  the local time of the unknown process $X$ is given by Tanaka's formula
\begin{equation}
	|X_t-x| = |x_0-x| + \int_0^t \sgn(X_s-x)dX_s +  \ell_t^x, \quad t \geq 0, x\in \R.
\end{equation}
In this case we denote the local time by $\ell$, since it not only satisfies Tanaka's formula but is also the occupation density of $X$. 
In this paper, we define a local time $\gamma$ for solutions to \eqref{eq:singularSDE} in a similar spirit via a Tanaka-type formula
 and reformulate \eqref{eq:singularSDE} 
in terms of $\gamma$. That is, we derive a Tanaka-type formula
 for semimartingales of the form 
\begin{equation}\label{eq:semiintro}
	X_t =x_0+L_t+A_t, \quad t \geq 0,
\end{equation}
define a local time $\gamma$ from this Tanaka formula
and show that \eqref{eq:singularSDE} is equivalent to  
\begin{equation}\label{eq:SDE}
	X_t = x_0 +L_t + \int_\R \gamma_t^x \mu(dx), \quad t \geq 0.
\end{equation}
  As it is not clear a-priori that  $(\gamma_t^x)_{t \geq 0}^{x\in \R}$, defined via Tanaka's formula, coincides with the occupation density $(\ell_t^x)_{t \geq 0}^{x\in \R}$, we use the different notation $(\gamma_t^x)_{t \geq 0}^{x\in \R}$.
However, we show that the local time $(\gamma_t^x)_{t\geq 0}^{x\in \R}$
and the occupation density $(\ell_t^x)_{t \geq 0}^{x\in \R}$ coincide outside of sets of measure zero with respect to the measure $\mu$ and the Lebesgue measure for solutions of \eqref{eq:SDE} under
 some additional conditions (see Theorem~\ref{th:sdeilt}). To this end, we also show the joint continuity of $(\ell_t^x)_ {t \geq 0}^{x \in \R}$ of $X$ in this setting. 
 Finally, we can apply the reformulation in terms of $\gamma$ to prove that the weak existence result
 from Theorem~\ref{th:sde_exist} is sharp. \par 
 As mentioned above, proving existence and uniqueness of solutions to \eqref{eq:singularSDE} is one of the main goals of this paper. For this reason we want to give a short review of existing literature on this topic. The
 case of Brownian forcing was studied extensively. Hereby, proving strong existence and uniqueness
 for solutions to stochastic differential equations (SDEs) often involves the application of the Zvonkin
 transformation. The transformed equation admits a drift of higher regularity, enabling the utilization
 of classical tools. Initially, it was devised in \cite{Zvonkin75} for one-dimensional SDEs driven by Brownian
 motion with a bounded and measurable drift. This method later saw extension to the multidimensional
 setting \cite{Veretennikov81} and adaptation for locally unbounded drifts in \cite{Gyongy01,Krylov05}. Furthermore, weak existence and
 uniqueness was proven for drifts in a certain Kato class in \cite{Bass03} for $d \geq 2$ and for $d = 1$ strong existence
 and uniqueness was established for drifts which are distributional derivatives of of Hölder continuous
 functions with exponent $\gamma > 1/2$ was proven in \cite{Bass01}.\par
 In the setting of $\alpha$-stable forcing, we first want to mention
 \cite{Tanaka74, Priola12, Chen12}, where the authors considered the case where the drift is a function in a Hölder space or in a Kato class depending on the stability parameter $\alpha$. 
 In \cite{Priola12} strong existence and pathwise uniqueness were proven when the drift is of Hölder regularity $\beta > 1-\alpha/2$ when $d\geq 1$ and $\alpha \in [1,2)]$.
 In the setting of drifts of Kato class the authors of \cite{Chen12}
 showed weak existence and uniqueness of a solution to \eqref{eq:singularSDE} if the drift is a function in the Kato class $K_{\alpha-1}$ for $d \geq 1$ and $\alpha \in (1,2)$. \par
 While the above results already demonstrate that the class of possible drift terms is much larger
 compared to the deterministic setting, it turns out that it is even possible to consider generalized 
 functions and measures. 
As already mentioned above, the authors of \cite{Kim14} were able to prove weak existence and uniqueness in the case where $\mu$ is a measure in $K_{\alpha-1}$ for $d \geq 2$ and $\alpha \in (1,2)$.
 The case of generalized functions as drift was for instance investigated in \cite{Athreya20} , where strong existence
 and uniqueness were shown for $\mu \in B^\beta_{\infty,\infty}$ with $\beta > \frac{1-\alpha}{2}$ in dimension 
 $d = 1$ and for $\alpha \in (1,2)$. Hereby $B^{p,q}_s$, $p,q\in [1,\infty]$, $s\in \R$ denote the non-homogeneous Besov 
 spaces (see Definition~\ref{def:Besov}). We furthermore want to mention \cite{DeRaynal22}, 
 where time-dependent drifts in $B^\beta_{p,q}$ for general $p,q$ were considered.
 In \cite{Kremp22, Kremp22PhD} the authors considered drifts in $B^\beta_{\infty,\infty}$
 for $\beta>\frac{2-2\alpha}{3}$ and used the theory of paracontrolled distributions to
 show existence and uniqueness of solutions to the associated martingale problem and 
 weak rough path type existence and uniqueness. In order to use the theory of paracontrolled distributions 
 it is necessary to assume the drifts are enhanced drifts, meaning that they satisfy some additional assumptions  \par  
As outlined above, our aim is to derive a Tanaka-type formula for semimartingales $X$ of the from \eqref{eq:semiintro} in order to define a local time $\gamma$. In the case, where $X$ 
is a symmetic L\'evy process this was achieved in \cite{Salminen07}. In 
\cite{Tsukada18}  and \cite{Tsukada19} the author extended the result from \cite{Salminen07} to the nonsymmetric case and in \cite{Engelbert19} the authors considered all parameters $\alpha \in (0,2)$.\\
We also want to mention, that SDEs involving the local time of the unknown process with Brownian forcing were already extensively studied in the literature. 
This kind of equation was first considered in \cite{Strook81} in their discussion about pure martingales. 
Later, in \cite{LeGall84}, \cite{Engelbert85}, \cite{Blei13} and \cite{Bass05} the authors considered the case where $\mu$ is a finite measure 
such that $\mu(\{x\}) \leq 1$, for all $x$. In \cite{Groh86} the author considered the case where 
$\mu(dx)$ is the Lebesgue Stieltjes measure of a function $f$ of locally bounded variation satisfying some additional assumptions.  
We also want to mention \cite{blei2012note}, where the authors considered the case where
$|\mu({x})|>1$ for some $x \in \R$.
Furthermore, in \cite{Etore18} the time inhomogeneous case was discussed. \par 

\subsection{Outline}
In Section~\ref{sec:Levy}, we first review some basic facts about $\alpha$-stable processes. 
Next, we discuss different notions of local times that are used throughout this paper in Section~\ref{sec:loc_time_intro}  and introduce the notion
  of capacities in and their relation to Kato classes in Section~\ref{sec:capacities}. Then, in Section~\ref{sec:defKato}, we review the definition of Besov spaces and their relationship with Kato class. 
  In Section~\ref{sec:mainresults}, we state the main results of the paper. This in particular includes strong existence and uniqueness of solutions to \eqref{eq:singularSDE}. Furthermore, we discuss the reformulation of 
  the singular SDE \eqref{eq:singularSDE} so that the drift is defined in terms of the local time $\gamma$, which in turn is defined via a Tanaka type formula. 
  This leads to \eqref{eq:SDE} and we show that solutions to \eqref{eq:singularSDE} and \eqref{eq:SDE} coincide under certain conditions on $\mu$. 
  We also show that   $\gamma$ and the occupation density $\ell$ of solutions to \eqref{eq:SDE} coincide
  under certain assumptions on the drift $\mu$. 
  The subsequent Sections ~\ref{sec:sde_exist}-\ref{sec:sdeilt} are devoted to the proofs of the main results. 
  Finally, in Section~\ref{sec:sharp} we use our definition of the local time $\gamma$ to demonstrate that our existence result for equation \eqref{eq:singularSDE} is sharp.
  \subsection*{Convention for Constants}
In this article, constants are denoted by $C$ and $c$, with their precise values considered irrelevant. We reserve the freedom to adjust these values without altering the notation for readability. 
Other constants are denoted by $c_1, c_2, \ldots$ and $C_1, C_2, \ldots$. When necessary, dependence on additional parameters is expressed as $C = C(\lambda)$.
  \subsection*{Acknowledgments}
 We want to thank Lukas Anzeletti,  Oleg Butkovsky, Panki Kim, and Renming Song for very helpful conversations.
 The work of the authors was supported in part by the ISF grant
 No. 1985/22. 
\section{Preliminaries}\label{sec:Preiminaries}
\subsection{Notation}
We use the following notation for common function spaces. 
\begin{itemize}
	\item $\mathcal{C}^k$ is the space of $k$ times continuously differentiable, real-valued functions on $\R$. When $k = 0$, we write $\mathcal{C}^k= \mathcal{C}$.
	\item $\mathcal{C}_c^k$ is the space of $k$ times continuously differentiable, compactly supported, real-valued, functions on $\R$.  When $k = 0$ we write $\mathcal{C}^k_c = \mathcal{C}_c$.
	\item $\mathcal{S}$ denotes the Schwarz space. That is, $\mathcal{S}$ is the space of all infinitely differentiable  functions $f$  on $\R$ such that 
	$\sup_{0 <\alpha<k, x\in\R }(1+|x|)^k|\partial^\alpha f| < \infty$, for all $k \in \mathbb{N}$.
	\item $\mathcal{S}'$ is the space of tempered distributions and is defined as the topological dual of $\mathcal{S}$.
	\item $\mathcal{B}$ denotes the Borel sigma algebra over $\R$.
	\item $L^p(A,m)$ for $p \in [1,\infty]$ denotes the usual $L^p$-space on the measure space $(A, \mathcal{A},m)$.
\end{itemize}
In this article, we frequently consider real-valued right continuous functions of finite variation. If $f: I \rightarrow \R$ with $I \subseteq \R$ measurable, is such a function we shall
denote the Lebesgue-Stieltjes measure associated with $f$ by $\nu_f$. By the Hahn-Jordan decomposition theorem, 
every signed Radon measure $\nu$ can be split into $\nu = \nu^+-\nu^-$, where both $\nu^+$ and $\nu^-$ are non-negative measures. 
The total variation measure $|\nu|$ of $\nu$ is then defined as $|\nu|:= \nu^++\nu^-$.
\subsection{Symmetric $\alpha$-stable L\'evy Processes}\label{sec:Levy}
In this section, we recall several well-known properties of L\'evy processes and in particular of symmetric $\alpha$-stable processes. \par 
By the L\'evy decomposition theorem \cite[Chapter 4, Theorem 42 ]{Protter04}, every L\'evy process $(L_t)_{t \geq 0}$ admits the following decomposition
\begin{equation}
	L_t = \sigma B_t + bt + \int_{|r|>1}r\Pi_t(dr) + \int_{|r| \leq 1}r\big( \Pi_t(dr)-t\tilde{\pi}(dr)\big), \quad t \geq 0.
\end{equation}
Hereby, $b , \sigma\in \R $ and $B$ is a standard Brownian motion, independent of the Poission random measure $N$, which is given by
\begin{equation*}
	\Pi([0,T]\times G) = \sum_{0 \leq s  \leq t }\mathbbm{1}_{G}(\Delta L_s), \quad t \geq 0, G \in \mathcal{B}(G) ,
\end{equation*}
with intensity 
\begin{equation}\label{eq:intensity}
\pi(ds,dr) = dt\tilde{\pi}(dr),
\end{equation}
 where $\tilde{\pi}$ is  a $\sigma$-finite measure such that 
\begin{equation*}
	\int_\R (1 \wedge |r|^2) \upsilon(dr) < \infty.
\end{equation*}
In the case of the symmetric $\alpha$-stable process, 
\begin{equation*}
	b = \sigma = 0
\end{equation*}
and 
\begin{equation}\label{eq:poissoon_intense}
	\tilde{\pi}(dx) = c_1(\alpha) |x|^{-1-\alpha}dx,
\end{equation}
where 
\begin{equation}\label{eq:c_1}
	c_1(\alpha)= \alpha/(2\Gamma(1-\alpha)\cos(\alpha \pi/2)),
\end{equation}
and $\Gamma$ denotes the Gamma function.
This means, that the symmetric $\alpha$-stable process is a pure jump process and since for $\alpha \in (1,2)$
\begin{equation*}
	\int_\R (1\wedge|r|) \tilde{\pi}(dr) = \infty,
\end{equation*}
it is of unbounded variation by \cite[Lemma 2.12]{Kyprianou14}. In this article, we only consider the case, where the symmetric $\alpha$-stable process $(L_t)_{t \geq 0}$ has 
unbounded variation, that is we consider the case $\alpha \in (1,2)$.\par 
The generator of a symmetric $\alpha$-stable process is the fractional Laplacian, which is defined as 
\begin{equation}
	\Delta_{\alpha}f(x) = -\int_\R \hat{f}(\xi) \Psi(\xi) \ee^{ix\xi} d\xi = \int_\R \big(f(x+y)-f(x)- \frac{\partial}{\partial x}f(x)y\mathbbm{1}_{(0,1)} \big)\tilde{\pi}(dy),
\end{equation}
for all $f \in \mathcal{C}_c^\infty$ and $x \in \R$. 
Hereby, $\Psi(\xi) = |\xi|^\alpha$ denotes the L\'evy exponent of $L$, which satisfies
\begin{equation}\label{eq:characteristic}
	\bbE[\ee^{i\xi L_t}] =\ee^{-t \Psi(\xi)}= \ee^{-t|\xi|^\alpha}.
\end{equation}

\subsection{Notions of Local Times}\label{sec:loc_time_intro}
In the literature appear several notions of local times which do not always coincide. Below we give a short overview of the notions we use in this article. 
\subsubsection{Occupation Density}\label{sec:occupation}
First, we define the occupation measure of $(X_t)_{t \geq 0}$ with respect to the Lebesgue measure $m$. That is, for $B \in \mathcal{B}(\R)$
we define
\begin{equation}
	\eta_t^m(B,\omega) = m(\{s  \colon X_s(\omega) \in B, 0 \leq s \leq t\}).
\end{equation} 
The occupation density $(\ell_t^x)_{t \geq 0}^{x\in \R}$ is then defined as the density of the occupation measure with respect to the Lebesgue measure.  We sometimes write $\ell_t^x(X)$, for $t \geq 0$, $x\in \R$ to indicate that $\ell$ is the occupation density of the process $X$. 
In particular, the occupation density satisfies  
\begin{equation}
	\int_0^t f(X_s) ds = \int_\R f(x) \ell_t^x dx, \quad t \geq 0, \text{ a.s.},
\end{equation}
for all bounded  Borel measurable functions $f$.
Classically, one considers the occupation density with respect to the Lebesgue measure.  However, it is possible to  generalize this 
notion to densities with respect to different measures.

\subsubsection{Tanaka Formula}
Another way to defining local times is based on stochastic calculus.  Here we focus on Tanaka's formula. In the case of a continuous semimartingale $(X_t)_{t \geq 0}$ of the form
\begin{equation*}
	X_t = B_t+A_t, \quad t \geq 0, 
\end{equation*}
where $B$ is a standard Brownian motion and $A$ is an adapted right continuous process of finite variation  it reads as 
\begin{equation}\label{eq:tanaka_bm}
	|X_t-x| = |X_0-x| + \int_0^t \sgn(X_s-x)dX_s + \ell_t^x, \quad x \in \R, \quad t \geq 0.
\end{equation}
It turns out that $(\ell_t^x)_{t \geq 0}^{x\in\R}$ is also the occupation density of $X$. 
In the case when the martingale part of $X$ is an $\alpha$-stable process a new Tanaka formel needs to be derived. 
When $X_t = L_t$, $t  \geq  0$ is an symmetric $\alpha$-stable process with $\alpha \in (1,2)$ this was achieved by  \cite{Salminen07}. They derived the formula
\begin{equation*}
	v(L_t-x) = v(x_0-x) + N_t^x + \ell_t^x, \quad \forall x\in \R, \quad t \geq 0
	\end{equation*}
	where $N_t^a$ is a martingale, $v(x) = c(\alpha)|x|^{\alpha-1}$ for all $x \in \R$ and some constant $c(\alpha)$. Again,  $(\ell_t^x)_{t \geq 0}^{x \in \R}$ is the occupation density of $L$. 
	One of our main tasks  is to generalize this formula to the case $X  = L+A$, where $A$ is a right continuous, adapted process of finite variation.

\subsection{Capacities}\label{sec:capacities}
In this section, we review some results about capacities. In the context of Markov processes, capacities arise naturally through their associated Dirichlet forms.
Symmetric Dirichlet forms on $L^2(\R,m)$ also induce a norm on their domain
$D \subseteq L^2(\R,m)$. 
In some cases, these normed spaces coincide with function spaces like the Sobolev spaces. Although we expect 
some of
the solutions of \eqref{eq:singularSDE} to be Markov processes, they will certainly not be symmetric. However, it turns out that it is enough for our analysis to consider the symmetric part of the
corresponding Dirichlet form. For Markov solutions of \eqref{eq:singularSDE} the symmetric part
coincides with the fractional Laplacian and the induced norm is equivalent to the norm of
$B_{2,2}^{\alpha}$. For this reason, we define capacities in terms of this norm. To this end, we orient ourselves on \cite{Adams95}\\
\begin{definition}[Bessel Potential Space \cite{Adams95}]
	Let  $s >0$ and $G_s$ be the Bessel potential so that
	\begin{equation*}
		(1-\Delta)^{-s/2}u =: G_s \ast u \quad \forall u \in L^2(\R).
	\end{equation*}
	Then we define the Bessel potential space as
	\begin{equation*}
		\mathcal{L}_{s,2} = (1-\Delta)^{-s/2}(L^2(\R)),
	\end{equation*}
	endowed with the norm 
	\begin{equation*}
		\|f\|_{\mathcal{L}_{s,2}}  := \|g\|_{L^2}, \text{ where } f = G_s \ast g.
	\end{equation*}
\end{definition}
\begin{prop}
	Let $s >0$. Then $\mathcal{L}_{s,2} = B_{2,2}^s$ and the norm 
	\begin{equation}\label{eq:Lip_norm}
		\|f\|_{\Lambda_{2,2}^s} ^2:= \|f\|_{L^2} ^2+ \int_\R \int_\R\frac{|f(x)-f(y)|^2}{|x-y|^{1+2s}}dxds,
	\end{equation}
	is equivalent to the norms of $\mathcal{L}_{s,2}$ and $B_{2,2}^s$.
\end{prop}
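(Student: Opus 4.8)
The plan is to show that each of the three norms appearing in the statement is comparable, with constants depending only on $s$, to the weighted Fourier norm
\[
  N_s(f)^2 := \int_\R (1+|\xi|^2)^{s}\,|\hat f(\xi)|^2\,d\xi,
\]
after which the proposition follows by transitivity. For the Bessel potential norm this is essentially the definition: $f = G_s \ast g$ means exactly $f = (1-\Delta)^{-s/2}g$, i.e. $\hat g(\xi) = (1+|\xi|^2)^{s/2}\hat f(\xi)$, so Plancherel's theorem gives $\|f\|_{\mathcal{L}_{s,2}} = \|g\|_{L^2} = c\, N_s(f)$ with a fixed normalisation constant $c$; in particular $\mathcal{L}_{s,2}$ is precisely the set of $f$ with $N_s(f) < \infty$.

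For the Besov norm we would invoke the Littlewood--Paley characterisation from Definition~\ref{def:Besov}, by which $\|f\|_{B_{2,2}^s}^2$ is comparable to $\sum_{j \geq -1} 2^{2js}\|\Delta_j f\|_{L^2}^2$. Applying Plancherel to each dyadic block, using that the associated Fourier multipliers $\varphi_j$ satisfy $\sum_j \varphi_j^2 \asymp 1$ and that $2^{2js} \asymp (1+|\xi|^2)^{s}$ on $\supp \varphi_j$, one gets $\sum_j 2^{2js}\|\Delta_j f\|_{L^2}^2 \asymp N_s(f)^2$, hence $B_{2,2}^s = \mathcal{L}_{s,2}$ with equivalent norms. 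For the Gagliardo-type norm $\|\cdot\|_{\Lambda_{2,2}^s}$ --- the first-difference expression in \eqref{eq:Lip_norm}, which is the pertinent one for $0 < s < 1$ --- we would substitute $y = x+h$ in the double integral, apply Plancherel in $x$ for each fixed $h$ (so that $\int_\R |f(x+h)-f(x)|^2\,dx = c\int_\R |\ee^{ih\xi}-1|^2 |\hat f(\xi)|^2\,d\xi$), and then use Tonelli's theorem, all integrands being non-negative, to exchange the order of integration. The scaling $u = h|\xi|$ evaluates the inner integral,
\[
  \int_\R \frac{|\ee^{ih\xi}-1|^2}{|h|^{1+2s}}\,dh = C(s)\,|\xi|^{2s}, \qquad C(s) := \int_\R \frac{|\ee^{iu}-1|^2}{|u|^{1+2s}}\,du \in (0,\infty),
\]
where finiteness uses $|\ee^{iu}-1|^2 = O(|u|^2)$ near $0$ together with $s < 1$, and $|\ee^{iu}-1|^2 \leq 4$ near $\infty$ together with $s > 0$. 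Adding $\|f\|_{L^2}^2 = c\int |\hat f|^2$ gives $\|f\|_{\Lambda_{2,2}^s}^2 = c\int_\R (1 + C(s)|\xi|^{2s})|\hat f(\xi)|^2\,d\xi$, which is comparable to $N_s(f)^2$ because $1 + |\xi|^{2s} \asymp (1+|\xi|^2)^{s}$ on all of $\R$.

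Everything here is routine Fourier analysis once the characterisations are in place; the two points that need care are the finiteness of the constant $C(s)$ --- this is exactly where the range $0 < s < 1$ is forced for the first-difference seminorm in \eqref{eq:Lip_norm} --- and keeping the comparison constants in the Besov step dependent only on $s$ and the fixed Littlewood--Paley partition (and not on $f$). Note that the identity $\mathcal{L}_{s,2} = B_{2,2}^s$ with equivalent norms, established by the first two paragraphs, holds for every $s > 0$; only the description through \eqref{eq:Lip_norm} is tied to $s \in (0,1)$.
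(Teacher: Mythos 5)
Your argument is correct, and the first half is essentially the paper's: the identification $\mathcal{L}_{s,2}=B^s_{2,2}$ is obtained in both cases from the Littlewood--Paley blocks via Plancherel, using that the dyadic multipliers can be chosen with $\Xi^2+\sum_j\varphi(2^{-j}\cdot)^2$ bounded above and below and that $(1-\Delta)^{-s/2}$ acts as the Fourier multiplier $(1+|\xi|^2)^{-s/2}$. Where you diverge is the second half: the paper simply cites Stein (Chapter 5, Theorem 5 of \cite{Stein70}) for the equivalence of $\|\cdot\|_{\mathcal{L}_{s,2}}$ and $\|\cdot\|_{\Lambda^s_{2,2}}$, whereas you prove it directly by Plancherel in $x$ for fixed increment $h$, Tonelli, and the scaling identity $\int_\R |\ee^{ih\xi}-1|^2|h|^{-1-2s}\,dh=C(s)|\xi|^{2s}$. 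Your route is more elementary and self-contained, but it exploits $p=q=2$ in an essential way (Stein's theorem covers the general $\Lambda^{p,q}_s$ scale), which is all that is needed here. It also has the merit of making explicit that the finiteness of $C(s)$, and hence the first-difference description \eqref{eq:Lip_norm}, forces $s\in(0,1)$ --- a restriction that is present in Stein's theorem as well but is glossed over by the blanket hypothesis $s>0$ in the statement; since the paper only ever applies the proposition with $s=\alpha/2$ or $s=(\alpha-1)/2$, both in $(0,1)$, your remark separating the Fourier-side identity $\mathcal{L}_{s,2}=B^s_{2,2}$ (valid for all $s>0$) from the Gagliardo description (valid for $s\in(0,1)$) is a useful clarification rather than a defect.
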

\begin{proof}
	The equivalence of the norms of $\mathcal{L}_{s,2}$ and $B_{2,2}^s$ can be seen by using that 
	the partition of unity from \cite[Proposition 2.10]{Bahouri11}
	can be chosen so that 
	\begin{equation}
		1/2 \leq \Xi^2(x) + \sum_{j = \geq 0} \varphi(2^j x)^2 \leq 1, \quad \forall x \in \R,
	\end{equation}
	 and the fact that 
	\begin{equation*}
		\mathfrak{F}((1-\Delta))^{-s/2}u(\xi) \propto (1+|\xi|^2)^{-s/2}\mathfrak{F}u(\xi).
	\end{equation*}
	The equivalence of the norms $\| \cdot\|_{\mathcal{L}_{s,2}}$ and $\| \cdot\|_{\Lambda_{2,2}^s}$ follows from \cite[Chapter 5, Theorem 5]{Stein70}.
\end{proof}
The generator of  the symmetric, $\alpha$-stable process $(L_t)_{t \geq 0}$ is the fractional Laplacian $\Delta_\alpha$. The variational definition has the following form (see \cite[p.4-5]{Ainsworth18})
\begin{equation*}
	(\Delta_\alpha f, g) = c(\alpha) \int_\R \int_\R \frac{\big(f(x)-f(y)\big)\big(g(x)-g(y)\big)}{|x-y|^{1+\alpha}}dxdy \quad \forall g,f \in \mathcal{C}_c^\infty.
\end{equation*}
Hereby, $(\cdot,\cdot)$ denotes the inner product of $L^2(\R,m)$.
For $f =g$ this coincides (up to constants) with the second term of \eqref{eq:Lip_norm} for $s = \alpha/2$ and for this reason, we consider the capacity associated with 
the $B^s_{2,2}$. The following definition is taken from \cite{Adams95}.
\begin{definition}(Capacity)
	Let $K \subset \R$ be compact and set 
	\begin{equation*}
		\mathfrak{L}_K = \{ g \in \mathcal{S} \colon g \geq 1 \text{ on } K \}. 
	\end{equation*}
	Let $s \geq 0$ and set 
	\begin{equation*}
		\mathrm{Cap}_{s} (K) = \inf\{\|g\|^2_{B^s_{2,2}} \colon g \in \mathfrak{L}_K\}.
	\end{equation*}
	Furthermore, for open $A\subseteq \R$ define
	\begin{equation*}
		\mathrm{Cap}_s(A) = \sup\{\mathrm{Cap}_s(K) \colon K \subseteq A, K \text{ compact  }\}
	\end{equation*}
	and finally for arbitrary $E \subseteq \R$ 
	\begin{equation*}
		\mathrm{Cap}_s(E) = \inf\{\mathrm{Cap}_s(A) \colon E \subseteq A, A \text{ open  }\}.
	\end{equation*}
\end{definition}
For compact sets $D\subset \R$ it is possible to derive the following dual definition, which already suggests
that the set $A \in \mathcal{B}(A)$ such that $\mathrm{Cap} _s(A) = 0$ can be characterized by a certain family of measures. This is \cite[Theorem 2.2.7]{Adams95}.
\begin{theorem}\label{th:dual_def}
	Let $s >0$ and $D \subseteq \R$ compact. Then there exists a constant $c> 0$ such that 
	\begin{equation*}
		c^{-1}\mathrm{Cap}_{s}(D) \leq \sup_{\eta \in \mathcal{M}^+(K)}\Big(\frac{\eta(D)}{\|G_s\ast\eta\|_{L_2(\R)}}\Big)^2 \leq c \mathrm{Cap}_s(D),
	\end{equation*}
	where $G_s$ is the Bessel potential 
	and $\mathcal{M}^+(K)$ is the set of non-negative,  finite measures on $K$.
\end{theorem}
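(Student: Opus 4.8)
The plan is to deduce the claim from the standard duality theory of Bessel capacities, which is essentially \cite[Theorem 2.2.7]{Adams95}, combined with the norm equivalence $\|\cdot\|_{B^s_{2,2}}\simeq\|\cdot\|_{\mathcal{L}_{s,2}}$ from the preceding proposition; this equivalence is the source of the unspecified constant $c$. First I would rewrite the capacity in terms of the Bessel potential space: every admissible $g\in\mathfrak{L}_D$ can be written $g = G_s\ast f$ with $\|g\|_{B^s_{2,2}}\simeq\|f\|_{L^2(\R)}$, and since $G_s\geq 0$ one may replace $f$ by $f^+$ without increasing $\|f\|_{L^2}$ or destroying the constraint $G_s\ast f\geq 1$ on $D$. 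Hence, up to the equivalence constant,
\begin{equation*}
	\mathrm{Cap}_s(D)\simeq \inf\Big\{\|f\|_{L^2(\R)}^2 \;:\; f\in L^2(\R),\ f\geq 0,\ G_s\ast f\geq 1 \text{ on } D\Big\}.
\end{equation*}

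Next I would set up the Lagrangian dual of this convex minimization problem, dualizing the constraint $G_s\ast f\geq 1$ on $D$ by a nonnegative Radon measure $\eta$ supported on $D$. Writing $\int_D(1-G_s\ast f)\,d\eta$ for the penalty and using the symmetry $G_s(-x)=G_s(x)$ to move the convolution onto $\eta$, one has $\int_D G_s\ast f\,d\eta=\int_\R f\,(G_s\ast\eta)\,dm$, so the inner infimum over $f\geq 0$ becomes
\begin{equation*}
	\inf_{f\geq 0}\Big(\|f\|_{L^2}^2 - \int_\R f\,(G_s\ast\eta)\,dm\Big) \;=\; -\tfrac14\,\|G_s\ast\eta\|_{L^2}^2 ,
\end{equation*}
the minimizer $f=\tfrac12 G_s\ast\eta$ being automatically nonnegative because $G_s\geq 0$ and $\eta\geq 0$; thus the positivity constraint on $f$ is harmless. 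This yields the dual value $\sup_{\eta\in\mathcal{M}^+(D)}\big(\eta(D)-\tfrac14\|G_s\ast\eta\|_{L^2}^2\big)$, and optimizing over the rescalings $\eta\mapsto t\eta$, $t>0$ (legitimate since $\mathcal{M}^+(D)$ is a cone), turns it into $\sup_{\eta\in\mathcal{M}^+(D)}\big(\eta(D)/\|G_s\ast\eta\|_{L^2}\big)^2$, which is exactly the quantity in the statement.

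The main obstacle is to justify that there is no duality gap, i.e. that the primal infimum equals the dual supremum. I would obtain this either by a minimax theorem — the objective is convex in $f$, affine in $\eta$, and coercive in $f$ on the nonempty closed convex feasible set (nonempty because $D$ is compact) — or, following Adams--Hedberg, by Hahn--Banach separation: the capacitary extremal function exists by weak lower semicontinuity and uniform convexity of $L^2(\R)$, and $\eta$ arises as the nonnegative Lagrange multiplier attached to the active constraint, with $G_s\ast\eta$ proportional to the extremal function. Once strong duality is in place, the two inequalities with a single constant $c$ follow by tracking the $B^s_{2,2}$--$\mathcal{L}_{s,2}$ norm-equivalence constants through the reduction above: with the $\mathcal{L}_{s,2}$-norm in the definition of $\mathrm{Cap}_s$ the identity would hold with $c=1$, and the stated two-sided bound is precisely the price of working with the equivalent Besov norm. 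Since all of this is carried out in detail in \cite{Adams95}, in the write-up I would simply cite that reference and point out the role of the norm equivalence.
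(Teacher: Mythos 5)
The paper offers no proof of this statement at all—it is quoted verbatim as \cite[Theorem 2.2.7]{Adams95}—and your argument ultimately rests on the same citation, so you are taking essentially the same route. Your sketch of the underlying convex duality (rewriting $\mathrm{Cap}_s$ as a minimization over nonnegative $f$ with $G_s\ast f\geq 1$ on $D$, Lagrangian dualization with the explicit minimizer $f=\tfrac12 G_s\ast\eta$, optimizing over rescalings $t\eta$, and attributing the constant $c$ to the $B^s_{2,2}$--$\mathcal{L}_{s,2}$ norm equivalence) is a correct outline of the standard Adams--Hedberg proof, with the no-duality-gap step appropriately deferred to that reference.
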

It turns out, that sets of $\mathrm{Cap}_s$ for $s\in (0,1/2)$ can be characterized by measures of a certain Kato class. 
These classes of  measures are defined in the following way and play a central role in our subsequent analysis.
\begin{definition}[{Kato class, }{\cite[Definition 1.2]{Kim14}}]\label{def:Kato}
	A signed measure $\nu$ on $\R^d$ is said to be of Kato class $\eta \in (0,d)$ if
	\begin{equation*}
		\lim_{r \downarrow 0}M_\nu^\eta(r)  = \lim_{r \rightarrow 0} \sup_{x\in \R^d} \int_{B(x,r)}
		\frac{|\nu|(dy)}{|x-y|^{d-\eta}} =0.
	\end{equation*} 
	We denote by $K_{\eta}$ the set of all measures of  Kato class $\eta$ and by $K_{\eta}^f$ the set of
	all finite measures of  Kato class $\eta$.
\end{definition}
\begin{rema}
	Kato class quantifies the regularity or smoothness of measures. Generally, the regularity of a measure decreases while $\eta$ increases. 
	To illustrate this point we can consider $d = 1$ and the measure $\nu_s(dx) = |x|^{-s}$. Then $\nu_s \in K_{\eta}$ for all $\eta > s$.
\end{rema}
The following theorem from \cite[Corollary 2.5]{Albeverio92} demonstrates that the measures of Kato class $K_{2s}$
define the sets of capacity zero with respect to $\mathrm{Cap}_{s}$ for $s \in (0,1/2)$.
\begin{theorem}\label{th:capacity_condition}
	Let $A \in \mathcal{B}(\R)$ and $s \in (0,1/2)$. Then $\mathrm{Cap}_s(A) = 0$ if and only if $\mu(A) = 0$ for
	all  measures $\mu \in K_{2s}^f$.
\end{theorem}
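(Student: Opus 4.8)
The plan is to prove the two implications separately, in each case reducing to compact sets and exploiting the dual description of the capacity from Theorem~\ref{th:dual_def} together with the standard estimates $G_{2s}(z)\asymp|z|^{2s-1}$ for $|z|\le1$ and $G_{2s}(z)\le Ce^{-|z|/2}$ for $|z|>1$ (here $d=1$). The decisive point is that the singularity of the Bessel kernel $G_{2s}$ is exactly of order $|z|^{-(1-2s)}$, which is the kernel occurring in Definition~\ref{def:Kato} of the class $K_{2s}$; recall also that $G_s\ast G_s=G_{2s}$.

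\emph{The implication ``$\mathrm{Cap}_s(A)=0\Rightarrow\mu(A)=0$ for all $\mu\in K_{2s}^f$''.} I would argue by contraposition. Given $\mu\in K_{2s}^f$ with $\mu(A)>0$, inner regularity of the finite Radon measure $|\mu|$ yields a compact $K\subseteq A$ with $\mu(K)>0$; set $\nu:=\mu|_K\in\mathcal{M}^+(K)$. By Fubini and $G_s\ast G_s=G_{2s}$ one has $\norm{G_s\ast\nu}_{L^2(\R)}^2=\int_\R(G_{2s}\ast\nu)(x)\,\nu(dx)$. Splitting the convolution $G_{2s}\ast\nu$ at $|x-y|=1$ and inserting the kernel estimates, the near-diagonal part is bounded by $C\,M_\nu^{2s}(1)<\infty$ (finite because $\nu$ is of Kato class) and the far part by $C\,\nu(\R)<\infty$; hence $\norm{G_{2s}\ast\nu}_{L^\infty}<\infty$ and so $\norm{G_s\ast\nu}_{L^2(\R)}^2\le\norm{G_{2s}\ast\nu}_{L^\infty}\,\nu(K)<\infty$. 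Theorem~\ref{th:dual_def} then forces $\mathrm{Cap}_s(K)\ge c^{-1}\big(\nu(K)/\norm{G_s\ast\nu}_{L^2(\R)}\big)^2>0$, and since the capacity is monotone in its argument, $\mathrm{Cap}_s(A)\ge\mathrm{Cap}_s(K)>0$.

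\emph{The implication ``$\mu(A)=0$ for all $\mu\in K_{2s}^f\Rightarrow\mathrm{Cap}_s(A)=0$''.} Again by contraposition: assuming $\mathrm{Cap}_s(A)>0$, I must produce some $\mu\in K_{2s}^f$ with $\mu(A)>0$. As $\mathrm{Cap}_s$ is a Choquet capacity and $A$ is Borel, $A$ is capacitable, so there is a compact $K\subseteq A$ with $0<\mathrm{Cap}_s(K)<\infty$; by Theorem~\ref{th:dual_def} the set $K$ then carries a nonzero measure of finite $(s,2)$-energy. The remaining task is to upgrade this to a measure of class $K_{2s}^f$ that still charges a compact subset of $A$. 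A naive restriction of a finite-energy measure does not suffice, since finite energy only makes the truncated potentials $\int_{B(x,r)}|x-y|^{2s-1}\,\mu(dy)$ vanish as $r\downarrow0$ for $\mu$-a.e.\ $x$, whereas membership in $K_{2s}$ requires this \emph{uniformly} in $x$. Instead I would pass through a Frostman-type growth condition: a direct layer-cake estimate shows that any finite measure with $\mu(B(x,r))\le Cr^{1-2s+\delta}$ for all $x$ and all $r\le1$, with some fixed $\delta>0$, lies in $K_{2s}$, and by Frostman's lemma such a measure is carried by $K$ as soon as $\mathcal{H}^{1-2s+\delta}(K)>0$, i.e.\ whenever $\dim_H K>1-2s$ — which holds once $\mathrm{Cap}_s(K)>0$ except in the borderline case $\dim_H K=1-2s$.

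I expect that borderline case to be the genuine obstacle. There one cannot use a pure power: one has to replace $r^{1-2s+\delta}$ by a slowly varying gauge such as $r^{1-2s}/(\log(1/r))^{1+\varepsilon}$ (for which the same layer-cake computation still yields membership in $K_{2s}$), construct a Frostman-type measure adapted to that gauge, and then match $\mathrm{Cap}_s$ against the associated generalized Hausdorff measure. This comparison between the Bessel $(s,2)$-capacity and generalized Hausdorff measures, and the extraction of the corresponding Frostman measure living on $K$, is the substance of \cite[Corollary 2.5]{Albeverio92} that one genuinely has to invoke; the rest of the argument above is routine.
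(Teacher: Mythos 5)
You should first note that the paper contains no proof of this statement at all: Theorem~\ref{th:capacity_condition} is quoted verbatim from \cite[Corollary 2.5]{Albeverio92}, so there is no internal argument to compare yours against. Judged on its own, your first implication is correct and essentially complete: restricting to a compact $K\subseteq A$ charged by the measure, computing the energy via $G_s\ast G_s=G_{2s}$ and symmetry, bounding $\|G_{2s}\ast\nu\|_{L^\infty}$ by splitting at scale $1$ and using the Kato condition together with finiteness of $\nu$, and then invoking Theorem~\ref{th:dual_def} and monotonicity of $\mathrm{Cap}_s$ is a sound argument. The only (harmless) repair needed is for signed $\mu\in K_{2s}^f$: if $\mu(A)\neq 0$ you should pass to $|\mu|$ (or $\mu^{\pm}$), which is again in $K_{2s}^f$ and nonnegative, before restricting to $K$, since the dual formulation requires a measure in $\mathcal{M}^+(K)$.

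The genuine gap is in the converse, and you have located it accurately yourself: positive $\mathrm{Cap}_s(K)$ only forces $\mathcal{H}^{1-2s}(K)=\infty$, hence $\dim_H K\ge 1-2s$, and compacts with $\dim_H K=1-2s$ and positive capacity do exist, so the pure-power Frostman argument does not cover all sets of positive capacity. Your proposed repair is the right kind of statement — the gauge $h(r)=r^{1-2s}(\log(1/r))^{-(1+\varepsilon)}$ does give membership in $K_{2s}$ by the layer-cake computation, and what is then needed is (a) the comparison ``$\mathcal{H}^h(K)<\infty\Rightarrow\mathrm{Cap}_s(K)=0$'' for such gauges (a Bessel-capacity versus generalized Hausdorff measure theorem, available for $p=2$ in the classical potential-theory literature) and (b) a Frostman lemma for general gauge functions producing a nonzero measure on $K$ with $\mu(B(x,r))\le h(r)$. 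But these two ingredients are exactly the nontrivial content of the direction you are trying to prove, and as written you simply defer them to \cite{Albeverio92} — the same citation the paper already uses for the whole theorem — so your text does not amount to an independent proof of this implication. If you want a self-contained argument, either prove or precisely cite the gauge comparison and gauge Frostman lemma, or follow the alternative classical route: show that every compact set of positive capacity carries a nonzero measure whose potential $G_{2s}\ast\mu$ is continuous (not merely of finite energy, and not merely bounded, which is weaker than Kato), and then verify that a finite, compactly supported measure with continuous potential satisfies the uniform vanishing required in Definition~\ref{def:Kato}; both steps again require genuine arguments, not just the observation that finite energy is insufficient.
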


\subsection{Besov Spaces}\label{sec:defKato}
In this section, we define the notion Besov spaces and investigate their relation to measures of Kato class.
Besov spaces are  another way to quantify the regularity of measures. These spaces are also suitable for the more general Schwarz distributions and were for instance used in \cite{Athreya20,Kremp22} to quantify the regularity of the drift. We will rely on Besov spaces and their properties for some technical details.

\begin{definition}[{Besov Spaces,}{ \cite[Chapter 2]{Bahouri11}}]\label{def:Besov}
	Let $\mathscr{C} = [-8/3, -3/4]\cup[3/4, 8/3]$ and let $\Xi$ and $\varphi$ be radial functions in $\mathcal{C}_c^\infty(\R)$
	supported on $B(0,3/4)$ and  $\mathscr{C}$, respectively. Furthermore, let $\Xi, \varphi$ be such that 
	\begin{align*}
		&\Xi(x) + \sum_{j \geq 0} \varphi(2^{-j}x) = 1, \quad \forall x \in \R, \\
		&\sum_{j \in \mathbb{Z}}\varphi(2^{-j}x) = 1, \quad \forall x \in \R\backslash\{0\}, 
	\end{align*}
	and such that 
	\begin{align*}
		&|j-j'| \geq 2 \Rightarrow \mathrm{supp}(\varphi(2^{-j} \cdot)) \cap \mathrm{supp}(\varphi(2^{-j'} \cdot)) = \emptyset,\\
		& j \geq 1 \Rightarrow \mathrm{supp}(\Xi) \cap \mathrm{supp}(\varphi(2^{-j'} \cdot)) = \emptyset.
	\end{align*}
	Set
	\begin{align*}
		&\Delta_{-1}f= \mathfrak{F}^{-1} \Xi \mathfrak{F} f , \quad \Delta_jf =\mathfrak{F}^{-1} \varphi(2^{-j}\cdot)\mathfrak{F} f , \quad j \geq 0, \forall f \in \mathcal{S}'.
	\end{align*}
	Hereby, $\mathfrak{F}$ denotes the Fourier transform, which is defined in the distributional sense.
	Let $\beta \in \R$ and $(p,r) \in [1,\infty]^2$. Define
	\begin{align}
		&B^\beta_{p,r} = \Big\{f \in \mathcal{S}' \colon \|f\|_{B_{p,r}^\beta}  = \big(\sum_{j \geq -1} 2^{rj\beta}\|\Delta_j f\|_{L_p}^r\big) ^{1/r}< \infty\Big\}, 
	\end{align}
	with the usual adaptations when $p = \infty$ or $r = \infty$.
	For $(p,r) \in [1,\infty]^2$ we call the spaces $B^\eta_{p,r}$ (nonhomogeneous) Besov spaces.
\end{definition}
In this paper we will only use the spaces $B^\eta_{p,q}$ with $p = q= \infty$. 
If $\beta > 0$ it holds  that $B^\beta_{\infty,\infty} $ is the space of H\"older continuous functions with exponent $\beta$. Recall, that we denoted $B^\beta_{\infty,\infty}$ by $\mathcal{C}^\beta$ for al $\beta \in \R$.
By \cite[Theorem 9, Remark 26]{Triebel88} it turns out that we have the following characterization of  $\mathcal{C}^\beta$ for $\beta <0$
\begin{equation}\label{eq:besov_char}
	c^{-1} \| f\|_{\mathcal{C}^\beta} \leq \sup_{t \in (0,1]} t^{-\beta/2}\| \ee^{t \Delta}f\|_\infty \leq c \| f\|_{\mathcal{C}^\beta} , \quad \forall f \in \mathcal{C}^\beta,
\end{equation}
where $c>0$ is a constant and $\ee^{t\Delta}$ is the heat semigroup defined as
\begin{equation*}
	\ee^{t\Delta}f (x)= \frac{1}{\sqrt{2\pi}} t^{-1/2}\int_\R \ee^{-\frac{|x-y|^2}{4t}}f(y)dy, \quad t>0, x\in \R,  f \in \mathcal{S}'.
\end{equation*}
In the next lemma, we investigate the relationship of Besov spaces and measures of Kato class $K_{\alpha-1}$ for $\alpha \in (1,2)$. Hereby, we restrict ourselves to $d = 1$. \\
\begin{lemm}\label{lem:KatoBesov}
	In dimension $d = 1$ we have the following inclusions:
	\begin{enumerate}[label = (\roman*)]
		\item \label{itm:fin_m_bes}
		\begin{equation}
			\mathcal{M}_f^+\cap \mathcal{C}^{-s}\subseteq K_{\eta}^f, \quad 0<s<\eta<1;
		\end{equation}
		
		\item \label{itm:kato_bes_non}
		\begin{equation}
			K_\eta^f \subseteq \mathcal{C}^{-s} \cap \mathcal{M}, \quad 0<\eta <s<1.
		\end{equation}
	\end{enumerate}
	Hereby,  $\mathcal{M}$ denotes the set of all signed Radon measures on $\R$, $\mathcal{M}^+$ denotes the set of all 
	non-negative Radon measures on $\R$ and $	\mathcal{M}_f^+$ denotes the set of all nonnegative, finite Radon measures on $\R$.
\end{lemm}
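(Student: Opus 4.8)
The plan is to establish the two inclusions by characterizing membership in $\mathcal{C}^{-s}=B^{-s}_{\infty,\infty}$ through the heat-semigroup estimate \eqref{eq:besov_char}, and then comparing the resulting quantity with the Kato modulus $M^{\eta}_\nu(r)$ from Definition~\ref{def:Kato}. The heart of the matter is that for a finite measure $\nu$, $\ee^{t\Delta}\nu(x)$ is a Gaussian average of $\nu$ around $x$, whose size is dictated by how much mass $\nu$ places in balls $B(x,\sqrt t)$; this is precisely the information encoded in $M^\eta_\nu$. So the proof is essentially a bookkeeping exercise translating between the Gaussian kernel $t^{-1/2}\ee^{-|x-y|^2/4t}$ and the Riesz-type kernel $|x-y|^{-(1-\eta)}=|x-y|^{\eta-1}$ (recall $d=1$).

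For \ref{itm:fin_m_bes}: let $\nu\in\mathcal{M}^+_f\cap\mathcal{C}^{-s}$ with $0<s<\eta<1$. By \eqref{eq:besov_char} there is $C$ with $\ee^{t\Delta}\nu(x)\le C t^{-s/2}$ uniformly in $x\in\R$, $t\in(0,1]$. Fix $r\in(0,1]$ and $x\in\R$. The idea is to bound
\[
\int_{B(x,r)}\frac{\nu(dy)}{|x-y|^{1-\eta}}
\]
by decomposing the annuli $B(x,2^{-k}r)\setminus B(x,2^{-k-1}r)$, $k\ge 0$. On the $k$-th annulus $|x-y|^{-(1-\eta)}\le (2^{-k-1}r)^{-(1-\eta)}$, while the mass $\nu(B(x,2^{-k}r))$ can be controlled by the heat flow: for $t=(2^{-k}r)^2$ one has, for $y\in B(x,2^{-k}r)$, $\ee^{-|x-y|^2/4t}\ge \ee^{-1/4}$, hence
\[
\nu\big(B(x,2^{-k}r)\big)\le \ee^{1/4}\sqrt{2\pi}\, t^{1/2}\,\ee^{t\Delta}\nu(x)\le C'\, (2^{-k}r)\,(2^{-k}r)^{-s}=C'(2^{-k}r)^{1-s}.
\]
Multiplying by $(2^{-k-1}r)^{\eta-1}$ and summing over $k\ge 0$ gives a geometric series with ratio $2^{-(\eta-s)}<1$, so $\int_{B(x,r)}|x-y|^{\eta-1}\nu(dy)\le C'' r^{\eta-s}$, which tends to $0$ as $r\downarrow 0$ uniformly in $x$. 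Thus $\nu\in K^f_\eta$. (Finiteness of $\nu$ is inherited from the hypothesis, so $\nu\in K^f_\eta$, not merely $K_\eta$.)

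For \ref{itm:kato_bes_non}: let $\nu\in K^f_\eta$ with $0<\eta<s<1$. Since $\nu$ is a finite measure it is certainly a signed Radon measure, so $\nu\in\mathcal{M}$; it remains to show $\nu\in\mathcal{C}^{-s}$. By \eqref{eq:besov_char} it suffices to bound $|\ee^{t\Delta}|\nu|(x)|\le\ee^{t\Delta}|\nu|(x)\le C t^{-s/2}$ uniformly for $t\in(0,1]$, $x\in\R$; here $|\nu|\in K^f_\eta$ as well because $M^\eta_{|\nu|}=M^\eta_\nu$. Split $\ee^{t\Delta}|\nu|(x)$ into the contribution from $B(x,\sqrt t)$ and from its complement. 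On $B(x,\sqrt t)$ bound the Gaussian kernel by its sup $\sim t^{-1/2}$ and write $t^{-1/2}=t^{(\eta-1)/2}\cdot t^{-\eta/2}\le$ (on $B(x,\sqrt t)$) $C\,|x-y|^{\eta-1} t^{-\eta/2}$... more carefully, use $|x-y|\le\sqrt t$ so $t^{-1/2}\le |x-y|^{\eta-1}\cdot t^{(1-\eta)/2}\cdot t^{-1/2}= |x-y|^{\eta-1}t^{-\eta/2}$, giving a factor $t^{-\eta/2}M^\eta_{|\nu|}(\sqrt t)\le C t^{-\eta/2}\le C t^{-s/2}$ since $\eta<s$ and $t\le 1$. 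On the complement, decompose into dyadic annuli $B(x,2^{k+1}\sqrt t)\setminus B(x,2^k\sqrt t)$, $k\ge 0$; the Gaussian kernel there is at most $t^{-1/2}\ee^{-4^k/4}$, and the mass of each annulus is bounded via the Kato modulus at scale $2^{k+1}\sqrt t$ — one covers the annulus by $O(2^k)$ balls of radius $\sqrt t$ and on each such ball $|x'-y|^{\eta-1}\ge (2^{k+2}\sqrt t)^{\eta-1}$ relative to the ball's center, hence $|\nu|(B(x',\sqrt t))\le (2^{k+2}\sqrt t)^{1-\eta}M^\eta_{|\nu|}(\sqrt t)$, so the total annular mass is $\lesssim 2^k (2^k\sqrt t)^{1-\eta}M^\eta_{|\nu|}(\sqrt t)$. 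Putting these together, the $k$-th term contributes $\lesssim t^{-1/2}\ee^{-4^k/4}\,2^{k(2-\eta)}t^{(1-\eta)/2}M^\eta_{|\nu|}(\sqrt t)=t^{-\eta/2}M^\eta_{|\nu|}(\sqrt t)\,2^{k(2-\eta)}\ee^{-4^k/4}$, and the sum over $k$ converges thanks to the Gaussian decay. Since $M^\eta_{|\nu|}$ is bounded on $(0,1]$, one obtains $\ee^{t\Delta}|\nu|(x)\le C t^{-\eta/2}\le C t^{-s/2}$ for $t\in(0,1]$, so $\nu\in\mathcal{C}^{-s}$ by \eqref{eq:besov_char}.

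The main obstacle is the second inclusion's far-field term: one must extract a uniform-in-$x$ bound on the Gaussian mass average from the Kato modulus, which is a statement about small balls only; covering large annuli by small balls and tracking the number of covering balls against the Gaussian decay is the one place where the constants must be handled with some care (and where the geometry of $d=1$, making the covering number of an annulus of radius $R$ by balls of radius $\rho$ equal to $O(R/\rho)$, is used). Everything else is a routine dyadic decomposition plus the equivalence \eqref{eq:besov_char}.
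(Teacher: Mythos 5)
Your proposal is correct, and it differs from the paper's proof in a way worth recording. For part \ref{itm:fin_m_bes} the paper does not estimate $M^\eta_\nu(r)$ directly: it invokes an equivalent heat-kernel characterization of the Kato class, namely $\nu\in K_\eta$ iff $N^\eta_\nu(t)=\sup_x\int_0^t\int_\R \tau^{\eta/2-1}h(\tau,x,y)|\nu|(dy)\,d\tau\to0$ (from the proof of Proposition 2.3 in Kim--Song), and then bounds $N^\eta_\nu(t)\le \int_0^t\tau^{-1+\epsilon/2}d\tau\cdot\sup_{\tau\le1}\|\tau^{(\eta-\epsilon)/2}\ee^{\tau\Delta}\nu\|_\infty$ using \eqref{eq:besov_char}. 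You instead bound the Kato modulus itself, using only the trivial pointwise lower bound of the Gaussian kernel on $B(x,\sqrt t)$ to get $\nu(B(x,\rho))\lesssim\rho^{1-s}$ and then a dyadic decomposition of $B(x,r)$, yielding $M^\eta_\nu(r)\lesssim r^{\eta-s}$; this is more self-contained (no appeal to the external equivalence) and even quantitative, and it automatically rules out atoms, so the decomposition into annuli loses nothing. For part \ref{itm:kato_bes_non} both arguments rest on the same device of inserting $|x-y|^{\eta-1}|x-y|^{1-\eta}$ into the Gaussian average, but the paper splits at a \emph{fixed} radius $R$, kills the far field simply by finiteness of $\nu$ (the Gaussian is $\le\ee^{-R^2/4t}$ there), and handles the near field by maximizing $\ee^{-|z|^2/4t}|z|^{1-\eta}$ over $|z|\le R$, arriving at $\||\nu|\|_{\mathcal{C}^{-s}}\le C(1+M^\eta_\nu(R))$; you split at the natural scale $\sqrt t$ and control the far field by covering dyadic annuli with $O(2^k)$ balls of radius $\sqrt t$, which is more laborious but gives the slightly stronger bound $\ee^{t\Delta}|\nu|(x)\lesssim t^{-\eta/2}M^\eta_\nu(1)$ and uses finiteness of $\nu$ only through boundedness of $M^\eta_\nu$ on $(0,1]$. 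Both routes are valid; the paper's is shorter per step at the cost of citing the Kato-class heat-kernel characterization, yours is longer but elementary throughout.
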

The proof of the above lemma can be found in Appendix~\ref{sec:appendic_bk}.

\section{Main Results}\label{sec:mainresults}
In this section, we state our main results. We restrict ourselves to the case $d =1$
in all our results.
\subsection{Existence and Uniqueness Results for \eqref{eq:singularSDE}}
We first turn our attention to solutions to the formal  singular SDE of the form \eqref{eq:singularSDE}.
Hereby, we make sense of the term $\int_0^t \mu(X_s)ds$ by employing an approximation scheme that uses mollification. 
 \begin{definition}[Mollifier]\label{def:mollifier}
 	Consider a non-negative, radial function $\varphi \in \mathcal{C}^\infty$ and assume that $\mathrm{supp}(\varphi) = [-1,1]$, $\int_\R\varphi(x)dx = 1 $. We denote
 	\begin{equation*}
 		\varphi^{(\epsilon)} (x) = \epsilon^{-1}\varphi(\epsilon^{-1} x),
 	\end{equation*}
 	and 
 	\begin{equation*}
 		\mu^{(\epsilon)}(x) = \int_\R \varphi_\epsilon(x-y)\mu(dy),
 	\end{equation*}
 	for all Radon measures $\mu$ and
 	\begin{equation*}
 		u^{(\epsilon)}(x) =\int_\R \varphi_\epsilon(x-y)u(y)dy, 
 	\end{equation*}
 	for all measurable functions $u$.
 	When $\epsilon = 2^{-n}$ we write $\varphi^{(2^{-n}) }= \varphi_n$, 
 	$\mu^{(2^{-n})} = \mu_n$ and $u^{(2^{-n})} = u_n$ instead.
 \end{definition}
 With this definition at hand we can state the notion of solution we are working with.
\begin{definition}\label{def:sing_sol}
	Let $\mu$ be a signed measure in $K_{\alpha-1}$, $\alpha \in (1,2) $ and let $(L_t)_{t \geq 0}$ be a symmetric $\alpha$-stable process in one dimension. 
	A c\`adl\`ag process $(X_t)_{t \geq 0}$ is a solution to \eqref{eq:singularSDE} with initial condition $x_0$, if there exists a continuous process $(A_t)_{t \geq 0}$ such that almost surely
	\begin{equation}
		X_t = x_0 + L_t +A_t, \quad \forall t \geq 0, 
	\end{equation}
	where
	\begin{enumerate}
		\item $\int_0^t \mu_n(X_s)ds$ converges to $A_t$ in probability,  uniformly in  $t$ over finite intervals;
		\item there exists a subsequence $\{n_k\}$ such that $\sup_k\int_0^t|\mu_{n_k}(X_s)|ds<\infty$ almost surely for each $t>0$;
		\item under $\bbP$, $L_0 = 0$ and $(L_t)_{t \geq 0}$ is a symmetric $\alpha$-stable process on $(\Omega, (\mathcal{F}_t)_{t \geq 0})$.
	\end{enumerate}
\end{definition}
A \emph{weak} solution to \eqref{eq:singularSDE} is a pair $(X,L)$ such that there exists a stochastic basis $(\Omega, (\mathcal{F}_t)_{t \geq 0}, \bbP)$ 
such that $L$ is a symmetric, $\alpha$-stable  process and $X$ is adapted to $(\mathcal{F}_t)_{t \geq 0}$ and is a solution to \eqref{eq:singularSDE}. Sometimes, with some abuse of notation, we will call a corresponding probability law $\bbP_x$ (the law of $X$ with $X_0 =x$) a weak solution to \eqref{eq:singularSDE}.
A \emph{strong} solution with respect to a given symmetric, $\alpha$-stable is a process $X$ which satisfies Definition~\ref{def:sing_sol} and is adapted to the filtration generated by $L$. 
\emph{Weak uniqueness} is said to hold for solutions to \eqref{eq:singularSDE} if, given any two weak solutions $(X,L)$ and $(\tilde{X},\tilde{L})$
 sharing the same initial distribution $X_0$, the processes $(X_t)_{t \geq 0}$ and $(\tilde{X}_t)_{t\geq 0}$ have the same law. \emph{Pathwise uniqueness}, 
 on the other hand, is established when any two weak solutions $(X,L)$ and $(\tilde{X}, L)$ on the same probability space and with a common $L$ and initial condition $X_0$ are indistinguishable.
  Finally, \emph{strong uniqueness} is achieved if any two strong solutions $X$ and $\tilde{X}$ of \eqref{eq:singularSDE}, relative to a common $L$ and initial condition $X_0$ are indistinguishable.
   Notably, pathwise uniqueness implies strong uniqueness.

In \cite{Kim14} the authors proved weak existence and uniqueness of solutions to \eqref{eq:singularSDE} when $\mu \in K_{\alpha-1}$ and in dimension $d \geq 2$. 
Following their approach, it is possible to extend this result to $d = 1$. In fact, in the one-dimensional setting it is possible to strengthen this result and obtain \emph{strong} existence and uniqueness at the expense of assuming that $\mu \in K^f_{\eta}$ with $\eta \in (0,\alpha-1)$. This can be achieved by proving pathwise uniqueness of solutions and then using a generalization of the Yamada- Watanabe theorem from \cite{Kurtz14}.
Here, we would like to mention the argument used in \cite{Tanaka74} for proving the pathwise uniqueness 
for a stochastic differential equation driven by additive noise and with H\"older drift. The proof in \cite{Tanaka74} heavily uses the assumption that the drift is function-valued. 
The generalization of this proof to measure-valued drift would be interesting but does not look trivial. Thus, we chose to use the method of proof that uses the 
Zvonkin transformation, which, in our opinion, could be further generalized and also applied to equations driven by a multiplicative noise. 
This means, we apply a Zvonkin transformation $\phi$, akin to the transformation used in \cite{Athreya20}, to \eqref{eq:singularSDE}. 
The resulting transformed equation is satisfied by $\phi(X)$, where $X$ is a solution to \eqref{eq:singularSDE}.
It possesses coefficients of higher regularity than the original equation, which enables the utilization of results from \cite{Knopova17} 
to establish weak uniqueness of solutions to the transformed equation.
With weak uniqueness in hand, we then employ similar techniques to \cite{Xiong19} to prove pathwise uniqueness for solutions to the transformed equation.
Given that every solution to \eqref{eq:SDE} is also a solution to the transformed equation, pathwise uniqueness for the latter implies pathwise uniqueness for the former.

\begin{theorem}\label{th:sde_exist}
	Let $\mu \in K_{\eta}^f$ with $\eta \in (0, \alpha-1]$.
	\begin{enumerate}[label = (\roman*)]
	\item \label{itm:weakex} Then  there exists a unique weak solution to \eqref{eq:singularSDE} and the solution is a strong Markov process and $(A_t)_{t\geq 0}$ is a continuous, adapted process of finite variation.
	\item \label{itm:stronex}If $\eta \in (0,\alpha-1)$, then the weak solution to \eqref{eq:singularSDE} is strong. Moreover, pathwise uniqueness holds for solutions to \eqref{eq:singularSDE}.
	\end{enumerate}
	 
\end{theorem}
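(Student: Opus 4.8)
The plan is to prove Theorem~\ref{th:sde_exist} in two stages corresponding to the two assertions, in each case following and adapting the cited works. For part~\ref{itm:weakex}, I would first establish weak existence by the approximation scheme of \cite{Kim14}: mollify $\mu$ to get smooth drifts $\mu_n$, solve the classical SDEs $X^n_t = x_0 + L_t + \int_0^t \mu_n(X^n_s)\,ds$, and show tightness of the laws of $(X^n)$ together with tightness of the additive functionals $A^n_t = \int_0^t \mu_n(X^n_s)\,ds$. The key analytic input is the Kato-class control: using the resolvent/heat kernel bounds for the $\alpha$-stable semigroup and the definition of $K_\eta$ with $\eta \le \alpha-1$, one derives uniform-in-$n$ estimates of the form $\bbE[\int_0^t |\mu_n(X^n_s)|\,ds] \le C\,M_\mu^\eta(\cdot)$-type bounds, ensuring the additive functionals do not blow up and that a limit point $(X,A)$ satisfies Definition~\ref{def:sing_sol}. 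Since $\eta \le \alpha - 1$ keeps us in the regime where the Green function $\int_0^\infty p_t^\alpha(x,y)\,dt$ is integrable against $|\mu|$ locally, the finite-variation property of $A$ follows from the positive/negative Hahn–Jordan decomposition of $\mu$ and monotonicity of the corresponding approximations. Weak uniqueness and the strong Markov property then come from identifying the limit with the (unique) solution to the martingale problem for $\Delta_\alpha + \mu\cdot\nabla$, exactly as in \cite{Kim14}, now valid in $d=1$ because $\mu$ is assumed finite; I would simply check that their multidimensional arguments (Duhamel expansion, resolvent convergence) go through verbatim once $d=1$ and $\mu \in K_{\alpha-1}^f$.

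For part~\ref{itm:stronex}, assuming now $\eta < \alpha-1$, the plan is to upgrade to pathwise uniqueness via a Zvonkin transformation and then invoke the Yamada–Watanabe theorem of \cite{Kurtz14} to conclude strong existence. Concretely: by Lemma~\ref{lem:KatoBesov}\ref{itm:kato_bes_non}, $\mu \in K_\eta^f \subseteq \mathcal{C}^{-s}$ for any $s \in (\eta,\alpha-1)$, so the drift lies in a negative Hölder–Besov space of order $> 1-\alpha$, i.e. in the subcritical range covered by \cite{Athreya20}. I would solve the resolvent (Zvonkin) equation $\lambda u - \Delta_\alpha u - \mu\cdot u' = \mu$ (or the vector-valued analogue used in \cite{Athreya20}) for $\lambda$ large, obtaining $u \in \mathcal{C}^{\alpha - s}$ with $\alpha - s > 1$, hence $u$ is $\mathcal{C}^1$ with small $\mathcal{C}^1$ norm. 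The map $\phi(x) = x + u(x)$ is then a $\mathcal{C}^1$-diffeomorphism, and Itô's formula for jump processes applied to $\phi(X)$ (using that $u \in \mathcal{C}^{\alpha-s}$ with $\alpha - s > 1$ controls the compensated-jump remainder) shows $Y = \phi(X)$ solves an SDE of the form $Y_t = \phi(x_0) + \int_0^t \tilde b(Y_s)\,ds + (\text{stable integral with a regular, invertible jump coefficient } \psi(Y_{s-}))$, where $\tilde b$ and $\psi$ are Hölder continuous with $\psi$ close to the identity. Weak uniqueness for this transformed equation follows from \cite{Knopova17}, and pathwise uniqueness from the technique of \cite{Xiong19} (an $L^1$- or $L^p$-type estimate on $|Y_t - \tilde Y_t|$ exploiting the regularity of the coefficients and the structure of the stable noise). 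Since $\phi$ is a bijection, pathwise uniqueness transfers back to $X$, and then \cite{Kurtz14} gives the existence of a strong solution.

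The main obstacle, I expect, is the Itô/Tanaka-type change-of-variables step: one must apply a change of variables to the jump process $X = x_0 + L + A$ with a transformation $u$ that is only $\mathcal{C}^{\alpha-s}$ (just above $\mathcal{C}^1$, not $\mathcal{C}^2$), and verify that all the terms — in particular $\int_0^t \mu(X_s)\,ds$, which is only defined through the approximation scheme of Definition~\ref{def:sing_sol}, and the nonlocal jump term $\int (u(X_{s-}+r) - u(X_{s-}) - u'(X_{s-})r\mathbbm{1}_{|r|\le 1})\,\tilde\pi(dr)$ — make sense and combine correctly. This requires showing that $\int_0^t \mu_n(X^n_s)\,ds$ passes to the limit consistently with the transformed equation (so that $\phi(X)$ genuinely solves the regularized SDE and not merely something formally resembling it), which is precisely where the condition $\eta < \alpha-1$ (rather than $\eta \le \alpha-1$) is needed: it buys the extra Besov regularity that makes $u \in \mathcal{C}^1$ with controllable remainder. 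A secondary technical point is verifying the hypotheses of \cite{Knopova17} and \cite{Xiong19} for the transformed coefficients — checking the Hölder exponents and the non-degeneracy of $\psi = \mathrm{id} + u'$ quantitatively — but this should be routine once $\lambda$ is taken large enough that $\|u'\|_\infty < 1$.
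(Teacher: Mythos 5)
Your part~\ref{itm:stronex} plan is essentially the paper's: solve the resolvent equation $\lambda w-\Delta_\alpha w-\mu w'=\mu$ for large $\lambda$, get a $\mathcal{C}^{1+}$ solution with small $\mathcal{C}^1$ norm (here Lemma~\ref{lem:KatoBesov} is used exactly as you say), transform by $\phi=\mathrm{id}+w$, prove weak uniqueness of the transformed equation via \cite{Knopova17}, deduce pathwise uniqueness via \cite{Xiong19}, transfer back through the bijection $\phi$, and close with \cite{Kurtz14}; you also correctly flag the main obstacle, namely applying It\^o when the drift is only defined through the approximation scheme (the paper resolves this by applying It\^o to the mollified resolvent solutions $w_n$ and proving that $\int_0^t (w_n)'(X_s)\,dA_s-\int_0^t (w_n)'(X_s)\mu_n(X_s)\,ds\to 0$, cf.\ Lemma~\ref{lem:conv_riem}). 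One correction: the pathwise-uniqueness mechanism borrowed from \cite{Xiong19} is \emph{not} an $L^1$/$L^p$ Gronwall-type estimate on $|Y_t-\tilde Y_t|$ — such an estimate is unavailable for merely H\"older coefficients — but the comparison trick: Tanaka's formula shows $W^1\vee W^2$ is again a solution of the transformed equation, and the weak uniqueness just established forces $W^1=W^2$ a.s.\ This is precisely why the \cite{Knopova17} step is indispensable; as written, your $L^p$-estimate description would not go through and would also make the weak-uniqueness step superfluous, so the two halves of your plan are slightly inconsistent.

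For part~\ref{itm:weakex} you take a genuinely different route (tightness of the solutions $X^n$ with mollified drifts and of their additive functionals, then identification of a limit point), whereas the paper, following \cite{Kim14}, constructs the transition density $p^\mu$ directly as the perturbation series $p+\sum_k p\circledast(\partial_x p\,\mu)^{\circledast k}$, obtains a Feller process, produces $A$ from the $\lambda$-potentials $W_\lambda^\mu\mu^{\pm}$ via \cite{Blumenthal68}, and proves the required convergence $\int_0^t\mu_n(X_s)\,ds\to A_t$ by the Bass-type argument of Lemma~\ref{lem:conv_A}. Your compactness route can in principle work, but as stated it has two gaps. First, Definition~\ref{def:sing_sol} requires convergence of $\int_0^t\mu_m(X_s)\,ds$ computed along the \emph{limit} process $X$, which does not follow from tightness of $(X^n,A^n)$; establishing it needs heat-kernel or potential estimates for the limit process itself (uniform Krylov-type bounds), i.e.\ essentially the two-sided bounds \eqref{eq:trans_density_est} that the direct construction delivers. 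Second, weak uniqueness cannot be dismissed as "uniqueness of the martingale problem for $\Delta_\alpha+\mu\,\partial_x$, exactly as in \cite{Kim14}": with a measure-valued drift the content of that step is to show that \emph{any} weak solution in the sense of Definition~\ref{def:sing_sol} has $\lambda$-potentials equal to $\sum_k U_\lambda(N_\mu U_\lambda)^k g$, and this is proved by iterating the resolvent identity along the given solution, using the defining approximation property of $A$ together with the gradient bounds \eqref{eq:lamPot_grad} for $u_\lambda$ against Kato-class measures (Propositions~\ref{prop:grad_meas}--\ref{prop:pot_sum}); this argument, not an abstract martingale-problem statement, is what your write-up needs to supply.
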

An immediate consequence of this theorem is the following result.

\begin{theorem}
	Let $\mu \in K_{\eta}^f$ with $\eta \in (0, \alpha-1)$. Then there exists a unique strong solution to \eqref{eq:singularSDE}.
\end{theorem}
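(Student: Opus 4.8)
The plan is to read the claim off directly from Theorem~\ref{th:sde_exist}, which does all the work. First I would invoke part~\ref{itm:weakex} of that theorem: under the hypothesis $\eta \in (0,\alpha-1]$, and hence in particular under our hypothesis $\eta \in (0,\alpha-1)$, there exists a (unique in law) weak solution to \eqref{eq:singularSDE}. Next I would invoke part~\ref{itm:stronex}: since $\eta \in (0,\alpha-1)$, this weak solution is in fact strong, i.e.\ it can be taken adapted to the filtration generated by the given driving process $L$ while still satisfying Definition~\ref{def:sing_sol} with respect to that same $L$. This already yields strong existence.

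For uniqueness, I would use that part~\ref{itm:stronex} of Theorem~\ref{th:sde_exist} asserts pathwise uniqueness for solutions to \eqref{eq:singularSDE}, together with the elementary implication recorded just before that theorem that pathwise uniqueness implies strong uniqueness: any two strong solutions $X$ and $\tilde X$ of \eqref{eq:singularSDE} relative to a common $L$ and initial condition $X_0$ are weak solutions on the same stochastic basis with a common $L$, hence are indistinguishable. Combining the two points, there is a strong solution and it is unique, which is exactly the assertion.

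The only subtlety worth spelling out explicitly is the logical chain internal to Theorem~\ref{th:sde_exist}\ref{itm:stronex} — namely that weak existence from part~\ref{itm:weakex} together with the pathwise uniqueness established for the Zvonkin-transformed equation (and transferred back via the fact that every solution of \eqref{eq:singularSDE} solves the transformed equation) is fed into the generalized Yamada--Watanabe theorem of \cite{Kurtz14} to upgrade the weak solution to a strong one. But this reasoning is already contained in the proof of Theorem~\ref{th:sde_exist}, so at the level of the present statement there is no genuine obstacle: it is an immediate corollary, and the ``proof'' consists of the two-step citation above.
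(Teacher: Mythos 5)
Your proposal matches the paper exactly: the paper states this result as an immediate corollary of Theorem~\ref{th:sde_exist}, obtaining strong existence from part~\ref{itm:stronex} and uniqueness from pathwise uniqueness together with the remark that pathwise uniqueness implies strong uniqueness. Nothing further is needed.
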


In Definition~\ref{def:sing_sol} we fix a particular sequence of smooth functions approximating the drift $\mu$. Another 
common approach to define a solution to \eqref{eq:singularSDE}, that was for instance used in \cite{Athreya20}, is to require the convergence 
\begin{equation}\label{eq:conv_A_PUc}
	\lim_{n \rightarrow \infty} \bbP (\sup_{t \in [0,T]}|\int_0^t\tilde{\mu}_n(X_s)ds-A_t|>\epsilon) = 0, \quad \forall \epsilon >0, \forall T > 0,
\end{equation}
to hold for every sequence of smooth functions $(\tilde{\mu}_n)_{n \in \N}$ that converges to $\mu$ in an appropriate way. 
In the next lemma we provide sufficient conditions on a sequence $(\tilde{\mu}_n)_{n \in \N}$ that guarantee that \eqref{eq:conv_A_PUc} holds. 

\begin{lemm}\label{cor:PUC}
	Let $\mu \in K_{\alpha-1}^f$ and let $X$ be a solution to \eqref{eq:singularSDE} in the sense of Definition~\ref{def:sing_sol}. Let $(\tilde{\mu}_n)_{n \in \N}$ be a sequence 
	of bounded smooth functions, converging weakly to $\mu$ such that 
	\begin{equation}\label{eq:limsup_M}
		\lim_{r \rightarrow 0} \sup_{n \in \N} M^{\alpha-1}_{\tilde{\mu}_n}(r) = 0.
	\end{equation}
	Then  $\Big(\int_0^t \tilde{\mu}_n(X_s)ds\Big)_{t \geq 0}$ converges to $A$ in probability,  uniformly in  $t$ over finite intervals.
\end{lemm}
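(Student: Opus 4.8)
The plan is to compare the approximation $\tilde\mu_n$ with the fixed mollification $\mu_n$ from Definition~\ref{def:mollifier}, for which convergence to $A$ is already guaranteed by Definition~\ref{def:sing_sol}~(1). Write
\[
	\int_0^t \tilde\mu_n(X_s)\,ds - A_t = \Big(\int_0^t \mu_n(X_s)\,ds - A_t\Big) + \int_0^t \big(\tilde\mu_n - \mu_n\big)(X_s)\,ds,
\]
so it suffices to show that $\sup_{t\in[0,T]}\big|\int_0^t (\tilde\mu_n-\mu_n)(X_s)\,ds\big|\to 0$ in probability. The key tool for this should be a uniform bound of the form
\[
	\bbE_x\Big[\sup_{t\le T}\Big|\int_0^t f(X_s)\,ds\Big|\Big] \le C(T)\,\|f\|_{*}
\]
for a suitable norm $\|\cdot\|_*$ controlled by the Kato quantities $M^{\alpha-1}_f(r)$ — this is exactly the type of estimate that underlies the construction of $A$ in the Kim--Song framework, obtained from the resolvent/Green function bounds for the symmetric $\alpha$-stable process. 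In fact, the standard estimate gives control of $\sup_t \bbE_x[\int_0^t |f(X_s)|\,ds]$ by a multiple of $M^{\alpha-1}_f$ (plus a term involving $\|f\|_{L^1}$ or the total mass, which is where finiteness of $\mu$ and weak convergence of $\tilde\mu_n$ enter), and a Markov-property/Khasminskii-type argument upgrades this to a bound on the supremum over $t$.

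First I would record the relevant a priori bound: for $g$ a bounded measurable function, $\bbE_x[\int_0^T |g(X_s)|\,ds] \le C(T)\,\big(M^{\alpha-1}_g(r_0) + c_{r_0}\|g\|_{L^1}\big)$ for any fixed small $r_0$, uniformly in $x$; this follows from splitting the Green function $G(x,y)\asymp |x-y|^{\alpha-1}$ near and away from the diagonal. Then I would apply this with $g = \tilde\mu_n - \mu_n$. The near-diagonal piece is handled by $M^{\alpha-1}_{\tilde\mu_n-\mu_n}(r) \le M^{\alpha-1}_{\tilde\mu_n}(r) + M^{\alpha-1}_{\mu_n}(r)$; by hypothesis \eqref{eq:limsup_M} the first term is small uniformly in $n$ for $r$ small, and since $\mu\in K^f_{\alpha-1}$ one checks (as in the proof that $\mu_n$ is an admissible approximating sequence) that $\sup_n M^{\alpha-1}_{\mu_n}(r)\to 0$ as $r\downarrow 0$ as well. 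So, fixing $\epsilon$, choose $r_0$ making both near-diagonal contributions $\le \epsilon$ uniformly in $n$. The far-diagonal piece is $c_{r_0}\|\tilde\mu_n-\mu_n\|_{TV}$-type, but $\tilde\mu_n-\mu_n \rightharpoonup 0$ only weakly, so I cannot bound this directly in total variation; instead I would test against the (bounded, continuous) Green-type kernel $y\mapsto \int_{|x-y|>r_0} G(x,y)\,\cdot$ and exploit that $\tilde\mu_n \rightharpoonup \mu$ and $\mu_n \rightharpoonup \mu$ to conclude that this term tends to $0$ for each fixed $r_0$ as $n\to\infty$ (using tightness/finiteness of the masses to control the behaviour at infinity). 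Combining, for $n$ large the whole expectation is $\le C\epsilon$, and Markov's inequality plus Definition~\ref{def:sing_sol}~(1) gives the claimed uniform-in-$t$ convergence in probability.

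The main obstacle I anticipate is the far-diagonal term: because $\tilde\mu_n - \mu_n$ converges only weakly and not in total variation, one cannot simply estimate $\int_0^t(\tilde\mu_n-\mu_n)(X_s)\,ds$ by $\|\tilde\mu_n-\mu_n\|_{TV}$ times a supremum of occupation densities. The fix is to route everything through the resolvent: express $\bbE_x[\int_0^T g(X_s)\,ds]$ as an integral of $g$ against a kernel that, after truncating the singularity at scale $r_0$, is bounded and continuous (and vanishing at infinity once we incorporate the finite time horizon), so that weak convergence $\tilde\mu_n - \mu_n\rightharpoonup 0$ does the job. A secondary technical point is to make the passage from the expectation bound to control of $\sup_{t\le T}$ — this uses the Markov property of $X$ (available by Theorem~\ref{th:sde_exist}~(i)) together with a maximal inequality, in the same way the continuity of $A$ and assertion (1) of Definition~\ref{def:sing_sol} were originally established; one must be a little careful that the estimates are uniform in the starting point so the Markov step closes.
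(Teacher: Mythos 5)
Your plan is correct and is essentially the paper's own argument: the paper likewise splits into a near-diagonal part controlled uniformly in $n$ by the Kato condition \eqref{eq:limsup_M} and an off-diagonal part handled by weak convergence tested against the jointly continuous perturbed kernel $p^\mu$ (this is Lemma~\ref{lem:conv_nu_seq_pot}, giving uniform convergence of the $\lambda$-potentials $W^\mu_\lambda\tilde\mu_n\to W^\mu_\lambda\mu$), and then upgrades to uniform-in-$t$ convergence in probability through the Markov-property maximal inequality of Lemma~\ref{lem:conv_A} (based on \cite{Bass03}), exactly the Khasminskii/maximal step you invoke. The only cosmetic differences are that you subtract the mollified sequence $\mu_n$ and phrase the kernel bound over a finite horizon, whereas the paper compares directly with $W^\mu_\lambda\mu$ and works with $\lambda$-potentials.
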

The proof of Lemma~\ref{cor:PUC}  is given in Section~\ref{sec:aprox_seq}. 
\begin{rema}\label{rema:molli}
	The assumptions of Lemma~\ref{cor:PUC} are in particular satisfied for the sequence $(\mu_n)_{n\in \N}$ of mollifications of $\mu\in K_{\alpha-1}^f$ defined by 
	Definition~\ref{def:mollifier}. That is, it is well-known that $(\mu_n)_{n \in \N}$ converges weakly to $\mu$ and it is easily verified that 
	\begin{equation*}
		M^{\alpha-1}_{\mu_n}(r) \leq C M^{\alpha-1}_{\mu}(r) , \quad r \geq 0, \forall n \in \N,
	\end{equation*}
	which yields \eqref{eq:limsup_M}.
\end{rema}
\begin{rema}
	Lemma~\ref{cor:PUC} demonstrates that we can give an alternative formulation of  Definition~\ref{def:sing_sol}  by replacing (1)  and (2) with the following. 
	\begin{itemize}
		\item[(1)']  For any sequence $\tilde{\mu}_n \in \mathcal{C}_b^\infty$, converging weakly to $\mu$ and satisfying \eqref{eq:limsup_M}, it follows that $\Big(\int_0^t \mu_n(X_s)ds\Big)_{t \geq 0}$ converges in probability,
		  uniformly in  $t$ over finite intervals to $A$.
		\item[(2)'] For any $(\tilde{\mu}_n)_{n \geq 0}$ from (1)' there exists a subsequence $\{n_k\}$ such that $\sup_k\int_0^t|\tilde{\mu}_{n_k}(X_s)|ds<\infty$ almost surely for each $t>0$.
	\end{itemize}
	Then, by Theorem~\ref{th:sde_exist}, Lemma~\ref{cor:PUC} and Remark~\ref{rema:molli}, $X$ is a solution to \eqref{eq:singularSDE}
		 in the sense of Definition~\ref{def:sing_sol} if and only if it is a solution to \eqref{eq:singularSDE} in the sense of Definition~\ref{def:sing_sol} with (1), (2) replaced by (1)', (2)'.
\end{rema}
\subsection{SDEs involving the Local Time of the Unknown Process}
Now, we turn our attention to equations of the form \eqref{eq:SDE}. This includes rigorously defining the local time $\gamma$ and 
a notion of solution for \eqref{eq:SDE}, as well as investigating properties of $\gamma$ and proving the equivalence of \eqref{eq:SDE} and \eqref{eq:singularSDE} when $\mu \in K_{\alpha-1}^f$. \par
First, we define the local time of one-dimensional semi-martingales of the form
\begin{equation}\label{eq:semimartingale}
	X_t = x_0+ L_t +A_t, \quad t \geq 0,
\end{equation}
where $L$ is a symmetric $\alpha$-stable process with $\alpha \in (1,2)$ and $A$
is an adapted, right continuous process of finite variation and $x _0\in \R$. The main idea is to extend the result from \cite{Salminen07} to the semimartingale case. 
Before we state the definition, we motivate it with the following formal calculations.
Note that the function $h(x) = |x|$, used in Tanaka's formula for Brownian motion, is the renormalized  zero potential density of the Brownian motion. This means, we seek to replace $h$ with 
 a formal solution to the equation 
 \begin{equation*}
 	\Delta_\alpha v = \delta_0,
 \end{equation*}
 for $\alpha \in (1,2)$.
In the one-dimensional case, the zero potential of $L$ does not exist but it is possible to 
derive a renormalized soluton in the following way.
Let $u_\lambda$ be the $\lambda$-potential density of $L$ for $\lambda >0$. 
It is well known from \cite[p.67]{Bertoin96} that 
\begin{equation}\label{eq:lam_pot_dens_closed_form}
	u_\lambda(x) = \frac{1}{\pi} \int_0^\infty\frac{\cos(\xi x)}{\lambda + \Psi(\xi)}d\xi = \frac{1}{\pi} \int_0^\infty\frac{\cos(\xi x)}{\lambda + |\xi|^\alpha}d\xi,
\end{equation}
where $\Psi$ is defined in \eqref{eq:characteristic}. 
Then the following convergence holds due to \cite[Lemma 3.2]{Salminen07}.
For every $x \in \R$, as $\lambda \rightarrow 0$
\begin{equation}\label{eq:renorm_pot_dens}
	u_\lambda(0) -u_\lambda(x) \rightarrow v(x) =\frac{1}{\pi}\int_0^\infty\frac{1-\cos(\xi x)}{|\xi|^\alpha}d\xi= c_2(\alpha)|x|^{\alpha-1} .
\end{equation} 
Hereby, 
\begin{equation*}
	c_2(\alpha) = (2\pi c_1(\alpha-1))^{-1},
\end{equation*}
with $c_1(\alpha-1)$ given by  \eqref{eq:c_1}.
Consequently, by an application of the Leibniz rule
\begin{equation}\label{eq:deriv_u_lam}
	\frac{\partial}{\partial x}u_\lambda(x) = \frac{1}{\pi}\int_0^\infty \frac{-\xi \sin(\xi x)}{\lambda + |\xi|^\alpha}d\xi.
\end{equation}
It turns out that this yields (see \cite[Propoosition 4.1]{Salminen07})
\begin{equation}\label{eq:tanaka_alpha_stable}
	v(L_t-a) = v(x_0-a) + N_t^a + \gamma_t^a, \quad \forall a\in \R,
\end{equation}
where $N_t^a$ is a martingale and $(\gamma_t^x)_{t \geq 0}^{x \in \R}$ is the occupation density of $L$.  
Note that \eqref{eq:tanaka_alpha_stable} is an analogue of \eqref{eq:tanaka_bm}, the Tanaka formula for Brownian motion, with $v(\cdot)$ replacing the function $h(\cdot) = |\cdot|$.
We now seek to extend \eqref{eq:tanaka_alpha_stable} to the case where we replace $L$ by a semimartingale $X = x_0+L+A$. A formal application of It\^o's formula yields the formal equality
\begin{equation}\label{eq:formal_discussion}
	\int_0^t \delta_0(X_s-x)ds = v(X_t-x)- v(x_0-x) -\int_0^t v'(X_s-x)dA_s-N^x_t,
\end{equation}
where $N^x_t$ is a martingale. This formal discussion suggests that the right hand side of \eqref{eq:formal_discussion} is a good candidate to define a local time that can be identified with the occupation density of $X$.
Now, we give a rigorous definition of the local time that we will use in the remainder of this article. To this end, recall that the Poisson random measure of $L$ is given by $\Pi$ with compensator $\pi$ given by \eqref{eq:intensity}
and L\'evy measure $\tilde{\pi}$ given by \eqref{eq:poissoon_intense}.
\begin{definition}\label{def:loc_time}
	Let $X$ satisfy \eqref{eq:semimartingale} and fix arbitrary $t \geq 0$. Define the set
	\begin{equation}\label{eq:set_loc_time}
		\begin{split}
			D_t:= &\Big\{x \in \R \colon \sup_{0 \leq s \leq t}\Big(v(X_s-x) +\Big|\int_0^sv'(X_\tau-x)dA_\tau\Big|\Big) < \infty , \bbP_{x_0}\text{-a.s.} ,\\
			&\quad \bbE_{x_0}\Big[\sup_{0 \leq s \leq t} \int_0^s\int_\R (v(X_{\tau-}-x+r) -v(X_{\tau-}-x))^2 \tilde{\pi}(dr)d\tau\Big] < \infty\Big\}
		\end{split}
	\end{equation}
	and let
	\begin{equation}\label{eq:local_time_gamma_def}
		\gamma_s^x := \begin{cases}
			v(X_s-x)- v(x_0-x) -\int_0^s v'(X_\tau-x)dA_\tau -N^x_s, \quad  x \in D_t\\
			0, \quad \text{ else },
		\end{cases}
	\end{equation}
	where 
	\begin{equation}\label{eq:martingale}
		N^x_s = \int_0^s \int_\R (v(X_{\tau-}-x+r)-v(X_{\tau-}-x))(\Pi-\pi)(dr,d\tau),
	\end{equation}
	$v$ is defined as in \eqref{eq:renorm_pot_dens} and $v'(x) = c_2(\alpha)(\alpha-1)\sgn(x)|x|^{\alpha-2}$ for
	$x \neq 0$ and $v'(0) = 0$.
	We call $(\gamma^x_s)_{s\in [0,t]}$ the local time of $X$ at $x$ on $[0,t]$ and we sometimes use the notation $\gamma_t^x(X)$ to indicate that $\gamma$ is the local time of the process $X$.
\end{definition}
Throughout this paper the local time of a process $X$ satisfying \eqref{eq:semimartingale} is always given as in Definition~\ref{def:loc_time}. 
\begin{rema}$~$\\
	\begin{itemize}
		\item  The condition
		\begin{equation*}
			\bbE_{x_0}\Big[\sup_{0 \leq s \leq t} \int_0^t\int_\R (v(X_{s-}-x+r) -v(X_{s-}-x))^2 \tilde{\pi}(dr)ds \Big] < \infty
		\end{equation*}
		in \eqref{eq:set_loc_time} ensures that the stochastic integral in \eqref{eq:martingale} is well defined and that $(N^x_s)_{0\leq s \leq t}$
		is a martingale (\cite[Chapter 4]{Applebaum09}).
		\item It is easily seen that the map $(t,x) \mapsto \gamma_t^x$ is measurable $\bbP$-a.s.. First note, that $D_t$ is a measurable set, since all the the terms appearing its definition can easily be approximated continuous functions by replacing $v'$ by $-M\vee(v'(x)\wedge M)$ for all $x \in \R$ and by letting $M >0$ go to infinity. Furthermore, 
		the map $(t,x) \mapsto \mathbbm{1}_{D_T}(x)\gamma_t^x$  for $t \in [0,T]$, $x \in \R$ is $\bbP$-a.s. measurable for the same reason. 
	\end{itemize}
\end{rema}
Our first result is that $(\gamma_t^x)_{t\geq0}^{x\in \R}$ is non-trivial on any compact time interval and for any $x$ outside a set of zero capacity of parameter
 $(\alpha-1)/2$.  Hereby, we rely on the notion of capacity, because set of zero
 capacity of parameter $(\alpha-1)/2$ are of measure zero with respect to the Lebesgue measure and any measure $\mu$ of Kato class $K_{\alpha-1}$. 
 This is of significance, since we integrate $\gamma$ with respect to $\mu$ in \eqref{eq:SDE} and because we want to show that $\gamma$ is an occupation density.
\begin{theorem}\label{th:local_time}
	Let $X$ satisfy \eqref{eq:semimartingale}.
	Then, it holds for every $T > 0$ that 
	\begin{equation}
		\mathrm{Cap}_{(\alpha-1)/2}(D_T^c) = 0.
	\end{equation}
\end{theorem}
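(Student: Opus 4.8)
The goal is to show that the ``bad'' set $D_T^c$, on which the local time $\gamma^x$ cannot be defined via the Tanaka expansion, has zero capacity of order $(\alpha-1)/2$. By Theorem~\ref{th:capacity_condition}, it suffices to show that $\mu(D_T^c) = 0$ for every finite measure $\mu \in K_{\alpha-1}^f$; equivalently, by the dual characterization in Theorem~\ref{th:dual_def}, it is enough to produce, for each compact $K \subseteq D_T^c$, good quantitative bounds ruling out positive capacity. I would work directly with the Kato-class formulation: fix an arbitrary $\mu \in K_{\alpha-1}^f$ and prove $\int_{\R} \mathbbm{1}_{D_T^c}(x)\,\mu(dx) = 0$, which then gives $\mathrm{Cap}_{(\alpha-1)/2}(D_T^c)=0$.

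The plan is to show that all three quantities defining $D_T$ in \eqref{eq:set_loc_time} are finite for $\mu$-a.e.\ $x$, by showing each has finite $\mu$-integral (or is $\mu$-a.e.\ finite). First, for the term $\sup_{0\le s\le T} v(X_s - x)$: since $v(y) = c_2(\alpha)|y|^{\alpha-1}$ and $\alpha - 1 \in (0,1)$, $v$ is bounded on compacts and grows sublinearly, so $\sup_{0\le s\le T} v(X_s-x)$ is finite $\bbP_{x_0}$-a.s.\ for \emph{every} $x$, using that $X$ is c\`adl\`ag hence a.s.\ bounded on $[0,T]$; this term poses no issue. Second, for the martingale-bracket term $\bbE_{x_0}\big[\sup_{0\le s\le T}\int_0^s\!\int_\R (v(X_{\tau-}-x+r)-v(X_{\tau-}-x))^2\tilde\pi(dr)\,d\tau\big]$: I would integrate in $x$ against $\mu$, use Tonelli, and bound $\int_\R |v(y+r)-v(y)|^2 \tilde\pi(dr)$ uniformly for $y$ in a compact set (the integrand is $O(|r|^{2(\alpha-1)})$ near $0$ against $|r|^{-1-\alpha}dr$, integrable since $2(\alpha-1) - 1 - \alpha = \alpha - 3 < -1$ requires care — actually $2(\alpha-1)-(1+\alpha) = \alpha-3$, and near zero we need the exponent $> -1$, i.e.\ $\alpha - 3 > -1$ fails, so one must use the mean-value/second-difference structure more carefully, bounding by $|r|^2$ times $\sup|v''|$ on a neighborhood for small $r$ and by a constant for large $r$). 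This gives a bound $C(T)\,\mu(\R) < \infty$, so this term is $\mu$-a.e.\ finite. Third, and this is the crux, the term $\sup_{0\le s\le T}\big|\int_0^s v'(X_\tau - x)\,dA_\tau\big|$: since $|v'(y)| = c_2(\alpha)(\alpha-1)|y|^{\alpha-2}$ blows up as $y\to 0$ and $\alpha - 2 \in (-1,0)$, I would bound $\int_0^T |v'(X_\tau - x)|\,|dA|_\tau$ and integrate in $x$ against $\mu$, applying Tonelli to get $\int_0^T \big(\int_\R |X_\tau - x|^{\alpha-2}\,\mu(dx)\big)|dA|_\tau$; the inner integral is controlled by the Kato condition $\sup_x \int_{B(x,r)} |x-y|^{-(1-(\alpha-1))}\mu(dy) = \sup_x\int_{B(x,r)}|x-y|^{\alpha-2}\mu(dy) \to 0$, which is exactly $M_\mu^{\alpha-1}(r)$ with $d=1$, $\eta=\alpha-1$, giving $\int_\R |z-x|^{\alpha-2}\mu(dx) \le C(\mu) < \infty$ uniformly in $z$. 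Hence $\bbE_{x_0}\int_\R \mathbbm{1}(\text{this sup}=\infty)\,\mu(dx)$ — more precisely the $\mu$-$\bbP$-expectation of the integral — is bounded by $C(\mu)\,\bbE_{x_0}[|A|_T]$.

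The one genuine obstacle is ensuring $\bbE_{x_0}[|A|_T] < \infty$, i.e.\ that the finite-variation part $A$ of the semimartingale \eqref{eq:semimartingale} has integrable total variation on $[0,T]$ — the statement only assumes $A$ is adapted, right-continuous, of finite variation path-by-path. If this is not given, one localizes: introduce stopping times $\tau_m = \inf\{t: |A|_t \ge m\}\wedge T$, prove $\mathrm{Cap}_{(\alpha-1)/2}(D_{\tau_m}^c) = 0$ as above with the a.s.-finite bound $|A|_{\tau_m}\le m$ in place of integrability (running the Fubini argument under $\bbE_{x_0}$ with the bound $m$), and then let $m\to\infty$ using $\tau_m \uparrow T$ $\bbP_{x_0}$-a.s., together with monotonicity $D_{T} \subseteq D_{\tau_m}$ for $\tau_m \le T$ (so $D_T^c \supseteq D_{\tau_m}^c$, which goes the wrong way) — more carefully, one writes $D_T^c \subseteq \bigcup_m D_{\tau_m}^c \cup \{|A|_T = \infty\}$ and uses countable subadditivity of capacity plus $\mathbbm{1}(|A|_T=\infty)$ having zero $\mu$-$\bbP$-measure. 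Assembling these pieces, $\int_\R \mathbbm{1}_{D_T^c}(x)\mu(dx) = 0$ for all $\mu \in K_{\alpha-1}^f$, and Theorem~\ref{th:capacity_condition} with $s = (\alpha-1)/2 \in (0,1/2)$ yields $\mathrm{Cap}_{(\alpha-1)/2}(D_T^c) = 0$.
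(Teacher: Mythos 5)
Your overall strategy is the same as the paper's: fix an arbitrary nonnegative $\mu\in K_{\alpha-1}^f$, verify the defining conditions of $D_T$ hold for $\mu$-a.e.\ $x$ by a Fubini/Tonelli argument against $\mu$, and invoke Theorem~\ref{th:capacity_condition} with $s=(\alpha-1)/2$. Your treatment of the drift term is exactly the paper's: the kernel $|X_\tau-x|^{\alpha-2}$ integrated against $\mu$ is bounded uniformly in the space variable by $M_\mu^{\alpha-1}(R)+R^{\alpha-2}\mu(\R)$, and the pathwise finite total variation of $A$ does the rest. (The paper in fact argues this term pathwise, so no localization in $m$ is needed; only a.s.\ finiteness of $|A|_T$ enters, and your stopping-time detour — which also conflates a subset of $\R$ with the event $\{|A|_T=\infty\}\subset\Omega$ — can simply be dropped.)

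The genuine problem is your handling of the martingale-bracket term. The claim that $\int_\R \big(v(y+r)-v(y)\big)^2\tilde{\pi}(dr)$ is uniformly bounded for $y$ in a compact set is false: by the scaling substitution $r=|y|z$ one gets exactly $c(\alpha)\,|y|^{\alpha-2}\int_\R\big(|1+z|^{\alpha-1}-1\big)^2|z|^{-1-\alpha}dz$, which diverges as $y\to 0$ (and $y=X_{\tau-}-x$ does approach $0$ for relevant $x$). Your proposed repair via $|r|^2\sup|v''|$ on a neighborhood fails for the same reason, since $v''(y)\propto |y|^{\alpha-3}$ is not locally bounded at the origin; consequently the asserted bound $C(T)\,\mu(\R)$ does not follow. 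The correct step — and the one the paper uses — is precisely this scaling identity: it converts the bracket term into the kernel $|X_t-x|^{\alpha-2}$, which is then integrated against $\mu$ and controlled by the \emph{same} Kato-class bound you already deploy for the drift term, giving $\bbE_{x_0}\big[\int_0^T\int_\R|X_t-x|^{\alpha-2}\mu(dx)\,dt\big]\le T\big(M_\mu^{\alpha-1}(R)+R^{\alpha-2}\mu(\R)\big)<\infty$ (note no expectation of $|A|_T$ is involved here at all). With that substitution in place of the uniform-boundedness claim, your argument closes and coincides with the paper's proof.
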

An important question is, whether the local time $\gamma$ according to Definition~\ref{def:loc_time} is 
an occupation density. The measure $\mu$ may be singular with respect to the Lebesgue measure, which means that we possibly need to identify $\gamma$ with the occupation density on sets of Lebesgue measure zero.
  For this reason it is of interest to investigate when 
solutions to \eqref{eq:singularSDE} admit a continuous occupation density.
\begin{prop}\label{lem:density}
	Let $X$ be a solution of \eqref{eq:singularSDE}, where $\mu\in K_{\eta}^f$  for $\eta \in (0, \alpha-1)$. Then there exists a jointly continuous occupation density $(\ell_t^x)_{t \geq 0}^{x\in \R}$
	of $X$.
\end{prop}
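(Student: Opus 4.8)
The plan is to construct the occupation density from sharp estimates on the transition density of $X$ and then to upgrade it to a jointly continuous field by the classical moment method for local times. By Theorem~\ref{th:sde_exist} the process $X$ is strong Markov; moreover, applying the Zvonkin transformation $\phi$ from the proof of that theorem (a $\mathcal{C}^{1+\theta}$ homeomorphism of $\R$, akin to the one used in \cite{Athreya20}, with $\phi'$ bounded above and bounded away from $0$) turns $X$ into a process $Y=\phi(X)$ driven by $L$ with bounded, H\"older continuous coefficients, to which the parametrix heat-kernel estimates of \cite{Knopova17} apply. Transferring these estimates back through $\phi^{-1}$ yields a transition density $p_t^X(x,y)$ of $X$ satisfying, for $t\in(0,T]$,
\[
0<p_t^X(x,y)\le C\,t^{-1/\alpha},\qquad \abs{p_t^X(x,y)-p_t^X(x,y')}\le C\,t^{-(1+\theta)/\alpha}\,\abs{y-y'}^{\theta},
\]
for some $\theta\in(0,\alpha-1)$; here $\theta$ is strictly positive precisely because the condition $\eta<\alpha-1$ makes the drift subcritical, and one has $\int_0^T t^{-1/\alpha}\d t<\infty$ and $\int_0^T t^{-(1+\theta)/\alpha}\d t<\infty$ since $\alpha>1$ and $\theta<\alpha-1$. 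These are the only properties of $p^X$ that the rest of the argument uses.

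Next I would construct $\ell$ itself. For mollifiers $\rho_\varepsilon\to\delta_0$ put $\ell_t^{y,\varepsilon}=\int_0^t\rho_\varepsilon(X_s-y)\d s$. Using the Chapman--Kolmogorov equation one expresses $\bbE_{x_0}[\ell_t^{y,\varepsilon}]$ and $\bbE_{x_0}[\ell_t^{y,\varepsilon}\ell_t^{z,\varepsilon'}]$ in terms of $p^X$, and the bounds of the previous step show that $(\ell_t^{y,\varepsilon})_\varepsilon$ is Cauchy in $L^2(\bbP_{x_0})$; its limit $\ell_t^y$ satisfies $\int_B\ell_t^y\d y=\eta_t^m(B)$ a.s.\ for every Borel $B$, so the occupation measure of $X$ is a.s.\ absolutely continuous with density $\ell$. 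Passing to the limit in the second-moment formula and inserting the spatial H\"older bound together with the integrability of the time singularities gives, for $y,z$ in a fixed bounded set with $\abs{y-z}\le1$,
\[
\bbE_{x_0}\bigl[(\ell_t^y-\ell_t^z)^2\bigr]\le C_T\,\abs{y-z}^{\theta}.
\]
The same computation carried out with $2n$ factors, after ordering the $2n$ time variables and chaining the H\"older estimate along the consecutive gaps, yields $\bbE_{x_0}[(\ell_t^y-\ell_t^z)^{2n}]\le C_{n,T}\,\abs{y-z}^{n\theta}$ for every $n\in\N$; and the Markov property together with $\int_0^h p_r^X(x,x)\d r\le C\,h^{(\alpha-1)/\alpha}$ gives the companion estimate $\bbE_{x_0}[(\ell_t^y-\ell_s^y)^{2n}]\le C_{n,T}\,\abs{t-s}^{2n(\alpha-1)/\alpha}$.

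Choosing $n$ large enough that $n\theta>2$ and $2n(\alpha-1)/\alpha>2$ and combining the two moment bounds, the two-parameter Kolmogorov--Chentsov criterion produces a modification of $(t,y)\mapsto\ell_t^y$ that is jointly continuous on $[0,T]\times[-R,R]$ for every $R>0$, hence on $[0,\infty)\times\R$. To see that this modification really is an occupation density for all $t$ simultaneously, note that $t\mapsto\eta_t^m(B)$ is $1$-Lipschitz, that $t\mapsto\int_B\ell_t^y\d y$ is continuous (the range of $X$ on $[0,t]$ is a.s.\ compact, so the integral is effectively over a bounded set and joint continuity applies), and that the two coincide for each fixed $t$ and each $B$ in a countable generating subclass of $\mathcal{B}$; hence they coincide for all $t\ge0$ and all Borel $B$ almost surely, and $\ell$ is an occupation density of $X$.

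\textbf{The main obstacle} is the transition-density step: obtaining the spatial H\"older regularity of $p_t^X$ with an exponent $\theta\in(0,\alpha-1)$, together with the matching integrable time singularity, in the presence of the singular, measure-valued drift. This is exactly where the hypothesis $\eta<\alpha-1$ (rather than merely $\eta\le\alpha-1$) enters, as it is what makes the Zvonkin-transformed equation have coefficients regular enough for the heat-kernel estimates of \cite{Knopova17} to be available. Once those estimates are secured, the construction of $\ell$, the moment computations, and the Kolmogorov argument are the standard route to joint continuity of local times, and the remaining work is routine.
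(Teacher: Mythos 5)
Your route (Zvonkin transform, then parametrix heat-kernel bounds for the transformed process, then the moment method and a two-parameter Kolmogorov--Chentsov argument) is genuinely different from the paper's, and it is a recognizable classical strategy; but it has a real gap exactly at the point you flag and then treat as quotable. The result of \cite{Knopova17} used in this paper (their Theorem 2.4, whose hypotheses are verified in Lemma~\ref{lemm:weak_unique}) is a well-posedness statement for the martingale problem; it is not established, here or in your sketch, that it delivers the quantitative bound $|p_t^X(x,y)-p_t^X(x,y')|\le Ct^{-(1+\theta)/\alpha}|y-y'|^{\theta}$ in the \emph{forward} variable with an integrable time singularity, nor is the transfer through $\phi^{-1}$ spelled out (it brings in the density factor $\phi'(y)$, which is only $\mathcal{C}^{\epsilon/2}$). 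Moreover, your moment computation needs more than the estimate you state: after ordering the $2n$ time points and telescoping over the choices $y$ or $z$, the increment $|y-z|$ appears alternately in the backward and the forward slot of the kernels $p_{\Delta_i}(x_{i-1},x_i)$, so you need Hölder control in \emph{both} space variables (or a symmetry/duality argument, which is unavailable since $X$ is not symmetric). Until those kernel estimates are actually proved, the $L^2$-construction of $\ell$, the bound $\bbE_{x_0}[(\ell_t^y-\ell_t^z)^{2n}]\le C|y-z|^{n\theta}$, and the Kolmogorov step all rest on an unproven core.

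For comparison, the paper sidesteps time-dependent heat-kernel regularity entirely. It works with the $\lambda$-potential density $w_\lambda^\mu=\sum_k u_\lambda\ast(\frac{\partial}{\partial x}u_\lambda\,\mu)^{\ast k}$ (Lemma~\ref{lem:pot_mu_dens}), proves a Dini-type modulus of continuity of $w_\lambda^\mu$ near the diagonal in each variable separately (Lemma~\ref{lem:regularity_potential}), using only the explicit bounds \eqref{eq:lamPot_est_bod}, \eqref{eq:lamPot_grad} and the Kato condition on $\mu$, and then invokes Boylan's criterion (Theorem~\ref{th:density_exist_boyloan}), which directly produces a jointly continuous occupation density. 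The input there is much weaker than what your plan requires: a modulus $g$ for the time-integrated kernel with $\sum_n n\,g(2^{-n})^{1/2}<\infty$, rather than pointwise-in-time Hölder bounds with singular prefactors, and no moment or chaining machinery at all. If you want to rescue your approach, the substantive task is precisely to prove the two-variable Hölder heat-kernel estimates for the transformed equation (this is indeed where $\eta<\alpha-1$ enters); alternatively, running your argument at the level of the $\lambda$-potential instead of the heat kernel essentially reproduces the paper's Boylan-based proof.
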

In the following $(\ell_t^x)_{t \geq 0}^{x\in\R}$ always denotes the continuous modification of the occupation density, if it exists.
\begin{prop}\label{lem: convergence}
	Let $X_t = x_0 + L_t + A_t$, $t \geq 0$ where $A$ is an adapted, right-continuous process of finite variation and let $\nu \in K_{\alpha-1}^f$. Then it follows   for any $\epsilon > 0$ that
	\begin{equation}
		\lim_{n \rightarrow \infty} \bbP_{x_0}\Big(\sup_{0 \leq s \leq t}\Big| \int_\R\int_0^s \varphi_n(X_\tau-x)d\tau\mu(dx)- \int_\R  \gamma_s^{x} \nu(dx)\Big|> \epsilon\Big)  = 0,
	\end{equation} 
	where $\phi_n$ is defined as in Definition~\ref{def:mollifier} and $\gamma$ is the local time as defined in Definition~\ref{def:loc_time}.
\end{prop}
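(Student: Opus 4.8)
The plan is to deduce the statement from It\^o's formula applied to a mollified renormalized potential and then estimate four explicit error terms, with the Kato condition on $\nu$ doing the essential work. Set $v_n:=v\ast\varphi_n$; since $\Delta_\alpha v=\delta_0$ we get $\Delta_\alpha v_n=\varphi_n$, and $v_n\in\mathcal{C}^\infty$ has bounded first two derivatives and polynomial growth. Applying It\^o's formula to $v_n(X_s-x)$ — the same computation underlying the Tanaka-type identity of Definition~\ref{def:loc_time}, but now for a $C^2$ function, so that no renormalization is needed — gives
\[
	\int_0^s\varphi_n(X_\tau-x)\,d\tau=v_n(X_s-x)-v_n(x_0-x)-\int_0^s v_n'(X_{\tau-}-x)\,dA_\tau-N_s^{n,x},
\]
where $N^{n,x}$ is the compensated Poisson integral of $v_n$. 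By Theorem~\ref{th:local_time} and Theorem~\ref{th:capacity_condition} we have $|\nu|(D_t^c)=0$, so on a $|\nu|$-full set of $x$ the defining formulas \eqref{eq:local_time_gamma_def}--\eqref{eq:martingale} for $\gamma^x_s$ hold; subtracting them from the display and integrating against $\nu$ writes $\int_\R\int_0^s\varphi_n(X_\tau-x)\,d\tau\,\nu(dx)-\int_\R\gamma^x_s\,\nu(dx)$ as a sum of: two boundary terms $\int_\R(v_n-v)(X_s-x)\,\nu(dx)$ and $-\int_\R(v_n-v)(x_0-x)\,\nu(dx)$; a drift term $-\int_0^s\big((v_n'\ast\nu)-(v'\ast\nu)\big)(X_{\tau-})\,dA_\tau$; and a martingale term $-Q^n_s$, where $Q^n_s:=\int_0^s\int_\R\big((\psi_n\ast\nu)(X_{\tau-}+r)-(\psi_n\ast\nu)(X_{\tau-})\big)(\Pi-\pi)(dr,d\tau)$ with $\psi_n:=v_n-v$ and $(f\ast\nu)(y):=\int_\R f(y-x)\,\nu(dx)$.

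The boundary terms are harmless: since $t\mapsto t^{\alpha-1}$ is globally $(\alpha-1)$-H\"older on $[0,\infty)$ and $\varphi_n$ is supported in $[-2^{-n},2^{-n}]$, one has $\|v_n-v\|_\infty\le c_2(\alpha)2^{-n(\alpha-1)}$, so both terms are at most $c_2(\alpha)2^{-n(\alpha-1)}|\nu|(\R)$, uniformly in $s$. The crux is the drift and martingale terms, and both reduce to one estimate. Here I expect the main obstacle: $v_n'-v'$ is \emph{not} small in sup norm — it is of order $|y|^{\alpha-2}$ on a window of width $\sim 2^{-n}$ around the origin — and smallness is recovered only after pairing against $\nu$, which is exactly the role of $K_{\alpha-1}$. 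I would prove the elementary pointwise bound $|v_n'(y)-v'(y)|\le C\big(|y|^{\alpha-2}\mathbbm{1}_{|y|<2^{-n+1}}+2^{-n}|y|^{\alpha-3}\mathbbm{1}_{|y|\ge 2^{-n+1}}\big)$, integrate it against $|\nu|$, bound the first piece by $M^{\alpha-1}_\nu(2^{-n+1})$ and the second — via a dyadic decomposition and $|\nu|(B(z,\rho))\le\rho^{2-\alpha}M^{\alpha-1}_\nu(\rho)$ — by $C\sum_{k\ge1}2^{-k}M^{\alpha-1}_\nu(2^{k-n+1})$, and let $n\to\infty$ using that $M^{\alpha-1}_\nu$ is bounded on $(0,\infty)$ and vanishes at $0^+$. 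This yields $\|v_n'\ast\nu-v'\ast\nu\|_\infty\le\rho_n\to 0$; since $v'\ast\nu$ is itself a bounded function (split $\{|y-x|\le1\}$ from $\{|y-x|>1\}$ and use the Kato bound), this is exactly the convergence needed, and writing $(\psi_n\ast\nu)(y+r)-(\psi_n\ast\nu)(y)=\int_0^r(v_n'\ast\nu-v'\ast\nu)(y+u)\,du$ it also gives $\big|(\psi_n\ast\nu)(y+r)-(\psi_n\ast\nu)(y)\big|\le\min\big(\rho_n|r|,\,2c_2(\alpha)2^{-n(\alpha-1)}|\nu|(\R)\big)$, uniformly in $y$.

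With this in hand the three remaining terms are finished quickly. The drift term is bounded by $\|v_n'\ast\nu-v'\ast\nu\|_\infty\,|dA|([0,t])\le\rho_n|dA|([0,t])\to0$ almost surely, since $A$ has finite variation. For the martingale term, the modulus bound above and $\tilde\pi(dr)=c_1(\alpha)|r|^{-1-\alpha}dr$ give $\sup_y\int_\R\big((\psi_n\ast\nu)(y+r)-(\psi_n\ast\nu)(y)\big)^2\tilde\pi(dr)\le C(\rho_n^2+2^{-2n(\alpha-1)})=:\kappa_n\to0$ (the exponents $1-\alpha>-1$ and $-1-\alpha<-1$ make the small- and large-$r$ integrals converge, respectively), so by the It\^o isometry and Doob's maximal inequality $\bbE_{x_0}\big[\sup_{0\le s\le t}|Q^n_s|^2\big]\le4t\kappa_n\to0$, whence $\sup_{0\le s\le t}|Q^n_s|\to0$ in $L^2$ and hence in probability. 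Adding the four contributions gives $\sup_{0\le s\le t}$ of the quantity in the statement $\to 0$ in probability, which is the claim.

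Finally I would dispatch the routine points. The Fubini exchanges used above are legitimate because $\int_\R|v_n'(y-x)|\,|\nu|(dx)$ and $\int_\R|v'(y-x)|\,|\nu|(dx)$ are bounded uniformly in $y$ (the $K_{\alpha-1}$ bound again), and to handle the unbounded variation of $A$ one localizes by the stopping times $T_m:=\inf\{\tau:|A_\tau|+|dA|([0,\tau])\ge m\}$, which increase to $+\infty$; since convergence in probability of $\sup_{0\le s\le t\wedge T_m}|\cdot|$ for every $m$ implies convergence of $\sup_{0\le s\le t}|\cdot|$, this suffices. When $A$ is continuous — as in the SDE application — $X_{\tau-}$ may be replaced by $X_\tau$ throughout and no extra jump terms appear in It\^o's formula; in the general case the jumps of $A$ contribute only the countably many additional Stieltjes terms already subsumed in the $dA$-integral above, handled by the same bounds.
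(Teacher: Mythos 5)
Your overall architecture is the same as the paper's: establish an It\^o--Tanaka identity for the mollified kernel $v_n$, subtract the defining formula \eqref{eq:local_time_gamma_def} for $\gamma^x$ (valid for $|\nu|$-a.e.\ $x$ by Theorem~\ref{th:local_time} and Theorem~\ref{th:capacity_condition}), and kill the boundary, drift and martingale errors using the Kato condition. Where you genuinely differ is in how the errors are controlled: you prove uniform quantitative bounds, namely $\|v_n-v\|_\infty\lesssim 2^{-n(\alpha-1)}$ and, via the pointwise bound on $v_n'-v'$ plus a dyadic decomposition with $|\nu|(B(y,\rho))\le \rho^{2-\alpha}M^{\alpha-1}_\nu(\rho)$, the sup-norm convergence $\|v_n'\ast\nu-v'\ast\nu\|_\infty\to0$; the paper instead proves almost-sure convergence of the drift error by a near/far splitting (Lemma~\ref{lem:drift}) and $L^2$ convergence of the integrated martingales by Cauchy--Schwarz in $\nu$ plus Doob and It\^o isometry applied for each $x$ separately (Lemma~\ref{lem:l2bound}). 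Your estimates are correct and in some respects cleaner and stronger (the boundary and drift terms are handled uniformly in one line each), and the computation of $\kappa_n$ for the jump kernel is sound.

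Two steps, however, are asserted rather than proved, and the first is exactly where the paper invests most of its effort. You write ``since $\Delta_\alpha v=\delta_0$ we get $\Delta_\alpha v_n=\varphi_n$'' and then apply It\^o's formula to $v_n$. Within this paper $v$ is only \emph{defined} as the renormalized limit \eqref{eq:renorm_pot_dens} of $\lambda$-potential densities, so the identity $\Delta_\alpha v_n=\varphi_n$ (with the correct constant $c_2(\alpha)$), together with the fact that the compensated-jump term in your It\^o expansion is a genuine (local) martingale, is precisely the content of Lemmas~\ref{lem:ulam_conv}--\ref{lem:Ito_tan_func}: the paper applies It\^o to $u_{\lambda,n}$, where $-\Delta_\alpha u_{\lambda,n}+\lambda u_{\lambda,n}=\varphi_n$ holds rigorously, and passes to the limit $\lambda\to0$ term by term with a localization in the total variation of $A$. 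Your proof needs either this argument or an independent verification of the Riesz-kernel identity for $c_2(\alpha)|x|^{\alpha-1}$; as written it is a gap, albeit a fillable one. Second, rewriting $\int_\R\bigl(N^{n,x}_s-N^x_s\bigr)\nu(dx)$ as the single compensated Poisson integral $Q^n_s$ is a \emph{stochastic} Fubini interchange; the Fubini justification you give only covers the ordinary interchange in the drift term. The interchange is standard and legitimate under the uniform square bounds you establish, but it should be stated and cited (or avoided altogether by the paper's route: Cauchy--Schwarz with respect to $\nu$, then Doob and It\^o isometry for each fixed $x$). With these two points repaired, your argument gives the proposition, and in fact with more explicit rates for the boundary and drift contributions than the paper's proof.
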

This proposition immediately yields the following corollary, by setting $\mu(dx) = f(x)dx$ for $f \in \mathcal{C}_c$ and noting that $\int_\R\int_0^t\phi_n(X_s-x)dsf(x)dx = \int_0^t f_n(X_s)ds$ converges almost surely to $\int_0^tf(X_s)ds$. 
\begin{cor}\label{cor:occ_dens_lev}
	Let $X_t = x_0 +L_t+A_t$, $t \geq 0$ where $A$ is an adapted, right-continuous process of finite variation. Let $\gamma = \gamma(X)$ be the local time of $X$.  Then  
	\begin{equation*}
		\int_\R\gamma_t^xf(x)dx = \int_0^tf(X_s)ds, \quad t   \geq 0, \text{ a.s.},
	\end{equation*}
	for all $f \in \mathcal{C}_c$. That is, $(\gamma_t^x)_{t \geq 0}^{x\in \R}$ is an occupation density.
\end{cor}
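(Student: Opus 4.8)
The plan is to read the identity off directly from Proposition~\ref{lem: convergence}. Fix $f \in \mathcal{C}_c$ and apply that proposition with $\mu(dx) = \nu(dx) = f(x)\,dx$. This is an admissible choice: $f\,dx$ is a finite signed measure, and since $d = 1$ and $\alpha - 1 \in (0,1)$,
\[
	M^{\alpha-1}_{f\,dx}(r) = \sup_{x\in\R}\int_{|x-y|<r}\frac{|f(y)|}{|x-y|^{\,2-\alpha}}\,dy \le \|f\|_\infty\int_{|z|<r}|z|^{\alpha-2}\,dz = \frac{2\|f\|_\infty}{\alpha-1}\,r^{\alpha-1} \to 0
\]
as $r \downarrow 0$, so $f\,dx \in K^f_{\alpha-1}$ and Proposition~\ref{lem: convergence} applies with $\gamma = \gamma(X)$.

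Next I would identify the mollified term. By the definition of convolution (with mollifier $\varphi_n$ from Definition~\ref{def:mollifier}) and Fubini's theorem --- legitimate because $(\tau,x)\mapsto\varphi_n(X_\tau-x)f(x)$ is bounded and, as a function of $x$, supported in $\supp f$, hence integrable on $[0,s]\times\R$ --- we get, for every $s\ge 0$ and a.e.\ $\omega$,
\[
	\int_\R\int_0^s \varphi_n(X_\tau-x)\,d\tau\,f(x)\,dx = \int_0^s\Big(\int_\R \varphi_n(X_\tau-x)f(x)\,dx\Big)d\tau = \int_0^s f_n(X_\tau)\,d\tau ,
\]
where $f_n$ is the mollification of $f$ from Definition~\ref{def:mollifier}. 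Since $f$ is uniformly continuous, $\|f_n-f\|_\infty\to 0$, so for each fixed $t>0$,
\[
	\sup_{0\le s\le t}\Big|\int_0^s f_n(X_\tau)\,d\tau-\int_0^s f(X_\tau)\,d\tau\Big| \le t\,\|f_n-f\|_\infty \to 0
\]
deterministically as $n\to\infty$.

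Combining the two displays with Proposition~\ref{lem: convergence} --- which says $\int_0^s f_n(X_\tau)\,d\tau$ converges to $\int_\R \gamma_s^x f(x)\,dx$ in probability, uniformly in $s\in[0,t]$ --- and using uniqueness of limits in probability, I would conclude that for each fixed $t>0$,
\[
	\int_\R \gamma_s^x f(x)\,dx = \int_0^s f(X_\tau)\,d\tau \qquad\text{for all } s\in[0,t],\ \bbP_{x_0}\text{-a.s.}
\]
Letting $t$ range over $\N$ and intersecting the associated null sets yields the claimed identity for all $t\ge 0$ simultaneously. For the final assertion, fix a good $\omega$ and $t\ge 0$: since $X$ is c\`adl\`ag, $\{X_s(\omega):0\le s\le t\}$ is bounded, so both $B\mapsto\int_B\gamma_t^x\,dx$ and the occupation measure $B\mapsto m(\{s\le t:X_s(\omega)\in B\})$ are finite Borel measures with support in one common bounded set; since they integrate every $f\in\mathcal{C}_c$ to the same value, they coincide, i.e.\ $\gamma$ is the occupation density of $X$ (and the occupation identity then extends to all bounded Borel $f$ by a monotone-class argument).

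I do not expect a genuine obstacle here: the substance is entirely contained in Proposition~\ref{lem: convergence}. The only points needing a little care are verifying $f\,dx\in K^f_{\alpha-1}$ so that the proposition is applicable, justifying the Fubini exchange, and bookkeeping the dependence of the exceptional null set on $t$.
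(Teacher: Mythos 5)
Your proposal is correct and follows essentially the same route as the paper, which obtains the corollary directly from Proposition~\ref{lem: convergence} with $\nu(dx)=f(x)\,dx$ and the almost sure convergence of $\int_0^t f_n(X_s)\,ds$ to $\int_0^t f(X_s)\,ds$. You merely spell out details the paper leaves implicit (the check that $f\,dx\in K^f_{\alpha-1}$, the Fubini exchange, the null-set bookkeeping in $t$, and the measure-identification step), all of which are fine.
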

Now, we are ready to rigorously define what we understand under a solution of \eqref{eq:SDE}.

\begin{definition}\label{def:SDEILT}
	Let $\mu$ be a signed measure in $K_{\alpha-1}$, $\alpha \in (1,2) $ and let $(L_t)_{t \geq 0}$ be a symmetric $\alpha$-stable process in one dimension. 
	A c\`adl\`ag process $(X_t)_{t \geq 0}$ is a solution to \eqref{eq:SDE} with initial condition $x_0$, if there exists a continuous process $(A_t)_{t \geq 0}$ such that almost surely
	\begin{equation*}
		X_t = x_0+L_t+A_t, \quad \forall t \geq 0,
	\end{equation*}
	where
	\begin{enumerate}
		\item $(A_t)_{t \geq 0}$ is adapted, right continuous, and of finite variation such that 
		\begin{equation*}
			A_t =  \int_\R \gamma_t^x\mu(dx), \quad  \bbP\text{-a.s.},
		\end{equation*}
		for all $t \geq 0$, where $\gamma$ is the local time of $X$;
		\item under $\bbP$, $L_0 = 0$ and $(L_t)_{t \geq 0}$ is a symmetric $\alpha$-stable process on $(\Omega, (\mathcal{F}_t)_{t \geq 0})$.
	\end{enumerate}
\end{definition}
The notions of weak and strong existence as well as weak uniqueness, pathwise uniqueness and arong uniqueness apply to Definition~\ref{def:SDEILT} in the same way as to Definition~\ref{def:sing_sol}.
\begin{theorem}\label{th:sdeilt}
	Let $L$ be a one-dimensional, symmetric $\alpha$-stable process with $\alpha \in (1,2)$
	and $\mu \in K_{\alpha-1}^f$. Then the following holds.
	\begin{enumerate}[label = (\roman*)]
		\item\label{itm:equiv} 	Every  weak solution to \eqref{eq:SDE} is also a weak solution to \eqref{eq:singularSDE} and vice versa.
		\item \label{itm:density} If $\mu \in K_{\eta}$ with $\eta \in (0,\alpha-1)$, then
		the solution of \eqref{eq:SDE} has a jointly continuous occupation density $(\ell_t^x)_{t \geq 0}^{x\in \R}$
		such that $\gamma_t^x = \ell_t^x$ $\bbP$-a.s. for all $t \geq 0$  and all  $x \in \R \backslash N$  for some set 
		$N \subset \R$  with $\mathrm{Cap}_{(\alpha-1)/2}(N) = 0$.
	\end{enumerate}
\end{theorem}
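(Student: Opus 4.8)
The plan is to prove the two assertions in order, both leveraging Proposition~\ref{lem: convergence} as the bridge between the two notions of drift.

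For part \ref{itm:equiv}, I would argue as follows. Suppose $(X,L)$ is a weak solution to \eqref{eq:singularSDE} with $\mu\in K_{\alpha-1}^f$. By Theorem~\ref{th:sde_exist}\ref{itm:weakex}, $X = x_0+L+A$ with $A$ a continuous, adapted process of finite variation, so $X$ has the form \eqref{eq:semimartingale} and its local time $\gamma=\gamma(X)$ is well defined via Definition~\ref{def:loc_time}. By Definition~\ref{def:sing_sol}, $\int_0^t\mu_n(X_s)\,ds = \int_\R\int_0^t\varphi_n(X_s-x)\,ds\,\mu(dx)$ converges to $A_t$ in probability, uniformly on finite intervals. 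On the other hand, Proposition~\ref{lem: convergence} (applied with $\nu=\mu$) gives that this same sequence converges uniformly in probability to $\int_\R\gamma_t^x\mu(dx)$. By uniqueness of limits in probability, $A_t = \int_\R\gamma_t^x\mu(dx)$ for all $t\geq0$ a.s. (first for each fixed $t$, then for all $t$ simultaneously using continuity of both sides in $t$), which is exactly condition (1) in Definition~\ref{def:SDEILT}; condition (2) is inherited from the stable-process requirement in Definition~\ref{def:sing_sol}. Hence $(X,L)$ solves \eqref{eq:SDE}. Conversely, if $(X,L)$ solves \eqref{eq:SDE}, then $X$ again has the form \eqref{eq:semimartingale}, so Proposition~\ref{lem: convergence} applies and yields that $\int_0^t\mu_n(X_s)\,ds$ converges uniformly in probability to $\int_\R\gamma_t^x\mu(dx) = A_t$; this verifies (1) of Definition~\ref{def:sing_sol}, while (2) — the existence of a subsequence with $\sup_k\int_0^t|\mu_{n_k}(X_s)|\,ds<\infty$ — should follow from decomposing $\mu=\mu^+-\mu^-$, applying Proposition~\ref{lem: convergence} separately to $\mu^+,\mu^-\in K_{\alpha-1}^f$, and extracting an a.s.-convergent subsequence of $\int_\R\int_0^t\varphi_n(X_s-x)\,ds\,\mu^\pm(dx)$.

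For part \ref{itm:density}, assume now $\mu\in K_\eta^f$ with $\eta\in(0,\alpha-1)$. By part \ref{itm:equiv}, the (unique) weak solution of \eqref{eq:SDE} coincides with that of \eqref{eq:singularSDE}, so Proposition~\ref{lem:density} applies and furnishes a jointly continuous occupation density $(\ell_t^x)_{t\geq0}^{x\in\R}$ of $X$. It remains to identify $\gamma$ with $\ell$ off a set of capacity zero. Here I would use Corollary~\ref{cor:occ_dens_lev}: for every $f\in\mathcal{C}_c$ we have $\int_\R\gamma_t^x f(x)\,dx = \int_0^t f(X_s)\,ds = \int_\R\ell_t^x f(x)\,dx$ a.s., so $\gamma_t^\cdot = \ell_t^\cdot$ for Lebesgue-a.e.\ $x$, for each fixed $t$ a.s. To upgrade "Lebesgue-a.e." to "outside a set of $\mathrm{Cap}_{(\alpha-1)/2}$-measure zero" I would invoke Theorem~\ref{th:local_time}: $\gamma$ is defined (via the explicit formula in Definition~\ref{def:loc_time}) on $D_T$ with $\mathrm{Cap}_{(\alpha-1)/2}(D_T^c)=0$, and on $D_T$ the map $x\mapsto\gamma_t^x$ has enough regularity (it is built from $v(X_s-x)$, $\int_0^s v'(X_\tau-x)\,dA_\tau$ and the martingale $N_s^x$) that its values are determined by a dense set of $x$'s; combined with the joint continuity of $\ell$ and the a.e.\ equality, this forces $\gamma_t^x=\ell_t^x$ for all $x\in\R\setminus N$ with $N\subseteq D_T^c$, hence $\mathrm{Cap}_{(\alpha-1)/2}(N)=0$. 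Finally, one makes the null set uniform in $t$ using continuity in $t$ and taking a countable dense set of times.

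The main obstacle I anticipate is the capacity-refinement step in part \ref{itm:density}: upgrading the Lebesgue-a.e.\ identity $\gamma_t^x=\ell_t^x$ to an identity valid \emph{quasi-everywhere} (outside a $\mathrm{Cap}_{(\alpha-1)/2}$-null set). This requires showing that $x\mapsto\gamma_t^x$ is, in an appropriate sense, a quasi-continuous representative — likely by establishing moment/continuity estimates on the three building blocks of $\gamma$ in $x$ (uniformly on compacts, using that $v$ and its regularization are controlled and that $A$ has finite variation with the Kato bound), so that two representatives agreeing Lebesgue-a.e.\ must agree off a polar set. A secondary technical point is the careful handling of the exceptional null sets: the event of full measure on which the identities hold a priori depends on $t$ and on $f$, and assembling a single $\mathbb{P}$-null set and a single deterministic $N$ that works simultaneously for all $t\geq0$ needs the joint continuity of $\ell$ together with a monotone-class / countable-dense-set argument.
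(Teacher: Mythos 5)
Your part \ref{itm:equiv} is essentially the paper's argument (Theorem~\ref{th:sde_exist} plus Proposition~\ref{lem: convergence}, with the positive/negative decomposition of $\mu$ handling condition (2) of Definition~\ref{def:sing_sol}), so that half is fine. The problem is in part \ref{itm:density}, precisely at the step you yourself flag as the ``main obstacle'': you propose to upgrade the Lebesgue-a.e.\ identity $\gamma_t^x=\ell_t^x$ to a quasi-everywhere identity by arguing that on $D_T$ the map $x\mapsto\gamma_t^x$ is regular enough (quasi-continuous, determined by a dense set of $x$'s). No such spatial regularity of $\gamma$ is established anywhere in the paper, and it is not a minor technicality: the remark following the theorem explicitly states that the existence of a continuous modification of $x\mapsto\gamma_t^x$ is only a \emph{conjecture}. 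Theorem~\ref{th:local_time} gives you only that $\gamma_t^x$ is well defined for $x$ outside a capacity-null set; it says nothing about how $\gamma_t^x$ varies in $x$, so continuity of $\ell$ plus equality Lebesgue-a.e.\ cannot, by itself, ``force'' equality off a polar set. As written, your argument for the capacity refinement does not go through.

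The paper closes this gap by a duality argument that avoids any regularity of $\gamma$ in $x$: Proposition~\ref{lem: convergence} is stated for an \emph{arbitrary} $\nu\in K_{\alpha-1}^f$, not just for Lebesgue measure. Applying it to every nonnegative finite Kato measure $\upsilon$, and comparing with $\int_0^t\upsilon_n(X_s)\,ds=\int_\R\ell_t^x\upsilon_n(x)\,dx$ (which converges using the joint continuity of $\ell$ and weak convergence of $\upsilon_n$), one gets $\int_\R\gamma_t^x\,\upsilon(dx)=\int_\R\ell_t^x\,\upsilon(dx)$ a.s.\ for every such $\upsilon$; testing against $f_m(x)\upsilon(dx)$ for a countable dense family $(f_m)\subset\mathcal{C}_c$ then yields $\gamma_t^x=\ell_t^x$ $\upsilon$-a.e.\ a.s.\ for every $\upsilon$. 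Finally, Theorem~\ref{th:capacity_condition} characterizes $\mathrm{Cap}_{(\alpha-1)/2}$-null sets exactly as the sets that are null for all finite measures of Kato class $K_{\alpha-1}$, so the exceptional set $\tilde N=\{x:\gamma_t^x\neq\ell_t^x\text{ with positive probability}\}$ must have zero capacity. If you replace your quasi-continuity step by this ``test against all $\upsilon\in K_{\alpha-1}^f$ and invoke Theorem~\ref{th:capacity_condition}'' argument, your proof becomes complete; attempting instead to prove quasi-continuity of $\gamma$ would amount to resolving the paper's open conjecture.
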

The above theorem together with Theorem~\ref{th:sde_exist} immediately yields the following theorem.
\begin{theorem}
	Let $\mu \in K^f_{\eta}$, $\eta \in (0, \alpha-1]$.
	\begin{enumerate}[label = (\roman*)]
	 \item Then there exists a unique weak solution to \eqref{eq:SDE}.
	 \item If  $\eta \in (0,\alpha-1)$ the weak solution to \eqref{eq:SDE} is strong. Moreover, pathwise uniqueness holds for solutions to \eqref{eq:SDE}.
	\end{enumerate}
\end{theorem}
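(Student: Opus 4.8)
The plan is to obtain this theorem as a direct corollary of Theorem~\ref{th:sde_exist} together with part~\ref{itm:equiv} of Theorem~\ref{th:sdeilt}; the whole argument is a matter of transporting the various solution concepts through the equivalence of \eqref{eq:SDE} and \eqref{eq:singularSDE}. The first point to record is the nesting of Kato classes: for $\eta \le \eta'$ and $r \le 1$ one has $|x-y|^{-(1-\eta')} \le |x-y|^{-(1-\eta)}$ on $B(x,r)$, hence $M^{\eta'}_\nu(r) \le M^{\eta}_\nu(r)$ and therefore $K^f_\eta \subseteq K^f_{\eta'}$ (consistent with the remark following Definition~\ref{def:Kato}). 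In particular $\mu \in K^f_\eta$ with $\eta \in (0,\alpha-1]$ implies $\mu \in K^f_{\alpha-1}$, so Theorem~\ref{th:sdeilt}\ref{itm:equiv} is applicable and the set of weak solutions $(X,L)$ to \eqref{eq:SDE} coincides with the set of weak solutions to \eqref{eq:singularSDE}.

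For part~(i), Theorem~\ref{th:sde_exist}\ref{itm:weakex} furnishes a weak solution to \eqref{eq:singularSDE}; by the identification above it is a weak solution to \eqref{eq:SDE}, which gives existence. For uniqueness in law, any two weak solutions to \eqref{eq:SDE} with a common initial distribution are, again by the identification, two weak solutions to \eqref{eq:singularSDE} with the same initial distribution, hence have the same law by the weak uniqueness asserted in Theorem~\ref{th:sde_exist}\ref{itm:weakex}.

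For part~(ii), assume $\eta \in (0,\alpha-1)$ and invoke Theorem~\ref{th:sde_exist}\ref{itm:stronex}. Since the notions of weak solution, strong solution and pathwise uniqueness are formulated purely in terms of the pair $(X,L)$ and the filtration generated by $L$ — none of which refers to the precise analytic form of the drift — these properties transfer verbatim under the equivalence of Theorem~\ref{th:sdeilt}\ref{itm:equiv}: the weak solution $(X,L)$ to \eqref{eq:SDE} is literally the same pair as the weak solution to \eqref{eq:singularSDE}, so $X$ is adapted to the augmented natural filtration of $L$ and is thus strong; and if $(X,L)$ and $(\tilde X,L)$ solve \eqref{eq:SDE} on a common stochastic basis with common $L$ and $X_0 = \tilde X_0$, then they solve \eqref{eq:singularSDE} and are indistinguishable by pathwise uniqueness there. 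Equivalently, weak existence from~(i) combined with pathwise uniqueness yields strong existence via the Yamada--Watanabe theorem in the form of \cite{Kurtz14}.

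There is essentially no obstacle of mathematical substance here, as the statement is announced as an immediate consequence of the two cited theorems. The only items demanding a little care are: first, the Kato-class inclusion $K^f_\eta \subseteq K^f_{\alpha-1}$, without which Theorem~\ref{th:sdeilt}\ref{itm:equiv} could not be invoked; and second, the observation that the equivalence in Theorem~\ref{th:sdeilt}\ref{itm:equiv} is stated at the level of the pairs $(X,L)$, so that the stronger solution concepts (strong solution, pathwise uniqueness) survive the transfer automatically rather than needing a separate argument.
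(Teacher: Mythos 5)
Your proposal is correct and follows essentially the same route as the paper, which obtains this theorem as an immediate consequence of Theorem~\ref{th:sde_exist} combined with the equivalence in Theorem~\ref{th:sdeilt}~\ref{itm:equiv}. Your added remarks (the inclusion $K^f_\eta \subseteq K^f_{\alpha-1}$ needed to invoke Theorem~\ref{th:sdeilt}, and the observation that strongness and pathwise uniqueness transfer because the equivalence is at the level of the pairs $(X,L)$) simply make explicit what the paper leaves implicit.
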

Again, as an immediate consequence of the above theorems we obtain the following result.
\begin{theorem}
	Let $\mu \in K_{\eta}^f$ with $\eta \in (0, \alpha-1)$. Then there exists a unique strong solution to \eqref{eq:SDE}.
\end{theorem}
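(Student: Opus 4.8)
The plan is to deduce this statement directly from Theorem~\ref{th:sdeilt} and Theorem~\ref{th:sde_exist}, so the argument is essentially a bookkeeping step that assembles pieces already established.

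First I would observe that for $\eta \in (0,\alpha-1)$ one has $\mu \in K_\eta^f \subseteq K_{\alpha-1}^f$, so Theorem~\ref{th:sdeilt}\ref{itm:equiv} applies and the weak solutions of \eqref{eq:SDE} coincide with the weak solutions of \eqref{eq:singularSDE}. By Theorem~\ref{th:sde_exist}\ref{itm:weakex} there is a weak solution to \eqref{eq:singularSDE}, hence a weak solution $(X,L)$ to \eqref{eq:SDE}; and by Theorem~\ref{th:sde_exist}\ref{itm:stronex}, since $\eta < \alpha-1$, this solution is strong, i.e. $X$ is adapted to the filtration generated by $L$. This already gives existence of a strong solution to \eqref{eq:SDE}.

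For uniqueness I would again invoke Theorem~\ref{th:sde_exist}\ref{itm:stronex}, which yields pathwise uniqueness for \eqref{eq:singularSDE}; this transfers to \eqref{eq:SDE} through the equivalence in Theorem~\ref{th:sdeilt}\ref{itm:equiv}. As recorded just after Definition~\ref{def:sing_sol}, and applicable verbatim to \eqref{eq:SDE} by the remark following Definition~\ref{def:SDEILT}, pathwise uniqueness implies strong uniqueness: any two strong solutions relative to a common driving noise $L$ and initial condition $x_0$ are indistinguishable. Combining this with the existence obtained above yields the claim.

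There is no genuine obstacle at this stage — the substance sits entirely in Theorem~\ref{th:sdeilt}\ref{itm:equiv} (identifying solutions of \eqref{eq:SDE} with those of \eqref{eq:singularSDE} via the local time $\gamma$) and in Theorem~\ref{th:sde_exist}\ref{itm:stronex} (the Zvonkin-transform route to strong existence and pathwise uniqueness for \eqref{eq:singularSDE}, built on the weak-uniqueness input of \cite{Knopova17}, a pathwise-uniqueness argument in the spirit of \cite{Xiong19}, and the Yamada--Watanabe theorem of \cite{Kurtz14}). The only point to keep straight is that ``solution'' always refers to a solution in the sense of Definition~\ref{def:SDEILT}, so the objects in the hypotheses of the cited theorems and in the present statement are the same; this is immediate from the definitions.
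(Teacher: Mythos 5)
Your proposal is correct and matches the paper's own treatment: the paper states this theorem as an immediate consequence of Theorem~\ref{th:sde_exist} and Theorem~\ref{th:sdeilt}, i.e. exactly the assembly of weak/strong existence and pathwise uniqueness for \eqref{eq:singularSDE} with the equivalence of solutions of \eqref{eq:singularSDE} and \eqref{eq:SDE} that you describe. Your extra bookkeeping (the inclusion $K_\eta^f\subseteq K_{\alpha-1}^f$, transfer of adaptedness, and pathwise uniqueness implying strong uniqueness) is exactly what the paper leaves implicit.
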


\begin{rema}
	In Theorem~\ref{th:sdeilt} we prove that $\gamma_t^x = \ell_t^x$ almost surely for all $t$ and all $x$ outside of a set of zero $(\alpha-1)/2$ -capacity 
	when $\mu \in K^f{\eta}$ for $\eta \in (0,\alpha-1)$. However, we conjecture that, in this case, there is in fact a continuous modification of $\gamma$ with spatial modulus of continuity of  $\frac{\alpha-1-\eta}{2}$ up to some logarithmic terms.
\end{rema}
The subsequent sections are dedicated to the proofs of the above results.
\section{Proof of Theorem~\ref{th:sde_exist}}\label{sec:sde_exist}
In this section, we discuss the  the proof of Theorem~\ref{th:sde_exist}.  For this purpose we fix
$\mu \in K_{\eta}^f$ with $\eta \in (0,\alpha-1]$ throughout this section.
First, we will discuss weak existence of solutions by constructing the transition densities in Section~\ref{sec:weakexist}.
Next, we analyze the associated potentials. These will play an important role in the proof
of weak uniqueness, pathwise uniqueness via Zvonkins transform and also in the proof of Proposition~\ref{lem:density}. 
By using the fact that the potentials uniquely determine the law, we prove weak uniqueness of solutions to \eqref{eq:singularSDE} in Section~\ref{sec:weak_unique}.  Thus, by combining the results in Sections~\ref{sec:weakexist} and  ~\ref{sec:weak_unique} we conclude the proof of Theorem~\ref{th:sde_exist}~\ref{itm:weakex}.\par
Finally, in Section~\ref{sec:pathwise}, we use a Zvonkin transformation to establish pathwise uniqueness of solutions when $\mu \in K^f_{\eta}$ for $\eta \in (0,\alpha-1)$. In \cite{Athreya20} the authors demonstrated that  singular SDEs driven by $\alpha$-stable L\'evy processes can be reformulated to fit in the framework of \cite{Kurtz14}, which allows to infer strong existence and uniqueness from weak existence and pathwise uniqueness. Since this is fairly standard we omit it here.  Thus, in Section~\ref{sec:pathwise} we conclude the proof of Theorem~\ref{th:sde_exist}~\ref{itm:stronex}.

\subsection{Weak Existence}\label{sec:weakexist}
To prove weak existence,  we first construct the transition densities associated with \eqref{eq:generator} via a perturbation approach. 
The generator of the solution
of \eqref{eq:singularSDE}
is expected to be of the form
\begin{equation}\label{eq:generator}
	\Delta_\alpha^\mu f  = \Delta_\alpha f  + \mu \frac{\partial }{\partial x}f
\end{equation} 
for $f \in \mathcal{C}_c^\infty$. The operator can be extended to a larger class of functions but this is not needed here.
The transition densities $p^\mu$  are the  fundamental solutions of the Cauchy problem associated with $(\frac{\partial}{\partial t}- \Delta_\alpha^{\mu})$, that is they satisfy
\begin{align*}
	&(\frac{\partial}{\partial t}- \Delta_\alpha^{\mu})p^\mu(t,x,y) =0, \quad x,y \in \R , t >0, \\
	&p^\mu(t,x,\cdot) \rightarrow \delta_x, \quad t \rightarrow 0^+, \quad x\in \R.
\end{align*}
In order to rigorously define $p^\mu$, we we will use Theorem~\ref{th:transition_kernel} below, which demonstrates that $p^\mu$ can be constructed as the following limit:
\begin{equation}\label{eq:dens_formal}
	p^\mu(t,x,y) = p(t,x,y) + \sum_{k = 1}^\infty( p\circledast (\frac{\partial}{\partial x} p\mu)^{\circledast k})(t,x,y).
\end{equation}
Here $p(t,x,y)$ for $t>0,x,y \in \R$ denote the transition densities of the fractional Laplacian $\Delta_\alpha$ and we use the notation
\begin{equation*}
	(f\circledast g)(t,x,y) = \int_0^t \int_\R f(t-s,x,z)g(s,z,y)dzds, \quad x,y \in \R , t >0, 
\end{equation*}
and
\begin{equation*}
	(f)^{\circledast k}(t,x,y) = \int_0^t \int_\R f(t-s,x,z)f^{\circledast (k-1)}(s,z,y)dzds,  \quad x,y \in \R , t >0,  \quad k \in \mathbb{N}, k \geq 2.
\end{equation*}
Theorem~\ref{th:transition_kernel} (i), (ii)  is a slight modification of \cite[Theorem 3.10]{Kim14} and can be proven along the same lines. 
Part (iii) of Theorem~\ref{th:transition_kernel} can be proven following the lines of \cite[Theorem 1]{Bogdan07} or \cite[Proposition 2.3]{Chen12}. Thus, the proof of Theorem~\ref{th:transition_kernel} is omitted.
\begin{theorem}\label{th:transition_kernel}
	Let $\mu \in K_{\alpha-1}^f$. Then the following holds.
	\begin{itemize}
		\item [(i)] There exist $T_0(\mu)>0$, such that  $p(t,x,y) + \sum_{k = 1}^\infty( p\circledast (\frac{\partial}{\partial x} p\mu)^{\circledast k})$
		converges locally uniformly on $(0, T_0] \times \R \times \R$ to a positive continuous function $p^{\mu}(t,x,y)$ and on 
		$(0, T_0] \times \R \times \R$ that satisfies 
		\begin{equation}\label{eq:trans_density_est}
			c_1^{-1}\Big(t^{-1/\alpha} \wedge \frac{t}{|x-y|^{1+\alpha}}\Big) \leq p^{\mu}(t,x,y) \leq c_1 \Big(t^{-1/\alpha} \wedge \frac{t}{|x-y|^{1+\alpha}}\Big), 
		\end{equation} 
		where $c_1(\mu)>0$ is a constant.
		Moreover, $\int_\R p^{\mu}(t,x,y)dy = 1$ for every $n \geq 1$, $t \in (0,T_0]$ and $x \in \R$.
		\item[(ii)] The function $p^{\mu}(t,x,y)$ defined in (i) can be extended uniquely to a positive
		jointly continuous function on $(0, \infty)\times \R \times \R$ so that for all $s,t \in (0,\infty)$ and $(x,y) \in \R \times \R$, $\int_\R p^{\mu}(t,x,y)dy = 1$ 
		and 
		\begin{equation}
			p^{\mu}(t+s,x,y) = \int_\R p^{\mu}(s,x,z)p^{\mu}(t,z,y)dz.
		\end{equation}
		\item[(iii)] Define 
		\begin{equation}
			P_t^{\mu}f(x) := \int_\R p^{\mu}(t,x,y)f(y)dy, \quad \forall f \in \mathcal{C}_c^\infty.
		\end{equation}
		Then for any $f \in \mathcal{C}_c^\infty(\R)$ and $g \in C_\infty(\R)$
		\begin{equation}
			\lim_{t \downarrow 0} \int_\R t^{-1} (P_t^{\mu}f(x)-f(x))g(x) dx= \int_\R \Delta_\alpha^{\mu}f(x)g(x)ds.
		\end{equation}
		Thus, $p^{\mu}(t,x,y)$ is the fundamental solution of $(\frac{\partial}{\partial t}- \Delta_\alpha^{\mu})$ in the distributional sense.
	\end{itemize}
\end{theorem}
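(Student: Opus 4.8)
The plan is to construct $p^{\mu}$ by the classical Duhamel--Levi parametrix method, treating $\mu\frac{\partial}{\partial x}$ as a perturbation of $\Delta_\alpha$; this is precisely the route of \cite[Theorem 3.10]{Kim14}, and the only genuinely one-dimensional issue is to check that the relevant exponents interact correctly with the Kato condition $\mu\in K_{\alpha-1}^{f}$. Write $p(t,x,y)=p_{t}(y-x)$ for the free $\alpha$-stable kernel and $q(s,z,w):=\p_{z}p(s,z,w)$ for the perturbation kernel, and read the convolution $\circledast$ in \eqref{eq:dens_formal} with $\mu(\d z)$ in place of $\d z$ in the inner integral, so that \eqref{eq:dens_formal} is the iterated form of $p^{\mu}=p+p\circledast q^{\mu}$ with $q^{\mu}=\sum_{k\ge 1}q^{\circledast k}$. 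The preliminary facts I would record first are: the two-sided estimate for $p$ (i.e.\ \eqref{eq:trans_density_est} with $\mu=0$); the gradient bound $|q(s,z,w)|\le C\big(s^{-1/\alpha}\wedge|z-w|^{-1}\big)p(s,z,w)$; and the ``$3P$''-type inequalities, namely $p(t-s,x,z)p(s,z,w)\le C\big(p(t-s,x,z)+p(s,z,w)\big)p(t,x,w)$ together with its analogue for products $p\cdot q$. These are standard for stable kernels and identical to the $d\ge 2$ case.

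For part (i), the heart of the matter is the geometric estimate
\[
\big|\,p\circledast q^{\circledast k}\,\big|(t,x,y)\ \le\ \big(C\,\omega(t)\big)^{k}\,p(t,x,y),\qquad k\ge 1,
\]
proved by induction on $k$ from the $3P$ inequalities, where $\omega(t):=\sup_{z\in\R}\int_{0}^{t}\!\int_{\R}|q(s,z,w)|\,|\mu|(\d w)\,\d s$. The key computation is that after the $\d s$ integral one has $\int_{0}^{t}|q(s,z,w)|\,\d s\le C\,|z-w|^{-(2-\alpha)}$, and since $d-\eta=1-(\alpha-1)=2-\alpha$, splitting the $w$-integral at radius $r$ bounds its near-diagonal part by $C\,M_{\mu}^{\alpha-1}(r)$ and its far part by $C\,|\mu|(\R)$ times a positive power of $t$ (for each fixed $r$, once $t$ is small); hence $\limsup_{t\to 0}\omega(t)\le C\,M_{\mu}^{\alpha-1}(r)$ for every $r>0$, and letting $r\to 0$ gives $\omega(t)\to 0$ as $t\to 0$. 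Choosing $T_{0}=T_{0}(\mu)$ with $C\,\omega(T_{0})<1/2$, the series \eqref{eq:dens_formal} converges absolutely and locally uniformly on $(0,T_{0}]\times\R\times\R$ (each term being continuous), $p^{\mu}$ inherits the two-sided bound \eqref{eq:trans_density_est} from $p$ via $|p^{\mu}-p|\le 2C\omega(t)\,p$, positivity holds on $(0,T_{0}]$ because the leading term dominates there, and $\int_{\R}p^{\mu}(t,x,y)\,\d y=1$ follows term by term from $\int_{\R}q(s,z,y)\,\d y=-\int_{\R}p_{s}'(u)\,\d u=0$ and Fubini.

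Part (ii) is routine: from the integral equation $p^{\mu}=p+p\circledast q^{\mu}$ one derives Chapman--Kolmogorov on $(0,T_{0}]$ as in \cite{Kim14}, then sets $p^{\mu}(t+s,x,y):=\int_{\R}p^{\mu}(s,x,z)p^{\mu}(t,z,y)\,\d z$ to extend uniquely to all $t>0$, and the two-sided bound, positivity, conservativeness and joint continuity all propagate. For part (iii), I would split $t^{-1}(P_{t}^{\mu}f-f,g)=t^{-1}(P_{t}f-f,g)+t^{-1}\big((P_{t}^{\mu}-P_{t})f,g\big)$: the first term tends to $(\Delta_\alpha f,g)$ by the generator of the free stable semigroup, and for the second one uses the Duhamel identity $P_{t}^{\mu}f-P_{t}f=\int_{0}^{t}P_{t-s}\big(\mu\,\p_{x}P_{s}^{\mu}f\big)\,\d s$ together with the Kato bounds and the continuity of $s\mapsto P_{s}^{\mu}f$ at $0$ to obtain the limit $(\mu\,\p_{x}f,g)$; adding the two limits and using \eqref{eq:generator} yields $(\Delta_\alpha^{\mu}f,g)$. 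This is the argument of \cite[Theorem 1]{Bogdan07} and \cite[Proposition 2.3]{Chen12}.

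The main obstacle is the inductive geometric bound in part (i): absorbing the singular gradient factor $|z-w|^{-1}$ carried by $q$ into the Kato modulus $M_{\mu}^{\alpha-1}$ at the borderline exponent $\eta=\alpha-1$ in dimension one forces one to use the $3P$-for-gradient inequality together with a carefully optimized splitting of each space--time convolution into a near-diagonal region (controlled by $M_{\mu}^{\alpha-1}(r)$) and a far region (controlled by $|\mu|(\R)$ and a vanishing power of $t$); the finiteness of $\mu$ is exactly what provides the slack needed in the far region. Once these estimates are in place, everything else is bookkeeping.
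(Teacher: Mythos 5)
Your proposal is correct and follows essentially the same route as the paper: the paper proves this theorem by deferring to \cite[Theorem 3.10]{Kim14} for parts (i)--(ii) and to \cite{Bogdan07}, \cite{Chen12} for part (iii), which is precisely the parametrix/Duhamel series construction with Kato-class control that you sketch, including the one-dimensional exponent bookkeeping $\int_0^t|\partial_z p(s,z,w)|\,ds\lesssim |z-w|^{\alpha-2}$ matched against $d-\eta=2-\alpha$.
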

Following the arguments of \cite[Proposition 4.2]{Kim14} and \cite[Proposition 2.3]{Chen12} again, the above theorem in particular implies the existence of a conservative Feller process $(X_t, \bbP_x)$ with transition semigroup $(P^\mu_t)_{t \geq 0}$.
Now, we need to show that the process $(X_t, \bbP_x)$ solves \eqref{eq:singularSDE} according
to Definition~\ref{def:sing_sol}. In fact, the argument to show existence is exactly the same as in \cite{Kim14}. 
For this reason, we only sketch the main ideas. \par
To this end, we need to consider the $\lambda$-potentials of the Markov process associated with the transition semigroup $(P_t^\mu)_{t \geq 0}$. They are denoted by
\begin{equation}
	W_\lambda^\mu f(x) = \int_0^\infty \ee^{-\lambda t}P_t^\mu f(x)dt.
\end{equation} 
Note that the action of both $(P_t^\mu)_{t \geq 0}$ and $W^\mu_\lambda$, $\lambda >0$ can also be extended to measures via 
\begin{equation*}
	P_t^\mu \nu(x) = \int_\R p^\mu(t,x,y) \nu(dy), 
\end{equation*}
whenever the integral wit respect to the measure $\nu$ is well-defined.
Definition~\ref{def:sing_sol} relies on an approximation scheme. For this reason we will need the following convergence result, which is a slight modification of \cite[Lemma 4.7]{Kim14}.
\begin{lemm}\label{lem:pot_conv}
	Let $\lambda >0$ and $\nu \in K_{\alpha-1}$. Then 
	$W^{\mu}_\lambda\nu_n$,  $W^{\mu_n}_\lambda\nu_n$
	and $W^{\mu_n}_\lambda\nu$ all converge to $W^{\mu}_\lambda\nu$, uniformly on compact subsets of $\R$ as $n \rightarrow \infty$.  Hereby, $\nu_n$ and $\mu_n$ denote the mollifications of $\nu$ and $\mu$, respectively, according to Definition~\ref{def:mollifier}.
	If $\nu$ is a finite measure, the potentials are bounded functions and the convergence holds uniformly on $\R$.
\end{lemm}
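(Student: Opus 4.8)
The plan is to treat the three convergences by exploiting the resolvent identity together with the uniform heat-kernel type bounds from Theorem~\ref{th:transition_kernel}. First I would record that for any finite signed measure $\nu$ and any drift measure $\kappa \in K_{\alpha-1}^f$ one has the resolvent identity
\begin{equation*}
	W_\lambda^{\kappa}\nu = W_\lambda^0 \nu + W_\lambda^0\big(\kappa\,\tfrac{\partial}{\partial x}W_\lambda^{\kappa}\nu\big),
\end{equation*}
where $W_\lambda^0$ is the $\lambda$-resolvent of the free stable process; this follows from the perturbation series \eqref{eq:dens_formal} by resumming, exactly as in \cite[Lemma 4.7]{Kim14}. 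Using \eqref{eq:trans_density_est} (which holds uniformly in the drift once $M_\kappa^{\alpha-1}(r)$ is controlled uniformly) together with the Kato condition one shows that the map $\nu \mapsto \tfrac{\partial}{\partial x}W_\lambda^{\kappa}\nu$ is bounded from the space of finite measures into $L^\infty$, with a bound depending on $\kappa$ only through $\sup_r M_\kappa^{\alpha-1}(r)$ and the total mass; by Remark~\ref{rema:molli} this is uniform over $\kappa \in \{\mu\}\cup\{\mu_n : n \in \N\}$. Hence all the potentials appearing in the statement are uniformly bounded, which gives the last sentence of the lemma once the convergence is established.

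Next I would prove the convergence itself. For $W_\lambda^{\mu}\nu_n \to W_\lambda^{\mu}\nu$ this is the easiest case: $\nu_n \to \nu$ weakly and $\sup_n M_{\nu_n}^{\alpha-1}(r) \le C M_\nu^{\alpha-1}(r)$ by Remark~\ref{rema:molli}, so writing $W_\lambda^\mu\nu_n(x) = \int_\R w_\lambda^\mu(x,y)\nu_n(dy)$ with $w_\lambda^\mu(x,y) = \int_0^\infty \ee^{-\lambda t}p^\mu(t,x,y)\,dt$ continuous in $y$ off the diagonal and bounded near it by the Kato estimate, one passes to the limit by a standard weak-convergence argument (splitting the $y$-integral into a small ball around $x$, where the Kato condition makes the contribution uniformly small, and its complement, where $w_\lambda^\mu(x,\cdot)$ is bounded continuous). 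Uniformity in $x$ over compacts comes from the joint continuity of $p^\mu$ and equicontinuity estimates on $w_\lambda^\mu$. For the two convergences that also vary the drift, $W_\lambda^{\mu_n}\nu_n$ and $W_\lambda^{\mu_n}\nu$, I would subtract the resolvent identities for $\mu_n$ and for $\mu$ and iterate: the difference $W_\lambda^{\mu_n}\kappa - W_\lambda^{\mu}\kappa$ (for $\kappa = \nu$ or $\kappa = \nu_n$) satisfies
\begin{equation*}
	W_\lambda^{\mu_n}\kappa - W_\lambda^{\mu}\kappa = W_\lambda^0\big((\mu_n-\mu)\tfrac{\partial}{\partial x}W_\lambda^{\mu_n}\kappa\big) + W_\lambda^0\big(\mu\,\tfrac{\partial}{\partial x}(W_\lambda^{\mu_n}\kappa - W_\lambda^{\mu}\kappa)\big),
\end{equation*}
and the last term can be absorbed into the left-hand side for $\lambda$ large (and then propagated to all $\lambda>0$ by the resolvent equation in $\lambda$), reducing matters to showing that $W_\lambda^0\big((\mu_n-\mu)\,g_n\big) \to 0$ uniformly on compacts, where $g_n = \tfrac{\partial}{\partial x}W_\lambda^{\mu_n}\kappa$ is uniformly bounded. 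Since $\mu_n \to \mu$ weakly with uniformly controlled Kato moduli, and $g_n$ is uniformly bounded, this again follows from the weak-convergence-plus-Kato argument applied to the kernel $\tfrac{\partial}{\partial x}w_\lambda^0(x,y)$, whose singularity on the diagonal is exactly of Kato order $\alpha-1$.

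The main obstacle I anticipate is handling the diagonal singularity of $\tfrac{\partial}{\partial x}w_\lambda^0(x,y) \sim |x-y|^{-1}$ (more precisely of Kato class $K_{\alpha-1}$ in one dimension) simultaneously with the weak convergence $\mu_n \to \mu$: weak convergence alone does not control integrals against kernels that blow up, so one genuinely needs the uniform smallness $\lim_{r\to 0}\sup_n M^{\alpha-1}_{\mu_n}(r) = 0$ together with the uniform bound on $g_n$ to make the near-diagonal contribution uniformly negligible, after which the off-diagonal part is handled by the (bounded, continuous) kernel and plain weak convergence. Making this splitting argument uniform in $x$ on compact sets — which requires a modulus-of-continuity estimate on $x \mapsto \tfrac{\partial}{\partial x}w_\lambda^0(x,\cdot)$ away from the diagonal that is compatible with the Kato truncation — is the one place where some care is needed; everything else follows the template of \cite[Lemma 4.7]{Kim14}.
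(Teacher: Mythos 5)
Your first convergence ($W_\lambda^{\mu}\nu_n \to W_\lambda^{\mu}\nu$, fixed drift) is fine and is in the spirit of the paper's argument (which simply defers to \cite{Kim14}): split near/far from the diagonal, use the uniform Kato moduli of Remark~\ref{rema:molli} near the diagonal and weak convergence against the continuous kernel away from it. The genuine gap is in the two drift-varying cases. In your identity
\begin{equation*}
	W_\lambda^{\mu_n}\kappa - W_\lambda^{\mu}\kappa \;=\; U_\lambda\big((\mu_n-\mu)\,\tfrac{\partial}{\partial x} W_\lambda^{\mu_n}\kappa\big) \;+\; U_\lambda\big(\mu\,\tfrac{\partial}{\partial x}\big(W_\lambda^{\mu_n}\kappa - W_\lambda^{\mu}\kappa\big)\big),
\end{equation*}
the second term involves the \emph{derivative} of the unknown difference, so it cannot be ``absorbed into the left-hand side'' in sup norm; absorption requires running the contraction in a norm containing the gradient (via Proposition~\ref{prop:grad_meas}), and even then you are left with $U_\lambda\big((\mu_n-\mu)g_n\big)$ where $g_n=\tfrac{\partial}{\partial x} W_\lambda^{\mu_n}\kappa$ depends on $n$. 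Weak convergence of $\mu_n$ together with a uniform bound on $g_n$ does \emph{not} make this vanish (an $n$-dependent bounded multiplier can oscillate), and splitting $g_n=g+(g_n-g)$ requires $\|g_n-g\|_\infty\to 0$, which is precisely the analogue of Lemma~\ref{lem:deriv_pot_conv}, i.e.\ a statement at least as strong as the one you are proving; at this point your reduction is circular. The way out, which is what the paper and \cite{Kim14} do, is to expand the full Neumann series \eqref{eq:potential_sum_nu} for both drifts and telescope term by term, so that $\mu_n-\mu$ is always integrated against a \emph{fixed}, $n$-independent continuous kernel; the key input is then $\sup_y\big|\tfrac{\partial}{\partial x}U_\lambda(\mu_n-\mu)(y)\big|\to 0$ (Lemma~\ref{lem:deriv_u_lam_conv_append}).

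Two further points. First, the series/contraction mechanism is only available for $\lambda\ge\lambda_0$ (Proposition~\ref{prop:pot_sum}), whereas the lemma is asserted for every $\lambda>0$; your proposed propagation ``by the resolvent equation in $\lambda$'' is not carried out and would need extra ingredients (convergence $W_{\lambda_0}^{\mu_n}g\to W_{\lambda_0}^{\mu}g$ for bounded continuous $g$, and a localization to handle non-finite $\nu\in K_{\alpha-1}$, for which only local uniform convergence is available). The paper's route, following \cite[Lemmas 4.7, 4.8]{Kim14}, avoids both problems by working directly with the heat-kernel representation $W_\lambda^{\mu_n}\nu_n(x)=\int_0^\infty \ee^{-\lambda t}\int_\R p^{\mu_n}(t,x,y)\,\nu_n(dy)\,dt$: the $n$-uniform small-time bounds of Theorem~\ref{th:transition_kernel}, the locally uniform convergence $p^{\mu_n}\to p^{\mu}$, and weak convergence of $\nu_n$ give the statement for every $\lambda>0$ at once (compare the proof of Lemma~\ref{lem:conv_nu_seq_pot}). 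Second, a smaller slip: the two-sided bound \eqref{eq:trans_density_est} on $p^{\mu}$ does not yield the asserted $L^\infty$ bound on $\tfrac{\partial}{\partial x}W_\lambda^{\kappa}\nu$; that bound comes from the gradient estimate \eqref{eq:lamPot_grad} for the free resolvent fed through the series, as in Proposition~\ref{prop:grad_meas} and Lemma~\ref{lemm:wlmm_bound_deriv}, and again only for $\lambda$ large.
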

\begin{proof}
	The proof follows similar steps as the proofs of \cite[Lemma 4.7, 4.8]{Kim14} and, thus, is omitted.
\end{proof}
For any $\nu\in K_{\alpha-1}$ we can
split $\nu = \nu^+-\nu^-$ in the negative and positive part and consider 
$W^\mu_\lambda\nu^+$ and $W^\mu_\lambda\nu^-$, which are both bounded, continuous potentials of $X$, since they are, by Lemma~\ref{lem:pot_conv}, the limits of $W^\mu_\lambda\nu^+_n$ and $W^\mu_\lambda\nu^-_n$, respectively. 
Thus, by \cite[Corollary IV 3.14]{Blumenthal68} there exist  positive continuous additive functionals $A_t^{\nu^+}$  and $A_t^{\nu^-}$ on the same probability space as $X$, such that 
\begin{equation}
	W^\mu_\lambda\nu^+(x) =\bbE_x\Big[ \int_0^\infty \ee^{-\lambda t} dA_t^{\nu^+}\Big], \text{ and } W^\mu_\lambda\nu^-(x) = \bbE_x\Big[\int_0^t \ee^{-\lambda t} dA_t^{\nu^-}\Big].
\end{equation}
The process $A_t^\nu = A_t^{\nu^+}-A_t^{\nu^-}$ is continuous, adapted and of bounded variation. 
The following lemma demonstrates that $A_t^\nu$ is, in fact, the limit of $\int_0^t \nu_n(X_s)ds$ in the appropriate sense.
\begin{lemm}\label{lem:conv_A}
	Let $\nu \in K_{\alpha-1}^f$ and $(A^\nu_t)_{t \geq 0}$ be a continuous adapted process of finite variation such that 
	\begin{equation*}
		W_\lambda^\mu \nu (x)= \bbE_x\Big[\int_0^\infty\ee^{-\lambda t}dA^\nu_t \Big].
	\end{equation*} 
	Furthermore, let $(\nu_n)_{n \in \N}$ be a sequence of bounded $\mathcal{C}^\infty$ functions such that $W_\lambda^\mu\nu_n$ converges  to $W_\lambda^\mu\nu$ uniformly on $\R$. Then it follows that 
	\begin{equation}
		\lim_{n \rightarrow \infty}\sup_x\bbP_x\Big(\sup_{0 \leq t \leq T}|\int_0^t \nu_n(X_s)ds-A_t^\nu|> \epsilon\Big) = 0
	\end{equation}
	for all $\epsilon >0$ and all $T >0$.
\end{lemm}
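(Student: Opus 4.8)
The plan is to realise $A^{(n)}_t:=\int_0^t\nu_n(X_s)\,ds$ and $A^\nu$ as continuous additive functionals of the Feller process $(X_t,\bbP_x)$ with respective $\lambda$-potentials $W_\lambda^\mu\nu_n$ and $W_\lambda^\mu\nu$ --- for $A^\nu$ this is exactly how it was produced above via \cite{Blumenthal68}, and for $A^{(n)}$ it follows from Fubini since $\nu_n$ is bounded, so that $W_\lambda^\mu\nu_n(x)=\bbE_x[\int_0^\infty e^{-\lambda s}\nu_n(X_s)\,ds]$ --- and then to encode their difference in a single uniformly integrable martingale. Writing $D^{(n)}:=A^{(n)}-A^\nu$ and $h_n:=W_\lambda^\mu\nu_n-W_\lambda^\mu\nu$ (so $\|h_n\|_\infty\to0$ by hypothesis), $D^{(n)}$ is a continuous additive functional of finite variation whose $\lambda$-potential is $h_n$, and the standard potential--martingale identity (a consequence of the Markov property) gives that, under each $\bbP_x$,
\[
 \mathcal{N}^{(n)}_t:=e^{-\lambda t}h_n(X_t)+\int_0^t e^{-\lambda s}\,dD^{(n)}_s
\]
is a uniformly integrable $(\mathcal{F}_t)$-martingale with $\mathcal{N}^{(n)}_0=h_n(x)$, closed by $\mathcal{N}^{(n)}_\infty=\int_0^\infty e^{-\lambda s}\,dD^{(n)}_s$. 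Undoing the exponential weight by a deterministic integration by parts (both sides being continuous processes of finite variation) then yields the pathwise bound
\[
 \sup_{0\le t\le T}\bigl|D^{(n)}_t\bigr|\ \le\ C(\lambda,T)\Bigl(\sup_{0\le t\le T}\bigl|\mathcal{N}^{(n)}_t\bigr|+\|h_n\|_\infty\Bigr),
\]
so it suffices to show $\sup_x\bbP_x(\sup_{t\le T}|\mathcal{N}^{(n)}_t|>\epsilon)\to0$ for every $\epsilon>0$.

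For this I would combine a Khasminskii-type second-moment estimate with Doob's maximal inequality. By the strong Markov property $\bbE_x[\int_s^\infty e^{-\lambda u}\,dD^{(n)}_u\mid\mathcal{F}_s]=e^{-\lambda s}h_n(X_s)$; expanding the square $\bigl(\int_0^\infty e^{-\lambda s}dD^{(n)}_s\bigr)^2=2\int_0^\infty e^{-\lambda s}\bigl(\int_s^\infty e^{-\lambda u}dD^{(n)}_u\bigr)dD^{(n)}_s$ (licit pathwise, since $D^{(n)}$ is continuous and $\int_0^\infty e^{-\lambda s}d|D^{(n)}|_s\le\|\nu_n\|_\infty/\lambda+\int_0^\infty e^{-\lambda s}\,d(A^{\nu^+}+A^{\nu^-})_s<\infty$ a.s.) and conditioning gives
\[
 \bbE_x\bigl[(\mathcal{N}^{(n)}_\infty)^2\bigr]=2\,\bbE_x\Bigl[\int_0^\infty e^{-2\lambda s}h_n(X_s)\,dD^{(n)}_s\Bigr]\ \le\ 2\|h_n\|_\infty\,\bbE_x\Bigl[\int_0^\infty e^{-\lambda s}\,d|D^{(n)}|_s\Bigr].
\]
Since $|D^{(n)}|_s\le\int_0^s|\nu_n|(X_\tau)\,d\tau+A^{\nu^+}_s+A^{\nu^-}_s$, the last expectation is at most $W_\lambda^\mu|\nu_n|(x)+W_\lambda^\mu|\nu|(x)$. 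When the $\nu_n$ are the mollifications of $\nu$ one has $|\nu_n|\le|\nu|_n$ pointwise, hence $W_\lambda^\mu|\nu_n|\le W_\lambda^\mu|\nu|_n$, which together with $W_\lambda^\mu|\nu|$ is bounded uniformly in $n$ and $x$ by Lemma~\ref{lem:pot_conv} applied to $|\nu|\in K_{\alpha-1}^f$ (by the argument behind that lemma the same holds for any approximating sequence with $\sup_n M^{\alpha-1}_{\nu_n}(r)\to0$ as $r\downarrow0$, cf.\ \eqref{eq:limsup_M}). Thus $\sup_n\sup_x\bbE_x[\int_0^\infty e^{-\lambda s}d|D^{(n)}|_s]=:C_0<\infty$, so $\sup_x\bbE_x[(\mathcal{N}^{(n)}_\infty)^2]\le 2C_0\|h_n\|_\infty\to0$, and Doob's inequality $\bbP_x(\sup_{t\le T}|\mathcal{N}^{(n)}_t|>\epsilon)\le\epsilon^{-2}\bbE_x[(\mathcal{N}^{(n)}_\infty)^2]$ combined with the bound of the first paragraph finishes the proof.

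I expect the last estimate to be the crux: the second moment of $\mathcal{N}^{(n)}_\infty$ unavoidably involves the $\lambda$-potential of the total variation of the signed Revuz measure $\nu_n\,dx-\nu$, so one genuinely needs $\sup_n\|W_\lambda^\mu|\nu_n|\|_\infty<\infty$ rather than merely the assumed uniform convergence of $W_\lambda^\mu\nu_n$; this is where the structure of the approximating sequence (mollification, or a uniform Kato bound) and Lemma~\ref{lem:pot_conv} enter. A second point worth stating carefully rather than waving through is the potential--martingale correspondence for the abstractly constructed $A^\nu$, i.e.\ $\bbE_x[\int_s^\infty e^{-\lambda u}\,dA^\nu_u\mid\mathcal{F}_s]=e^{-\lambda s}W_\lambda^\mu\nu(X_s)$, which rests on the continuous-additive-functional theory of \cite{Blumenthal68} already invoked before the lemma.
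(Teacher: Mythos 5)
Your argument is correct and is essentially the same proof as the paper's: the paper likewise works with the exponentially weighted difference $B^n_t=\int_0^t e^{-\lambda s}\,dA^\nu_s-\int_0^t e^{-\lambda s}\nu_n(X_s)\,ds$, deduces $\sup_x|\bbE_x B^n_\infty|\to 0$ from the assumed uniform convergence of the potentials, and removes the weight by exactly your deterministic integration by parts. The one difference is that the paper treats the central upgrade from smallness of $\sup_x|\bbE_x B^n_\infty|$ to smallness of $\sup_x\bbE_x\big[\sup_t|B^n_t|^2\big]$ as a black box, citing \cite[Lemma 3.10]{Bass03} (following \cite[Proposition 5.6]{Kim14}), whereas you reprove that lemma: the closed martingale $e^{-\lambda t}h_n(X_t)+\int_0^t e^{-\lambda s}\,dD^{(n)}_s$, the Khasminskii-type expansion of the square via conditioning, and Doob's maximal inequality. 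This makes your write-up self-contained but is not a different route; it is the inside of the cited lemma. Your closing caveat is the one substantive point, and it cuts both ways: uniform convergence of the signed potentials $W^\mu_\lambda\nu_n$ alone does not furnish the bound $\sup_n\sup_x W^\mu_\lambda|\nu_n|(x)<\infty$ that your second-moment estimate needs, and the same uniform total-variation bound is part of the hypotheses of \cite[Lemma 3.10]{Bass03}, so the paper's proof implicitly relies on it as well; it is satisfied in every application made in the paper, namely for mollifications (Remark~\ref{rema:molli} together with Lemma~\ref{lem:pot_conv}) and for sequences obeying the uniform Kato condition \eqref{eq:limsup_M} used in Lemma~\ref{cor:PUC}, exactly as you indicate.
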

The proof of the above lemma uses the same ideas as the proof of \cite[Proposition 5.6]{Kim14}. For the sake of completeness we provide it in Appendix~\ref{appendix}.\par
Now we are ready to show existence. 
\begin{lemm}\label{lem:weak_exist}
	Let $\mu \in K^f_{\eta}$ with $\eta \in (0,\alpha-1]$. Then there exists a weak solution to \eqref{eq:singularSDE}, which is a strong Markov process and $(A_t)_ {t\geq 0}$ is a continuous, adapted process of  finite variation. 
\end{lemm}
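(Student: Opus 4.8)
The plan is to start from the conservative Feller process $(X_t,\bbP_x)$ furnished by Theorem~\ref{th:transition_kernel} and the discussion following it (obtained along the lines of \cite[Proposition 4.2]{Kim14} and \cite[Proposition 2.3]{Chen12}), to work on its canonical filtration $(\mathcal{F}_t)_{t\ge0}$, and to verify directly that it is a weak solution to \eqref{eq:singularSDE} in the sense of Definition~\ref{def:sing_sol}. Observe first that $\eta\le\alpha-1$ forces $\mu\in K_\eta^f\subseteq K_{\alpha-1}^f$, since $M_\nu^\eta(r)$ is nonincreasing in $\eta$ on scales $r\le1$, and that, because the Kato condition only involves the total variation, $|\mu|\in K_{\alpha-1}^f$ as well; hence all the constructions recalled above are available for both $\mu$ and $|\mu|$. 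A Feller process is automatically strong Markov, so it only remains to produce the finite-variation process $A$ and to check conditions (1)--(3) of Definition~\ref{def:sing_sol}.

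For the drift I would take $A:=A^\mu=A^{\mu^+}-A^{\mu^-}$, where $A^{\mu^\pm}$ are the positive continuous additive functionals associated via the Revuz correspondence \cite[Corollary IV.3.14]{Blumenthal68} with the bounded continuous potentials $W^\mu_\lambda\mu^\pm$, exactly as in the paragraph preceding Lemma~\ref{lem:conv_A}; then $A$ is continuous, adapted, of finite variation, and $A_0=0$. Condition (1) is then immediate: Lemma~\ref{lem:pot_conv} (with $\nu=\mu$, $\nu_n=\mu_n$) gives $W^\mu_\lambda\mu_n\to W^\mu_\lambda\mu$ uniformly on $\R$, so Lemma~\ref{lem:conv_A} yields that $\int_0^t\mu_n(X_s)\,ds\to A_t$ in probability, uniformly in $t$ over bounded intervals. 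For condition (2), I would run the same construction for the finite measure $|\mu|$ with approximating sequence $(|\mu|)_n$ (the mollification of $|\mu|$), obtaining a continuous additive functional $A^{|\mu|}$ with $\int_0^t(|\mu|)_n(X_s)\,ds\to A^{|\mu|}_t$ in probability, uniformly on $[0,T]$, for each $T$. Since convolution with $\varphi_n\ge0$ is monotone we have $0\le|\mu_n|\le(|\mu|)_n$ pointwise, so a diagonal extraction over $T\in\N$ produces a single subsequence $\{n_k\}$ along which $\int_0^t(|\mu|)_{n_k}(X_s)\,ds$ converges $\bbP_{x_0}$-a.s., uniformly on compacts, and consequently
\begin{equation*}
	\sup_k\int_0^t|\mu_{n_k}(X_s)|\,ds\;\le\;\sup_k\int_0^t(|\mu|)_{n_k}(X_s)\,ds\;<\;\infty\qquad\bbP_{x_0}\text{-a.s., for each }t>0 .
\end{equation*}

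For condition (3) --- that $L:=X_\cdot-x_0-A_\cdot$ satisfies $L_0=0$ and is a symmetric $\alpha$-stable process with respect to $(\mathcal{F}_t)_{t\ge0}$ --- I would follow \cite{Kim14} essentially verbatim. Theorem~\ref{th:transition_kernel}(iii) (that $p^\mu$ is the fundamental solution of $\partial_t-\Delta_\alpha^\mu$) together with the Revuz identity for $A$ shows that $X$ solves the martingale problem for $\Delta_\alpha^\mu$: for every $f\in\mathcal{C}_c^\infty$ the process $f(X_t)-f(x_0)-\int_0^t\Delta_\alpha f(X_s)\,ds-\int_0^t f'(X_s)\,dA_s$ is an $(\mathcal{F}_t)$-martingale. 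Approximating the bounded map $x\mapsto\ee^{i\xi x}$ by $\mathcal{C}_c^\infty$ functions and using $\Delta_\alpha\ee^{i\xi\cdot}=-|\xi|^\alpha\ee^{i\xi\cdot}$ extends this to $f=\ee^{i\xi\cdot}$; then, since $A$ has continuous paths of finite variation, an integration by parts shows that $\ee^{i\xi(X_t-A_t)}\ee^{t|\xi|^\alpha}=\ee^{i\xi(x_0+L_t)}\ee^{t|\xi|^\alpha}$ is a complex $(\mathcal{F}_t)$-martingale for every $\xi\in\R$, whence $\bbE[\ee^{i\xi(L_t-L_s)}\mid\mathcal{F}_s]=\ee^{-(t-s)|\xi|^\alpha}$ for $s\le t$. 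By the characterization of L\'evy processes via conditional characteristic functions this identifies $L$ as an $(\mathcal{F}_t)$-L\'evy process with exponent \eqref{eq:characteristic}, i.e.\ a symmetric $\alpha$-stable process, and $L_0=0$.

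I expect the main difficulty to lie in two places. The first, of a bookkeeping nature, is condition (2): upgrading the convergence-in-probability statement of Lemma~\ref{lem:conv_A} to an almost-sure bound that holds simultaneously at all times, handled by the diagonal subsequence argument above together with the elementary domination $|\mu_n|\le(|\mu|)_n$. The second and more substantial is the martingale-problem step underlying condition (3); the only genuinely $\alpha$-stable input there is the eigenrelation $\Delta_\alpha\ee^{i\xi\cdot}=-|\xi|^\alpha\ee^{i\xi\cdot}$ and the identification of $p^\mu$ as a fundamental solution in Theorem~\ref{th:transition_kernel}(iii), after which the passage to exponential test functions and the L\'evy characterization are routine and identical to the arguments in \cite{Kim14}.
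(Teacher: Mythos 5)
Your proposal is correct in its overall architecture and in fact shares the paper's skeleton: take the Feller process generated by the kernels of Theorem~\ref{th:transition_kernel}, build $A=A^{\mu^+}-A^{\mu^-}$ through the Revuz correspondence, and get condition (1) of Definition~\ref{def:sing_sol} from Lemma~\ref{lem:pot_conv} combined with Lemma~\ref{lem:conv_A}; your treatment of condition (2) via the domination $|\mu_n|\le(|\mu|)_n$, the same construction for $|\mu|\in K_{\alpha-1}^f$, and a diagonal extraction of an a.s.\ convergent subsequence is a nice explicit argument for a point the paper leaves implicit. Where you genuinely diverge is condition (3): the paper (following \cite[Proposition 5.6]{Kim14}) identifies $X_t-A_t-x_0$ as symmetric $\alpha$-stable by exploiting the convergence $p^{\mu_n}\to p^{\mu}$ and of the $\lambda$-potentials, passing through the approximating solutions with smooth drifts and computing the characteristic function in the limit, whereas you work directly with the limit process via the martingale problem for $\Delta_\alpha^{\mu}$ and the exponential-martingale/integration-by-parts identity for $\ee^{i\xi(X_t-A_t)}\ee^{t|\xi|^\alpha}$, which is a clean and self-contained way to get $\bbE[\ee^{i\xi(L_t-L_s)}\mid\mathcal{F}_s]=\ee^{-(t-s)|\xi|^\alpha}$. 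The one place where your write-up is too quick is the assertion that Theorem~\ref{th:transition_kernel}(iii) ``together with the Revuz identity'' already yields the martingale problem: part (iii) only gives the generator identity in a weak sense as $t\downarrow0$, so to obtain that $f(X_t)-f(x_0)-\int_0^t\Delta_\alpha f(X_s)ds-\int_0^t f'(X_s)dA_s$ is a martingale you still need a Duhamel-type identity of the form $P_t^{\mu}f-f=\int_0^t P_s^{\mu}(\Delta_\alpha f)\,ds+\int_0^t\!\int_\R p^{\mu}(s,\cdot,y)f'(y)\mu(dy)\,ds$ (or an equivalent computation with the series defining $p^{\mu}$), together with the density form of the Revuz identity for $A$; this is essentially the same work the paper defers to \cite{Kim14}, so your route repackages rather than avoids it. Likewise the extension of the martingale problem from $\mathcal{C}_c^\infty$ to $f=\ee^{i\xi\cdot}$ needs a little care because $\Delta_\alpha$ is nonlocal, though the tail estimates make this routine. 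With those two steps written out, your argument is a valid alternative proof.
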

\begin{proof}
Because the transition densities from Theorem~\ref{th:transition_kernel} imply the existence of a conservative Feller process $(X_t, \bbP_x)$ and due to Lemma~\ref{lem:conv_A}, it only remains to verify that $(X_t-A_t-x_0)_{t \geq 0}$ is a symmetric, $\alpha$-stable process. This can be done by using
the convergence of the transition densities $p^{\mu_n}$ to $p^{\mu}$, which can be shown with the same arguments as in the proof of \cite[Theorem 3.9]{Kim14}  and the convergence of the $\lambda$-potentials  and identifying the distribution via the characteristic function. For the details we point to the proof of \cite[Proposition 5.6]{Kim14}. 
This proves the weak existence of a solution to \eqref{eq:singularSDE}, which is a strong Markov process and is such that $A$ is a continuous, adapted process  of finite variation. 
\end{proof}
\subsection{Analysis of $\lambda$-Potentials}
As mentioned above the $\lambda$-potentials $W_\lambda^\mu$ play a central role 
in the definition of Zvonkins transformation and in the proof of Proposition~\ref{lem:density}.
To this end we will derive the following formal equality
\begin{equation}\label{eq:pot_as_sum}
	W^{\mu}_\lambda\nu(x) = \sum_{k = 0}^\infty U_\lambda (N_{\mu }U_\lambda)^k\nu(x), \quad x \in \R, \nu \in K_{\alpha-1},
\end{equation} 
for $\lambda>0$ large enough, where $U_\lambda$ denotes the $\lambda$-potential 
\begin{equation*}
	U_\lambda f(x) = \int_0^\infty \ee^{-\lambda t} P_tf(x)dt, \quad t \geq 0, x \in \R, \quad f \in \mathcal{C}_b,
\end{equation*}
associated with the fractional Laplacian $\Delta_\alpha$ with transition semigroup $(P_t)_{t\geq 0}$ and where
\begin{equation*}
	N_{\mu}U_\lambda\nu(x) = \frac{\partial}{\partial x}U_\lambda\nu(x)\mu(dx).
\end{equation*}
Again, the action of both $(P_t)_{t \geq 0}$ and $U_\lambda$, $\lambda >0$ can also be extended to measures via 
\begin{equation*}
	P_t \nu(x) = \int_\R p(t,x,y) \nu(dy), 
\end{equation*}
whenever the integral with respect to the measure $\nu$ is well-defined. The density $u_\lambda$ of $U_\lambda$, 
is given by 
\begin{equation*}
	u_\lambda(x,y) = \int_0^\infty \ee^{-\lambda t } p(t,x,y)dt.
\end{equation*}
and satisfies 
\begin{equation*}
	U_\lambda f(x) = \int_\R u_\lambda(x,y)f(y) dy, \quad x\in \R , f \in \mathcal{C}_b.
\end{equation*}
We furthermore  use the notation
\begin{equation*}
	\Big|\frac{\partial}{\partial x} U_\lambda\Big||\nu|(x)= \int_\R \big|\frac{\partial}{\partial x}u_\lambda(x,y)\big||\nu|(dy),
\end{equation*}
for $\nu\in K_{\alpha-1}$, whenever the integral is well defined.
The identity \eqref{eq:pot_as_sum} can once again be derived with similar arguments as in \cite{Kim14} and heavily relies on the following estimates for the density $u_\lambda$ of $U_\lambda$.  
It was shown in \cite[Lemma 7, Lemma 9]{Bogdan07} that there exist constants $C_1,c_1 > 0$ such that for all $x, y \in \R$ 
\begin{equation}\label{eq:lamPot_est_bod}
	\begin{split}
		&c_1^{-1}\big((\lambda^{1/\alpha-1}\vee |x-y|^{\alpha-1})\wedge(\lambda^{-2}|x-y|^{-1-\alpha})\big)
		\leq u_\lambda(x,y)\\
		& \qquad \leq c_1\big((\lambda^{1/\alpha-1}\vee |x-y|^{\alpha-1})\wedge(\lambda^{-2}|x-y|^{-1-\alpha}) \big)
	\end{split}
\end{equation}
and 
\begin{equation}\label{eq:lamPot_grad}
	\begin{split}
		C_1^{-1}\big(|x-y|^{\alpha-2}\wedge \lambda^{-2}|x-y|^{-2-\alpha}\big) &\leq \big|\frac{\partial}{\partial x}  u_\lambda(x,y)\big| \\
		& \qquad \leq 
		C_1\big(|x-y|^{\alpha-2}\wedge \lambda^{-2}|x-y|^{-2-\alpha}\big).
	\end{split}
\end{equation}
Note that \eqref{eq:lamPot_est_bod} specifically implies
\begin{equation}\label{eq:lamPot_bound}
	|u_\lambda(x,y)| \leq c(\lambda),  \quad \forall x,y \in \R,
\end{equation}
for some constant $c(\lambda) >0$.
These estimates can be used to derive the following preliminary results, that are needed to derive \eqref{eq:pot_as_sum}. 

\begin{lemm}\label{lem:deriv_u_lam_conv_append}
	Let $\nu \in K_{\alpha-1}^f$ and let $\nu_m$, $m \in \mathbb{N}$ be its mollification. Then  for all $\lambda >0$ it holds that
	\begin{equation*}
		\frac{\partial}{\partial x}U_\lambda \nu_m(x) \rightarrow \frac{\partial}{\partial x}U_\lambda \nu(x),
	\end{equation*}
	uniformly on $\R$ as $m \rightarrow \infty$.
\end{lemm}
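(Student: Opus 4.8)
The plan is to show that the family of functions $y \mapsto \frac{\partial}{\partial x} u_\lambda(x,y)$ can be integrated against $\nu_m$ and against $\nu$ in a way that is stable under the mollification $\nu_m \to \nu$, uniformly in $x$. The key analytic input is the gradient estimate \eqref{eq:lamPot_grad}, which gives
\begin{equation*}
	\Big|\frac{\partial}{\partial x} u_\lambda(x,y)\Big| \leq C_1\big(|x-y|^{\alpha-2}\wedge \lambda^{-2}|x-y|^{-2-\alpha}\big), \quad x,y \in \R.
\end{equation*}
Since $\alpha-2 \in (-1,0)$, the singularity $|x-y|^{\alpha-2}$ near $y = x$ is of the form $|x-y|^{-(d-\eta)}$ with $d = 1$ and $\eta = \alpha-1$, so finiteness of the Kato quantity $M_\nu^{\alpha-1}(r)$ controls the local contribution, while the far-field decay $\lambda^{-2}|x-y|^{-2-\alpha}$ together with finiteness of $\nu$ controls the tail. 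First I would record the elementary fact that $\nu_m$ is bounded in $K_{\alpha-1}^f$ in the sense of Remark~\ref{rema:molli}, namely $M_{\nu_m}^{\alpha-1}(r) \leq C M_\nu^{\alpha-1}(r)$ for all $r \geq 0$ and all $m$, and $\nu_m(\R) = \nu(\R)$; this makes the bounds below uniform in $m$.

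Next I would establish a quantitative tail/local split. Fix $\delta > 0$ and $R > 0$. Writing $g_\lambda(x,y) := \frac{\partial}{\partial x} u_\lambda(x,y)$, decompose
\begin{equation*}
	\frac{\partial}{\partial x}U_\lambda\nu_m(x) - \frac{\partial}{\partial x}U_\lambda\nu(x) = \int_{|x-y| < \delta} g_\lambda(x,y)\,(\nu_m-\nu)(dy) + \int_{|x-y|\geq \delta} g_\lambda(x,y)\,(\nu_m-\nu)(dy).
\end{equation*}
For the near-diagonal term, the bound on $g_\lambda$ and the uniform Kato control give $\int_{|x-y|<\delta}|g_\lambda(x,y)|\,|\nu_m|(dy) \leq C M_{\nu_m}^{\alpha-1}(\delta) \leq C' M_\nu^{\alpha-1}(\delta)$, and similarly for $\nu$; since $M_\nu^{\alpha-1}(\delta)\to 0$ as $\delta \downarrow 0$, this term is $\leq \varepsilon/3$ uniformly in $x$ and $m$ once $\delta$ is small. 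For $|x| > R$ with $R$ large, the tail decay $|g_\lambda(x,y)| \lesssim \lambda^{-2}|x-y|^{-2-\alpha}$, combined with the fact that $\nu$ and hence $\nu_m$ have most of their mass in a fixed compact set (here one uses finiteness of $\nu$ and that mollification by $\varphi_m$ with shrinking support does not spread mass far), makes the whole difference small uniformly for $|x|>R$. It remains to treat the compact region $|x| \leq R$, $|x-y| \geq \delta$: there $g_\lambda(x,\cdot)$ is continuous and bounded (by \eqref{eq:lamPot_grad} it is bounded by $C_1\delta^{\alpha-2}$ on that region and it is jointly continuous away from the diagonal), and $\nu_m \to \nu$ weakly, so $\int g_\lambda(x,y)\mathbbm{1}_{|x-y|\geq\delta}(\nu_m-\nu)(dy)\to 0$ for each fixed $x$; an Arzelà--Ascoli / equicontinuity argument in $x$ (the family $\{x \mapsto \int_{|x-y|\geq\delta} g_\lambda(x,y)\nu_m(dy)\}_m$ is equicontinuous on $|x|\leq R$ because $\partial_x g_\lambda$ enjoys a comparable bound away from the diagonal, or alternatively by directly estimating the modulus of continuity of $g_\lambda$) upgrades this to uniform convergence on $|x|\leq R$. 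Combining the three regions gives $\sup_x |\frac{\partial}{\partial x}U_\lambda\nu_m(x) - \frac{\partial}{\partial x}U_\lambda\nu(x)| \leq \varepsilon$ for $m$ large.

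The main obstacle I anticipate is the careful handling of the non-diagonal, compact-in-$x$ region: one needs joint continuity of $g_\lambda$ off the diagonal together with a uniform (in $m$) modulus of continuity in $x$ of $x\mapsto \int g_\lambda(x,y)\mathbbm{1}_{|x-y|\geq\delta}\nu_m(dy)$ in order to pass from pointwise weak convergence to uniform convergence; the singularity of $g_\lambda$ at the diagonal means this modulus blows up as $\delta \downarrow 0$, so the order of choosing $\delta$ (small, from the Kato condition), then $R$ (large, from finiteness of $\nu$), then $m$ (large, from weak convergence and equicontinuity on the fixed compact annular region) must be respected. A secondary technical point is justifying that differentiation under the integral sign is legitimate, i.e. that $\frac{\partial}{\partial x}U_\lambda\nu(x) = \int_\R \frac{\partial}{\partial x}u_\lambda(x,y)\nu(dy)$; this follows from \eqref{eq:lamPot_grad}, which provides a $\nu$-integrable (locally uniformly in $x$) dominating function, allowing a standard dominated-convergence differentiation argument.
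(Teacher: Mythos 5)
Your overall strategy is sound and uses the same key ingredients as the paper (the gradient bound \eqref{eq:lamPot_grad}, the Kato condition near the diagonal with the uniform-in-$m$ bound $M^{\alpha-1}_{\nu_m}(r)\le CM^{\alpha-1}_\nu(r)$, the tail decay, and continuity of the kernel in the intermediate range), but the route through the middle and far regions is genuinely different from the paper's. The paper keeps the whole decomposition centered at $x$ — $B(x,r)$, $B(x,R)^c$, and the annulus $B(x,R)\setminus B(x,r)$ — so the far term is bounded simply by $\lambda^{-2}R^{-2-\alpha}|\nu|(\R)$, uniformly in $x$, with no concentration-of-mass argument; and on the annulus it never invokes weak convergence at all: by symmetry of $\varphi$ it rewrites $\int f_\lambda(x,y)\,\nu_m(dy)=\int(\varphi_m\ast f_\lambda(x,\cdot))(y)\,\nu(dy)$ with $f_\lambda(x,y)=\mathbbm{1}_{B(x,R)\setminus B(x,r)}(y)\,\partial_x u_\lambda(x,y)$, and then compares $\varphi_m\ast f_\lambda(x,\cdot)$ with $f_\lambda(x,\cdot)$ directly, using uniform continuity of $u_\lambda'$ on the annulus plus Kato estimates ($\sim M_\nu(4r)$) for the thin boundary layers where the indicators mismatch. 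This makes the uniformity in $x$ manifest and avoids your pointwise-to-uniform upgrade via equicontinuity and the splitting of $x$-space into $|x|\le R$ and $|x|>R$.

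Two places in your sketch need more than you give them, and they are precisely the delicate points the paper's proof is built around. First, in the region $|x|\le R$, $|x-y|\ge\delta$, the test function $y\mapsto g_\lambda(x,y)\mathbbm{1}_{|x-y|\ge\delta}$ is discontinuous at the two points $x\pm\delta$, so weak convergence $\nu_m\rightharpoonup\nu$ does not apply as stated; you must either observe that Kato-class measures are atomless (so the discontinuity set is $\nu$-null and a Portmanteau argument, applied to $\nu^\pm$ separately, goes through) or smooth the cutoff. Second, your equicontinuity claim only accounts for the regularity of $g_\lambda$ in $x$; when $x$ moves, the integration region $\{|x-y|\ge\delta\}$ moves too, and the symmetric-difference boundary layer carries $\nu_m$-mass that must be shown small uniformly in $m$ — this follows from $|\nu_m|(B(z,h))\le Ch^{2-\alpha}M^{\alpha-1}_\nu(h)$, i.e.\ again the Kato condition, and is the analogue of the paper's $J_1,J_2$ boundary terms. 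Also note the second-derivative bound on $u_\lambda$ you invoke is not established in the paper; your fallback (a modulus of continuity for $u_\lambda'$ away from the diagonal, which is what the paper uses) suffices. With these points filled in, and with the mass-outside-a-compact threshold in your $|x|>R$ argument chosen after $\delta$ (since the kernel there is only bounded by $\delta^{\alpha-2}$), your proof closes.
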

The proof of the above lemma is similar to the proof of  \cite[Lemma 3.5]{Kim14}. For the sake of completeness, we provide the proof in Appendix~\ref{appendix}.

The next to lemmas provide important estimates that will be used in several instances throughout this article.  They are slight modifications of \cite[Proposition 5.1, Proposition 5.2]{Kim14}.
Hereby the following constant, that depends on $\lambda$ plays a key role. 
Let $\delta \in (0,2/(2+\alpha))$ be fixed and define
\begin{equation}\label{eq:c_mu_lam}
	\mathcal{R}(\lambda, \nu) := C_1 \big(M_\nu^\alpha(\lambda^{-\delta})+|\nu|(\R)\lambda^{-2+\delta(2+\alpha)}\big),
\end{equation}
for $\nu \in K_{\alpha-1}^f$, where $C_1$ is the constant from \eqref{eq:lamPot_grad}. It is easy to see that 
\begin{equation*}
	\mathcal{R}(\lambda, \nu) \rightarrow 0,
\end{equation*}
as $ \lambda \rightarrow \infty$.
\begin{prop}\label{prop:grad_meas}$~$\par
	\begin{enumerate}
		\item Let $\nu \in K_{\alpha-1}^f$, then for all $\lambda > 0$, $U_\lambda\nu$ is a $\mathcal{C}^1$ function with 
		\begin{equation}
			\Big|\frac{\partial}{\partial x} U_\lambda \nu(x)\Big| \leq \Big|\frac{\partial}{\partial x} U_\lambda\Big||\nu| (x) 
			\leq \mathcal{R}(\lambda, \nu).
		\end{equation} 
		\item For all $\lambda >0$, there exists $C = C(\lambda)$ such that for every bounded measurable function $g$, 
		\begin{equation}
			\Big|\frac{\partial}{\partial x} U_\lambda g\Big| \leq \Big|\frac{\partial}{\partial x} U_\lambda \Big||g|(x) \leq C(\lambda) \|g\|_\infty, \quad \forall x \in \R
		\end{equation}
		where $C(\lambda) \rightarrow 0$ as $\lambda \rightarrow \infty$.
	\end{enumerate}
\end{prop}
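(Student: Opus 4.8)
The plan is to prove both parts by directly estimating the integral defining $\frac{\partial}{\partial x}U_\lambda$ against the measure $|\nu|$ (respectively the function $|g|$), using the two-sided gradient bound \eqref{eq:lamPot_grad}. The crucial point is that $\big|\frac{\partial}{\partial x}u_\lambda(x,y)\big| \le C_1\big(|x-y|^{\alpha-2}\wedge \lambda^{-2}|x-y|^{-2-\alpha}\big)$ splits the $y$-integral into a "near" region $|x-y|\le \lambda^{-\delta}$ and a "far" region $|x-y|>\lambda^{-\delta}$, and the choice $\delta\in(0,2/(2+\alpha))$ is exactly what makes both contributions vanish as $\lambda\to\infty$.

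For part (1), I would first justify that $U_\lambda\nu$ is $\mathcal{C}^1$ with $\frac{\partial}{\partial x}U_\lambda\nu(x)=\int_\R \frac{\partial}{\partial x}u_\lambda(x,y)\,\nu(dy)$: by Lemma~\ref{lem:deriv_u_lam_conv_append} the mollifications $U_\lambda\nu_m$ have derivatives $\int_\R\frac{\partial}{\partial x}u_\lambda(x,y)\nu_m(y)\,dy$ converging uniformly, and one checks the limit is continuous, so differentiation under the integral is legitimate; this also gives the first inequality $|\frac{\partial}{\partial x}U_\lambda\nu(x)|\le |\frac{\partial}{\partial x}U_\lambda||\nu|(x)$ by the triangle inequality. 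Then on the near region I bound $\int_{|x-y|\le\lambda^{-\delta}}|x-y|^{\alpha-2}|\nu|(dy)$; writing this as a sum over dyadic annuli $2^{-k-1}\lambda^{-\delta}<|x-y|\le 2^{-k}\lambda^{-\delta}$ and using that on each annulus $|x-y|^{\alpha-2}\le (2^{-k-1}\lambda^{-\delta})^{\alpha-2}$ together with the definition of the Kato modulus $M_\nu^\alpha$ (note $d=1$, so $d-\eta$ with $\eta=\alpha$ gives $|x-y|^{\alpha-2}$ after $d=1$, matching $M^\alpha_\nu$), one sums the geometric series to get a bound of the form $C\,M_\nu^\alpha(\lambda^{-\delta})$. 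On the far region I bound $\int_{|x-y|>\lambda^{-\delta}}\lambda^{-2}|x-y|^{-2-\alpha}|\nu|(dy)\le \lambda^{-2}(\lambda^{-\delta})^{-2-\alpha}|\nu|(\R)=|\nu|(\R)\lambda^{-2+\delta(2+\alpha)}$. Adding the two and absorbing constants into $C_1$ yields $|\frac{\partial}{\partial x}U_\lambda||\nu|(x)\le \mathcal{R}(\lambda,\nu)$, uniformly in $x$, which is precisely \eqref{eq:c_mu_lam}. That $\mathcal{R}(\lambda,\nu)\to0$ follows since $M_\nu^\alpha(r)\to 0$ as $r\to0$ (as $\nu\in K_{\alpha-1}\subseteq$ Kato class of a smaller index, or directly from $\eta\le\alpha-1<\alpha$) and $\delta<2/(2+\alpha)$ forces $-2+\delta(2+\alpha)<0$.

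For part (2), I would run the identical near/far decomposition with $|\nu|(dy)$ replaced by $|g(y)|\,dy\le \|g\|_\infty\,dy$. On the near region $\int_{|x-y|\le\lambda^{-\delta}}|x-y|^{\alpha-2}\,dy=\frac{2}{\alpha-1}(\lambda^{-\delta})^{\alpha-1}$ is finite because $\alpha>1$; on the far region $\int_{|x-y|>\lambda^{-\delta}}\lambda^{-2}|x-y|^{-2-\alpha}\,dy=\frac{2}{1+\alpha}\lambda^{-2}(\lambda^{-\delta})^{-1-\alpha}$. Hence $|\frac{\partial}{\partial x}U_\lambda||g|(x)\le C_1\|g\|_\infty\big(\tfrac{2}{\alpha-1}\lambda^{-\delta(\alpha-1)}+\tfrac{2}{1+\alpha}\lambda^{-2+\delta(1+\alpha)}\big)=:C(\lambda)\|g\|_\infty$, and both exponents are negative (the second because $\delta<2/(2+\alpha)<2/(1+\alpha)$), so $C(\lambda)\to0$ as $\lambda\to\infty$; the continuity of $U_\lambda g$ and differentiation under the integral are again standard given the integrability just established.

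The main obstacle is the bookkeeping in the near-region dyadic estimate for the measure case: one must verify that the geometric series of the annular Kato contributions converges (using $\alpha-2<0$ so that $(2^{-k}\lambda^{-\delta})^{\alpha-2}$ grows geometrically in $k$, while $M_\nu^\alpha$ over the annulus at scale $2^{-k}\lambda^{-\delta}$ is controlled by $M_\nu^\alpha(\lambda^{-\delta})$), and to be careful that the definition of $M^\alpha_\nu(r)$ uses balls $B(x,r)$ rather than annuli, which only costs a harmless constant. Everything else is a routine application of \eqref{eq:lamPot_grad}, Lemma~\ref{lem:deriv_u_lam_conv_append}, and the elementary power-function integrals above.
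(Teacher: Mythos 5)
Your overall strategy coincides with the paper's: differentiate under the integral (justified by the absolute integrability coming from \eqref{eq:lamPot_grad}), split the $y$-integral at radius $\lambda^{-\delta}$, bound the far part by $\lambda^{-2}(\lambda^{-\delta})^{-2-\alpha}|\nu|(\R)$, and for part (2) replace $|\nu|(dy)$ by $\|g\|_\infty\,dy$ and compute elementary power integrals. All of that is what the paper does and is fine.

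The one place where you deviate — the dyadic treatment of the near region — is both unnecessary and, as written, incorrect. The integral $\int_{B(x,\lambda^{-\delta})}|x-y|^{\alpha-2}|\nu|(dy)$ is, by Definition~\ref{def:Kato} with $d=1$ and $\eta=\alpha-1$ (so $d-\eta=2-\alpha$), exactly the quantity whose supremum over $x$ defines the Kato modulus $M^{\alpha-1}_\nu(\lambda^{-\delta})$; it is therefore bounded by that modulus with no decomposition at all. (Your bookkeeping ``$d-\eta$ with $\eta=\alpha$ gives $|x-y|^{\alpha-2}$'' is an arithmetic slip: $\eta=\alpha$ would give $|x-y|^{\alpha-1}$; the superscript $\alpha$ in \eqref{eq:c_mu_lam} is a notational inconsistency of the paper, and elsewhere, e.g.\ in the proof of Theorem~\ref{th:local_time}, the same integral is bounded by $M^{\alpha-1}_\mu$.) Moreover, your dyadic argument does not close: on the $k$-th annulus you bound $|x-y|^{\alpha-2}\le (2^{-k-1}\lambda^{-\delta})^{\alpha-2}$, which grows like $2^{k(2-\alpha)}$, while the only control the Kato modulus gives on the mass is $|\nu|\big(B(x,2^{-k}\lambda^{-\delta})\big)\le (2^{-k}\lambda^{-\delta})^{2-\alpha}M^{\alpha-1}_\nu(\lambda^{-\delta})$, so each annulus contributes a quantity of order $M^{\alpha-1}_\nu(\lambda^{-\delta})$ uniformly in $k$: the series is constant, not geometric, and diverges. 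The fix is simply to delete the decomposition and invoke the definition of the Kato class directly, as the paper does. A further minor point: justifying the $\mathcal{C}^1$ property via Lemma~\ref{lem:deriv_u_lam_conv_append} is slightly circular, since that lemma's statement (and its proof) already presupposes the Leibniz-rule identity for $\frac{\partial}{\partial x}U_\lambda\nu$; the cleaner route, taken in the paper, is to apply the Leibniz rule directly, which the integrability supplied by \eqref{eq:lamPot_grad} justifies.
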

\begin{proof}
	The proof is analogous to the proof of \cite[Proposition 5.1]{Kim14}, with only slight modifications to accommodate for the fact that $\mu$ is merely a finite measure and not necessarily a measure with  compact support. 
	Since $\frac{\partial}{\partial x}u_\lambda$ is integrable with respect to $\nu$ due to
	\eqref{eq:lamPot_grad} it follows by the Leibniz integral rule that
	\begin{equation*}
		\frac{\partial}{\partial x} U_\lambda \nu(x) = \int_{\R}\frac{\partial}{\partial x}u_\lambda(x,y) \nu(dy), \quad 	\frac{\partial}{\partial x} U_\lambda g(x) = \int_{\R}\frac{\partial}{\partial x}u_\lambda(x,y) g(y)dy,
	\end{equation*}
	and thus as a consequence of \eqref{eq:lamPot_grad} both $\frac{\partial}{\partial x} U_\lambda \nu(x)$ 
	and $\frac{\partial}{\partial x} U_\lambda g(x)$ are continuous.
	Using \eqref{eq:lamPot_grad} again we obtain
	\begin{align*}
		&\Big|\int_{\R}\frac{\partial}{\partial x}u_\lambda(x,y) \nu(dy) \Big|\leq C_1 \Big(\int_\R| x-y|^{\alpha-2}\wedge \lambda^{-2}|x-y|^{-2-\alpha}|\nu|(dy)\Big)\\
		&\leq   C_1 \Big(\int_{B(x,\lambda^{-\delta})}| x-y|^{\alpha-2}|\nu|(dy)+ \int_{B(x,\lambda^{-\delta})^c}\lambda^{-2}|x-y|^{-2-\alpha}|\nu|(dy)\Big)\\
		& \leq C_1\Big(M_{\nu}^\alpha(\lambda^{-\delta} ) + c({\nu})\lambda^{-2+\delta(2+\alpha)}\Big),
	\end{align*}
	and 
	\begin{align*}
		\Big|\frac{\partial}{\partial x} U_\lambda g(x)\Big| \leq \|g\|_{\infty} \int_\R \big|\frac{\partial}{\partial x} u_\lambda(x,y)\big|dy \leq C(\lambda) \|g\|_{\infty}
	\end{align*}
	This finishes the proof.
\end{proof}

\begin{prop}\label{prop:total_var_Nmu}
	Suppose that $\mu, \nu \in K_{\alpha-1}^f$.Then
	$\tilde{\nu} := N_{\mu}U_\lambda \nu$
	and 
	$\tilde{\nu}_n := N_{\mu_n} U_\lambda\nu $ are in $K_{\alpha -1}$. Moreover, for all $r>0$, $\lambda >0$,
	\begin{equation}
		M^\alpha_{\tilde{\nu}_n}(r) \leq \mathcal{R}(\lambda,\nu) M_\mu^\alpha(r), \quad M^\alpha_{\tilde{\nu}}(r) \leq \mathcal{R}(\lambda,\nu) M_\mu^\alpha(r)
	\end{equation}
	and 
	\begin{align*}
		|\tilde{\nu}|(dx) \leq \mathcal{R}(\lambda,\nu) |\mu|(dx), \quad 	|\tilde{\nu}_n|(dx) \leq \mathcal{R}(\lambda,\nu) |\mu_n|(dx).
	\end{align*}
\end{prop}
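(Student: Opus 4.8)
The plan is to read everything off from Proposition~\ref{prop:grad_meas}(1), since $N_\mu U_\lambda\nu$ is simply the measure $\mu$ reweighted by the function $\frac{\partial}{\partial x}U_\lambda\nu$. Set $g:=\frac{\partial}{\partial x}U_\lambda\nu$. Because $\nu\in K_{\alpha-1}^f$, Proposition~\ref{prop:grad_meas}(1) gives that $g$ is continuous with $\sup_{x\in\R}\abs{g(x)}\le\mathcal{R}(\lambda,\nu)$; in particular $g\,\mu$ and $g\,\mu_n$ are well-defined finite signed Radon measures, and by definition of $N$ we have $\tilde\nu(dx)=g(x)\,\mu(dx)$ and $\tilde\nu_n(dx)=g(x)\,\mu_n(dx)$. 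Writing $\mu=\frac{d\mu}{d\abs{\mu}}\abs{\mu}$, the total variations are $\abs{\tilde\nu}(dx)=\abs{g(x)}\,\abs{\mu}(dx)$ and $\abs{\tilde\nu_n}(dx)=\abs{g(x)}\,\abs{\mu_n}(dx)$, so the uniform bound on $\abs g$ yields at once the last displayed inequalities $\abs{\tilde\nu}(dx)\le\mathcal{R}(\lambda,\nu)\abs{\mu}(dx)$, $\abs{\tilde\nu_n}(dx)\le\mathcal{R}(\lambda,\nu)\abs{\mu_n}(dx)$, together with finiteness $\abs{\tilde\nu}(\R)\le\mathcal{R}(\lambda,\nu)\abs{\mu}(\R)<\infty$ (and similarly for $\tilde\nu_n$).

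For the Kato quantities I would integrate this domination against the kernel $\abs{x-y}^{\alpha-2}$: for every $x\in\R$ and $r>0$,
\[
\int_{B(x,r)}\abs{x-y}^{\alpha-2}\,\abs{\tilde\nu}(dy)=\int_{B(x,r)}\abs{x-y}^{\alpha-2}\abs{g(y)}\,\abs{\mu}(dy)\le\mathcal{R}(\lambda,\nu)\int_{B(x,r)}\abs{x-y}^{\alpha-2}\,\abs{\mu}(dy),
\]
and taking the supremum over $x$ gives $M^\alpha_{\tilde\nu}(r)\le\mathcal{R}(\lambda,\nu)M^\alpha_\mu(r)$. The identical computation gives $M^\alpha_{\tilde\nu_n}(r)\le\mathcal{R}(\lambda,\nu)M^\alpha_{\mu_n}(r)$, after which one absorbs $M^\alpha_{\mu_n}$ into $M^\alpha_\mu$: using $\abs{\mu_n(z)}\le\int_\R\varphi_n(z-y)\abs{\mu}(dy)$, Fubini, the substitution $z=y+w$ and translation invariance of the balls $B(x,r)$ in $\R$, the inner integral is bounded by $M^\alpha_\mu(r)$ uniformly in $w$, and then $\int_\R\varphi_n=1$ yields $M^\alpha_{\mu_n}(r)\le M^\alpha_\mu(r)$ (cf.\ Remark~\ref{rema:molli}). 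Finally, since $M^\alpha_\mu(r)\to0$ as $r\downarrow0$ (this is exactly $\mu\in K^f_{\alpha-1}$), the displayed bounds force $M^\alpha_{\tilde\nu}(r)\to0$ and $M^\alpha_{\tilde\nu_n}(r)\to0$, which with the finiteness above shows $\tilde\nu,\tilde\nu_n\in K_{\alpha-1}$ (in fact in $K^f_{\alpha-1}$).

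I do not expect any genuine obstacle here: the statement is essentially the assertion that multiplying a Kato measure by a uniformly bounded density preserves membership and rescales all the relevant constants by the bound on the density, the bound in this case being $\mathcal{R}(\lambda,\nu)$ from Proposition~\ref{prop:grad_meas}(1). The only mildly technical step, as indicated, is the mollification estimate $M^\alpha_{\mu_n}(r)\le M^\alpha_\mu(r)$, which is dispatched by Fubini and translation invariance.
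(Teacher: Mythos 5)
Your proof is correct and follows exactly the route the paper takes: the paper's own proof consists of the single line that the statement is an immediate consequence of Proposition~\ref{prop:grad_meas}, and your argument simply spells this out — $\tilde\nu$ and $\tilde\nu_n$ are $\mu$ (resp.\ $\mu_n$) reweighted by the uniformly bounded density $\frac{\partial}{\partial x}U_\lambda\nu$, so all bounds scale by $\mathcal{R}(\lambda,\nu)$. Your extra step $M^\alpha_{\mu_n}(r)\le M^\alpha_\mu(r)$ (via Fubini and translation invariance) is the same standard mollification estimate recorded in Remark~\ref{rema:molli}, so nothing in your write-up deviates from the paper's intent.
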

\begin{proof}
	This is an immediate consequence of Proposition~\ref{prop:grad_meas}.
\end{proof}

Finally, we can derive \eqref{eq:pot_as_sum}.
\begin{prop}\label{prop:pot_sum}
	For all measures $\nu \in K_{\alpha-1}^f$ and all $g \in \mathcal{C}_b$ there exists $\lambda_0 > 0$ such that
	for all $\lambda \geq \lambda_0$ it holds that
	\begin{equation}\label{eq:potential_sum_nu}
		W^{\mu}_\lambda\nu(x) = \sum_{k = 0}^\infty U_\lambda (N_{\mu }U_\lambda)^k\nu(x), \quad \forall x \in \R,
	\end{equation}
	and
	\begin{equation}\label{eq:potential_sum_g}
		W^{\mu}_\lambda g(x) = \sum_{k = 0}^\infty U_\lambda (N_{\mu }U_\lambda)^kg(x), \quad \forall x \in \R.
	\end{equation}
\end{prop}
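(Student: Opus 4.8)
The plan is to prove the two identities \eqref{eq:potential_sum_nu} and \eqref{eq:potential_sum_g} by a Neumann-series argument, using the estimates of Proposition~\ref{prop:grad_meas} and Proposition~\ref{prop:total_var_Nmu} to guarantee convergence of the series in the sup-norm, and then identifying the sum with $W_\lambda^\mu$ via the resolvent equation. First I would fix $\nu \in K_{\alpha-1}^f$ and choose $\lambda_0$ so large that $\mathcal{R}(\lambda,\nu) < 1$ for all $\lambda \geq \lambda_0$; this is possible since $\mathcal{R}(\lambda,\nu)\to 0$ as $\lambda\to\infty$. By Proposition~\ref{prop:total_var_Nmu}, iterating $N_\mu U_\lambda$ produces measures with total-variation mass controlled by $\mathcal{R}(\lambda,\nu)^k|\mu|(\R)$ (after the first application one lands in the space of measures dominated by a multiple of $|\mu|$, and each further step costs a factor $\mathcal{R}(\lambda,\mu)$), so the series $\sum_k U_\lambda (N_\mu U_\lambda)^k\nu$ converges absolutely and uniformly on $\R$, using the bound \eqref{eq:lamPot_bound} on $u_\lambda$ to pass from the total variation of the iterated measure to a sup-bound on its $U_\lambda$-potential. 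The same bookkeeping, now starting from Proposition~\ref{prop:grad_meas}(2), handles the case of $g \in \mathcal{C}_b$.

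Next I would check that the sum $S_\lambda\nu := \sum_{k=0}^\infty U_\lambda(N_\mu U_\lambda)^k\nu$ solves the perturbed resolvent equation, i.e.\ $S_\lambda\nu = U_\lambda\nu + U_\lambda N_\mu S_\lambda\nu$, which follows formally by splitting off the $k=0$ term and re-indexing; the interchange of the (infinite) sum with the operators $U_\lambda$ and $N_\mu U_\lambda$ is justified by the absolute convergence just established together with dominated convergence (the kernels $u_\lambda$ and $\partial_x u_\lambda$ are integrable against the relevant measures by \eqref{eq:lamPot_est_bod}--\eqref{eq:lamPot_grad}). On the other hand, $W_\lambda^\mu$ is the $\lambda$-potential of the semigroup $(P_t^\mu)_{t\geq 0}$ whose generator is the perturbation $\Delta_\alpha^\mu = \Delta_\alpha + \mu\,\partial_x$ of $\Delta_\alpha$, so $W_\lambda^\mu\nu$ satisfies the same resolvent identity $W_\lambda^\mu\nu = U_\lambda\nu + U_\lambda N_\mu W_\lambda^\mu\nu$ — this is the integral (Duhamel) form of the perturbation series \eqref{eq:dens_formal} for $p^\mu$, and can be read off from Theorem~\ref{th:transition_kernel} together with Lemma~\ref{lem:pot_conv}. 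One then shows uniqueness of bounded solutions to this fixed-point equation for $\lambda \geq \lambda_0$: if $h_1,h_2$ are two bounded solutions, then $h_1 - h_2 = U_\lambda N_\mu(h_1-h_2)$, and iterating $N$ times and using $\|U_\lambda N_\mu\|_{\infty\to\infty} \leq \mathcal{R}(\lambda,\mu) < 1$ (via Proposition~\ref{prop:grad_meas}(2) applied to the bounded function $h_1-h_2$) forces $h_1 = h_2$. Hence $S_\lambda\nu = W_\lambda^\mu\nu$, giving \eqref{eq:potential_sum_nu}, and the identical argument with $g$ in place of $\nu$ gives \eqref{eq:potential_sum_g}.

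The main obstacle I anticipate is the careful justification of the operator manipulations: both that $N_\mu U_\lambda$ maps into $K_{\alpha-1}^f$ with the claimed contraction constant after the first step (so that one really gets a geometric series rather than just a formal one), and that $W_\lambda^\mu$ genuinely satisfies the resolvent identity with the singular perturbation $N_\mu$ — this requires knowing that $\frac{\partial}{\partial x} W_\lambda^\mu\nu$ is a well-defined continuous function so that $N_\mu W_\lambda^\mu\nu = (\partial_x W_\lambda^\mu\nu)\,\mu(dx)$ makes sense, which in turn should follow by applying Proposition~\ref{prop:grad_meas} termwise to the series and invoking uniform convergence of the differentiated series (again controlled by $\mathcal{R}(\lambda,\nu)$). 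Passing between the mollified objects $\mu_n,\nu_n$ and their limits, where needed, is supplied by Lemma~\ref{lem:pot_conv} and Lemma~\ref{lem:deriv_u_lam_conv_append}. Everything else is routine once the convergence estimates are in place, and the structure closely parallels \cite[Proposition 5.3]{Kim14}.
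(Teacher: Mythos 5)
There is a genuine gap in your identification step. Your argument hinges on showing that $W_\lambda^\mu\nu$ itself satisfies the perturbed resolvent identity $W_\lambda^\mu\nu = U_\lambda\nu + U_\lambda N_\mu W_\lambda^\mu\nu$, and to even write the right-hand side you need $\frac{\partial}{\partial x}W_\lambda^\mu\nu$ to exist as a bounded continuous function so that $N_\mu W_\lambda^\mu\nu = (\partial_x W_\lambda^\mu\nu)\,\mu(dx)$ is a well-defined measure. The justification you offer — differentiate the series $\sum_k U_\lambda(N_\mu U_\lambda)^k\nu$ termwise and use its uniform convergence — is circular, because identifying $W_\lambda^\mu\nu$ with that series is exactly the statement \eqref{eq:potential_sum_nu} being proved. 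Nor can the Duhamel identity simply be ``read off'' from Theorem~\ref{th:transition_kernel}: that theorem only gives $p^\mu$ as the locally uniform limit of the kernel series and the fundamental-solution property in a distributional sense; turning this into the resolvent identity with a genuinely measure-valued drift requires an approximation argument. This is precisely how the paper proceeds (following \cite[Proposition 5.5]{Kim14}, not 5.3): the identity is first established for the mollified drifts $\mu_n$, where it is classical, and then one passes to the limit term by term, using Lemma~\ref{lem:pot_conv} for the left-hand side, Lemma~\ref{lem:deriv_u_lam_conv_append} for the gradients, and Propositions~\ref{prop:grad_meas}, \ref{prop:total_var_Nmu} for a uniform geometric bound on the tails. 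If you want to keep your fixed-point scheme, you would still have to supply this approximation step (or the probabilistic iteration used in Lemma~\ref{lem:weak_unique}) to verify that $W_\lambda^\mu\nu$ solves the equation, at which point your route collapses into the paper's.

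A secondary inaccuracy: your uniqueness argument treats $N_\mu$ as if it acted on bounded functions, invoking Proposition~\ref{prop:grad_meas}(2) and $\|U_\lambda N_\mu\|_{\infty\to\infty}\leq \mathcal{R}(\lambda,\mu)$. But $N_\mu h = h'\,\mu$, so $N_\mu(h_1-h_2)$ is only defined if $h_1-h_2$ is differentiable; the contraction must be run in a class like $\mathcal{C}^1_b$, estimating $\|(h_1-h_2)'\|_\infty \leq \mathcal{R}(\lambda,\mu)\|(h_1-h_2)'\|_\infty$ via Proposition~\ref{prop:grad_meas}(1) applied to the measure $(h_1-h_2)'\mu$, and one must also check that any candidate solution (in particular $W_\lambda^\mu\nu$) lies in that class — which brings you back to the regularity issue above. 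The convergence of the Neumann series itself (your first paragraph) is fine and matches the estimates the paper uses.
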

\begin{proof}
	The proof essentially follows along the same lines as the proof of \cite[Proposition 5.5]{Kim14}. 
	It is only necessary to make some adaptations to account for the fact that we allow for more general measures $\mu$ by choosing $\lambda_0$ 
	large enough and replacing Lemma 3.5, Lemma 4.7,  Proposition 5.1, Proposition 5.2 from \cite{Kim14} by 
	Lemma~\ref{lem:deriv_u_lam_conv_append}, Lemma~\ref{lem:pot_conv},  Proposition~\ref{prop:grad_meas} and Proposition~\ref{prop:total_var_Nmu}.
\end{proof}
The proposition above also suggests that $W_\lambda^\mu$ has a density $w_\lambda$. In fact,  following the arguments from \cite{Bogdan07} yields the following lemma.
\begin{lemm}\label{lem:pot_mu_dens}
	There exists $\lambda_0>0$ such that for all $\lambda \geq \lambda_0$
	the operator $W_\lambda$ has the kernel 
	\begin{equation*}
		w_\lambda^\mu(x,y) =\sum_{k = 0}^\infty (u_\lambda \ast (\frac{\partial}{\partial x}u_\lambda \mu)^{\ast k})(x,y) , \quad x\neq y, x,y \in \R.
	\end{equation*}
\end{lemm}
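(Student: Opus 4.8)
The plan is to build the kernel $w_\lambda^\mu$ directly as the convolution series
\begin{equation*}
	w_\lambda^\mu(x,y) = \sum_{k=0}^\infty \big(u_\lambda \ast (\tfrac{\partial}{\partial x}u_\lambda\,\mu)^{\ast k}\big)(x,y),
\end{equation*}
and then to identify the associated integral operator with $W_\lambda^\mu$ by pairing against test functions $g\in\mathcal{C}_b$ and invoking the series representation \eqref{eq:potential_sum_g} from Proposition~\ref{prop:pot_sum}. Concretely, I would first fix a convolution convention: write $(f\ast h)(x,y) = \int_\R f(x,z)h(z,y)$ and $( \tfrac{\partial}{\partial x}u_\lambda\,\mu)(x,z) := \tfrac{\partial}{\partial x}u_\lambda(x,z)$ interpreted as a kernel acting by $z\mapsto\mu(dz)$, so that $\big(u_\lambda\ast(\tfrac{\partial}{\partial x}u_\lambda\,\mu)\big)(x,y) = \int_\R u_\lambda(x,z_1)\tfrac{\partial}{\partial x}u_\lambda(z_1,y)\,\mu(dz_1)$ and the $k$-th term is the obvious iterated integral with $k$ copies of $\mu$. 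This matches $U_\lambda(N_\mu U_\lambda)^k$ at the level of kernels, since $N_\mu U_\lambda\nu(dx) = \tfrac{\partial}{\partial x}U_\lambda\nu(x)\,\mu(dx)$ by definition and $U_\lambda$ has density $u_\lambda$.

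The key analytic input is the gradient estimate \eqref{eq:lamPot_grad}, which gives $|\tfrac{\partial}{\partial x}u_\lambda(x,y)| \le C_1\big(|x-y|^{\alpha-2}\wedge\lambda^{-2}|x-y|^{-2-\alpha}\big)$, together with the bound \eqref{eq:lamPot_bound} on $u_\lambda$ itself. These are exactly the ingredients behind Proposition~\ref{prop:grad_meas} and Proposition~\ref{prop:total_var_Nmu}: iterating the latter, the total variation of the measure-kernel obtained after $k$ convolutions with $\tfrac{\partial}{\partial x}u_\lambda\,\mu$ is controlled by $\mathcal{R}(\lambda,\cdot)^k$ times $|\mu|^{\ast k}$-type quantities, and since $\mathcal{R}(\lambda,\mu)\to 0$ as $\lambda\to\infty$, one can choose $\lambda_0$ large enough that $\mathcal{R}(\lambda,\mu)<1$ for all $\lambda\ge\lambda_0$. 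Combined with $\int_\R u_\lambda(x,z)\,dz\le c(\lambda)$ (a finite constant, since $u_\lambda$ is bounded and the tail decay in \eqref{eq:lamPot_est_bod} is integrable), a geometric-series argument shows the $k$-sum defining $w_\lambda^\mu$ converges absolutely and locally uniformly off the diagonal, so $w_\lambda^\mu(x,y)$ is a well-defined finite kernel for $x\ne y$, and each layer except possibly the outermost $u_\lambda\ast$ is bounded. I would follow the computation in \cite[Lemma 7, Lemma 9]{Bogdan07} (and \cite[Proposition 2.3]{Chen12}) essentially verbatim here.

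Having established convergence, the identification step is to show that for every $g\in\mathcal{C}_b$ and every $x\in\R$,
\begin{equation*}
	\int_\R w_\lambda^\mu(x,y)\,g(y)\,dy = \sum_{k=0}^\infty U_\lambda(N_\mu U_\lambda)^k g(x) = W_\lambda^\mu g(x),
\end{equation*}
where the last equality is \eqref{eq:potential_sum_g}. For this I would justify interchanging $\int_\R(\cdot)g(y)\,dy$ with the sum over $k$ using the absolute-convergence bounds just obtained (dominated convergence with the geometric majorant), and then check term-by-term that $\int_\R \big(u_\lambda\ast(\tfrac{\partial}{\partial x}u_\lambda\,\mu)^{\ast k}\big)(x,y)g(y)\,dy$ equals $U_\lambda(N_\mu U_\lambda)^k g(x)$; this is just Fubini together with the identities $U_\lambda h(x)=\int u_\lambda(x,z)h(z)\,dz$ and $N_\mu U_\lambda h(dz)=\tfrac{\partial}{\partial x}U_\lambda h(z)\,\mu(dz)=\big(\int\tfrac{\partial}{\partial x}u_\lambda(z,\cdot)\,d(\cdot)\big)\mu(dz)$ applied iteratively. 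The main obstacle I anticipate is not conceptual but bookkeeping: one must be careful that the innermost application of $\tfrac{\partial}{\partial x}u_\lambda$ is being integrated against $g(y)\,dy$ (a bounded density) rather than against $\mu$, so that the relevant estimate there is $\int_\R|\tfrac{\partial}{\partial x}u_\lambda(z,y)|\,dy\le C(\lambda)$ from Proposition~\ref{prop:grad_meas}(2), and that the Fubini interchanges are legitimate at each level — all of which is handled by the decay in \eqref{eq:lamPot_grad}. Since this is entirely parallel to \cite{Bogdan07}, I would state the lemma and refer to that computation, as the excerpt already does for Theorem~\ref{th:transition_kernel}.
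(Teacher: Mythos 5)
Your proposal is correct, and it differs from the paper's route mainly in the identification step. The paper's own proof is a one-line deferral to \cite[p.191--192]{Bogdan07}, where the resolvent kernel is obtained by Laplace-transforming in time the heat-kernel perturbation series of Theorem~\ref{th:transition_kernel} (so the time--space convolutions $\circledast$ become the spatial convolutions $\ast$ term by term, justified by Fubini and the heat-kernel bounds). You instead construct the series directly and identify it with $W_\lambda^\mu$ by pairing against $g\in\mathcal{C}_b$ and invoking the operator expansion \eqref{eq:potential_sum_g} of Proposition~\ref{prop:pot_sum}. Both arguments rest on the same analytic inputs --- \eqref{eq:lamPot_grad}, \eqref{eq:lamPot_bound} and the smallness $\mathcal{R}(\lambda,\mu)<1$ for $\lambda\geq\lambda_0$ --- and your convergence bound is sound: since $u_\lambda$ is bounded in $d=1$ and $|\partial_x u_\lambda(x,y)|$ depends only on $|x-y|$, integrating the factors against $|\mu|$ one at a time gives the uniform geometric majorant $c(\lambda)\mathcal{R}(\lambda,\mu)^k$, so you do not even need the 3G-type comparison of Lemma~\ref{lem:grad_est_pot_dens} for this lemma. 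What your route buys is self-containedness within the paper (Proposition~\ref{prop:pot_sum} is already proved, so no reference to the time-dependent series is needed); what it costs is the extra Fubini bookkeeping you already flag (the innermost $\partial_x u_\lambda$ is integrated against $g(y)\,dy$, handled by Proposition~\ref{prop:grad_meas}(2), the outer ones against $\mu$), plus the small remark that agreement of $\int w_\lambda^\mu(x,y)g(y)\,dy$ with $W_\lambda^\mu g(x)$ for all $g\in\mathcal{C}_b$ identifies the density only for a.e.\ $y$; the geometric majorant together with continuity of each term gives continuity of $w_\lambda^\mu$ off the diagonal, which upgrades this to the pointwise statement $x\neq y$ claimed in the lemma.
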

\begin{proof}
	Follow the proof in \cite[p.191-192]{Bogdan07} line by line, while only making trivial adaptations where needed.
\end{proof}

In the next lemma we derive estimates for $W^\mu_\lambda\mu$. This map will be used 
in the Zvonkin transformation in Section~\ref{sec:pathwise}.
\begin{lemm}\label{lemm:wlmm_bound_deriv}
	There exists $\lambda_0>0$ such that for all $\lambda \geq  \lambda _0$ the following holds. There exist $C(\lambda)>0$ such that
	\begin{equation*}
		\|(W_{\lambda}^{\mu}\mu) '\|_\infty + \|W_{\lambda}^{\mu}\mu \| _\infty \leq C(\lambda),
	\end{equation*}
	where $C(\lambda) \rightarrow 0$ as $\lambda \rightarrow \infty$.
\end{lemm}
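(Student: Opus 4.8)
I want to bound $\|W^\mu_\lambda\mu\|_\infty$ and $\|(W^\mu_\lambda\mu)'\|_\infty$ using the series representation \eqref{eq:potential_sum_nu} with $\nu=\mu$, together with the estimates of Proposition~\ref{prop:grad_meas} and Proposition~\ref{prop:total_var_Nmu}. Throughout I work with $\lambda$ at least as large as the $\lambda_0$ produced by Proposition~\ref{prop:pot_sum}, and I will enlarge $\lambda_0$ further so that $\mathcal{R}(\lambda,\mu)<1/2$, say; recall $\mathcal{R}(\lambda,\mu)\to 0$ as $\lambda\to\infty$.

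\textbf{Step 1: control of the $k$-th term.} Set $\nu_k := (N_\mu U_\lambda)^k\mu$ and note that $\nu_0=\mu$, and $\nu_{k}=N_\mu U_\lambda \nu_{k-1}$. By iterating the total-variation bound of Proposition~\ref{prop:total_var_Nmu} (applied with the roles "$\mu$" fixed and "$\nu$" successively equal to $\nu_{k-1}$) one gets, provided $\mathcal{R}(\lambda,\nu_{k-1})\le \mathcal{R}(\lambda,\mu)$ at each stage,
\begin{equation*}
	|\nu_k|(\R) \le \mathcal{R}(\lambda,\mu)\,|\nu_{k-1}|(\R) \le \cdots \le \mathcal{R}(\lambda,\mu)^k\,|\mu|(\R),
\end{equation*}
and likewise $M^\alpha_{\nu_k}(r)\le \mathcal{R}(\lambda,\mu)^k M^\alpha_\mu(r)$. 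Here I must check that $\mathcal R(\lambda,\nu_{k-1})\le \mathcal R(\lambda,\mu)$; since $\mathcal R(\lambda,\nu)=C_1(M^\alpha_\nu(\lambda^{-\delta})+|\nu|(\R)\lambda^{-2+\delta(2+\alpha)})$ is monotone in both $M^\alpha_\nu$ and $|\nu|(\R)$, and both quantities decrease along the chain, this is automatic. Consequently $\nu_k\in K^f_{\alpha-1}$ for every $k$ with $\mathcal{R}(\lambda,\nu_k)\le\mathcal{R}(\lambda,\mu)^{k+1}$ going to $0$ geometrically.

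\textbf{Step 2: summing.} For each $k\ge 0$, $U_\lambda\nu_k = W^{(0)}_\lambda$ applied to a finite measure, so by \eqref{eq:lamPot_bound} (the bound $|u_\lambda|\le c(\lambda)$) and $|\nu_k|(\R)\le\mathcal R(\lambda,\mu)^k|\mu|(\R)$,
\begin{equation*}
	\|U_\lambda\nu_k\|_\infty \le c(\lambda)\,|\nu_k|(\R) \le c(\lambda)\,\mathcal{R}(\lambda,\mu)^k\,|\mu|(\R),
\end{equation*}
and by Proposition~\ref{prop:grad_meas}(1), $\|(U_\lambda\nu_k)'\|_\infty\le \mathcal R(\lambda,\nu_k)\le\mathcal R(\lambda,\mu)^{k+1}$ (using Step 1, noting $\nu_k$ plays the role of $\nu$ in that proposition and $M^\alpha_{\nu_k},|\nu_k|(\R)$ are controlled). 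Since $U_\lambda(N_\mu U_\lambda)^k\mu = U_\lambda\nu_k$, summing the geometric series — legitimate because $\mathcal R(\lambda,\mu)<1/2<1$, which also justifies term-by-term differentiation of \eqref{eq:potential_sum_nu} via uniform convergence of the derivative series — gives
\begin{equation*}
	\|W^\mu_\lambda\mu\|_\infty \le \frac{c(\lambda)\,|\mu|(\R)}{1-\mathcal{R}(\lambda,\mu)}, \qquad \|(W^\mu_\lambda\mu)'\|_\infty \le \frac{\mathcal{R}(\lambda,\mu)}{1-\mathcal{R}(\lambda,\mu)}.
\end{equation*}
Set $C(\lambda)$ equal to the sum of the two right-hand sides. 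As $\lambda\to\infty$ we have $\mathcal{R}(\lambda,\mu)\to 0$; I also need $c(\lambda)|\mu|(\R)\to 0$, which follows from the explicit $\lambda$-dependence in \eqref{eq:lamPot_est_bod}: $u_\lambda(x,y)\le c_1(\lambda^{1/\alpha-1}\vee|x-y|^{\alpha-1})\wedge(\lambda^{-2}|x-y|^{-1-\alpha})$, and optimizing over $|x-y|$ gives $\|u_\lambda\|_\infty = c(\lambda)\to 0$ as $\lambda\to\infty$ (since $\alpha\in(1,2)$, the exponent $1/\alpha-1<0$). Hence $C(\lambda)\to 0$, which is the claim.

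\textbf{Main obstacle.} The one genuinely delicate point is the interchange of differentiation and summation in \eqref{eq:potential_sum_nu}: I must show the series $\sum_k (U_\lambda\nu_k)'$ converges uniformly so that $(W^\mu_\lambda\mu)'=\sum_k(U_\lambda\nu_k)'$. The geometric bound $\|(U_\lambda\nu_k)'\|_\infty\le \mathcal R(\lambda,\mu)^{k+1}$ from Step 2 gives exactly this, but it hinges on Step 1's iteration being valid at every level — in particular on $\nu_k$ being genuinely a \emph{finite} measure of Kato class $\alpha-1$ (not just a distribution), which is what Proposition~\ref{prop:total_var_Nmu} guarantees. A secondary subtlety is extracting $\|u_\lambda\|_\infty\to 0$ cleanly from \eqref{eq:lamPot_est_bod}; this is a short one-variable optimization and not conceptually hard, but it is what makes the $\|W^\mu_\lambda\mu\|_\infty$ term vanish rather than merely stay bounded.
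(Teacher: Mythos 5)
Your proposal is correct and takes essentially the same route as the paper: the series representation of Proposition~\ref{prop:pot_sum}, the geometric decay of the Kato-class data of $(N_\mu U_\lambda)^k\mu$ (which the paper obtains by invoking the arguments of \cite[Proposition 5.5]{Kim14}, and you obtain by iterating Proposition~\ref{prop:total_var_Nmu} together with the monotonicity of $\mathcal{R}$), the gradient estimate \eqref{eq:lamPot_grad}/Proposition~\ref{prop:grad_meas}, and summation of the resulting geometric series, with the added merit that you treat the $\|W_\lambda^\mu\mu\|_\infty$ part and the term-by-term differentiation explicitly where the paper leaves them implicit. One cosmetic remark: the intermediate inequality $|\nu_k|(\R)\le \mathcal{R}(\lambda,\mu)\,|\nu_{k-1}|(\R)$ is not literally what Proposition~\ref{prop:total_var_Nmu} gives (it gives $|\nu_k|(\R)\le \mathcal{R}(\lambda,\nu_{k-1})\,|\mu|(\R)$), but your induction with $\mathcal{R}(\lambda,\nu_{k-1})\le \mathcal{R}(\lambda,\mu)^{k}$ delivers the stated geometric bounds, so no genuine gap results.
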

\begin{proof}
	  We will only prove, that there exists $\lambda_0 > 0$ such that 
	\begin{equation*}
		\|\frac{\partial}{\partial x}W_\lambda^\mu\mu(x)\|_\infty\leq  C(\lambda), \quad \forall \lambda \geq \lambda_0,
	\end{equation*}
	because the assertion for $\|w_\lambda^{\mu,\mu}\|_\infty$ follows along the same lines.
	Note that for $\lambda>0$ sufficiently large
	\begin{equation}\label{eq:deriv_pot_sum}
		\frac{\partial}{\partial x}W_\lambda^\mu\mu(x) = \frac{\partial }{\partial x} \sum_{k = 0}^\infty U_\lambda (N_\mu U_\lambda)^k\mu(x),
	\end{equation}
	by Proposition~\ref{prop:pot_sum}.
	It can be easily seen, with the same arguments as in the proof of \cite[Proposition 5.5 ]{Kim14}, that $(N_\mu U_\lambda)^k\mu$, $k \geq 1$ 
	is a measure of Kato class $K_{\alpha-1}$ with total variation measure bounded by $C(\lambda,\mu)^k|\mu|(dx)$, 
	where $C(\lambda, \mu) = C \mathcal{R}(\lambda, \mu)$ and $\mathcal{R}(\lambda,\mu)$ is given by \eqref{eq:c_mu_lam}. Thus, for $k \geq 1$ we have
	\begin{equation}\label{eq:interderiv_est}
		\begin{split}
			&\Big|\frac{\partial}{\partial x}U_\lambda (N_\mu U_\lambda)^k\mu(x)\Big|\\
			& \leq \int_\R \big|\frac{\partial}{\partial x} u_\lambda(x,y)\big||(N_\mu U_\lambda)^k\mu|(dy)\\
			& \leq C(\lambda, \mu)^k \int_\R \big|\frac{\partial}{\partial x} u_\lambda(x,y)\big||\mu|(dy).
		\end{split}
	\end{equation}
	Now, fix $\lambda_0 > 0$ such that \eqref{eq:deriv_pot_sum} holds and $C(\lambda,\mu)<1$, for any $\lambda \geq \lambda_0$. Thus, \eqref{eq:interderiv_est} together with \eqref{eq:lamPot_grad} demonstrates that there exists $C(\lambda)>0$ such that
	\begin{align*}
		\sup_{x \in \R} \Big| W_\lambda^\mu\mu (x)\Big| \leq \sum_{k = 0}^\infty C(\lambda, \mu)^k \sup_{x \in \R}\int_\R \big|\frac{\partial}{\partial x} u_\lambda(x,y)\big||\mu|(dy) \leq C(\lambda), \lambda \geq \lambda_0,
	\end{align*}
	where $C(\lambda) \rightarrow 0$ as $\lambda \rightarrow \infty$.
\end{proof}
In the next lemma we prove the uniform convergence of the derivative of $W_\lambda^{\mu_n}\mu_n$ to the derivative of  $W_\lambda^{\mu}\mu$.
\begin{lemm}\label{lem:deriv_pot_conv}
	There exists $\lambda_0 > 0$ such that for all $\lambda \geq \lambda_0$
	\begin{align*}
		\|(w_{\lambda}^{\mu,\mu} )'-(w_{\lambda}^{\mu_n,\mu_n})'\|_\infty \rightarrow 0, \quad \text{ as } n \rightarrow \infty.
	\end{align*}
\end{lemm}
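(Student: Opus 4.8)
The plan is to exploit the series representation from Lemma~\ref{lem:pot_mu_dens}, namely
$w_\lambda^{\mu,\mu}(x,y) = \sum_{k\geq 0} (u_\lambda \ast (\tfrac{\partial}{\partial x}u_\lambda \mu)^{\ast k})(x,y)$ and the analogous expansion with $\mu$ replaced by $\mu_n$, and to pass to the limit term by term under a uniform tail bound. More precisely, I would first fix $\lambda_0$ large enough that the conclusions of Proposition~\ref{prop:pot_sum}, Lemma~\ref{lem:pot_mu_dens} and Lemma~\ref{lemm:wlmm_bound_deriv} all hold and that $C(\lambda,\mu) = C\mathcal{R}(\lambda,\mu) < 1$ for $\lambda \geq \lambda_0$; by Remark~\ref{rema:molli} we have $M^{\alpha-1}_{\mu_n}(r) \leq C M^{\alpha-1}_\mu(r)$ and $|\mu_n|(\R) = |\mu|(\R)$ (or at least bounded uniformly in $n$), so $\mathcal{R}(\lambda,\mu_n) \leq C\mathcal{R}(\lambda,\mu)$ uniformly in $n$, and hence the same $\lambda_0$ works simultaneously for all $\mu_n$. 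Writing $\frac{\partial}{\partial x}W_\lambda^\mu\mu(x) = \sum_{k\geq 0} \frac{\partial}{\partial x}U_\lambda(N_\mu U_\lambda)^k\mu(x)$ and likewise for $\mu_n$, the estimate $|\frac{\partial}{\partial x}U_\lambda(N_\mu U_\lambda)^k\mu(x)| \leq C(\lambda,\mu)^k \sup_x \int_\R |\frac{\partial}{\partial x}u_\lambda(x,y)||\mu|(dy) \leq C(\lambda,\mu)^k\,\mathcal{R}(\lambda,\mu)$ from the proof of Lemma~\ref{lemm:wlmm_bound_deriv} (and the uniform analogue for $\mu_n$) gives a geometric majorant independent of $n$. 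Hence, given $\varepsilon>0$, one picks $K$ so that the tails $\sum_{k>K}$ of both series are $<\varepsilon/3$ uniformly in $n$, and it remains to handle the finitely many terms $k = 0,1,\dots,K$.

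For the fixed finite sum, the plan is an induction on $k$ showing that $\frac{\partial}{\partial x}U_\lambda(N_{\mu_n}U_\lambda)^k\mu_n \to \frac{\partial}{\partial x}U_\lambda(N_\mu U_\lambda)^k\mu$ uniformly on $\R$. The base case $k=0$ is exactly Lemma~\ref{lem:deriv_u_lam_conv_append}. For the inductive step, set $\nu^{(k)} := (N_\mu U_\lambda)^k\mu$ and $\nu_n^{(k)} := (N_{\mu_n}U_\lambda)^k\mu_n$; by Proposition~\ref{prop:total_var_Nmu} these are measures in $K_{\alpha-1}$ with $|\nu_n^{(k)}|(dx) \leq C(\lambda,\mu)^k|\mu_n|(dx)$ and $M^\alpha_{\nu_n^{(k)}}(r) \leq C(\lambda,\mu)^k M^\alpha_{\mu_n}(r) \leq C' M^\alpha_\mu(r)$ uniformly in $n$, so the family $\{\nu_n^{(k)}\}_n$ satisfies the uniform Kato bound \eqref{eq:limsup_M}. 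One then writes
\[
	N_{\mu_n}U_\lambda \nu_n^{(k)}(dx) - N_\mu U_\lambda\nu^{(k)}(dx)
	= \Big(\tfrac{\partial}{\partial x}U_\lambda\nu_n^{(k)}(x) - \tfrac{\partial}{\partial x}U_\lambda\nu^{(k)}(x)\Big)\mu_n(dx)
	+ \tfrac{\partial}{\partial x}U_\lambda\nu^{(k)}(x)\,(\mu_n - \mu)(dx),
\]
and controls the first contribution using the inductive hypothesis together with Proposition~\ref{prop:grad_meas}(1) (which bounds $|\frac{\partial}{\partial x}U_\lambda(\cdot)|$ of any finite Kato measure by $\mathcal{R}(\lambda,\cdot)$, applied to the difference), and the second contribution using the weak convergence $\mu_n \to \mu$ together with the continuity of $\frac{\partial}{\partial x}U_\lambda\nu^{(k)}$ (from Proposition~\ref{prop:grad_meas}(1)) and the uniform Kato/mass bounds to get uniform-in-$x$ control of the oscillation of the test function $\frac{\partial}{\partial x}u_\lambda(x,\cdot)$ — this is precisely the mechanism already used to prove Lemma~\ref{lem:deriv_u_lam_conv_append} and Lemma~\ref{lem:pot_conv}. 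Applying $\frac{\partial}{\partial x}U_\lambda$ once more and invoking Lemma~\ref{lem:deriv_u_lam_conv_append}-type arguments (now for the sequence of measures $\nu_n^{(k+1)}$, whose uniform Kato bounds follow from Proposition~\ref{prop:total_var_Nmu}) closes the induction.

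Combining the uniform tail estimate with the termwise convergence of the first $K+1$ terms yields $\|\frac{\partial}{\partial x}W_\lambda^{\mu_n}\mu_n - \frac{\partial}{\partial x}W_\lambda^\mu\mu\|_\infty \to 0$, which is the claim (the statement about $(w_\lambda^{\mu,\mu})'$ is just the density form of $\frac{\partial}{\partial x}W_\lambda^\mu\mu$). The main obstacle I anticipate is the bookkeeping in the inductive step: one must propagate \emph{two} kinds of uniform-in-$n$ control simultaneously — the uniform Kato modulus bound $\sup_n M^\alpha_{\nu_n^{(k)}}(r) \to 0$ as $r\to 0$, and the uniform total-mass bound $\sup_n |\nu_n^{(k)}|(\R) < \infty$ — through each application of $N_\mu U_\lambda$, since both are needed to run the weak-convergence argument that handles the $(\mu_n-\mu)$ term, and one must make sure the constants $C(\lambda,\mu)^k$ do not degrade this as $k$ grows (they do not, since $C(\lambda,\mu)<1$). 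Everything else is a routine repackaging of the estimates \eqref{eq:lamPot_grad}, Proposition~\ref{prop:grad_meas} and Proposition~\ref{prop:total_var_Nmu} already established above.
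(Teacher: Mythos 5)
Your plan is correct in substance and rests on the same core ingredients as the paper's proof (the series representation from Proposition~\ref{prop:pot_sum}/Lemma~\ref{lem:pot_mu_dens}, the uniform-in-$n$ smallness of $\mathcal{R}(\lambda,\mu_n)$ via Remark~\ref{rema:molli}, Propositions~\ref{prop:grad_meas} and \ref{prop:total_var_Nmu}, and Lemma~\ref{lem:deriv_u_lam_conv_append} as the key convergence input), but the bookkeeping is organized differently. The paper splits $\frac{\partial}{\partial x}W_\lambda^{\mu_n}\mu_n-\frac{\partial}{\partial x}W_\lambda^{\mu}\mu$ into $\frac{\partial}{\partial x}W_\lambda^{\mu_n}(\mu_n-\mu)$, which is killed term by term in the series by the bound $C(\lambda,\mu)^k\sup_y|\frac{\partial}{\partial x}U_\lambda(\mu_n-\mu)(y)|$ and a geometric sum, plus $\frac{\partial}{\partial x}(W_\lambda^{\mu_n}-W_\lambda^{\mu})\mu$, which is rewritten as the telescoping double sum $\sum_{k\geq 1}\sum_{\ell=0}^{k-1}\frac{\partial}{\partial x}U_\lambda(N_{\mu_n}U_\lambda)^{\ell}(N_{\mu_n}U_\lambda-N_{\mu}U_\lambda)(N_\mu U_\lambda)^{k-\ell-1}\mu$ and handled as in \cite[Proposition 5.5]{Kim14}; you instead truncate both series uniformly in $n$ (legitimate, since the geometric majorant is uniform) and run an induction on the term index $k$. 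What your route buys is that you avoid the telescoping identity and the appeal to Kim--Song's argument; what it costs is the inductive step's second contribution $\frac{\partial}{\partial x}U_\lambda\bigl[\frac{\partial}{\partial x}U_\lambda\nu^{(k)}\,(\mu_n-\mu)\bigr]$, which is \emph{not} literally covered by Lemma~\ref{lem:deriv_u_lam_conv_append}, because $g\mu_n$ with $g=\frac{\partial}{\partial x}U_\lambda\nu^{(k)}$ is not the mollification of $g\mu$. You need a weighted version of that lemma: write the difference as $\int\!\int\varphi_n(w)\bigl[f_\lambda(x,z+w)-f_\lambda(x,z)\bigr]g(z+w)\,dw\,\mu(dz)+\int f_\lambda(x,z)\int\varphi_n(w)\bigl[g(z+w)-g(z)\bigr]dw\,\mu(dz)$; the first piece is $\|g\|_\infty$ times the quantity already controlled in the proof of Lemma~\ref{lem:deriv_u_lam_conv_append}, and the second is bounded by $\omega_g(2^{-n})\,\mathcal{R}(\lambda,\mu)$, so you must check that $\frac{\partial}{\partial x}U_\lambda\nu^{(k)}$ is \emph{uniformly} continuous (which does hold, by splitting $\int|u_\lambda'(x-y)-u_\lambda'(x'-y)||\nu^{(k)}|(dy)$ near and away from the diagonal, using the uniform Kato modulus of $\nu^{(k)}$ from Proposition~\ref{prop:total_var_Nmu} and the uniform continuity of $u_\lambda'$ off the origin). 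With that supplement your induction closes, and the effort is comparable to what the paper defers to \cite{Kim14}; so this is a workable alternative rather than a gap, but the weighted-mollification step should be spelled out rather than cited.
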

\begin{proof}
	First note that for all $\lambda >0$ large enough and all $x \in \R$
	\begin{equation}\label{eq:grad_conv_mumu}
		\begin{split}
			&\Big|\frac{\partial}{\partial x}\Big(W_\lambda^{\mu_n}\big(\mu_n-\mu\big)(x)\Big)\Big|\\
			& = \Big|\frac{\partial}{\partial x}\sum_{k = 0}^\infty U_\lambda(N_{\mu_n}U_\lambda)^k(\mu_n-\mu)(x)\Big|,
		\end{split}
	\end{equation}
	by Proposition~\ref{prop:pot_sum}.
	Note that by Proposition~\ref{prop:grad_meas}
	\begin{align*}
		&\Big|\frac{\partial}{\partial x}U_\lambda(N_{\mu_n}U_\lambda)(\mu_n-\mu)(x)\Big|\\
		&\leq \int_\R \big|\frac{\partial}{\partial x}u_\lambda(x,y)\big|\big|(\frac{\partial}{\partial x}U_\lambda)(\mu_n-\mu)(y)\big||\mu_n|(dy)\\
		& \leq \sup_{y\in \R}\big|(\frac{\partial}{\partial x}U_\lambda)(\mu_n-\mu)(y)\big|\int_\R\big|\frac{\partial}{\partial x}u_\lambda(x,y)\big||\mu_n|(dy)\\
		&\leq C(\lambda,\mu) \sup_{y\in \R}\big|(\frac{\partial}{\partial x}U_\lambda)(\mu_n-\mu)(y)\big|,
	\end{align*}
	where $C(\lambda,\mu)$ can be chosen independently of $n$ and such that $C(\lambda,\mu) \rightarrow 0$ as $\lambda \rightarrow \infty$,
	since it is not hard to verify that $\sup_n\mathcal{R}(\lambda, \mu_n) \rightarrow 0$ as $\lambda \rightarrow \infty$, where $\mathcal{R}(\lambda, \mu_n) $ is given by \eqref{eq:c_mu_lam}. We choose 
	$\lambda_0$ such that $C(\lambda, \mu)<1$ for all $\lambda \geq \lambda_0$. Repeating the above, using that 
	\begin{equation*}
		\sup_{y\in \R}\big|(\frac{\partial}{\partial x}U_\lambda)(\mu_n-\mu)(y)\big| \rightarrow 0,
	\end{equation*}
	due to Lemma~\ref{lem:deriv_u_lam_conv_append} and noting that $\sum_{k = 0}^\infty C(\lambda,\mu)^{k-1}< \infty$ demonstrates the convergence of \eqref{eq:grad_conv_mumu} to zero.
	Since both sums 
	\begin{align*}
		\sum_{k = 0}^\infty \frac{\partial}{\partial x}U_\lambda (N_{\mu_n }U_\lambda)^k\mu(x), \\
		\sum_{k = 0}^\infty \frac{\partial}{\partial x}U_\lambda (N_{\mu }U_\lambda)^k\mu(x),
	\end{align*}
	converge absolutely, 
	it remains to prove
	\begin{equation}\label{eq:conv_of_pot_deriv}
		\begin{split}
			&\sum_{k = 0}^\infty \frac{\partial}{\partial x}U_\lambda (N_{\mu_n }U_\lambda)^k\mu(x) - \sum_{k = 0}^\infty \frac{\partial}{\partial x}U_\lambda (N_{\mu }U_\lambda)^k\mu(x)\\
			& = \sum_{k = 1}^\infty \sum_{ \ell = 0}^{k-1}\frac{\partial}{\partial x}U_\lambda (N_{\mu_n}U_\lambda)^{\ell} (N_{\mu_n}U_\lambda-N_{\mu}U_\lambda) (N_{\mu}U_\lambda)^{k-l-1}\mu \rightarrow 0, \quad \text{ as } n\rightarrow \infty .
			\end {split}
		\end{equation}
		This can be demonstrated with the same arguments as in the proof of \cite[Proposition 5.5]{Kim14}. 
	\end{proof}
\subsection{Weak Uniqueness and Proof of Theorem~\ref{th:sde_exist}\ref{itm:weakex}}\label{sec:weak_unique}
\begin{lemm}\label{lem:weak_unique}
	Let $\mu \in K^f_{\eta}$ with $\eta \in (0, \alpha-1]$. Then there exists a unique weak solution to \eqref{eq:singularSDE}.
\end{lemm}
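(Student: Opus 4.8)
The plan is to prove uniqueness of the law of an arbitrary weak solution; combined with the strong Markov weak solution produced in Lemma~\ref{lem:weak_exist}, this shows every weak solution coincides in law with that canonical Feller process (and in particular is itself strong Markov, as asserted in Theorem~\ref{th:sde_exist}\ref{itm:weakex}). The key step is the \emph{resolvent identity}: if $(X,L)$ is any weak solution with $X_0=x$, then writing $R_\lambda h(x):=\bbE_x\big[\int_0^\infty\ee^{-\lambda s}h(X_s)\,ds\big]$,
\begin{equation*}
	R_\lambda h(x)=W_\lambda^\mu h(x),\qquad h\in\mathcal{C}_b,\ \lambda\geq\lambda_0 ,
\end{equation*}
with $\lambda_0$ the threshold of Proposition~\ref{prop:pot_sum}. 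Once this is known, uniqueness follows by two standard steps: $s\mapsto\bbE_x[h(X_s)]$ is bounded and right-continuous (dominated convergence plus right-continuity of $X$), so injectivity of the Laplace transform on $[\lambda_0,\infty)$ gives $\bbE_x[h(X_s)]=P^\mu_s h(x)$ for all $s\geq0$, where $P^\mu$ is the Feller semigroup of Lemma~\ref{lem:weak_exist}; and then a conditioning argument — conditionally on $\mathcal{F}_t$, the process $u\mapsto X_{t+u}$ is again a weak solution started from $X_t$, because $u\mapsto L_{t+u}-L_t$ is an $\alpha$-stable process independent of $\mathcal{F}_t$ and $\int_0^u\mu_n(X_{t+v})\,dv=\int_0^{t+u}\mu_n(X_r)\,dr-\int_0^t\mu_n(X_r)\,dr$ converges to $A_{t+u}-A_t$ in the sense of Definition~\ref{def:sing_sol} — yields $\bbE_x[h(X_{t+u})\mid\mathcal{F}_t]=P^\mu_u h(X_t)$ a.s., hence the Markov property with semigroup $P^\mu$ and, by induction on the number of time points, equality of all finite-dimensional distributions.

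To obtain the resolvent identity I would first take $h\in\mathcal{C}_c^\infty$, so that $g:=U_\lambda h$ is smooth with $(\lambda-\Delta_\alpha)g=h$ and has the polynomial decay afforded by the bounds \eqref{eq:lamPot_est_bod}--\eqref{eq:lamPot_grad} for $u_\lambda$ and $\partial_x u_\lambda$; Dynkin's formula for the semimartingale $X=x+L+A$ (the compensated jumps of $L$ giving a genuine martingale, $A$ being continuous of finite variation) applied to $\ee^{-\lambda t}g(X_t)$ and sent to $t\to\infty$ gives
\begin{equation*}
	U_\lambda h(x)=R_\lambda h(x)-\bbE_x\Big[\int_0^\infty\ee^{-\lambda s}(U_\lambda h)'(X_s)\,dA_s\Big].
\end{equation*}
Since $\int_0^\cdot\mu_n(X_s)\,ds\to A$ by Definition~\ref{def:sing_sol}, and $(U_\lambda h)'\mu_n$ converges weakly to $N_\mu U_\lambda h:=(U_\lambda h)'\mu\in K_{\alpha-1}^f$ (Proposition~\ref{prop:total_var_Nmu}) with a uniform Kato bound $M_{(U_\lambda h)'\mu_n}^{\alpha-1}(r)\leq\|(U_\lambda h)'\|_\infty M_{\mu_n}^{\alpha-1}(r)$ (Remark~\ref{rema:molli}), a limiting argument of the type behind Lemma~\ref{cor:PUC} and Lemma~\ref{lem:conv_A} identifies the last expectation with $R_\lambda(N_\mu U_\lambda h)(x)$, so that $R_\lambda h=U_\lambda h+R_\lambda(N_\mu U_\lambda h)$. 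The same computation applied to $U_\lambda\nu$ for $\nu\in K_{\alpha-1}^f$ gives $R_\lambda\nu=U_\lambda\nu+R_\lambda(N_\mu U_\lambda\nu)$; iterating and using $\|R_\lambda\|\leq\lambda^{-1}$ together with the contraction estimates of Proposition~\ref{prop:grad_meas} (valid because $\mathcal{R}(\lambda,\mu)<1$ for $\lambda\geq\lambda_0$), the remainder vanishes and $R_\lambda h=\sum_{k\geq0}U_\lambda(N_\mu U_\lambda)^k h=W_\lambda^\mu h$ by Proposition~\ref{prop:pot_sum}. Finally, equality on $\mathcal{C}_c^\infty$ extends to all bounded measurable $h$ because both $R_\lambda$ and $W_\lambda^\mu$ act through kernels.

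The main obstacle is this second paragraph, and in it two technical points, both handled as in \cite{Kim14}: making Dynkin's formula rigorous for the merely $\mathcal{C}^1$ potentials $U_\lambda\nu$ (a mollification of $g$ whose error is controlled by the pointwise bounds \eqref{eq:lamPot_est_bod}--\eqref{eq:lamPot_grad}), and justifying the passage to the limit $\bbE_x[\int_0^\infty\ee^{-\lambda s}(U_\lambda\nu)'(X_s)\,dA_s]=\lim_n R_\lambda\big((U_\lambda\nu)'\mu_n\big)(x)=R_\lambda(N_\mu U_\lambda\nu)(x)$ uniformly enough that it survives the iteration — which is where the uniform-in-$n$ Kato bounds for the mollifications and Proposition~\ref{prop:grad_meas} are used. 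The remaining ingredients (Laplace inversion for the one-dimensional marginals, the conditioning and induction step for the finite-dimensional ones) are routine once the resolvent identity is available.
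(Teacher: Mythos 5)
Your proposal is correct and follows essentially the same route as the paper: for an arbitrary weak solution one derives, via It\^o/Dynkin applied to $\ee^{-\lambda t}U_\lambda g(X_t)$ and the convergence of $\int_0^\cdot\mu_n(X_s)\,ds$ to $A$, the identity $R_\lambda g = U_\lambda g + R_\lambda(N_\mu U_\lambda g)$, iterates it using the contraction estimates, and identifies the resulting series with $W_\lambda^\mu g$ by Proposition~\ref{prop:pot_sum}, so that all weak solutions share the same $\lambda$-potentials. The only difference is presentational: you spell out the final step (Laplace inversion plus the conditioning/induction argument giving equality of finite-dimensional distributions) which the paper delegates to the citation of \cite{Kim14}.
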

\begin{proof}
	Existence is proven in Lemma~\ref{lem:weak_exist}. 
To prove weak uniqueness, we can  follow the same arguments as in \cite[Proposition 5.8]{Kim14} and we thus omit some details.
	Let $\mathbb{P}_x$ be the solution constructed in Lemma~\ref{lem:weak_exist} and
	 assume that $\tilde{\bbP}_x$ is another solution to \eqref{eq:singularSDE} and that $g \in \mathcal{C}^2$. 
	With the same steps as in \cite[Proposition 5.8]{Kim14} we arrive at
	\begin{equation}
			\bbE_{\tilde{P}_x}\Big[\int_0^\infty \ee^{-\lambda t} g(X_t) dt\Big] = U_\lambda g(x) + \bbE_{\tilde{\bbP}_x}\Big[ \int_0^\infty \ee^{-\lambda t} \frac{\partial }{\partial x}U_\lambda g(X_t)dA_t\Big].
		\end{equation}
	Note that $V_\lambda g (x): = \bbE_{\tilde{P}_x}\int_0^\infty \ee^{-\lambda t} g(X_t) dt $ is the $\lambda$-potential with respect to $\tilde{\bbP}$.
	Thus,
	\begin{equation}\label{eq:pot_unique}
			V_\lambda g (x)= U_\lambda g(x) + \bbE_{\tilde{\bbP}_x}\Big[ \int_0^\infty \ee^{-\lambda t} \frac{\partial }{\partial x}U_\lambda g(X_t)dA_t\Big].
		\end{equation}
	By taking limits, it turns out that \eqref{eq:pot_unique} holds for all continuous, bounded functions $g$. 
	Using the convergence of $\int_0^t \mu_n(X_s)ds$ towards $A_t$ allows us to identify
	\begin{align*}
			&\bbE_{\tilde{\bbP}_x}\Big[ \int_0^\infty \ee^{-\lambda t} \frac{\partial }{\partial x}U_\lambda g(X_t)dA_t \Big]\\
			&= U_\lambda N_\mu U_\lambda g(x) + \bbE_{\tilde{\bbP}_x} \Big[\int_0^\infty \ee^{-\lambda t} \frac{\partial }{\partial x}U_\lambda( N_\mu U_\lambda)g(X_t)dA_t\Big].
		\end{align*}
	Iterating this process eventually yields 
	\begin{equation*}
			V_\lambda g(x) = \sum_{k = 0}^\infty U_\lambda (N_\mu U_\lambda)^k g(x) = W^\mu_\lambda g(x) \quad \forall g \in \mathcal{C}_b(\R), 
		\end{equation*}
	where the last equality is due to Proposition~\ref{prop:pot_sum}. Thus the $\lambda$-potentials for both solutions are equal for all $\lambda \geq \lambda_0$ an this implies the weak uniqueness.
\end{proof}
\begin{proof}[Proof of Theorem~\ref{th:sde_exist}\ref{itm:weakex}]
	Immediate consequence of Lemma~\ref{lem:weak_exist} and Lemma~\ref{lem:weak_unique}.
\end{proof}
\subsection{Pathwise Uniqueness and Proof of Theorem~\ref{th:sde_exist}\ref{itm:stronex}}\label{sec:pathwise}
In this section, we will prove Theorem~\ref{th:sde_exist}~\ref{itm:stronex}. First, we use techniques from \cite{Athreya20}, \cite{Xiong19} and \cite{Knopova17} to prove pathwise uniqueness of \eqref{eq:singularSDE} 
when $\mu\in K_{\eta}^f$ for $\eta < \alpha-1$.  For the remainder of this section we fix $\mu \in  K_{\eta}^f$ for some fixed $\eta < \alpha-1$.\\
By Lemma~\ref{lem:KatoBesov}~\ref{itm:kato_bes_non} the conditions on $\mu$ in particular imply
$|\mu| \in 
\mathcal{C}^{1-\alpha+\epsilon}$  for some $\epsilon >0$ with $1-\alpha+\epsilon<0$. We fix this $\epsilon >0$ for the remainder of this section. 
The observation above allows us 
to employ the techniques from \cite{Athreya20} more easily. 
This includes analyzing  the 
resolvent equation
\begin{equation}\label{eq:resolvent}
	\lambda w- \Delta_{\alpha} w- \mu \frac{\partial}{\partial x}w = \mu.
\end{equation}
Let $w_\lambda^{\mu,\mu}$ denote a solution to \eqref{eq:resolvent}. Later we will show that such a solution exists and is unique when $\lambda$ is sufficiently large.
Then we will use the map 
\begin{equation}\label{eq:zvonkin_transform}
	z \mapsto \phi(z) = w_\lambda^{\mu,\mu}(z) +z,
\end{equation}
where $w_\lambda^{\mu,\mu}$ is a solution to \eqref{eq:resolvent}, to transform the equation  \eqref{eq:singularSDE}. 
The idea here is that $\phi(X)$, where $X$ is a solution to \eqref{eq:singularSDE}, satisfies a different SDE which has more regular coefficients.\par 
First, we need to prove that \eqref{eq:resolvent} has a unique solution in the distributional sense. 
Since $(P^\mu_t)_{t \geq 0}$ is a Feller semigroup with generator $(\Delta_\alpha + \mu \frac{\partial}{\partial x})$,
a natural candidate is given by $w_\lambda^{\mu,\mu} :=W_\lambda^\mu \mu$. We will show that $w_\lambda^{\mu,\mu}$ 
is in fact a solution in the next lemma.
To this end, we use similar arguments as in \cite{Athreya20}  and thus show existence and uniqueness of solutions in $\mathcal{C}^{1+\epsilon/2}$, with $\epsilon >0$. Recall that $\epsilon >0$ was fixed at the beginning of this section.
	\begin{lemm}\label{lem:boundw_lmm}
		There exists $\lambda_0 > 0$ such that there exists a unique solution of \eqref{eq:resolvent} in $\mathcal{C}^{1+\epsilon/2}$ for all $\lambda \geq\lambda_0$.
	\end{lemm}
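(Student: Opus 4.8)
The plan is to show that the candidate $w_\lambda^{\mu,\mu}:=W_\lambda^\mu\mu$ singled out before the lemma is \emph{the} solution, and to extract both its $\mathcal C^{1+\epsilon/2}$-regularity and the uniqueness from one Schauder-type bound for the stable resolvent $U_\lambda=(\lambda-\Delta_\alpha)^{-1}$ applied to finite Kato measures. \emph{Step 1 (a fixed-point identity).} Starting from the Neumann series of Proposition~\ref{prop:pot_sum} and regrouping, using $U_\lambda(N_\mu U_\lambda)^k\mu=U_\lambda\big(\mu\,\tfrac{\partial}{\partial x}U_\lambda(N_\mu U_\lambda)^{k-1}\mu\big)$ together with the absolute convergence established in the proof of Lemma~\ref{lemm:wlmm_bound_deriv}, one gets
\begin{equation*}
	w_\lambda^{\mu,\mu}=U_\lambda\mu+U_\lambda\!\big(\mu\,(w_\lambda^{\mu,\mu})'\big),
\end{equation*}
where $(w_\lambda^{\mu,\mu})'\in L^\infty$ by Lemma~\ref{lemm:wlmm_bound_deriv}, so $\mu\,(w_\lambda^{\mu,\mu})'$ is a genuine finite measure. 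Since $U_\lambda$ is the Fourier multiplier $(\lambda+|\xi|^\alpha)^{-1}$, this identity is exactly \eqref{eq:resolvent} read in $\mathcal S'$; conversely any $w\in\mathcal C^{1+\epsilon/2}$ solving \eqref{eq:resolvent} satisfies $w=U_\lambda\mu+U_\lambda(\mu w')$ (here $\mathcal C^{1+\epsilon/2}$ embeds into the continuously differentiable functions with $\|w'\|_\infty\lesssim\|w\|_{\mathcal C^{1+\epsilon/2}}$, so $\mu w'$ is a well-defined finite measure). Thus solving \eqref{eq:resolvent} in $\mathcal C^{1+\epsilon/2}$ is equivalent to finding a fixed point in $\mathcal C^{1+\epsilon/2}$ of $\mathcal T w:=U_\lambda\mu+U_\lambda(\mu w')$.

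\emph{Step 2 (the key estimate).} I would next prove that there is $C=C(\mu)>0$ with
\begin{equation*}
	\|U_\lambda(f\mu)\|_{\mathcal C^{1+\epsilon/2}}\le C\,\lambda^{-\epsilon/(2\alpha)}\,\|f\|_\infty,\qquad \|U_\lambda\mu\|_{\mathcal C^{1+\epsilon/2}}\le C\,\lambda^{-\epsilon/(2\alpha)},
\end{equation*}
for all bounded continuous $f$ and all $\lambda\ge1$. Indeed, $f\mu\in K_\eta^f$ with $M^\eta_{f\mu}(r)\le\|f\|_\infty M^\eta_\mu(r)$ and $|f\mu|(\R)\le\|f\|_\infty|\mu|(\R)$, so by (the quantitative form of) Lemma~\ref{lem:KatoBesov}\ref{itm:kato_bes_non} one has $f\mu\in\mathcal C^{-s}$ with $s:=\alpha-1-\epsilon\in(\eta,\alpha-1)$ — here we use $\epsilon<\alpha-1-\eta$, which is part of the choice of $\epsilon$ fixed at the start of the section — and $\|f\mu\|_{\mathcal C^{-s}}\lesssim_\mu\|f\|_\infty$. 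A Littlewood–Paley computation using the multiplier $(\lambda+|\xi|^\alpha)^{-1}$, splitting dyadic frequency blocks at $|\xi|\sim\lambda^{1/\alpha}$, gives $\|U_\lambda\|_{\mathcal C^{-s}\to\mathcal C^{-s+\gamma}}\lesssim\lambda^{-1+\gamma/\alpha}$ for every $\gamma\in[0,\alpha]$; taking $\gamma=\alpha-\epsilon/2<\alpha$, so that $-s+\gamma=1+\epsilon/2$, yields the claim. (Alternatively, the same bound can be read off the kernel estimates \eqref{eq:lamPot_est_bod}–\eqref{eq:lamPot_grad}, using that $x\mapsto\tfrac{\partial}{\partial x}u_\lambda(x,y)$ is Hölder of any order $<\alpha-1-\eta$ with a weight integrable against $K_\eta$ measures, and that $|(N_\mu U_\lambda)^k\mu|\le C(\lambda,\mu)^k|\mu|$ from the proof of Lemma~\ref{lemm:wlmm_bound_deriv}.)

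\emph{Step 3 (conclusion).} For existence and regularity, feed $(w_\lambda^{\mu,\mu})'\in L^\infty$ into the key estimate: both terms on the right of the Step~1 identity lie in $\mathcal C^{1+\epsilon/2}$, hence $w_\lambda^{\mu,\mu}\in\mathcal C^{1+\epsilon/2}$ and it solves \eqref{eq:resolvent}. For uniqueness, if $w_1,w_2\in\mathcal C^{1+\epsilon/2}$ both solve \eqref{eq:resolvent}, then $d:=w_1-w_2$ satisfies $d=U_\lambda(\mu d')$ with $\|d'\|_\infty\lesssim\|d\|_{\mathcal C^{1+\epsilon/2}}$, so the key estimate gives $\|d\|_{\mathcal C^{1+\epsilon/2}}\le C(\mu)\,\lambda^{-\epsilon/(2\alpha)}\,\|d\|_{\mathcal C^{1+\epsilon/2}}$. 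Choosing $\lambda_0$ so that $C(\mu)\lambda^{-\epsilon/(2\alpha)}<1$ for all $\lambda\ge\lambda_0$ forces $d=0$; equivalently, for such $\lambda$ the map $\mathcal T$ is a contraction on a small ball of $\mathcal C^{1+\epsilon/2}$ with unique fixed point $W_\lambda^\mu\mu$.

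\emph{Main obstacle.} The heart of the matter is the Schauder estimate coupled with making sense of and controlling the product $\mu w'$ of the singular measure $\mu$ with a merely bounded/Hölder function. A pure Bony/paracontrolled product bound would require $\mu\in\mathcal C^{-s}$ with $s$ small (essentially $s<\epsilon/2$), which fails once $\eta$ is close to $\alpha-1$; the point is instead to exploit that $\mu$ is a \emph{finite} measure, so that $f\mu$ is again a finite Kato measure with $\|f\mu\|_{\mathcal C^{-s}}\lesssim\|f\|_\infty$, and only then invoke the resolvent smoothing. One must also track the regularity budget: $s+(1+\epsilon/2)=\alpha-\epsilon/2<\alpha$ is exactly what forces the resolvent bound to carry a factor decaying in $\lambda$, which is what the contraction needs, and $\epsilon$ (equivalently $s$) should be chosen so that $-s$ and $1+\epsilon/2$ are non-integers so the Besov–Hölder identifications are unambiguous.
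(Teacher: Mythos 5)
Your proposal is correct, and it is built from the same analytic ingredients as the paper's proof — the crucial product bound $\|f\mu\|_{\mathcal{C}^{-s}}\le C\|f\|_{\infty}\,\||\mu|\|_{\mathcal{C}^{-s}}$ (which exploits that $\mu$ is a finite measure, via the heat-semigroup characterization \eqref{eq:besov_char}), the resolvent Schauder estimate with gain $\alpha-\epsilon/2<\alpha$ and smallness factor $\lambda^{-\epsilon/(2\alpha)}$, and a contraction in $\mathcal{C}^{1+\epsilon/2}$ for uniqueness, which is essentially identical to the paper's argument (there phrased through \cite[Lemma 4.1]{Athreya20} applied to $\lambda v-\Delta_\alpha v=\mu v'$). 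Where you genuinely diverge is the existence/regularity half: the paper verifies that $w_\lambda^{\mu,\mu}=W_\lambda^\mu\mu$ solves \eqref{eq:resolvent} by mollifying the data ($w_n=W_\lambda^\mu\mu_n$ solves the equation with right-hand side $\mu_n$ because $(P_t^\mu)$ is Feller with generator $\Delta_\alpha+\mu\partial_x$) and passing to the limit in $\mathcal{C}^{1-\alpha}$ using Lemmas~\ref{lem:pot_conv} and \ref{lem:deriv_pot_conv}, and only afterwards extracts the $\mathcal{C}^{1+\epsilon/2}$ bound by a self-improving Schauder estimate that must be absorbed; you instead regroup the Neumann series of Proposition~\ref{prop:pot_sum} into the mild identity $w_\lambda^{\mu,\mu}=U_\lambda\mu+U_\lambda(\mu\,(w_\lambda^{\mu,\mu})')$ and bootstrap regularity directly from the $L^\infty$ gradient bound of Lemma~\ref{lemm:wlmm_bound_deriv}, since your key estimate only sees $\|f\|_\infty$. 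This buys a cleaner route (no mollification limit, no absorption of a norm whose finiteness is not yet known), at the price of two points you should spell out: (i) the term-by-term regrouping and the interchange of $U_\lambda(\mu\,\cdot)$ with the series, which is justified by the geometric bounds $|(N_\mu U_\lambda)^k\mu|\le \mathcal{R}(\lambda,\mu)^k|\mu|$ and the uniform convergence of the differentiated series established in the proofs of Lemma~\ref{lemm:wlmm_bound_deriv} and Lemma~\ref{lem:deriv_pot_conv}; and (ii) the converse direction of your Step 1 (a $\mathcal{C}^{1+\epsilon/2}$ distributional solution of \eqref{eq:resolvent} satisfies the integral equation), which needs the injectivity of $\lambda-\Delta_\alpha$ on bounded H\"older functions — note that $(\lambda+|\xi|^\alpha)^{-1}$ is not a smooth symbol at $\xi=0$, so this should be argued on the Besov--H\"older scale (or one simply runs uniqueness as the paper does, applying the Schauder estimate directly to the equation for the difference, which sidesteps the issue). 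Also, your mapping bound $\|U_\lambda\|_{\mathcal{C}^{-s}\to\mathcal{C}^{-s+\gamma}}\lesssim\lambda^{-1+\gamma/\alpha}$ should be restricted to $\gamma<\alpha$ (the endpoint $\gamma=\alpha$ is delicate), but you only use $\gamma=\alpha-\epsilon/2$, so nothing is lost.
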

	\begin{proof}
		The proof of uniqueness essentially follows the same steps as the proof of \cite[Proposition 2.7]{Athreya20}. Choose arbitrary $\lambda >0$.
		Let $w_1,w_2$ be two solutions of \eqref{eq:resolvent} and set $v = w_1-w_2$. Denote $g = v'\mu$. By setting $\gamma = 1+ \epsilon/2$ and $\beta = 1-\alpha-\epsilon$
		and using \cite[Lemma 4.1]{Athreya20} we arrive at 
		\begin{equation}\label{eq:ineq_sol_res}
			\begin{split}
				\|v\|_{\mathcal{C}^{\gamma}}
				\leq C \lambda^{-1-\beta/\alpha + \gamma/\alpha} \|g\|_{\mathcal{C}^{\beta}}.
			\end{split}
		\end{equation}
		
		Now, note that by the characterization \eqref{eq:besov_char} of the Besov norm, we get for a measure $\nu$, such that $|\nu| \in \mathcal{M}\cap \mathcal{C}^{-s}$, $s >0$, that 
		\begin{equation*}
			\|f\nu\|_{\mathcal{C}^{-s}} \leq C\|f\|_{L^\infty} \big\||\nu|\big\|_{\mathcal{C}^{-s}}.
		\end{equation*}
		Thus, by also using \cite[Lemma 3.1 (iii)]{Athreya20} we arrive at
		\begin{align*}
			&\| v' \mu\|_{\mathcal{C}^{\beta}}\\
			& \leq C\|v\|_{\mathcal{C}^\gamma}   \big\|  |\mu| \big\|_{\mathcal{C}^{\beta}}.
		\end{align*}
		By plugging in $g = v'\mu$ we arrive at 
		\begin{equation}\label{eq:estimate_besnorm}
			\|v\|_{\mathcal{C}^{\gamma}}\\
			\leq C(\mu)\big( \lambda^{-1-\beta/\alpha + \gamma/\alpha}\vee \lambda^{\frac{1-\alpha}{\alpha}}\big) \|v\|_{\mathcal{C}^{\gamma}}.
		\end{equation}
		For all $\lambda >0$ large enough, it follows that $ C(\mu)\big( \lambda^{-1-\eta/\alpha + \gamma/\alpha}\vee \lambda^{\frac{1-\alpha}{\alpha}}\big) < 1$ and thus $v = 0$. \par
		Now, we prove existence. Since $(P_t^\mu)_{t \geq 0}$ is a conservative Feller semigroup, it follows that $w_n :=W_\lambda^\mu\mu_n$ solves 
		\begin{equation}
			\lambda w_n-\Delta_\alpha w_n - \mu \frac{\partial}{\partial x}w_n = \mu_n,
		\end{equation}
		in the distributional sense. Due to Lemma~\ref{lem:pot_conv} and by the proof of Lemma~\ref{lem:deriv_pot_conv} it follows that $w_n$ converges to $w_\lambda^{\mu,\mu}$ uniformly on $\R$ and for $\lambda \geq \lambda_0$ it follows that 
		$(w_n)'$ converges to $(w_\lambda^{\mu,\mu})$ uniformly on $\R$. Furthermore, since $\mu \in \mathcal{C}^{1-\alpha + \epsilon}$ it is not hard to verify that $\lim_{n \rightarrow \infty}\| \mu- \mu_n\|_{\mathcal{C}^{1-\alpha}}$.
		Thus, it holds 
		\begin{equation}\label{}
			\begin{split}
				&\|\lambda w_\lambda^{\mu,\mu}-\Delta_\alpha w_\lambda^{\mu,\mu} - \mu (w_\lambda^{\mu,\mu})' - \mu \|_{\mathcal{C}^{1-\alpha}}\\
				&\leq C\Big(\lambda \|w_\lambda^{\mu,\mu} - w_n\|_\infty +\|\Delta_\alpha (w_\lambda^{\mu,\mu}-w_n) \|_{\mathcal{C}^{1-\alpha}}+ \|w_n'-(w_\lambda^{\mu,\mu})'\|_\infty\| |\mu|\|_{\mathcal{C}^{1-\alpha}}+ \|\mu_n-\mu\|_{\mathcal{C}^{1-\alpha}}\Big)\\
				& \leq C\Big(\|w_\lambda^{\mu,\mu} - w_n\|_\infty + \|w_n'-(w_\lambda^{\mu,\mu})'\|_\infty\|  |\mu|\|_{\mathcal{C}^{1-\alpha}} + \|\mu_n-\mu\|_{\mathcal{C}^{1-\alpha}}\Big) \rightarrow 0, \quad n \rightarrow \infty,
			\end{split}
		\end{equation}
		by using \cite[Lemma 3.1, Lemma 3.2]{Athreya20} and \cite[Proposition 1, Section 2.5.7]{Triebel10}.
		This demonstrates, that $w_\lambda^{\mu,\mu}$ is indeed a solution of \eqref{eq:resolvent}. By employing \eqref{eq:ineq_sol_res} it follows with $\beta = 1-\alpha+\epsilon$, $\gamma = 1+ \epsilon/2$, $\rho = \epsilon/2$ and $g = \mu + \mu (w_\lambda^{\mu,\mu})'$ that 
		\begin{align*}
			&\|w_\lambda^{\mu,\mu}\|_{\mathcal{C}^{1+\epsilon/2}} \\
			& \leq C\big(\lambda^{-1-\frac{\beta}{\alpha}+ \frac{\gamma}{\alpha}}\vee \lambda^{\frac{1-\alpha}{\alpha}}\big)\||\mu|\|_{\mathcal{C}^{\beta}}\Big(1+\|w_\lambda^{\mu,\mu}\|_{\mathcal{C}^{1+\epsilon/2}} \Big).
		\end{align*} 
		Thus $w_\lambda^{\mu,\mu} \in C^{1+\epsilon/2}$ for $\lambda \geq \lambda_0$.
	\end{proof}
	From now on we always assume that $\lambda_0 >0$ is large enough such that \eqref{eq:resolvent} has a unique solution in $\mathcal{C}^{1+\epsilon/2}$ for all $\lambda \geq \lambda_0$ and we denote this unique solution by $w_\lambda^{\mu,\mu}$.
	In order to derive the transformed equation, we have to approximate $\mu$ by $\mu_n$
	so that we can apply It\^o's formula to $w_\lambda^{\mu_n,\mu_n}$. In a second step we then show 
	that all terms converge in an appropriate sense. To this end we derive the following lemma. 
	\begin{lemm}\label{lem:conv_riem}
		Let $(X_t)_{t \geq 0}$ be a solution to \eqref{eq:singularSDE} with $\mu \in K_{\eta}^f$ and $\eta < \alpha-1$. 
		Furthermore, let $(f_n)_{n \geq 1}  \subseteq \mathcal{C}_b$ converge uniformly to $f \in  \mathcal{C}_b$. Then  there exists a subsequence $(n_k)_{k \in \N}$ such that
		\begin{equation*}
			\int_0^t f_{n_k}(X_s)dA_s - \int_0^t f_{n_k}(X_s)\mu_{n_k}(X_s)ds \rightarrow 0,
		\end{equation*}
		almost surely as $k \rightarrow \infty$, where $A_t = X_t-x_0-L_t$, for $t \geq 0$.
	\end{lemm}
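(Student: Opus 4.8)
The plan is to write, for fixed $t\ge 0$,
\[
\int_0^t f_{n_k}(X_s)\,dA_s-\int_0^t f_{n_k}(X_s)\mu_{n_k}(X_s)\,ds=I_k+II_k+III_k,
\]
with
\[
I_k=\int_0^t\big(f_{n_k}-f\big)(X_s)\,dA_s,\qquad III_k=\int_0^t\big(f-f_{n_k}\big)(X_s)\mu_{n_k}(X_s)\,ds,
\]
\[
II_k=\int_0^t f(X_s)\,dA_s-\int_0^t f(X_s)\mu_{n_k}(X_s)\,ds,
\]
and to dispose of $I_k$ and $III_k$ by the uniform convergence $f_n\to f$, leaving $II_k$ as the only real issue. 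First I would record the structure available here: since $\mu\in K^f_\eta$ with $\eta<\alpha-1$, Theorem~\ref{th:sde_exist}\ref{itm:weakex} guarantees that $A_t=X_t-x_0-L_t$ is continuous, adapted and of finite variation (so every integral above is a genuine pathwise Lebesgue--Stieltjes integral), Definition~\ref{def:sing_sol}(1) gives that the primitives $G_n(t):=\int_0^t\mu_n(X_s)\,ds$ converge to $A_t$ in probability, uniformly in $t$ on compacts, and Definition~\ref{def:sing_sol}(2) supplies a subsequence $\{n_k\}$ along which $C_t(\omega):=\sup_k\int_0^t|\mu_{n_k}(X_s)|\,ds<\infty$ $\bbP$-a.s.\ for every $t$. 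Passing to a further subsequence (not relabelled), which preserves the finiteness of $C_t$, I may also assume $\sup_{s\le m}|G_{n_k}(s)-A_s|\to 0$ a.s.\ for every $m\in\N$. Then $|I_k|\le\|f_{n_k}-f\|_\infty\,\mathrm{Var}_{[0,t]}(A)\to 0$ and $|III_k|\le\|f_{n_k}-f\|_\infty\,C_t(\omega)\to 0$ almost surely.

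For $II_k$ I would argue pathwise. Fix $\omega$ outside the exceptional null set and $\epsilon>0$. The path $s\mapsto f(X_s(\omega))$ is càdlàg on $[0,t]$, hence regulated, so there is a step function $h=\sum_{j=1}^m c_j\mathbbm{1}_{[t_{j-1},t_j)}$, $0=t_0<\cdots<t_m=t$, with $\sup_{s\le t}|f(X_s)-h(s)|<\epsilon$. Replacing $f(X_\cdot)$ by $h$ in both integrals defining $II_k$ costs at most $\epsilon\big(\mathrm{Var}_{[0,t]}(A)+C_t(\omega)\big)$, uniformly in $k$, while
\[
\int_0^t h(s)\mu_{n_k}(X_s)\,ds=\sum_{j=1}^m c_j\big(G_{n_k}(t_j)-G_{n_k}(t_{j-1})\big)\longrightarrow\sum_{j=1}^m c_j\big(A_{t_j}-A_{t_{j-1}}\big)=\int_0^t h(s)\,dA_s
\]
as $k\to\infty$, by the a.s.\ convergence $G_{n_k}\to A$. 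Hence $\limsup_k|II_k|\le\epsilon\big(\mathrm{Var}_{[0,t]}(A)+C_t(\omega)\big)$, and letting $\epsilon\downarrow 0$ gives $II_k\to 0$ a.s. Combining the three estimates proves the claim; inserting $\sup_{t\le T}$ throughout and using that $G_{n_k}\to A$ uniformly on $[0,T]$ shows the convergence is in fact uniform in $t$ on compact intervals, which is the form one uses afterwards.

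The only genuine obstacle is $II_k$: one must upgrade the merely uniform convergence of the primitives $G_{n_k}$ of the signed measures $\mu_{n_k}(X_s)\,ds$ to convergence of these measures tested against the \emph{discontinuous} integrand $f(X_\cdot)$. This is exactly why both ingredients of Definition~\ref{def:sing_sol} enter -- the uniform-on-compacts convergence of $G_{n_k}$ and the a.s.\ bound $C_t(\omega)<\infty$ along the subsequence (i.e.\ a uniform bound on the total variation of $\mu_{n_k}(X_s)\,ds$) -- and why the regulated/step-function approximation of the càdlàg path $f(X_\cdot)$ closes the gap. Everything else ($I_k$, $III_k$, and the passage to an a.s.-convergent subsequence) is routine.
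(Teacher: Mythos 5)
Your proposal is correct and follows essentially the same route as the paper's proof: the identical three-term decomposition, with the first and third terms controlled by the uniform convergence $f_{n}\to f$ together with the finite variation of $A$ (from Theorem~\ref{th:sde_exist}) and the a.s.\ bound $\sup_k\int_0^t|\mu_{n_k}(X_s)|\,ds<\infty$ along the subsequence from Definition~\ref{def:sing_sol}. The only minor difference is the middle term: the paper deduces it from the a.s.\ weak convergence of $\mu_{n_k}(X_s)\,ds$ to $dA_s$ together with the fact that the discontinuity set of $f(X_\cdot)$ is $dA$-null (since $A$ is continuous and $X$ c\`adl\`ag), whereas you prove the same convergence directly via a step-function approximation of the regulated path $f(X_\cdot)$ and the uniform convergence of the primitives $G_{n_k}\to A$; both arguments are valid.
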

	\begin{proof}
		First, we pass to a sub-sequence $(n_k)_{k \geq 1}$ such that 
		\begin{equation}\label{eq:bounded_var}
			\sup_{n_k}\int_0^t|\mu_{n_k}(X_s)|ds < \infty, \quad \bbP\text{-a.s.}
		\end{equation}
		and such that 
		\begin{equation}\label{eq:uniform_conv}
			\lim_{k \rightarrow 0}\sup_{0 \leq s \leq t}|\int_0^s\mu_{n_k}(X_\tau)d\tau -A_s| = 0,  \quad \bbP\text{-a.s.}
		\end{equation}
		Such a sub-sequence exists by Definition~\ref{def:sing_sol} of a solution. 
		For the sake of a more concise notation, we suppose that we already passed to this sub-sequence and denote it by $(n)_{n \geq 1}$ from here on. 
		Now, consider the following 
		\begin{equation*}
			\begin{split}
				&	\Big| \int_0^t f_n(X_s)dA_s- \int_0^t f_n(X_s)\mu_n(X_s)ds\Big| \\
				&	\leq 	\Big| \int_0^t f_n(X_s)dA_s- \int_0^t f(X_s)dA_s\Big| + 	\Big| \int_0^t f(X_s)dA_s- \int_0^t f(X_s)\mu_n(X_s)ds\Big| \\
				&\quad + 	\Big| \int_0^t f(X_s)\mu_n(X_s)ds- \int_0^t f_n(X_s)\mu_n(X_s)ds\Big| \\
				& =: E_1^n+E_2^n+E_3^n.
			\end{split}
		\end{equation*}
		The term $E_1^n$ converges almost surely to zero as $n \rightarrow \infty$ by the dominated convergence theorem. Furthermore, it holds almost surely that
		\begin{equation*}
			E_3^n \leq \| f -f_n\|_\infty \sup_{n\geq 1} \int_0^t |\mu_n|(X_s)ds \rightarrow 0, \quad n \rightarrow \infty, 
		\end{equation*}
		by \eqref{eq:bounded_var}. Thus it remains to prove the almost sure convergence of $E_2^n$ to zero as 
		$n \rightarrow \infty$.  From \eqref{eq:bounded_var} and \eqref{eq:uniform_conv} it follws that there exists $\tilde{\Omega}\subseteq \Omega$ such that $\bbP(\tilde{\Omega}) = 1$,
		and $\mu_n(X_s(\omega))ds$ converges weakly to $dA_s(\omega)$ on $[0,t]$ for all $\omega \in \tilde{\Omega}$. Since $A$ is almost surely continuous and $X$ has c\`adl\`ag paths it 
		follows that the set of discontinuities of the bounded function $f(X_s)$ is almost surely of measure zero with respect to $dA$. Thus it follows that $E_2^n \rightarrow 0$ almost surely, as $n \rightarrow \infty$. 
		This concludes the proof. 
		
	\end{proof}

	Now, we are ready to derive the transformed equation.
	\begin{lemm}\label{lemm:trans_eq}
		Let $(X_t)_{t \geq 0}$ be a solution to \eqref{eq:singularSDE} with $\mu \in K_{\eta}^f$ and $\eta < \alpha-1$.
		Then there exists $\lambda_0 > 0$ such that for all $\lambda \geq \lambda_0$
		\begin{equation}\label{eq:trams_eq_X}
			\begin{split}
				&w_{\lambda}^{\mu,\mu} (X_t) + X_t = w_{\lambda}^{\mu,\mu} (x) + x +\lambda \int_0^t w_{\lambda}^{\mu,\mu} (X_s)ds\\
				& \qquad  +\int_0^t\int_\R \big(w_{\lambda}^{\mu,\mu} (X_{s-}+r)-w_{\lambda}^{\mu,\mu} (X_{s-}) \big)\big(\Pi-\pi\big)(ds,dr) +L_t ,
			\end{split}
		\end{equation}
		for all $t \geq 0$, $\bbP$-almost surely.
	\end{lemm}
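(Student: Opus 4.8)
The plan is to derive \eqref{eq:trams_eq_X} from It\^o's formula applied to a smooth approximation of the Zvonkin corrector, followed by a limiting argument. Write $w := w_\lambda^{\mu,\mu}$ and, for $n\in\N$, let $w_n := W_\lambda^{\mu_n}\mu_n = w_\lambda^{\mu_n,\mu_n}$, so that $w_n$ solves the classical resolvent equation $\lambda w_n - \Delta_\alpha w_n - \mu_n w_n' = \mu_n$. Since $\mu_n\in\mathcal{C}_b^\infty$, $w_n$ is smooth, and by the argument behind Lemma~\ref{lemm:wlmm_bound_deriv} one has $\sup_n(\|w_n\|_\infty + \|w_n'\|_\infty)<\infty$ for $\lambda$ large. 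Moreover $w_n\to w$ uniformly on $\R$ by Lemma~\ref{lem:pot_conv} (applied with $\nu=\mu$) and $w_n'\to w'$ uniformly on $\R$ by Lemma~\ref{lem:deriv_pot_conv}.

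First I would fix $\lambda\geq\lambda_0$ and apply the It\^o--L\'evy formula to $w_n(X_t)$, where $X=x+L+A$ solves \eqref{eq:singularSDE}. Since $L$ has no Gaussian part and (as $\alpha\in(1,2)$) is a pure-jump martingale with $L_t=\int_0^t\int_\R r(\Pi-\pi)(ds,dr)$, since $A$ is continuous and of finite variation, and since $\{s: X_{s-}\neq X_s\}$ is $ds$- and $dA$-negligible, this gives
\begin{equation*}
	w_n(X_t) = w_n(x) + \int_0^t w_n'(X_s)\,dA_s + \int_0^t \Delta_\alpha w_n(X_s)\,ds + M_t^n ,
\end{equation*}
with $M_t^n := \int_0^t\int_\R\big(w_n(X_{s-}+r)-w_n(X_{s-})\big)(\Pi-\pi)(ds,dr)$ the compensated-jump martingale and $\int_0^t\Delta_\alpha w_n(X_s)\,ds$ the compensator term (the generator of $L$ applied to $w_n$). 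Substituting $\Delta_\alpha w_n = \lambda w_n - \mu_n w_n' - \mu_n$ and adding the identity $X_t=x+L_t+A_t$, I would rearrange this into
\begin{equation*}
	\begin{split}
		w_n(X_t)+X_t = {}& w_n(x)+x+\lambda\int_0^t w_n(X_s)\,ds + M_t^n + L_t\\
		&+\Big(\int_0^t w_n'(X_s)\,dA_s - \int_0^t w_n'(X_s)\mu_n(X_s)\,ds\Big) + \Big(A_t - \int_0^t\mu_n(X_s)\,ds\Big).
	\end{split}
\end{equation*}

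Then I would send $n\to\infty$ along a suitable subsequence. By uniform convergence of $w_n$, the left-hand side and $w_n(x)+\lambda\int_0^t w_n(X_s)\,ds$ converge, for every fixed $t$ and $\bbP$-a.s., to $w(X_t)+X_t$ and $w(x)+\lambda\int_0^t w(X_s)\,ds$; the bracket $A_t-\int_0^t\mu_n(X_s)\,ds$ tends to $0$ in probability, uniformly on compact time intervals, by item (1) of Definition~\ref{def:sing_sol}; and the bracket $\int_0^t w_n'(X_s)\,dA_s - \int_0^t w_n'(X_s)\mu_n(X_s)\,ds$ tends to $0$ $\bbP$-a.s.\ along a subsequence by Lemma~\ref{lem:conv_riem} applied with $f_n=w_n'$ and $f=w'$ — this is the step where the singular drift is absorbed. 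For the martingale term, It\^o's isometry together with the elementary bound $|(w_n-w)(x+r)-(w_n-w)(x)|\leq\min(2\|w_n-w\|_\infty,\,|r|\,\|w_n'-w'\|_\infty)$ and $\tilde\pi(dr)=c_1(\alpha)|r|^{-1-\alpha}dr$ yield
\begin{align*}
	\bbE\big[(M_t^n-M_t)^2\big] &= \bbE\int_0^t\int_\R\big|(w_n-w)(X_{s-}+r)-(w_n-w)(X_{s-})\big|^2\,\tilde\pi(dr)\,ds\\
	&\leq C\,t\,\|w_n'-w'\|_\infty^{\alpha}\,\|w_n-w\|_\infty^{2-\alpha}\ \longrightarrow\ 0,
\end{align*}
where $M_t := \int_0^t\int_\R(w(X_{s-}+r)-w(X_{s-}))(\Pi-\pi)(ds,dr)$ (the same computation with $w_n-w$ replaced by $w$, using the bounds of Lemma~\ref{lemm:wlmm_bound_deriv}, shows that $M$ is a well-defined square-integrable martingale); by Doob's inequality $\sup_{s\leq t}|M_s^n-M_s|\to0$ in $L^2$, hence $\bbP$-a.s.\ along a subsequence. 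Extracting a common subsequence gives \eqref{eq:trams_eq_X} for every fixed $t$, $\bbP$-a.s., and since both sides of \eqref{eq:trams_eq_X} are c\`adl\`ag in $t$, the identity holds $\bbP$-a.s.\ for all $t\geq0$ simultaneously.

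The It\^o computation itself is routine once $w_n$ is known to be smooth enough; the main obstacle is the limiting argument, where one must reconcile three modes of convergence — the deterministic uniform bounds on $w_n,w_n'$, convergence in probability uniformly on compacts for $\int_0^t\mu_n(X_s)\,ds\to A_t$, and $\bbP$-a.s.\ convergence only along subsequences for both Lemma~\ref{lem:conv_riem} and $M^n$ — which is what forces the subsequence extraction and the c\`adl\`ag upgrade to all $t$. The genuinely delicate ingredient is Lemma~\ref{lem:conv_riem}: it is precisely what turns the two individually ill-behaved integrals $\int_0^t w_n'(X_s)\,dA_s$ and $\int_0^t w_n'(X_s)\mu_n(X_s)\,ds$ into a vanishing difference, and it rests in turn on the weak convergence of $\mu_n(X_\cdot)\,ds$ to $dA$ along the subsequence supplied by the definition of a solution.
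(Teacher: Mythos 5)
Your proposal is correct and follows essentially the same route as the paper: apply It\^o's formula to $w_n=W_\lambda^{\mu_n}\mu_n$ using the smoothed resolvent equation, absorb the singular drift through the cancellation $\int_0^t w_n'(X_s)\,dA_s-\int_0^t w_n'(X_s)\mu_n(X_s)\,ds\to0$ via Lemma~\ref{lem:conv_riem} together with the uniform convergences from Lemmas~\ref{lem:pot_conv} and~\ref{lem:deriv_pot_conv}, pass to a common subsequence, and upgrade to all $t$ by c\`adl\`ag paths. The only (immaterial) difference is that you control the compensated-jump term by a direct It\^o-isometry computation with the bound $\min(\|w_n'-w'\|_\infty|r|,2\|w_n-w\|_\infty)$, whereas the paper invokes \cite[Lemma 5.1]{Athreya20} with the same pointwise bound.
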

	\begin{proof}
		First note that $\mu\in K_{\eta}^f$ for $\eta < \alpha-1$ implies that $|\mu| \in \mathcal{C}^{1-\alpha+\epsilon}$
		for some fixed $\epsilon >0$, by Lemma~\ref{lem:KatoBesov}. Furthermore, 
		for the sake of a more concise notation, we denote $w_n$ to be the solution to 
		\begin{equation}\label{eq:smoothed_res}
			\lambda w_n -\Delta_{\alpha}w_n - \mu_n (w_n)' = \mu_n,
		\end{equation}
		where, as usual, $\mu_n = \varphi_n \ast \mu$. An application of It\^o's Lemma and \eqref{eq:smoothed_res} yield
		\begin{equation}\label{eq:Ito_wn}
			\begin{split}
				w_n(X_t)& = w_n(x) + \lambda \int_0^t w_n(X_s)ds - \int_0^t \mu_n(X_s)ds \\
				& + \int_0^t (w_n)'(X_s)dA_s - \int_0^t (w_n)'(X_s)\mu_n(X_s)ds \\
				& + \int_0^t \int_\R \big[w_n(X_{s-}+r)-w_n(X_{s-})\big]\big(\Pi-\pi\big)(ds,dr), \quad t \geq 0, \quad \bbP\text{-a.s.}
			\end{split}
		\end{equation}
		Now we fix arbitrary $t \geq 0$.
		We first show that there exists a subsequence such that   $\int_0^t (w_n)'(X_s)dA_s - \int_0^t (w_n)'(X_s)\mu_n(X_s)ds$ converges to zero in probability. 
		By Lemma~\ref{lem:conv_riem} it is enough to show that 
		$w_n \rightarrow w^{\mu.\mu}$ uniformly, which follows from Lemma~\ref{lem:deriv_pot_conv}. \par
		Now, we set 
		\begin{equation*}
			f_n(s,r,\omega) = w^{\mu,\mu}_\lambda(X_{s-}+r)-w^{\mu,\mu}_\lambda(X_{s-})-w_n(X_{s-}+r)+w_n(X_{s-}),\quad s \geq 0
		\end{equation*}
		and note that 
		\begin{equation*}
			|f_n(s,r,\omega)| \leq C\big(\|w_n'-(w^{\mu,\mu}_\lambda)'\|_\infty |r|\big)\wedge C\|w_n-w^{\mu,\mu}_\lambda\|_{\infty}, \quad \forall s\geq 0,\quad  r \in \R.
		\end{equation*}
		By applying \cite[Lemma 5.1]{Athreya20} to $f_n$ with $C_{f_n} = C \max(\|w_n'-(w^{\mu,\mu}_\lambda)'\|_\infty ,\|w_n-w^{\mu,\mu}_\lambda\|_{\infty} )$ and with Lemma~\ref{lem:pot_conv} and Lemma~\ref{lem:deriv_pot_conv}
		it follows that, as $n \rightarrow \infty$,
		\begin{align*}
			&\int_0^t \int_\R \big[w_n(X_{s-}+r)-w_n(X_{s-})\big] \big(\Pi-\pi\big)(ds,dr) \\
			& \qquad \rightarrow \int_0^t \int_\R \big[w_{\lambda}^{\mu,\mu} (X_{s-}+r)-w_{\lambda}^{\mu,\mu} (X_{s-})\big] \big(\Pi-\pi\big)(ds,dr)
		\end{align*}
		in probability.  The terms $w_n(X_t)$ and $\int_0^t w_n(X_s) ds$ also converge $\bbP$-almost surely to
		$w_{\lambda}^{\mu,\mu} (X_t)$ and $\int_0^t w_{\lambda}^{\mu,\mu} (X_s)ds$,  respectively.
		Now, we pass to a subsequence $(n_k)$ such that all the convergences discussed above hold and rewrite \eqref{eq:Ito_wn} as
		\begin{equation}\label{eq:It_w_n_sub}
			\begin{split}
				w_{n_k}(X_t) +X_t& = w_{n_k}(x) + \lambda \int_0^t w_{n_k}(X_s)ds +X_t- \int_0^t \mu_{n_k}(X_s)ds \\
				& + \int_0^t (w_{n_k})'(X_s)dA_s - \int_0^t (w_{n_k})'(X_s)\mu_{n_k}(X_s)ds \\
				& + \int_0^t \int_\R \big[w_{n_k}(X_{s-}+r)-w_{n_k}(X_{s-})\big]\big(\Pi-\pi\big)(ds,dr).
			\end{split}
		\end{equation}
		It is then evident, that the right hand side and the left hand side of \eqref{eq:It_w_n_sub} converge in probability to the right and left hand side of
		\eqref{eq:trams_eq_X}, respectively. Since all the processes in \eqref{eq:trams_eq_X} are c\`adl\`ag, we get that \eqref{eq:trams_eq_X} holds $\bbP$-almost surely for all $t \geq 0$. This concludes the proof.
	\end{proof}
	With the help of the above lemmas, we are able to derive transformed equation for which we can establish pathwise uniqueness with tools from \cite{Knopova17} and \cite{Xiong19}.
	To this end set 
	\begin{align*}
		&b(z)= w_{\lambda}^{\mu,\mu} (\phi^{-1}(z)), \\
		&\sigma(z,r) = w_{\lambda}^{\mu,\mu} (\phi^{-1}(z)+r)-w_{\lambda}^{\mu,\mu} (\phi^{-1}(z)),
	\end{align*}
	where $\phi$ is defined as in \eqref{eq:zvonkin_transform}.
	Setting $W_t = \phi(X_t)$ yields
	\begin{equation}\label{eq:trans_eq}
		W_t = \phi(x) + \lambda\int_0^t b(W_s)ds + \int_0^t\int_\R \sigma(W_{s-},r)\big(\Pi-\pi\big)(ds,dr) +L_t, \quad t \geq 0.
	\end{equation}
	It turns out that the transformation $\phi$ is invertible and its inverse is Lipshitz continuous.  
	\begin{lemm}
		There exists $\lambda_0 > 0$ such that for all $\lambda \geq \lambda_0$ the map $\phi$
		is strictly monotone increasing and satisfies
		\begin{equation}\label{eq:lip_phi}
			(1-\epsilon_0(\lambda)) |x-y| \leq  |\phi(x)-\phi(y)| \leq (1+\epsilon_0(\lambda))|x-y|, \quad x,y\in \R,
		\end{equation} 
		where $\epsilon_0(\lambda) \rightarrow 0$ as $\lambda \rightarrow 0$. 
		Furthermore, $\phi$  has a  strictly monotone increasing inverse $\phi^{-1}$, satisfying 
		\begin{equation}\label{eq:lip_phi_inv}
			(1+\epsilon_0(\lambda))^{-1} |x-y| \leq  |\phi^{-1}(x)-\phi^{-1}(y)| \leq (1-\epsilon_0(\lambda))^{-1}|x-y|,  \quad x,y\in \R.
		\end{equation}
	\end{lemm}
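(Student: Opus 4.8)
The plan is to take $\epsilon_0(\lambda) := \|(w_\lambda^{\mu,\mu})'\|_\infty$ and to deduce all the claimed estimates from Lemma~\ref{lemm:wlmm_bound_deriv} together with the fundamental theorem of calculus. By Lemma~\ref{lem:boundw_lmm}, for $\lambda \geq \lambda_0$ the function $w_\lambda^{\mu,\mu}$ lies in $\mathcal{C}^{1+\epsilon/2}$, hence is continuously differentiable, and by Lemma~\ref{lemm:wlmm_bound_deriv} we have $\|w_\lambda^{\mu,\mu}\|_\infty + \|(w_\lambda^{\mu,\mu})'\|_\infty \leq C(\lambda)$ with $C(\lambda)\to 0$. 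Enlarging $\lambda_0$ if necessary, I would assume $\epsilon_0(\lambda) < 1$ for all $\lambda \geq \lambda_0$, and then $\epsilon_0(\lambda)\to 0$ is immediate from Lemma~\ref{lemm:wlmm_bound_deriv}.

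Next, for $x < y$ I would use $\phi(z) = w_\lambda^{\mu,\mu}(z)+z$ and write
\begin{equation*}
	\phi(y)-\phi(x) = (y-x) + \int_x^y (w_\lambda^{\mu,\mu})'(z)\,dz,
\end{equation*}
whence $|\phi(y)-\phi(x) - (y-x)| \leq \epsilon_0(\lambda)\,(y-x)$ and therefore
\begin{equation*}
	(1-\epsilon_0(\lambda))(y-x) \leq \phi(y)-\phi(x) \leq (1+\epsilon_0(\lambda))(y-x).
\end{equation*}
Since $\epsilon_0(\lambda) < 1$, this both shows that $\phi$ is strictly increasing and proves \eqref{eq:lip_phi}.

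Finally, $\phi$ is continuous and strictly increasing, and because $\phi(x)-x = w_\lambda^{\mu,\mu}(x)$ is bounded we have $\phi(x)\to\pm\infty$ as $x\to\pm\infty$; hence $\phi$ is a homeomorphism of $\R$ onto $\R$ and admits a continuous, strictly increasing inverse $\phi^{-1}$ defined on all of $\R$. Substituting $x = \phi^{-1}(u)$, $y = \phi^{-1}(v)$ into \eqref{eq:lip_phi} and rearranging yields \eqref{eq:lip_phi_inv}. I do not anticipate a genuine obstacle here, since the substantive estimates are already contained in Lemma~\ref{lemm:wlmm_bound_deriv}; the only point deserving an explicit line is the surjectivity of $\phi$ onto $\R$ (so that $\phi^{-1}$ is globally defined), which follows from the uniform boundedness of $w_\lambda^{\mu,\mu}$.
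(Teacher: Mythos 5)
Your proof is correct and follows essentially the same route as the paper: the paper likewise takes the bound $\|(w_\lambda^{\mu,\mu})'\|_\infty \leq \epsilon_0(\lambda) < 1$ from Lemma~\ref{lemm:wlmm_bound_deriv} for $\lambda$ large and derives \eqref{eq:lip_phi} and \eqref{eq:lip_phi_inv} directly from it. You merely make explicit the details the paper leaves implicit (the fundamental theorem of calculus step and the surjectivity of $\phi$ guaranteeing a globally defined inverse), which is fine.
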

	\begin{proof}
		The strict monotonicity of $x \mapsto \phi(x) = w_{\lambda}^{\mu,\mu}(x) +x$ readily follows from the fact that 
		\begin{equation}\label{eq:bound_w_lam_prime}
			\|(w_{\lambda}^{\mu,\mu})'\|_\infty \leq \epsilon_0(\lambda) < 1,
		\end{equation}
		for $\lambda$ large enough by Lemma~\ref{lemm:wlmm_bound_deriv}. 
		Using the same estimate, one can easily derive \eqref{eq:lip_phi} and \eqref{eq:lip_phi_inv}.
	\end{proof}
	Before we verify the pathwise uniqueness, we  first show that solutions to \eqref{eq:trans_eq}
	are weakly unique.
	
	\begin{lemm}\label{lemm:weak_unique}
		There exists $\lambda_0>0$ such that for all $\lambda \geq \lambda_0$
		solutions to \eqref{eq:trans_eq} are weakly unique.
	\end{lemm}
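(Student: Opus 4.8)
The plan is to realize \eqref{eq:trans_eq} as an SDE whose associated martingale problem falls within the parametrix theory of \cite{Knopova17}, and then to read off weak uniqueness from the well-posedness of that martingale problem. First I would rewrite \eqref{eq:trans_eq} as a single stochastic integral against the compensated Poisson random measure of $L$: since $L_t=\int_0^t\int_\R r\,(\Pi-\pi)(ds,dr)$, the equation reads
\[
	W_t=\phi(x)+\lambda\int_0^t b(W_s)\,ds+\int_0^t\int_\R c(W_{s-},r)\,(\Pi-\pi)(ds,dr),\qquad c(z,r):=\sigma(z,r)+r .
\]
A direct computation gives $c(z,r)=\phi\big(\phi^{-1}(z)+r\big)-z$, so that for each fixed $r$ the map $z\mapsto z+c(z,r)=\phi(\phi^{-1}(z)+r)$ is a strictly increasing bijection of $\R$ by \eqref{eq:lip_phi}--\eqref{eq:lip_phi_inv}; moreover $|c(z,r)-r|=|\sigma(z,r)|\le\epsilon_0(\lambda)|r|$ with $\epsilon_0(\lambda)<1$ by \eqref{eq:bound_w_lam_prime}, and $z\mapsto c(z,r)$ is uniformly in $r$ Hölder continuous with modulus controlled by $|r|\wedge1$, because $w_\lambda^{\mu,\mu}\in\mathcal{C}^{1+\epsilon/2}$ by Lemma~\ref{lem:boundw_lmm} (hence $(w_\lambda^{\mu,\mu})'\in\mathcal{C}^{\epsilon/2}$). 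Likewise $b=w_\lambda^{\mu,\mu}\circ\phi^{-1}$ is bounded and Hölder continuous by Lemma~\ref{lemm:wlmm_bound_deriv} and \eqref{eq:lip_phi_inv}.

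Next I would identify the generator of \eqref{eq:trans_eq}: for $f\in\mathcal{C}_c^\infty$,
\[
	\mathcal{L}f(z)=\lambda b(z)f'(z)+\int_\R\Big(f\big(z+c(z,r)\big)-f(z)-c(z,r)f'(z)\mathbbm{1}_{\{|r|\le1\}}\Big)\,\tilde{\pi}(dr).
\]
Changing variables $u=c(z,r)$ in the jump integral and using that $r\mapsto c(z,r)$ has $\partial_r c(z,r)=(w_\lambda^{\mu,\mu})'(\phi^{-1}(z)+r)+1\in[1-\epsilon_0(\lambda),1+\epsilon_0(\lambda)]$ uniformly in $z$ (again by \eqref{eq:bound_w_lam_prime} and Lemma~\ref{lem:boundw_lmm}), the jump part becomes $\int_\R\big(f(z+u)-f(z)-uf'(z)\mathbbm{1}_{\{|u|\le\rho\}}\big)m(z,u)|u|^{-1-\alpha}\,du$ for a kernel $m$ that is bounded above and below away from $0$ and $\infty$ and Hölder continuous in $z$, the change of truncation being absorbed into the bounded Hölder drift. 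Thus $\mathcal{L}$ is a stable-like operator with Hölder coefficients and a bounded Hölder drift, i.e. precisely the class treated in \cite{Knopova17}; choosing $\lambda_0$ large enough keeps $\epsilon_0(\lambda)$ small, which is what guarantees the non-degeneracy of $m$. By \cite{Knopova17}, $\mathcal{L}$ admits a continuous transition density and the martingale problem for $\mathcal{L}$ is well-posed: for each starting point there is a unique solution.

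Finally I would conclude: any weak solution $(W,L)$ of \eqref{eq:trans_eq} is, by It\^o's formula, a solution of the martingale problem for $\mathcal{L}$ started at $\phi(x)$; since that problem has a unique solution, any two weak solutions of \eqref{eq:trans_eq} with the same initial condition have the same law, which is the assertion for all $\lambda\ge\lambda_0$. The step I expect to be the main obstacle is the bookkeeping needed to match \eqref{eq:trans_eq} to the precise hypotheses of \cite{Knopova17}, in particular verifying that the push-forward kernel $m(z,u)|u|^{-1-\alpha}$ is genuinely non-degenerate and Hölder in $z$; this rests on the quantitative bound \eqref{eq:bound_w_lam_prime} and the $\mathcal{C}^{1+\epsilon/2}$-regularity from Lemma~\ref{lem:boundw_lmm}, and forces $\lambda$ to be taken large. (An alternative would be to transport the weak uniqueness of \eqref{eq:singularSDE} from Lemma~\ref{lem:weak_unique} through $X=\phi^{-1}(W)$, but since $\phi^{-1}$ is only Lipschitz this would require an It\^o formula for a non-$\mathcal{C}^2$ function together with a re-verification of the approximation scheme in Definition~\ref{def:sing_sol}, so the direct route via \cite{Knopova17} seems preferable.)
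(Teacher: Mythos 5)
Your proposal follows essentially the same route as the paper: rewrite the generator of \eqref{eq:trans_eq}, change variables in the jump integral so that it becomes a stable-like operator with a kernel $m(x,r)$ and a drift $a(x)$, verify boundedness, non-degeneracy and H\"older continuity of these coefficients using \eqref{eq:bound_w_lam_prime} and the $\mathcal{C}^{1+\epsilon/2}$-regularity from Lemma~\ref{lem:boundw_lmm}, and then invoke \cite{Knopova17} (the paper applies \cite[Theorem 2.4]{Knopova17} together with \cite[Theorem 2.3]{Kurtz11} for the equivalence of the martingale problem with weak solutions). The ``bookkeeping'' you flag as the main obstacle—in particular the H\"older estimate for the drift contribution coming from the mismatch of the truncation indicators—is exactly the technical core of the paper's proof (handled there via the sets $D(x,y)$), and the ingredients you list are the ones the paper uses to carry it out.
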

	\begin{proof}
		In order to show weak uniqueness for \eqref{eq:trans_eq}, we prove that solutions to the associated martingale problem are unique.
		By \cite[Theorem 2.3]{Kurtz11} the martingale problem is equivalent to weak solutions of  \eqref{eq:trans_eq} 
		and thus uniqueness of solutions to the martingale problem implies weak uniqueness.
		This means, that the aim is to show that the martingale problem associated with the operator $A$ defined as 
		\begin{equation}
			\begin{split}
				&Af(x) = \lambda f'(x)b(x) + \int_\R \big(f(x+ \sigma(x,r)+r)-f(x) - f'(x)(\sigma(x,r)+\mathbbm{1}_{|r|<1}r) \big)\tilde{\pi}(dr)\\
				& = f'(x)\Big(\lambda b(x) - \int_{|r|\geq 1} \sigma(x,r) \tilde{\pi}(dr) \Big)\\
				& \qquad+ \int_\R \big(f(x+ \sigma(x,r)+r)-f(x) - f'(x)\mathbbm{1}_{|r|<1}(\sigma(x,r) +r)\big) \tilde{\pi}(dr).
			\end{split}
		\end{equation}
		is well posed.
		We will rewrite $A$ in such a way that we can show that it satisfies the conditions of \cite[Theorem 2.4]{Knopova17}.
		First note, that the map $r \mapsto \sigma(x,r)+r$ has an inverse for every $x \in \R$ and any $\lambda \geq \lambda_0$ if $\lambda_0$
		is chosen large enough so that \eqref{eq:bound_w_lam_prime} holds and such that \eqref{eq:resolvent} has a unique solution. We will denote this inverse by $g(x,r)$, for all $x \in \R , r \in \R$ and it holds that
		\begin{equation*}
			g(x,r) = \phi^{-1}(x+r)-\phi^{-1}(x), \quad x,r\in\R.
		\end{equation*}
		Thus the following holds
		\begin{align*}
			&\int_\R f(x+ \sigma(x,r)+r)-f(x) - f'(x)\mathbbm{1}_{|r|<1}(\sigma(x,r) +r) \tilde{\pi}(dr)\\
			&= \int_\R \big( f(x+r) -f(x) - f'(x)\mathbbm{1}_{|g(x,r)|<1}r\big) \big| \frac{r}{g(x,r)} \big|^{1+\alpha} \big((w_\lambda^{\mu,\mu})'(\phi^{-1}(x)+r) +1\big) \tilde{\pi}(dr)\\
			& =\int_\R \big( f(x+r) -f(x) - f'(x)\mathbbm{1}_{|r|<1}r\big) \big| \frac{r}{g(x,r)} \big|^{1+\alpha} \big((w_\lambda^{\mu,\mu})'(\phi^{-1}(x)+r) +1\big) \tilde{\pi}(dr)\\
			& + f'(x) \int_\R \big(\mathbbm{1}_{|g(x,r)|<1} - \mathbbm{1}_{|r|<1}\big) \big| \frac{r}{g(x,r)} \big|^{1+\alpha} \big((w_\lambda^{\mu,\mu})'(\phi^{-1}(x)+r) +1\big) \tilde{\pi}(dr),
		\end{align*}
		for all $x\in \R$. Now we set 
		\begin{equation}
			\begin{split}
				&a(x) = \lambda b(x) - \int_{|r|\geq 1} \sigma(x,r) \tilde{\pi}(dr) \\
				&\qquad +\int_\R \big(\mathbbm{1}_{|g(x,r)|<1} - \mathbbm{1}_{|r|<1}\big) \big| \frac{r}{g(x,r)} \big|^{1+\alpha} \big((w_\lambda^{\mu,\mu})'(\phi^{-1}(x)+r) +1\big) \tilde{\pi}(dr), \quad x\in \R,
			\end{split}
		\end{equation}
		and
		\begin{equation}
			m(x,r) = \big| \frac{r}{g(x,r)} \big|^{1+\alpha} \big((w_\lambda^{\mu,\mu})'(\phi^{-1}(x)+r) +1\big), \quad x,r \in \R,
		\end{equation}
		and rewrite
		\begin{equation*}
			Af(x) = a(x)f'(x) +\int_\R \big(f(x+r)-f(x)-\mathbbm{1}_{|r|<1}f'(x)r\big)m(x,r) \tilde{\pi}(dr), \quad x\in \R.
		\end{equation*}
		We need to check the conditions of \cite[Theorem 2.4]{Knopova17}. That is, we need to show
		\begin{align}
			\label{eq:a_bound}&|a(x)| \leq c_1, \quad \forall x \in \R\\
			\label{eq:m_bound}c_2 \leq & m(x,r) \leq c_3, \quad \forall x,y,r\in \R,
		\end{align}
		and 
		\begin{equation}\label{eq:m_a_holder}
			|m(x,r)-m(y,r)| + |a(x)-a(y)| \leq c_4(|x-y|^\delta\wedge 1), \quad \forall x,r \in \R,
		\end{equation}
		for some $\delta \in (0,1)$, $c_1,c_2,c_3>0$.\\
		Note that $\|(w_\lambda^{\mu,\mu})'\|_\infty \leq \epsilon_0(\lambda)< 1$, for $\lambda \geq \lambda_0$
		and thus for some $C>1$ we have
		\begin{equation}\label{eq:m_est1}
			C^{-1} \leq	\Big| \frac{r}{g(x,r)} \Big| \leq \frac{|r|}{|r| (1-\epsilon_0)} \leq C,
		\end{equation}
		by using \eqref{eq:lip_phi_inv}.
		Furthermore, again by \eqref{eq:bound_w_lam_prime} we get  that there exists $C>1$ such that 
		\begin{equation}\label{eq:m_est2}
			C^{-1} \leq \|(w_\lambda^{\mu,\mu})'(\phi^{-1}(\cdot)+ r)+1\|_\infty \leq C.
		\end{equation}
		Thus it is easily seen that \eqref{eq:a_bound} and \eqref{eq:m_bound} hold.
		Now we turn to \eqref{eq:m_a_holder}. To prove the inequality it is enough to show that 
		\begin{equation*}
			|m(x,r)-m(y,r)| + |a(x)-a(y)| \leq c_4|x-y|^\delta,
		\end{equation*}
		for all $x,y\in \R$ such that $|x-y| < \delta_0$ for some fixed $\delta_0 > 0$ that is chosen sufficiently small. First we find an estimate for the term involving $m$.
		So, let $x,y \in \R$ be such that $|x-y|\leq \delta_0$, then 
		\begin{align*}
			&|m(x,r)-m(y,r)|\\
			&  =\Big| \big| \frac{r}{g(x,r)} \big|^{1+\alpha} \big((w_\lambda^{\mu,\mu})'(\phi^{-1}(x)+r) +1\big) - \big| \frac{r}{g(y,r)} \big|^{1+\alpha} \big((w_\lambda^{\mu,\mu})'(\phi^{-1}(y)+r) +1\big) \Big|\\
			& \leq \Big| \big| \frac{r}{g(x,r)} \big|^{1+\alpha} \big((w_\lambda^{\mu,\mu})'(\phi^{-1}(x)+r) +1\big) - \big| \frac{r}{g(y,r)} \big|^{1+\alpha} \big((w_\lambda^{\mu,\mu})'(\phi^{-1}(x)+r) +1\big) \Big| \\
			&+ \Big| \big| \frac{r}{g(y,r)} \big|^{1+\alpha} \big((w_\lambda^{\mu,\mu})'(\phi^{-1}(x)+r) +1\big) - \big| \frac{r}{g(y,r)} \big|^{1+\alpha} \big((w_\lambda^{\mu,\mu})'(\phi^{-1}(y)+r) +1\big) \Big| \\
			&\leq c  \Big| \big| \frac{r}{g(x,r)} \big|^{1+\alpha} - \big| \frac{r}{g(y,r)} \big|^{1+\alpha} \Big| \\
			&+ c\Big|(w_\lambda^{\mu,\mu})'(\phi^{-1}(x)+r)  -  (w_\lambda^{\mu,\mu})'(\phi^{-1}(y)+r) \Big| \\
			& =: J_1(x,y) + J_2(x,y),
		\end{align*}
		where we used \eqref{eq:m_est1} and \eqref{eq:m_est2} in the second inequality.
		It is easy to see that 
		\begin{align*}
			J_2(x,y)\leq c \|(w_\lambda^{\mu,\mu})'\|_{\mathcal{C}^{\epsilon/2}} |x-y|^{\epsilon/2}, 
		\end{align*}
		since $\phi^{-1}$ is Lipschitz continuous and $(w_\lambda^{\mu,\mu})' \in \mathcal{C}^{\epsilon/2}$ by Lemma~\ref{lem:boundw_lmm} and \cite[Lemma 3.1]{Athreya20}.
		Now, we turn to $J_1$. Note that the sign of $g$ only depends on $r$. That is, if  $\sgn(g(x,r)) = \sgn(r)$ for all $x \in \R$, since $\phi^{-1}$ is monotone.
		Thus, without loss of generality we assume $r >0$ and get
		\begin{align*}
			&J_1 (x,y)\leq cr^{1+\alpha} \big| g(x,r)^{-1-\alpha} - g(y,r)^{-1-\alpha} \big| \\
			& \leq cr^{1+\alpha}\big| g(x,r)^{-1-\alpha} -g(x,r)^{-1}g(y,r)^{-\alpha} + g(x,r)^{-1}g(y,r)^{-\alpha}- g(y,r)^{-1-\alpha} \big| \\
			& \leq cr^{1+\alpha} \Big( r^{-1}\big| g(x,r)^{-1} -g(y,r)^{-1} \big|^{\alpha} + r^{-\alpha}\big| g(x,r)^{-1} -g(y,r)^{-1} \big|\Big), \quad \forall x,y\in \R, |x-y| \leq \delta_0.
		\end{align*}
		It is not to hard to check, that in fact $\phi^{-1} \in \mathcal{C}^{1+\epsilon/2}$.
		Together with \cite[Lemma 5.6]{Athreya20} this leads us to the following estimate
		\begin{align*}
			| g(x,r)^{-1} -g(y,r)^{-1} \big| \leq c r^{-2} |g(x,r)-g(y,r)| \leq cr^{-1} \| \phi^{-1}\|_{\mathcal{C}^{1+\epsilon/2}}|x-y|^{\epsilon/2}, \quad \forall x,y\in \R, |x-y| \leq \delta_0.
		\end{align*}
		Plugging this in yields 
		\begin{equation*}
			J_1(x,y) \leq c |x-y|^\delta,
		\end{equation*}
		for some $\delta > 0$ when $|x-y|\leq\delta_0$. This means it holds 
		\begin{equation}\label{eq:m_holder}
			|m(x,r)-m(y,r)| \leq c|x-y|^{\delta},
		\end{equation}
		for all $r\in \R$ and $x,y$ such that $|x-y|\leq\delta_0$, where $c$ is independent of $r$.
		Now, it remains to consider $a$.  It holds
		\begin{align*}
			&|a(x)-a(y)|  \leq \lambda|b(x)-b(y)|  + \sup_{|r|>1}|\sigma(x,r)-\sigma(y,r)|\int_{|r|\geq 1} \tilde{\pi}(dr) \\
			& + \Big|\int_\R \big(\mathbbm{1}_{|g(x,r)|<1} - \mathbbm{1}_{|r|<1}\big) \big| \frac{r}{g(x,r)} \big|^{1+\alpha} \big((w_\lambda^{\mu,\mu})'(\phi^{-1}(x)+r) +1\big) \tilde{\pi}(dr) \\
			& \qquad - \int_\R \big(\mathbbm{1}_{|g(y,r)|<1} - \mathbbm{1}_{|r|<1}\big) \big| \frac{r}{g(y,r)} \big|^{1+\alpha} \big((w_\lambda^{\mu,\mu})'(\phi^{-1}(y)+r) +1\big) \tilde{\pi}(dr)\Big|, \quad x,y\in \R.
		\end{align*}
		It is straightforward to see that  the  Hölder condition holds for some  $\delta > 0$ for the first two terms. Due to \eqref{eq:m_holder}
		it is enough to prove 
		\begin{equation}\label{eq:D_leb}
			\mathrm{Leb}\big(D(x,y)\big) \leq c  |x-y|^{\delta}, \quad  \forall x,y \in \R,|x-y|\leq \delta_0,
		\end{equation}
		and
		\begin{equation}\label{eq:D_lower_bound}
			\inf_{r \in D(x,y)} |r| \geq c >0, \quad  \forall x,y \in \R,|x-y|\leq \delta_0,
		\end{equation}
		where 
		\begin{equation}
			D(x,y) := \Big\{ r \in \R \colon \big|\mathbbm{1}_{|g(x,r)|<1} - \mathbbm{1}_{|g(y,r)|<1}\big| >0\Big\},  \quad  \forall x,y \in \R,|x-y|\leq \delta_0.
		\end{equation}
		First, we verify \eqref{eq:D_lower_bound}. To this end note that if $r \in D(x,y)$, then  either $1 \leq |g(x,r)|$ or
		$1 \leq |g(y,r)|$. Without loss of generality we assume  $1 \leq g(x,r) $. It thus holds 
		\begin{equation*}
			1 \leq g(x,r)  = \phi^{-1}(x+r) - \phi^{-1} (x) \Leftrightarrow \phi(1+\phi^{-1}(x)) - \phi(\phi^{-1}(x))\leq r,
		\end{equation*} 
		since $\phi$ is strictly monotone.
		Due to the fact that $\phi$ is monotone increasing and  \eqref{eq:lip_phi} it thus follows $0<c \leq r$ as desired. 
		Note that the sign of $g(\cdot,r)$ only depends on $r$. For this reason it is enough to verify 
		\begin{equation}\label{eq:Dtilde_leb}
			\mathrm{Leb}(\tilde{D}(x,y)) \leq c |x-y|^\delta,  \quad  \forall x,y \in \R,|x-y|\leq \delta_0
		\end{equation}
		where $\tilde{D}(x,y) = \{ r \in \R \colon 0 \leq g(x,r) <1 \leq g(y,r)  \}$, to verify \eqref{eq:D_leb}.
		Not that $r \in \tilde{D}(x,y)$,  only if $0 \leq r$ and 
		\begin{equation*}
			r < \phi(1+ \phi^{-1}(x)) -x , \quad r \geq \phi(1+ \phi^{-1}(y)) -y. 
		\end{equation*}
		By noting that $\phi$ and $\phi^{-1}$ are Lipshitz continuous \eqref{eq:Dtilde_leb} is easily verified. This finishes the proof.
	\end{proof}
	Now we show that \eqref{eq:trans_eq} has pathwise unique solutions. 
	\begin{lemm}\label{lem:path_unique_trans}
		There exists $\lambda_0 >0$ so that for all $\lambda \geq \lambda_0$ solutions to \eqref{eq:trans_eq} are pathwise unique.
	\end{lemm}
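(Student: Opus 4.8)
\textit{Plan.} The strategy is to put \eqref{eq:trans_eq} into the form covered by the pathwise uniqueness criterion of \cite{Xiong19} and then check its hypotheses using the regularity of $w_\lambda^{\mu,\mu}$ and the bi‑Lipschitz structure of $\phi$. Since $\alpha\in(1,2)$ we have $\int_{|r|>1}|r|\,\tilde\pi(dr)<\infty$, so $L_t=\int_0^t\int_\R r\,(\Pi-\pi)(ds,dr)$; combining this with the jump term of \eqref{eq:trans_eq}, any solution $W$ satisfies
\[
  W_t=\phi(x)+\lambda\int_0^t b(W_s)\,ds+\int_0^t\int_\R H(W_{s-},r)\,(\Pi-\pi)(ds,dr),\qquad H(w,r):=\phi\bigl(\phi^{-1}(w)+r\bigr)-w ,
\]
so that $w+H(w,r)=\phi(\phi^{-1}(w)+r)$. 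First I would fix $\lambda\ge\lambda_0$ large, take two solutions $W^{(1)},W^{(2)}$ on a common stochastic basis driven by the same $\Pi$ and with $W^{(1)}_0=W^{(2)}_0=\phi(x)$, and set $Y=W^{(1)}-W^{(2)}$.

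The three structural inputs to be verified are: (a) $b=w_\lambda^{\mu,\mu}\circ\phi^{-1}$ is bounded and globally Lipschitz — by Lemma~\ref{lemm:wlmm_bound_deriv} both $w_\lambda^{\mu,\mu}$ and $(w_\lambda^{\mu,\mu})'$ are bounded (with norms tending to $0$ as $\lambda\to\infty$), $(w_\lambda^{\mu,\mu})'\in\mathcal C^{\epsilon/2}$ by Lemma~\ref{lem:boundw_lmm}, and $\phi^{-1}$ is Lipschitz by \eqref{eq:lip_phi_inv}; (b) the \emph{non‑crossing} property: for every $r$ the map $w\mapsto w+H(w,r)=\phi(\phi^{-1}(w)+r)$ is strictly increasing and bi‑Lipschitz with constants in a neighbourhood of $1$ (by strict monotonicity of $\phi$ together with \eqref{eq:lip_phi}--\eqref{eq:lip_phi_inv}); in particular, for $s\le t$ the quantities $Y_{s-}$ and $Y_{s-}+\bigl(H(W^{(1)}_{s-},r)-H(W^{(2)}_{s-},r)\bigr)=\phi(\phi^{-1}(W^{(1)}_{s-})+r)-\phi(\phi^{-1}(W^{(2)}_{s-})+r)$ always have the same sign; and (c) the modulus estimate $|H(w,r)-H(w',r)|\le C\min\{(|r|^{\epsilon/2}\wedge1)\,|w-w'|,\ |r|\wedge1\}$, obtained by writing $H(w,r)-r=w_\lambda^{\mu,\mu}(\phi^{-1}(w)+r)-w_\lambda^{\mu,\mu}(\phi^{-1}(w))$, differentiating in $w$ and invoking $(w_\lambda^{\mu,\mu})'\in\mathcal C^{\epsilon/2}$, $\|(w_\lambda^{\mu,\mu})'\|_\infty<\infty$ and the Lipschitz bound \eqref{eq:lip_phi_inv}.

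With (a)--(c) in hand, the argument of \cite{Xiong19} runs through a Yamada--Watanabe-type estimate: one applies the It\^o formula for jump processes to $\psi_k(Y_t)$ for a sequence $\psi_k\uparrow|\cdot|$, controls the drift contribution by $C\int_0^t\bbE|Y_s|\,ds$ via (a), the large‑jump part ($|r|>1$) likewise using the global Lipschitz bound in (c), and shows that the small‑jump part ($|r|\le1$) vanishes as $k\to\infty$. This last step is the main obstacle, because $\tilde\pi$ is infinite near the origin; it is here that the non‑crossing property (b) is essential, since it prevents the second‑order remainder in the It\^o formula from probing the singularity of $\psi_k''$ at $0$ from the wrong side, while the $r$‑decay in (c) renders the remaining integral against $\tilde\pi$ summable. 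Combining this estimate with the weak uniqueness of Lemma~\ref{lemm:weak_unique} — which, following \cite{Xiong19}, is used to promote the resulting comparison/ordering of the two solutions into equality (and to dispose of a localising sequence) — Gronwall's lemma with $Y_0=0$ yields $\bbE|Y_t|=0$ for every $t$, so $W^{(1)}$ and $W^{(2)}$ are indistinguishable, which is the assertion; pathwise uniqueness for \eqref{eq:singularSDE} then follows at once by pulling back through the bi‑Lipschitz bijection $\phi$ and Lemma~\ref{lemm:trans_eq}. I expect the delicate small‑jump estimate — reconciling the heavy tail of $\tilde\pi$ at $0$ with the merely H\"older (not Lipschitz) dependence of $(w_\lambda^{\mu,\mu})'$, through the monotonicity of the Zvonkin map — to be the crux of the proof.
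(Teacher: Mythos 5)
Your structural observations (a)--(c) are essentially correct, and the non-crossing property you isolate in (b) is indeed the key ingredient; but the engine you build around it --- a Yamada--Watanabe estimate for $\psi_k(Y_t)$ followed by Gronwall to get $\bbE|Y_t|=0$ --- does not close, and the step you yourself flag as ``the crux'' is a genuine gap. With only $(w_\lambda^{\mu,\mu})'\in\mathcal C^{\epsilon/2}$ (and here $\epsilon<\alpha-1$, so $\epsilon$ is small), the best pointwise bound on the jump-coefficient difference is of the type $|\Delta(s,r)|\le C\min\{(|r|^{\epsilon/2}\wedge1)|Y_{s-}|,\ |r|\,|Y_{s-}|^{\epsilon/2}\}$. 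Feeding the first bound into the second-order remainder of It\^o's formula with the classical Yamada--Watanabe functions ($\psi_k''(y)\le 2/(k|y|)$ on a shrinking annulus) produces, even after using non-crossing to keep $Y_{s-}+\theta\Delta$ on the same side of $0$, an integrand of order $k^{-1}|Y_{s-}|\,|r|^{\epsilon}\tilde\pi(dr)$, and $\int_{|r|\le1}|r|^{\epsilon-1-\alpha}dr=\infty$ since $\epsilon<\alpha$; the second bound makes the $r$-integral finite but leaves a factor $|Y_{s-}|^{\epsilon-1}$, which on the support of $\psi_k''$ blows up far faster than the $1/k$ gain. Non-crossing supplies sign information only; it cannot render these divergent integrals summable, so the small-jump term does not vanish and Gronwall never gets started. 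This is precisely why neither \cite{Xiong19} nor \cite{Athreya20} argues this way. Your final sentence, in which weak uniqueness is ``used to promote the resulting comparison/ordering into equality,'' gestures at the correct mechanism but is not coherently integrated: if your Gronwall estimate worked you would not need weak uniqueness at all, and conversely your scheme never produces an ordering of the two solutions to be promoted.

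The paper's proof uses the non-crossing property for a different, expectation-free purpose. Given two solutions $W^1,W^2$ of \eqref{eq:trans_eq} driven by the same noise, one applies Tanaka's formula to $(W^2-W^1)^+$ as in \eqref{eq:max_Ito}; because $w\mapsto w+\sigma(w,r)$ is nondecreasing (the Lipschitz constant of $\sigma$ in $w$ is $<1$ for $\lambda$ large), the two solutions cannot cross by a jump, so the jump correction terms vanish exactly as in \cite[Proposition 4.1]{Xiong19}, and one reads off that $W^1\vee W^2$ is again a solution of \eqref{eq:trans_eq}. Weak uniqueness (Lemma~\ref{lemm:weak_unique}) then forces $W^1_t\vee W^2_t$, $W^1_t$ and $W^2_t$ to have the same law for each $t$; since $W^1_t\le W^1_t\vee W^2_t$ are defined on the same space and equal in law, they coincide a.s., and c\`adl\`ag paths upgrade this to indistinguishability. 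If you rework your argument so that monotonicity of the Zvonkin map is used to show that the maximum of two solutions solves the equation, and weak uniqueness is the decisive (not auxiliary) tool, the proof goes through without any moment estimate on $|Y_t|$.
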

	\begin{proof}
		Assume that $(W^1_t)_{t \geq 0}$ and $(W^2_t)_{t \geq 0}$ 
		are two solutions driven by the same L\'evy process $(L_t)_{t \geq 0}$. Note that $\sigma(w,r)+ w$ is non-decreasing in $w$ since
		$\sigma$ is Lipschitz with Lipschitz constant smaller than one (use the estimate of $w_{\lambda}^{\mu,\mu}
		$ like in \cite[Lemma 5.6]{Athreya20}). An application of Tanaka's formula yields
		\begin{equation}\label{eq:max_Ito}
			\begin{split}
				&W_t^1\vee W_t^2 = W_t^1 + \big(W_t^2-W_t^1\big)^+ \\
				& = W_t^1 + \int_0^t \mathbbm{1}_{W_{s-}^2>W_{s-}^1}d\big(W_s^2-W_s^1\big) \\
				&\quad  + \sum_{0 \leq s\leq t} \mathbbm{1}_{W_{s-}^2>W_{s-}^1} \big(W_s^2-W_s^1\big)^- + \sum_{0 \leq s\leq t} \mathbbm{1}_{W_{s-}^2\leq W_{s-}^1} \big(W_s^2-W_s^1\big)^+,
			\end{split}
		\end{equation}
		for $t \geq 0$.
		The same argument as in the proof of \cite[Proposition 4.1]{Xiong19} demonstrates that the last two terms on the right-hand side of \eqref{eq:max_Ito} are equal to zero. 
		Thus, we have
		\begin{align*}
			&W_t^1\vee W_t^2  =  W_t^1 + \int_0^t \mathbbm{1}_{W_{s-}^2>W_{s-}^1}d\big(W_s^2-W_s^1\big) \\
			& \quad = \phi(x) + L_t + \int_0^t \int_\R \sigma(W_{s-}^1\vee W_{s-}^2,r) \big(\Pi-\pi\big)(ds,dr) + \lambda \int_0^t b\big(W_s^1\vee W_s^2\big)ds, \quad t \geq 0.
		\end{align*}
		This means, that $(W_t^1\vee W_t^2)_{t \geq 0}$ is also a solution. Note that we already established the weak uniqueness of solutions to \eqref{eq:trans_eq} in Lemma~\ref{lemm:weak_unique}. 
		This means, that for any $t \geq 0$, $W_t^1\vee W_t^2 $, $W^1_t$, and $W^2_t$ all have the same law. Since $W^1_t$ and $W^2_t$ are random variables with the same 
		law on the same probability space and their maximum also has the same law, it follows that they are equal almost surely. 
		Since this holds for all $t \geq 0$ and $W^1$ and $W^2$ are c\`adl\`ag processes we immediately get that $W^1_t = W^2_t$ for $t \geq 0$, $\bbP$-almost surely.
	\end{proof}
	\begin{proof}[Proof of Theorem~\ref{th:sde_exist}\ref{itm:stronex}]
		Note, that $\phi$, defined in \eqref{eq:zvonkin_transform}, is one to one.
		 For this reason, the pathwise uniqueness of solutions to \eqref{eq:trans_eq}, given by Lemma~\ref{lem:path_unique_trans}, implies that solutions to \eqref{eq:trams_eq_X} are pathwise unique as well. 
		By Lemma~\ref{lemm:trans_eq} every solution to \eqref{eq:singularSDE} is a solution to \eqref{eq:trams_eq_X}. This yields  pathwise uniqueness of solutions to \eqref{eq:singularSDE}.
		 Using similar arguments as in the proof of \cite[Theorem 2.3]{Athreya20}, we can use the generalized version of the classical Yamada-Watanabe theorem from \cite{Kurtz14} to infer that there exists a strong solution. Since this is a standard argument, we omit it here. 
		 This concludes the proof of Theorem~\ref{th:sde_exist}~\ref{itm:stronex}.
	\end{proof}
\subsection{Other Approximating Sequences - Proof of Lemma~\ref{cor:PUC}}\label{sec:aprox_seq}
In this section, we briefly want to discuss conditions on a sequence $(\tilde{\mu}_n)_{n\in N}$ approximating $\mu$, that ensure the convergence \eqref{eq:conv_A_PUc}. 
By Lemma~\ref{lem:conv_A}, it is enough to show that the $\lambda$-potentials $W_\lambda^{\tilde{\mu}_n}$ converge uniformly to $W_\lambda^\mu\mu$.  In the following lemma we give sufficient conditions on the sequence $(\tilde{\mu}_n)_{n\in N}$ to ensure this convergence.
\begin{lemm}\label{lem:conv_nu_seq_pot}
	Let $\lambda >0$ and $\nu\in K^f_{\alpha-1}$. Furthermore, let $(V_n)_{n \in \N} \subset \mathcal{C}_b^\infty$ and denote $v_n(dx) = V_n(x)dx$. Assume that $v_n$ converges weakly to $\nu$ and that 
	\begin{equation}\label{eq:limsup_Mnu}
		\lim_{r \rightarrow 0}\sup_{n \in \N}M^{\alpha-1}_{v_n}(r) =0.
	\end{equation}
	Then  $W_\lambda^\mu v_n$ converges to $W_\lambda^\mu \nu$ uniformly on $\R$.
\end{lemm}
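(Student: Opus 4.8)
The plan is to work through the series representation of the potential from Proposition~\ref{prop:pot_sum}, thereby reducing everything to the free $\lambda$-potential operator $U_\lambda$ (whose kernel $u_\lambda$ is understood explicitly, and equals the convolution kernel $\bar u_\lambda=\mathfrak F^{-1}[(\lambda+|\cdot|^\alpha)^{-1}]$ by \eqref{eq:lam_pot_dens_closed_form}), and then to estimate each term in the $\|\cdot\|_\infty$ norm via the Fourier transform. The point of going through Fourier is that $\|\sigma\ast f\|_\infty\le \tfrac1{2\pi}\|\widehat\sigma\,\widehat f\|_{L^1}$ is already a \emph{global} sup-norm bound, so it produces uniform convergence on all of $\R$ directly, rather than merely on compact sets.

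\textbf{Step 1: reductions and preliminary bounds.} It suffices to treat $\lambda\ge\lambda_0$ with $\lambda_0$ large; for $0<\lambda<\lambda_0$ one applies the resolvent identity $W^\mu_\lambda\sigma=W^\mu_{\lambda_0}\sigma+(\lambda_0-\lambda)W^\mu_\lambda(W^\mu_{\lambda_0}\sigma)$ to $\sigma=v_n-\nu$ together with $\|W^\mu_\lambda h\|_\infty\le\lambda^{-1}\|h\|_\infty$ (the process being conservative). Weak convergence gives $\widehat{v_n}(\xi)\to\widehat\nu(\xi)$ for every $\xi$ (test against $\cos(\xi\,\cdot),\sin(\xi\,\cdot)\in\mathcal C_b$), and the uniform boundedness principle on $\mathcal C_0(\R)$ gives $M':=\sup_n|v_n-\nu|(\R)<\infty$. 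Since $M^\alpha_\rho(r)\le r\,M^{\alpha-1}_\rho(r)$, hypothesis \eqref{eq:limsup_Mnu} and $\nu\in K^f_{\alpha-1}$ show that, enlarging $\lambda_0$ if necessary, $\mathcal R(\lambda,\mu)<1$ and $\sup_n\mathcal R(\lambda,v_n-\nu)<\infty$ for all $\lambda\ge\lambda_0$ (with $\mathcal R$ as in \eqref{eq:c_mu_lam}); we also take $\lambda_0$ large enough for Proposition~\ref{prop:pot_sum}.

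\textbf{Step 2: series and geometric tail.} By Proposition~\ref{prop:pot_sum}, for $\lambda\ge\lambda_0$,
\[
W^\mu_\lambda v_n(x)-W^\mu_\lambda\nu(x)=\sum_{k\ge0}U_\lambda\sigma^{(n)}_k(x),\qquad \sigma^{(n)}_k:=(N_\mu U_\lambda)^k(v_n-\nu).
\]
Iterating Proposition~\ref{prop:total_var_Nmu} gives $|\sigma^{(n)}_k|(dx)\le \mathcal R(\lambda,v_n-\nu)\,\mathcal R(\lambda,\mu)^{k-1}|\mu|(dx)$ for $k\ge1$, so with $\|u_\lambda\|_\infty\le c(\lambda)$ (see \eqref{eq:lamPot_bound}) we get $\|U_\lambda\sigma^{(n)}_k\|_\infty\le c(\lambda)\,\mathcal R(\lambda,v_n-\nu)\,\mathcal R(\lambda,\mu)^{k-1}|\mu|(\R)$. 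As $\mathcal R(\lambda,\mu)<1$ and $\sup_n\mathcal R(\lambda,v_n-\nu)<\infty$, the tail $\sum_{k>K}\|U_\lambda\sigma^{(n)}_k\|_\infty$ is $\le\varepsilon$ uniformly in $n$ once $K=K(\varepsilon)$ is large. It remains to prove $\|U_\lambda\sigma^{(n)}_k\|_\infty\to0$ as $n\to\infty$ for each fixed $k$.

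\textbf{Step 3: each term vanishes.} Write $U_\lambda\sigma=\sigma\ast\bar u_\lambda$, so $\|U_\lambda\sigma\|_\infty\le\tfrac1{2\pi}\int|\widehat\sigma(\xi)|(\lambda+|\xi|^\alpha)^{-1}d\xi$, the kernel $(\lambda+|\cdot|^\alpha)^{-1}$ being in $L^1(\R)$ since $\alpha>1$. For $k=0$: $|\widehat{v_n-\nu}|\le M'$ with $\widehat{v_n-\nu}\to0$ pointwise, so dominated convergence gives $\|U_\lambda(v_n-\nu)\|_\infty\to0$. For $k\ge1$ it suffices to show $|\sigma^{(n)}_k|(\R)\to0$, because $\|U_\lambda\sigma^{(n)}_k\|_\infty\le c(\lambda)|\sigma^{(n)}_k|(\R)$; since $\sigma^{(n)}_k(dx)=\partial_xU_\lambda\sigma^{(n)}_{k-1}(x)\,\mu(dx)$ we have $|\sigma^{(n)}_k|(\R)\le\|\partial_xU_\lambda\sigma^{(n)}_{k-1}\|_\infty|\mu|(\R)$, and $\partial_xU_\lambda\sigma^{(n)}_{k-1}=\sigma^{(n)}_{k-1}\ast\bar u_\lambda'$. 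Now $\bar u_\lambda'$ is smooth off the origin with $|\bar u_\lambda'(z)|\le C_1|z|^{\alpha-2}$ near $0$ (cf.\ \eqref{eq:lamPot_grad}), which makes $\widehat{\bar u_\lambda'}$ fail to be in $L^1$; so split $\bar u_\lambda'=g_1^\rho+g_2^\rho$ with $g_1^\rho$ a smooth cutoff of $\bar u_\lambda'$ supported in $B(0,2\rho)$. Then $\|\sigma\ast g_1^\rho\|_\infty\le C_1\sup_x\int_{B(x,2\rho)}|x-y|^{\alpha-2}|\sigma|(dy)\le C_1 M^{\alpha-1}_\sigma(2\rho)$, which for $\sigma=\sigma^{(n)}_{k-1}$ tends to $0$ as $\rho\downarrow0$ uniformly in $n$ — by \eqref{eq:limsup_Mnu} when $k=1$, and by the Kato domination $|\sigma^{(n)}_{k-1}|\le C\,|\mu|$ from Proposition~\ref{prop:total_var_Nmu} when $k\ge2$. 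On the other hand $g_2^\rho\in C^\infty(\R)$ with $g_2^\rho,(g_2^\rho)''\in L^1(\R)$, hence $\widehat{g_2^\rho}\in L^1(\R)$, and $\|\sigma\ast g_2^\rho\|_\infty\le\tfrac1{2\pi}\int|\widehat\sigma(\xi)||\widehat{g_2^\rho}(\xi)|d\xi$, which for $k=1$ tends to $0$ by dominated convergence ($\widehat{v_n-\nu}\to0$ pointwise, bounded by $M'$) and for $k\ge2$ is at most $\tfrac1{2\pi}|\sigma^{(n)}_{k-1}|(\R)\|\widehat{g_2^\rho}\|_{L^1}\to0$ by the induction hypothesis. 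Choosing $\rho$ small and then $n$ large shows $\|\partial_xU_\lambda\sigma^{(n)}_{k-1}\|_\infty\to0$, hence $|\sigma^{(n)}_k|(\R)\to0$; induction on $k$ finishes Step~3, and together with Step~2 the lemma follows.

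\textbf{Main obstacle.} The genuinely delicate point is the uniformity over \emph{all} of $\R$: a naive argument via pointwise convergence plus equicontinuity of $\{W^\mu_\lambda v_n\}$ would only give uniform convergence on compacts, and controlling the mass of $v_n-\nu$ near infinity from weak convergence alone (a tightness-type issue, which also pins down that ``weakly'' must be tested against $\mathcal C_b$, not $\mathcal C_0$) is awkward. Routing everything through the Fourier transform avoids this at the cost of having to handle the non-integrable singularity of $\bar u_\lambda'$ at the origin, which is precisely where the uniform Kato bound \eqref{eq:limsup_Mnu} is indispensable.
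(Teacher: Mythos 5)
Your argument is correct in substance, but it takes a genuinely different route from the paper. The paper's proof never expands the perturbed resolvent: it writes $W^\mu_\lambda\sigma(x)=\int_0^\infty e^{-\lambda t}\int_\R p^\mu(t,x,y)\,\sigma(dy)\,dt$ and follows \cite[Lemma 4.7]{Kim14}, using the uniform Kato bound \eqref{eq:limsup_Mnu} together with the two-sided heat-kernel estimates \eqref{eq:trans_density_est} to discard small times and far-away space uniformly in $n$ and $x$, and then concludes on a compact window $[\delta,T]\times B(0,R)\times B(0,R)$ from the joint continuity of $p^\mu$ (Theorem~\ref{th:transition_kernel}) and the weak convergence $v_n\Rightarrow\nu$; this works for every $\lambda>0$ directly. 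You instead stay entirely at the level of the free kernel: the series of Proposition~\ref{prop:pot_sum}, the geometric domination $|\sigma^{(n)}_k|\le\mathcal R(\lambda,v_n-\nu)\mathcal R(\lambda,\mu)^{k-1}|\mu|$ from Propositions~\ref{prop:grad_meas}--\ref{prop:total_var_Nmu} for the tail, a Fourier sup-norm bound plus dominated convergence for the $k=0$ and $k=1$ terms, an induction on $k$, and a resolvent identity to pass from $\lambda\ge\lambda_0$ back to all $\lambda>0$. What your route buys is independence from the construction and continuity of $p^\mu$ (only $u_\lambda$ and the perturbation series are used), and the Fourier bound handles the global (tightness-at-infinity) issue very cleanly; what it costs is the large-$\lambda$ detour and a heavier kernel analysis.

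Two points deserve explicit justification before this is fully watertight. First, your claim that $g_2^\rho,(g_2^\rho)''\in L^1$, hence $\widehat{g_2^\rho}\in L^1$, uses smoothness of $u_\lambda'$ away from the origin together with integrable decay of its higher derivatives; this is true for the stable resolvent kernel (e.g.\ via $u_\lambda(z)=\int_0^\infty e^{-\lambda t}p_t(z)\,dt$, scaling, and the decay of derivatives of the stable density, or by integrating \eqref{eq:deriv_u_lam} by parts in $\xi$), but the paper only records the bounds \eqref{eq:lamPot_est_bod}--\eqref{eq:lamPot_grad}, so you should supply this estimate — note it is genuinely needed for $k=1$, where $|v_n-\nu|(\R)$ does not vanish and only $\widehat{v_n-\nu}\to0$ pointwise is available. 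Second, Proposition~\ref{prop:pot_sum} as stated lets $\lambda_0$ depend on the input measure; you need a single $\lambda_0$ valid for $\nu$ and all $v_n$ simultaneously, which does hold because the threshold in its proof depends on $\mu$ alone (through $\mathcal R(\lambda,\mu)<1$), but say so. With these two remarks added, your proof stands as a valid alternative to the paper's.
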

\begin{proof}
	To prove the assertion we follow an approach similar to the one in the proof of \cite[Lemma 4.7]{Kim14}. 
	By using \eqref{eq:limsup_Mnu},  we can follow the arguments of \cite[Lemma 4.7]{Kim14}
	with only slight modifications to see that it is enough to prove 
	\begin{equation*}
		\lim_{n \rightarrow \infty } \sup_{(t,x) \in [\delta,T] \times B(0,R) }\Big|\int_{B(0,R)}\ee^{-\lambda t}p^\mu(t,x,y)\big(\nu(dy)-v_n(dy)\big)\Big| =0,
	\end{equation*}
	for all $0<\delta< T< \infty$, $0<R<\infty$. However, since $p^\mu$ is jointly continuous by Theorem~\ref{th:transition_kernel}, this follows by the weak convergence of $v_n$. This finishes the proof.
\end{proof}
By Lemma~\ref{lem:conv_A}, the above lemma yields Lemma~\ref{cor:PUC}.

\section{Proof of Theorem~\ref{th:local_time}}\label{sec:proof_loc_time}
\begin{proof}[Proof of Theorem~\ref{th:local_time}]
	We denote the set of nonnegative finite measures of  Kato class $K_{\alpha-1}$ by $\mathcal{S}_{K_0}$.  Furthermore, denote by $\upsilon_A$ the Lebesgue-Stieltjes measure of $A$.
	We see that for all $\omega \in \Omega$ such that $\sup_{0 \leq t\leq T}|X_t|< \infty$ it follows
	\begin{align*}
		&\int_\R  \sup_{0 \leq t \leq T}\Big|\int_0^t v'(X_s-x)dA_s\Big|\mu(dx)\\
		& \leq c(\alpha) \int_0^T \int_{B(X_s,R)}|X_s-x|^{\alpha-2}\mu(dx) + \int_{B(X_s,R)^c}|X_s-x|^{\alpha-2}\mu(dx) |\upsilon_A|(ds)\\
		&\leq c(\alpha)\big(M_{\mu}^{\alpha-1}(R)+ R^{\alpha-2}\mu(\R)\big)|\upsilon_A|([0,T]) <\infty
	\end{align*}
	for $R > 0$ and all  $\mu \in S_{K_0}$. Hereby $M_{\mu}^{\alpha-1}(R)$ is defined as in 
	Definition~\ref{def:Kato}.
	Finally, we have by substituting $(X_{t-}-x)z = r$
	\begin{align*}
		&\int_\R\sup_{0 \leq t\leq T} \bbE_{x_0}\Big[\sup_{0 \leq t \leq T} \int_0^t\int_\R (v(X_{t-}-x+r) -v(X_{t-}-x))^2 \tilde{\pi}(dr)ds \Big] \mu(dx) \\
		&\leq c(\alpha) \int_0^T\bbE_{x_0}\Big[\int_\R |X_t-x|^{\alpha-2}\mu(dx) \Big] dt< \infty \quad \forall \mu \in S_{K_0}. 
	\end{align*}
	This implies $\mu(D_T^c) = 0$ for all $\mu \in S_{K_0}$. Showing that $\mathrm{Cap}_{\alpha-1}(D_T^c) = 0$ finishes the proof.
	However, this claim holds true due to Theorem~\ref{th:capacity_condition}.
\end{proof}

\section{Proof of Proposition~\ref{lem:density}}\label{sec:identification_of_local_time}
In \cite{Boylan64} the author derived criteria for the potential density that yield the existence of a jointly continuous occupation density of the corresponding process. More precisely, to prove the continuity of the occupation density of solutions to  \eqref{eq:singularSDE} we need to show that there
exist  $\lambda > 0$ and $\epsilon_0 > 0$ such that for all $x,y \in \R$ with $|x-y|<\epsilon_0$ it holds 
\begin{equation}\label{eq:continuity_dens}
	|w_\lambda^\mu (x,y)-w_\lambda^\mu(x,x)| < g(|x-y|) ,\quad |w_\lambda^\mu (y,x)-w_\lambda^\mu(x,x)| < g(|x-y|), 
\end{equation}
where $g$ is a nonnegative, increasing function such that 
\begin{equation}\label{eq:good_function}
	\sum_{n \geq 1} n g(2^{-n})^{1/2} < \infty.
\end{equation}
\begin{rema}
	Clearly, every increasing, Hölder continuous function $g$ with $g(0) = 0$ satisfies \eqref{eq:good_function}.
\end{rema}
The following theorem is a slight modification of \cite[Theorem 1]{Boylan64}.
\begin{theorem}\label{th:density_exist_boyloan}
	Let $X$ be a strong Markov process with almost surely right continuous sample paths
	and with potential density $w_\lambda^\mu$. If there exists $\lambda >0$
	such that \eqref{eq:continuity_dens} is satisfied, then there exists an occupation density $\ell_t^x$ that is 
	almost surely jointly continuous in $x$ and $t$.
\end{theorem}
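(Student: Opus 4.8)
The statement is, up to the precise form of the hypothesis, \cite[Theorem~1]{Boylan64}, and the plan is to follow that argument while indicating the two places where the present setting differs: $X$ is not assumed symmetric — which is exactly why \eqref{eq:continuity_dens} is imposed on both the ``row'' $w_\lambda^\mu(x,\cdot)$ and the ``column'' $w_\lambda^\mu(\cdot,x)$ of the resolvent density — and the paths are only right continuous; neither affects the skeleton. \emph{Step 1 (the occupation density as an additive functional).} Since the diagonal values $w_\lambda^\mu(x,x)$ are finite by \eqref{eq:continuity_dens}, for each fixed $x$ there is a positive continuous additive functional $\ell^x$ of $X$ with $\lambda$-potential $z\mapsto\bbE_z[\int_0^\infty\ee^{-\lambda t}\,d\ell_t^x]=w_\lambda^\mu(z,x)$; it is carried by $\{x\}$ and $t\mapsto\ell_t^x$ is continuous and nondecreasing. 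Testing against continuous $f\ge0$ and using Fubini, the processes $t\mapsto\int_\R f(y)\ell_t^y\,dy$ and $t\mapsto\int_0^t f(X_s)\,ds$ have the same $\lambda$-potential $W_\lambda^\mu f$, hence coincide; so $(x,t)\mapsto\ell_t^x$ is (a jointly measurable version of) the occupation density, and it remains only to produce a jointly continuous modification.

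\emph{Step 2 (moment bounds via the resolvent density).} Fix a compact $K$ and an $\mathrm{Exp}(\lambda)$ variable $\tau$ independent of $X$. Writing $(\ell_\tau^x-\ell_\tau^y)^{2k}$ as an iterated integral against the signed measure $d\ell^x-d\ell^y$ and applying the strong Markov property successively at the ordered integration times, each passage between the two points $x$ and $y$ produces — after summing over the $2^{2k}$ sign patterns — a first difference of the kernel, controlled by $g(|x-y|)$ via \eqref{eq:continuity_dens}; carrying this out and using the local boundedness of $w_\lambda^\mu$ gives, for $|x-y|<\epsilon_0$ and all $k\ge1$,
\begin{equation*}
	\sup_{z\in K}\bbE_z\big[(\ell_\tau^x-\ell_\tau^y)^{2k}\big]\ \le\ (Ck)^{2k}\,g(|x-y|)^{k},\qquad C=C(\lambda,K).
\end{equation*}
For $k=1$ one obtains explicitly $\bbE_z[(\ell_\tau^x-\ell_\tau^y)^2]=2w_\lambda^\mu(z,x)\big(w_\lambda^\mu(x,x)-w_\lambda^\mu(x,y)\big)+2w_\lambda^\mu(z,y)\big(w_\lambda^\mu(y,y)-w_\lambda^\mu(y,x)\big)$, which is precisely where both inequalities of \eqref{eq:continuity_dens} (a row and a column difference) are needed; the general $k$ follows the same telescoping.

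\emph{Step 3 (chaining and passage to all $t$).} The bound of Step 2 gives the sub-exponential tail $\bbP_z(|\ell_\tau^x-\ell_\tau^y|>\eta)\le 2\exp(-c\,\eta/g(|x-y|)^{1/2})$, uniformly in $z\in K$. Running the dyadic chaining of \cite{Boylan64} over a $2^{-n}$-net of $K$: there are $O(2^n)$ neighbouring pairs at scale $2^{-n}$, so choosing the increment threshold at scale $n$ proportional to $n\,g(2^{-n})^{1/2}$ makes the union bounds summable in $n$, and the resulting a.s.\ spatial modulus of continuity is a constant times $\sum_{n\ge1}n\,g(2^{-n})^{1/2}$, which is finite by \eqref{eq:good_function}. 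Hence $x\mapsto\ell_\tau^x$ has a modification that is a.s.\ continuous uniformly on compacts; the standard transfer argument of \cite{Boylan64} (restarting the process at a fixed time, together with the monotonicity of $t\mapsto\ell_t^x$ and its continuity from Step 1) shows that this spatial modulus holds simultaneously for $\ell_t^x$ over all $t$ in a compact interval, which upgrades the modification to one jointly continuous in $(x,t)$ on $K\times[0,T]$; exhausting $\R\times[0,\infty)$ by such sets finishes the proof.

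\emph{Expected main obstacle.} The crux is Step 2: organizing the strong-Markov bookkeeping so that the $2k$-th moment of the local-time increment is bounded by $g(|x-y|)^{k}$ (not merely $g(|x-y|)$) with constants uniform over the starting point in a compact set, and verifying that the two-sided form of \eqref{eq:continuity_dens} supplies exactly the kernel differences that survive the telescoping — this is the only point where the non-symmetry of $X$ genuinely intervenes. Given that bound, Step 3 is the routine chaining argument, with \eqref{eq:good_function} being precisely the summability condition it requires.
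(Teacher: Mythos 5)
Your proposal is correct and takes essentially the same approach as the paper: the paper's own proof of this theorem consists of citing \cite[Theorem 1]{Boylan64} (proved there for $\lambda=1$) and noting that only trivial adaptations are needed for general $\lambda>0$, and your three steps — point local times identified through their $\lambda$-potentials $w_\lambda^\mu(\cdot,x)$, the $2k$-th moment/telescoping estimate in which both inequalities of \eqref{eq:continuity_dens} enter, and the dyadic chaining for which \eqref{eq:good_function} is exactly the required summability — are a faithful reconstruction of that cited argument. There is no gap to flag beyond the Markov-property bookkeeping in your Step 2, which is precisely the content of Boylan's proof that the paper defers to.
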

\begin{proof}
	In \cite[Theorem 1]{Boylan64} the statement is proven for $\lambda = 1$. 
	The claim follows for $1 \neq \lambda >0$ by following the proof in \cite{Boylan64} line by line and making only trivial adaptations.
\end{proof}
In the following lemma, we prove that \eqref{eq:continuity_dens} is satisfied when $\mu \in K_{\alpha-1}$
satisfies some slightly stricter assumptions.  To prove the lemma we use the representation from Lemma~\ref{lem:pot_mu_dens} and employ the estimates of $u_\lambda$.

\begin{lemm}\label{lem:regularity_potential}
	Let $\mu\in K_{\alpha-1}$ be a finite measure such that 
	\begin{equation}\label{eq:strict_kato}
		\sup_{x \in \R}\int_{B(x,r)}|x-y|^{\alpha-2}\frac{1}{g(|x-y|)} |\mu|(dy) \rightarrow 0, \text{ as } r \rightarrow 0,
	\end{equation}
	for some nonnegative, increasing function $g$ such that $\bar{g}(x) = g(x^{1/4})$ satisfies \eqref{eq:good_function}.
	Then there exist $\lambda_0 >0$ and $\epsilon_0 > 0$ such that for all $\lambda > \lambda_0$ and $|x-y| \leq \epsilon_0$
	\begin{align}
		\label{eq: ineq1}&|w_\lambda^\mu(x,y)-w_\lambda^\mu(x,x)|  \leq C_2\tilde{g}(|x-y|),\\
		\label{eq:ineq2}&|w_\lambda^\mu(y,x)-w_\lambda^\mu(x,x)| \leq C_2\tilde{g}(|x-y|),
	\end{align}
	for some constant $C_2 = C_2(\lambda,\mu,\alpha, \epsilon_0)> 0$ and some nonnegative, increasing function $\tilde{g}$ satisfying \eqref{eq:good_function}.
\end{lemm}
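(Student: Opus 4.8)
The plan is to work with the series representation of $w_\lambda^\mu$ from Lemma~\ref{lem:pot_mu_dens}, written out as
\[
	w_\lambda^\mu(x,y) = u_\lambda(x,y) + \sum_{k\ge 1} T_k(x,y), \qquad
	T_k(x,y) = \int_{\R^k} u_\lambda(x,z_1)\Big(\prod_{i=1}^{k-1}\partial_{z_i} u_\lambda(z_i,z_{i+1})\Big)\partial_{z_k} u_\lambda(z_k,y)\,\mu(dz_1)\cdots\mu(dz_k),
\]
valid for $\lambda\ge\lambda_0$, and to control each term by peeling off the single factor that depends on the variable being moved. Two ingredients will be used throughout. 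First, $u_\lambda$ is bounded, $\|u_\lambda\|_\infty=c(\lambda)<\infty$ by \eqref{eq:lamPot_bound}, and $u_\lambda\in\mathcal C^{\alpha-1}(\R)$ with a Hölder seminorm depending only on $\alpha$; this I would read off from \eqref{eq:lam_pot_dens_closed_form} by splitting the frequency integral at $|\xi|\sim|x-y|^{-1}$ and using $|\cos\xi x-\cos\xi x'|\le 1\wedge(|\xi|\,|x-x'|)$. Second, by radial symmetry $|\partial_z u_\lambda(z,w)|=|\partial_w u_\lambda(z,w)|$, so Proposition~\ref{prop:grad_meas} gives $\sup_z\int_\R|\partial_z u_\lambda(z,w)|\,|\mu|(dw)\le \mathcal R(\lambda,\mu)$, and likewise with the two variables exchanged. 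Fixing $\lambda_0$ (at least as large as in Lemma~\ref{lem:pot_mu_dens}) so that in addition $c_0:=\mathcal R(\lambda,\mu)<1$ for $\lambda\ge\lambda_0$, successive integration in $z_1,\dots,z_k$ gives $|T_k(x,y)|\le c(\lambda)c_0^k$, so the series converges absolutely and the rearrangements below are justified.

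For \eqref{eq:ineq2} only the leading factor $u_\lambda(\cdot,z_1)$ of $T_k(\cdot,x)$ depends on the first variable, hence
\[
	|T_k(y,x)-T_k(x,x)| \le \Big(\sup_{z}|u_\lambda(y,z)-u_\lambda(x,z)|\Big)\int_{\R^k}|\partial_{z_1} u_\lambda(z_1,z_2)|\cdots|\partial_{z_k} u_\lambda(z_k,x)|\,|\mu|(dz_1)\cdots|\mu|(dz_k)\le C\,|x-y|^{\alpha-1}c_0^{k},
\]
using $u_\lambda\in\mathcal C^{\alpha-1}$ and the gradient bound applied $k$ times. Summing over $k\ge0$, with the $k=0$ term $|u_\lambda(y,x)-u_\lambda(x,x)|\le C|x-y|^{\alpha-1}$ included, yields $|w_\lambda^\mu(y,x)-w_\lambda^\mu(x,x)|\le C_2|x-y|^{\alpha-1}$. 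Note that this half uses only $\mu\in K_{\alpha-1}^f$, not the strengthened condition \eqref{eq:strict_kato}.

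For \eqref{eq: ineq1} only the last factor $\partial_{z_k} u_\lambda(z_k,\cdot)$ depends on the second variable. Writing $T_k(x,y)=\int_\R S_k(x,z)\,\partial_z u_\lambda(z,y)\,\mu(dz)$, where $S_k$ is the remaining $(k{-}1)$-fold kernel, the same successive-integration estimate that bounded $|T_k|$ now gives $\sup_{x,z}|S_k(x,z)|\le c(\lambda)c_0^{k-1}$, so
\[
	\sum_{k\ge1}|T_k(x,y)-T_k(x,x)| \le \frac{c(\lambda)}{1-c_0}\,I(x,y), \qquad I(x,y):=\int_\R\big|\partial_z u_\lambda(z,y)-\partial_z u_\lambda(z,x)\big|\,|\mu|(dz).
\]
Together with $|u_\lambda(x,y)-u_\lambda(x,x)|\le C|x-y|^{\alpha-1}$, it then remains to bound $I(x,y)$ by a constant times a good function of $|x-y|$. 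This is the step I expect to be the main obstacle, and it is the only place where \eqref{eq:strict_kato} enters.

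To estimate $I(x,y)$ I would set $\rho=|x-y|$ and split $\R$ into the near zone $B(x,3\rho)$, the intermediate zone $\{3\rho\le|z-x|<\rho^{1/2}\}$, and the far zone $\{|z-x|\ge\rho^{1/2}\}$. On the near zone one bounds $|\partial_z u_\lambda(z,w)|\le C|z-w|^{\alpha-2}$ via \eqref{eq:lamPot_grad} and uses the elementary observation that, $g$ being increasing, $\int_{B(x,r)}|z-x|^{\alpha-2}|\mu|(dz)\le g(r)\,\varepsilon(r)$ with $\varepsilon(r):=\sup_x\int_{B(x,r)}|z-x|^{\alpha-2}g(|z-x|)^{-1}|\mu|(dz)\to 0$ by \eqref{eq:strict_kato}; since $B(x,3\rho)$ and $B(y,2\rho)$ lie in balls of radius $\le4\rho$, this zone contributes $\lesssim g(4\rho)$. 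On the other two zones $z\notin[x,y]$, so the mean value theorem in the radial variable together with a second-derivative bound $|\partial_x^2 u_\lambda(x,y)|\le C\big(|x-y|^{\alpha-3}\wedge\lambda^{-2}|x-y|^{-3-\alpha}\big)$ (classical for the $\alpha$-stable potential kernel, obtainable from \eqref{eq:deriv_u_lam} and the scaling of $u_\lambda$) gives $|\partial_z u_\lambda(z,y)-\partial_z u_\lambda(z,x)|\le C\rho\big(|z-x|^{\alpha-3}\wedge\lambda^{-2}|z-x|^{-3-\alpha}\big)$. On the intermediate zone, $|z-x|^{\alpha-3}\le(3\rho)^{-1}|z-x|^{\alpha-2}$ cancels the $\rho$ and leaves $\lesssim\int_{B(x,\rho^{1/2})}|z-x|^{\alpha-2}|\mu|(dz)\le g(\rho^{1/2})\varepsilon(\rho^{1/2})$; on the far zone, integrating the displayed bound (splitting at $|z-x|=1$) gives $\lesssim\rho\big(\rho^{(\alpha-3)/2}+\lambda^{-2}\big)|\mu|(\R)=C\big(\rho^{(\alpha-1)/2}+\rho\big)|\mu|(\R)$, which is harmless since $(\alpha-1)/2>0$. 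Collecting the three zones, $I(x,y)\le C\big(g(|x-y|^{1/2})+|x-y|^{(\alpha-1)/2}\big)$ for $|x-y|\le\epsilon_0$ with $\epsilon_0$ so small that $\varepsilon(\epsilon_0^{1/2})\le1$ and $4\epsilon_0\le\epsilon_0^{1/2}$. Hence \eqref{eq: ineq1} and \eqref{eq:ineq2} both hold with $\tilde g(r):=r^{(\alpha-1)/2}+g(r^{1/2})$, which is increasing; and $\tilde g$ satisfies \eqref{eq:good_function} because the first summand does trivially and, $g$ being increasing with $2^{-n/2}\le2^{-n/4}$, $\sum_n n\,g(2^{-n/2})^{1/2}\le\sum_n n\,g(2^{-n/4})^{1/2}<\infty$ by the assumption on $\bar g(x)=g(x^{1/4})$.
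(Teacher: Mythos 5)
Your proposal is correct, and while its skeleton coincides with the paper's (the series representation of Lemma~\ref{lem:pot_mu_dens}, peeling off the single factor carrying the moving variable, geometric summation via the Kato bound of Proposition~\ref{prop:grad_meas}, and for \eqref{eq:ineq2} the uniform H\"older bound $|u_\lambda(y,z)-u_\lambda(x,z)|\le C|x-y|^{\alpha-1}$, which is exactly the paper's route via Lemma~\ref{lem:grad_est_pot_dens} and Lemma~\ref{lem:pot_grad_upper_est}), the core analytic step for \eqref{eq: ineq1} is handled genuinely differently. The paper estimates $\int |u_\lambda(x,z)|\,|\partial_z u_\lambda(z,y)-\partial_z u_\lambda(z,x)|\,|\mu|(dz)$ by manipulating the oscillatory representation \eqref{eq:deriv_u_lam} directly, splitting into the region between $x$ and $y$ and its complement and then into five zones around radius $|x-y|^{1/4}$; you instead apply the mean value theorem off the segment $[x,y]$ together with a second-derivative bound $|u_\lambda''(w)|\le C\big(|w|^{\alpha-3}\wedge\lambda^{-2}|w|^{-3-\alpha}\big)$ and a three-zone split at radii $3|x-y|$ and $|x-y|^{1/2}$, which is shorter and makes the role of \eqref{eq:strict_kato} (only in the near and intermediate zones) more transparent; your resulting modulus $\tilde g(r)=r^{(\alpha-1)/2}+g(r^{1/2})$ is compatible with the hypothesis on $\bar g(x)=g(x^{1/4})$, as you check. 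The one ingredient you import without proof is that second-derivative estimate: it is true and standard, but it is not in the paper (only \eqref{eq:lamPot_grad} is), and your suggested derivation "from \eqref{eq:deriv_u_lam} and scaling" needs care, since differentiating \eqref{eq:deriv_u_lam} under the integral produces a non-absolutely-convergent frequency integral; in a full write-up you should either integrate by parts in $\xi$ (as in the paper's Lemma~\ref{lemm:help_bounded}) or derive it from $u_\lambda=\int_0^\infty \ee^{-\lambda t}p_t\,dt$ and the classical bounds $|p_t''(x)|\le C\big(t^{-3/\alpha}\wedge t|x|^{-3-\alpha}\big)$ coming from the scaling and decay of the stable density. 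With that reference or short argument supplied, your proof is complete; it also makes explicit the pleasant fact (implicit in the paper) that \eqref{eq:ineq2} needs only $\mu\in K_{\alpha-1}^f$ and not \eqref{eq:strict_kato}.
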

\begin{proof}
	First, we prove \eqref{eq: ineq1}.
	Due to Lemma~\ref{lem:pot_mu_dens} we have an explicit representation of the potential densities at hand
	\begin{equation}\label{eq:density_w}
		\begin{split}
			&w_\lambda^\mu(x,y)-w_\lambda^\mu(x,x) \\
			& \qquad = \sum_{k = 0}^\infty \Big( u_\lambda \ast \big(\frac{\partial}{\partial x}u_\lambda \mu\big)^{\ast k} \Big)(x,y)
			-\sum_{k = 0}^\infty \Big( u_\lambda \ast \big(\frac{\partial}{\partial x}u_\lambda \mu\big)^{\ast k} \Big)(x,x ).
		\end{split}
	\end{equation}
	Rewrite \eqref{eq:density_w} as
	\begin{equation}\label{eq:difference1}
		\begin{split}
			&\sum_{k = 0}^\infty\Big( u_\lambda \ast \big(\frac{\partial}{\partial x}u_\lambda \mu\big)^{\ast k} \Big)(x,y)
			-\sum_{k = 0}^\infty \Big( u_\lambda \ast \big(\frac{\partial}{\partial x}u_\lambda \mu\big)^{\ast k} \Big)(x,x ) \\
			& = u_\lambda(x,y)-u_\lambda(x,x) \\
			& \quad + \sum_{k= 1}^\infty \int_\R \Big( u_\lambda \ast \big(\frac{\partial}{\partial x}
			u_\lambda \mu\big)^{\ast k-1} \Big)(x,z)\big(\frac{\partial}{\partial z}u_\lambda(z,y)-\frac{\partial}{\partial z}u_\lambda(z,x)\big) \mu(dz).
		\end{split}
	\end{equation}
	By the proof in \cite[Section 4]{Boylan64} 
	it follows that 
	\begin{equation}\label{eq:u_lam_holder}
		|u_\lambda(x,y)-u_\lambda(x,x)|\leq c(\lambda, \alpha)|x-y|^{\alpha-1},
	\end{equation}
	for $|x-y|$ small enough and some constant $c= c(\lambda,\alpha)>0$.
	For this reason it only remains to estimate the second term on the right hand side of \eqref{eq:difference1}.
	By Lemma~\ref{lem:grad_est_pot_dens} in Appendix ~\ref{append3} there exists $\lambda_0 >0$ such that for all $\lambda \geq \lambda_0$ it holds
	\begin{align*}
		&\int_\R \Big| u_\lambda \ast \big(\frac{\partial}{\partial x}u_\lambda \mu\big)^{\ast k-1} \Big|(x,z)
		\Big|\frac{\partial}{\partial z}u_\lambda(z,y)-\frac{\partial}{\partial z}u_\lambda(z,x)\Big| |\mu|(dz) \\
		& \leq c(\lambda,\mu)^{k-1}\int_\R |u_\lambda(x,z)|\Big|\frac{\partial}{\partial z}u_\lambda(z,y)-\frac{\partial}{\partial z}u_\lambda(z,x)\Big| |\mu|(dz),
	\end{align*}
	where $c(\lambda,\mu)<1$.
	For $x,y$ fixed we split $\R$ in two parts, $D := \{z \colon x-z, y-z \leq 0\} \cup \{z \colon x-z, y-z \geq 0\}$ and $D^c = \{z \colon x \leq z \leq y\} \cup \{z \colon y \leq z \leq x\}$.
	Consider the following decomposition
	\begin{align*}
		&\int_\R |u_\lambda(x,z)|\Big|\frac{\partial}{\partial z}u_\lambda(z,y)-\frac{\partial}{\partial z}u_\lambda(z,x)\Big| |\mu|(dz)\\
		&= \int_\R \mathbbm{1}_D|u_\lambda(x,z)|\Big|\frac{\partial}{\partial z}u_\lambda(z,y)-\frac{\partial}{\partial z}u_\lambda(z,x)\Big| |\mu|(dz)\\
		& \quad + \int_\R \mathbbm{1}_{D^c}|u_\lambda(x,z)|\Big|\frac{\partial}{\partial z}u_\lambda(z,y)-\frac{\partial}{\partial z}u_\lambda(z,x)\Big| |\mu|(dz)\\
		& =: J_1 +J_2.
	\end{align*}
	We first estimate $J_1$. To this end, note that 
	for all $z \in D$ 
	the difference of the derivatives of $u_\lambda$ given by \eqref{eq:deriv_u_lam} amounts to
	\begin{equation}\label{eq:est_deriv1}
		\begin{split}
			&\Big|\frac{\partial}{\partial z}(u_\lambda(z,y)-u_\lambda(z,x))\Big|\\
			&= c \Big||x-z|^{\alpha-2}\int_0^\infty\frac{\sin(\xi)\xi}{\lambda|x-z|^\alpha+|\xi|^\alpha}d\xi-|y-z|^{\alpha-2}\int_0^\infty\frac{\sin(\xi)\xi}{\lambda|y-z|^\alpha+|\xi|^\alpha}d\xi\Big|\\
			&\leq c \Big(\Big||x-z|^{\alpha-2}-|y-z|^{\alpha-2} \Big| \Big| \int_0^\infty\frac{\sin(\xi)\xi}{\lambda|y-z|^\alpha+|\xi|^\alpha} d\xi\Big|\\
			& \quad + |x-z|^{\alpha-2}  \Big| \int_0^\infty\frac{\sin(\xi)\xi}{\lambda|y-z|^\alpha+|\xi|^\alpha}- \frac{\sin(\xi)\xi}{\lambda|x-z|^\alpha+ |\xi|^\alpha} d\xi\Big| \Big)\\
			& \leq c\Big( |x-y|^{2-\alpha} \frac{1}{|x-z|^{2-\alpha}|y-z|^{2-\alpha}} \\
			& + \quad |x-z|^{\alpha-2} \Big| \int_0^\infty\sin(\xi)\xi\Big(\frac{\lambda|x-z|^\alpha-\lambda|x-y|^\alpha}{(\lambda|y-z|^\alpha + |\xi|^\alpha)(\lambda|x-z|^\alpha + |\xi|^\alpha)}\Big)  d\xi\Big|\Big).
		\end{split}
	\end{equation}
	Hereby, we used Lemma~\ref{lemm:help_bounded} from Appendix~\ref{append3} in the last inequality.
	We begin by estimating the second term on the right hand side of \eqref{eq:est_deriv1}.
	To this end consider
	\begin{align*}
		& \Big| \int_0^\infty\sin(\xi)\xi\Big(\frac{\lambda|x-z|^\alpha-\lambda|y-z|^\alpha}{(\lambda|y-z|^\alpha + |\xi|^\alpha)(\lambda|x-z|^\alpha + |\xi|^\alpha)}\Big)  d\xi\Big|  \\
		&\leq  \int_0^1\xi^\alpha\Big|\frac{\lambda|x-z|^\alpha-\lambda|y-z|^\alpha}{(\lambda|y-z|^\alpha + |\xi|^\alpha)(\lambda|x-z|^\alpha + |\xi|^\alpha)}\Big| d\xi \\
		&\quad+ \int_1^\infty\xi\Big|\frac{\lambda|x-z|^\alpha-\lambda|y-z|^\alpha}{(\lambda|y-z|^\alpha + |\xi|^\alpha)(\lambda|x-z|^\alpha + |\xi|^\alpha)}\Big| d\xi\\
		& \leq c(\alpha) \lambda |x-y| \max(|x-z|, |y-z|)^{\alpha-1}\\
		&\qquad \times\Big(\int_0^1\frac{\xi^\alpha}{(\lambda|y-z|^\alpha + |\xi|^\alpha)(\lambda|x-z|^\alpha + |\xi|^\alpha)} d\xi \\
		& \qquad \qquad + \int_1^\infty\xi\frac{1}{(\lambda|y-z|^\alpha + |\xi|^\alpha)(\lambda|x-z|^\alpha + |\xi|^\alpha)}d\xi\Big)
		=: G_1 +G_2.
	\end{align*} 
	Hereby, we used the mean value theorem in the second inequality. 
	In order to bound $G_1$ note that
	\begin{align*}
		&\int_0^1\frac{\xi^\alpha}{(\lambda|y-z|^\alpha + |\xi|^\alpha)(\lambda|x-z|^\alpha + |\xi|^\alpha)} d\xi \\
		& \leq \int_0^1\frac{\xi^\alpha}{\lambda\min(|x-z|,|y-z|)^\alpha + |\xi|^\alpha}\frac{1}{(\lambda\max(|x-z,|y-z|)^\alpha + |\xi|^\alpha)}d\xi \\
		& \leq \int_0^1\frac{1}{(\lambda\max(|x-z|,|y-z|)^\alpha + |\xi|^\alpha)}d\xi \\
		& \leq c(\alpha)\lambda^{1/\alpha-1}\max(|x-z,|y-z|)^{1-\alpha}
	\end{align*}
	and hence 
	\begin{equation}
		G_1 \leq c(\alpha)\lambda^{1/\alpha}|x-y|.
	\end{equation}
	Now we turn to $G_2$.
	It is evident that 
	\begin{equation*}
		\int_1^\infty\xi\frac{1}{(\lambda|y-z|^\alpha + |\xi|^\alpha)(\lambda|x-z|^\alpha + |\xi|^\alpha)}d\xi \leq \int_1^\infty \frac{\xi}{\xi^{2\alpha}}d\xi \leq c(\alpha),
	\end{equation*}
	for some constant $c(\alpha) >0$, since $2\alpha-1 > 1$. Thus
	\begin{equation}
		G_2 \leq c(\alpha) \lambda|x-y|\max(|x-z|, |y-z|)^{\alpha-1}.
	\end{equation}
	Using the estimates for $G_1$ and $G_2$ yields
	\begin{align*}
		&\int_\R u_\lambda(x,y)|x-z|^{\alpha-2} \Big| \int_0^\infty\sin(\xi)\xi\Big(\frac{\lambda|x-z|^\alpha-\lambda|x-y|^\alpha}{(\lambda|y-z|^\alpha + |\xi|^\alpha)(\lambda|x-z|^\alpha + |\xi|^\alpha)}\Big)  d\xi\Big|\\
		& \leq c(\alpha)\lambda^{1/\alpha}|x-y|\int_\R u_\lambda(x,z)|x-z|^{\alpha-2}|\mu|(dz)\\
		& + \quad c(\alpha)|x-y|\int_\R u_\lambda(x,z)|x-z|^{\alpha-2}\max(|x-z|,|y-z|)^{\alpha-1}|\mu|(dz)\\
		& \leq c(\alpha, \lambda, \epsilon_0) |x-y|,
	\end{align*}
	where we used \eqref{eq:lamPot_est_bod} in the last inequality.
	Now we turn to the first term on the right hand side of \eqref{eq:est_deriv1}. Assume that $|x-y|<\epsilon_0<1$ and denote $B^{x,y} = B(x,|x-y|/2) \cup B(y,|x-y|/2)$.
	By integrating the term $\frac{|x-y|^{2-\alpha}}{|x-z|^{2-\alpha}|y-z|^{2-\alpha}}$ we get 
	\begin{align*}
		&\int_\R| u_\lambda(x,z )| \frac{|x-y|^{2-\alpha}}{|x-z|^{2-\alpha}|y-z|^{2-\alpha}} |\mu|(dz) \\
		&\leq \int_{B(x,|x-y|/2)}| u_\lambda(x,z )| \frac{|x-y|^{2-\alpha}}{|x-z|^{2-\alpha}|y-z|^{2-\alpha}} |\mu|(dz) \\
		& \quad + \int_{B(y,|x-y|/2)}| u_\lambda(x,z )| \frac{|x-y|^{2-\alpha}}{|x-z|^{2-\alpha}|y-z|^{2-\alpha}}|\mu|(dz) \\
		& \quad + \int_{B(x,|x-y|^{1/4}/2)\backslash B^{x,y} }| u_\lambda(x,z )| \frac{|x-y|^{2-\alpha}}{|x-z|^{2-\alpha}|y-z|^{2-\alpha}}|\mu|(dz) \\ 
		& \quad + \int_{B(y,|x-y|^{1/4}/2)\backslash B^{x,y} }| u_\lambda(x,z )| \frac{|x-y|^{2-\alpha}}{|x-z|^{2-\alpha}|y-z|^{2-\alpha}}|\mu|(dz) \\
		& \quad +\int_{(B(x,|x-y|^{1/4}/2)\cup B(y,|x-y|^{1/4}/2)^c}| u_\lambda(x,z )| \frac{|x-y|^{2-\alpha}}{|x-z|^{2-\alpha}|y-z|^{2-\alpha}}|\mu|(dy) \\
		& = I_1 + I_2 +I_3 +I_4+ I_5.
	\end{align*}
	The first two terms can be estimated in the same way. We exemplary show how to estimate $I_1$. 
	Note that for $z \in B(y, |x-y|/2)^c$ it holds that 
	\begin{equation}\label{eq:y_z_est}
		|y-z|^{\alpha-2} \leq (|x-y|/2)^{\alpha-2}
	\end{equation}
	and thus 
	\begin{align*}
		I_1 \leq c(\lambda) g(|x-y|) \int_{B(x,|x-y|/2)} \frac{1}{|x-z|^{2-\alpha}g(|x-z|)} |\mu|(dz) \leq c(\lambda,\mu) g(|x-y|),
	\end{align*}
	where we used that $g$ is increasing and \eqref{eq:lamPot_bound}.
	Also, $I_3$ and $I_4$ can be estimated similarly and we will only demonstrate how to bound $I_3$.
	Note that \eqref{eq:y_z_est} also holds for $z \in (B^{x,y})^c$.
	Using \eqref{eq:y_z_est}, \eqref{eq:strict_kato} and that $g$ is increasing and \eqref{eq:lamPot_bound} we arrive at
	\begin{align*}
		&I_3 \leq k \int_{B(x, |x-y|^{1/4}/2)\backslash B^{x,y}} \frac{1}{|x-z|^{2-\alpha}}|\mu|(dz) \\
		&\leq  \int_{B(x, |x-y|^{1/4}/2)\backslash B^{x,y}} \frac{g(|x-z|)}{|x-z|^{2-\alpha}g(|x-z|)}|\mu|(dz) \leq c(\alpha,\mu)g(|x-y|^{1/4}).
	\end{align*}

	Finally, using \eqref{eq:lamPot_bound} it is easy to verify that
	\begin{align*}
		I_5 \leq c(\alpha, \mu)|x-y|^{(2-\alpha)/2}.
	\end{align*}
	Now we estimate $J_2$. To this end note that 
	$D^c \subseteq B(x, |x-y|) \cup B(y,|x-y|)$. It holds
	\begin{align*}
		&\int_{D^c}\Big| \int_0^\infty \frac{\xi \sin(\xi(x-z))}{\lambda + \xi^\alpha}d\xi \Big||\mu|(dz) \leq \int_{D^c} |x-z|^{\alpha-2} \Big|\int_0^\infty \frac{\xi \sin(\xi)}{\lambda|x-z|^\alpha+ |\xi|^\alpha} d\xi\Big| |\mu|(dz) \\
		&\leq c(\alpha) \int_{D^c} |x-z|^{\alpha-2}|\mu|(dz)\\
		& \leq c(\alpha)g(|x-y|) \int_{D^c} \max\big(|x-z|^{\alpha-2}\frac{1}{g(|x-z|)}, |y-z|^{\alpha-2}\frac{1}{g(|y-z|)}\big)|\mu|(dz)\\
		& \leq c(\alpha, \mu)g(|x-y|),
	\end{align*}
	where we applied Lemma~\ref{lemm:help_bounded} in the second inequality.
	Thus, 
	\begin{align*}
		&J_2 = \Big|\int_{D^c} u_\lambda(x,z)\big(\frac{\partial}{\partial z}u_\lambda(z,y)-\frac{\partial}{\partial z}u_\lambda(z,x)\big) \mu(dz)\Big| \\
		& \leq c(\alpha) \Big(\int_{D^c}\big|\frac{\partial}{\partial z}u_\lambda(z,y)\big||\mu|(dz) + \int_{D^c}\big|\frac{\partial}{\partial z}u_\lambda(z,x)\big||\mu|(dz) \Big)\\
		& \leq c(\alpha,\mu)g(|x-y|).
	\end{align*}
	This concludes the proof of \eqref{eq: ineq1} with $\tilde{g}_1(x) =  \max(g(x^{1/4}), |x|^\frac{2-\alpha}{2}))$ for $x \in [0,1)$ and some constant $C_2= C_2(\lambda,\mu,\alpha, \epsilon_0)$.\\
	Now we prove \eqref{eq:ineq2}. Again, we employ the explicit representation of the potential density to obtain
	\begin{equation}\label{eq:ineq_2_start}
		\begin{split}
			&\sum_{k = 0}^\infty\Big( u_\lambda \ast \big(\frac{\partial}{\partial x}u_\lambda \mu\big)^{\ast k} \Big)(y,x)
			-\sum_{k = 0}^\infty \Big( u_\lambda \ast \big(\frac{\partial}{\partial x}u_\lambda \mu\big)^{\ast k} \Big)(x,x ) \\
			& = u_\lambda(y,x)-u_\lambda(x,x) + \sum_{k= 1}^\infty \int_\R  (u_\lambda(y,z) -u_\lambda(x,z))
			\Big(\big(\frac{\partial}{\partial x}u_\lambda \mu\big)^{\ast k} \Big)(dz,x).
		\end{split}
	\end{equation}
	By \eqref{eq:u_lam_holder} it is enough to estimate the second term on the right-hand side of
	\eqref{eq:ineq_2_start}.
	It holds  for $|x-z| \neq |y-z|$
	\begin{equation}\label{eq:u_lam_holder2}
		\begin{split}
			&|u_\lambda(y,z)-u_\lambda(x,z)| = \Big|\int_0^\infty \frac{\cos(\xi|x-z|)-\cos(\xi|y-z|)}{\lambda + |\xi|^\alpha} d\xi\Big| \\
			& = 2\Big| \int_0^\infty \frac{\sin\big(\xi/2\big||x-z|-|y-z|\big|\big)\sin\big(\xi/2\big||x-z|+|y-z|\big|\big)}{\lambda + |\xi|^\alpha}d\xi\Big|\\
			& \leq |x-y|^{\alpha-1} \Big|\int_0^\infty \frac{\sin(\xi)\sin\big(\xi \big(\big||x-z|+|y-z|\big|/\big||x-z|-|y-z|\big|\big)\big)}{\lambda\big||x-z|-|y-z|\big|^\alpha + |\xi|^\alpha}d\xi\Big|\\
			& \leq |x-y|^{\alpha-1} \Big(\int_0^1 \frac{\xi}{\lambda\big||x-z|-|y-z|\big|^\alpha 
				+ |\xi|^\alpha}d\xi\\
			&\quad +\int_1^\infty \frac{1}{\lambda\big||x-z|-|y-z|\big|^\alpha + |\xi|^\alpha}d\xi \Big) \\
			& \leq |x-y|^{\alpha-1} \Big(\int_0^1 \frac{1}{ |\xi|^{\alpha-1}}d\xi
			+\int_1^\infty \frac{1}{ |\xi|^\alpha}d\xi \Big) \\
			& \leq c(\alpha)|x-y|^{\alpha-1}.
		\end{split}
	\end{equation}
	If $|x-z| = |y-z|$ then $|u_\lambda(y,z)-u_\lambda(x,z)| = 0$.
	Using \eqref{eq:u_lam_holder2} and Lemma~\ref{lem:pot_grad_upper_est} from Appendix~\ref{append3}
	immediately implies \eqref{eq:ineq2} with $\tilde{g}_2(x) = |x|^{\alpha-1}$ and some constant $C_2= C_2(\lambda,\mu,\alpha, \epsilon_0)> 0$.\\
	Setting $\tilde{g} = \max(\tilde{g}_1, \tilde{g}_2)$ finishes the proof.
\end{proof}
Due to Lemma~\ref{lem:regularity_potential} the conditions of Theorem~\ref{th:density_exist_boyloan} are satisfied for solutions of \eqref{eq:singularSDE} given that 
\eqref{eq:strict_kato}  holds. This implies Proposition~\ref{lem:density}, because the condition that $\mu \in K_{\alpha-1-\epsilon}$ is stronger than \eqref{eq:strict_kato}. In fact, we proved the existence of an continuous occupation density under the weaker assumption \eqref{eq:strict_kato}.

\section{Proof of Proposition~\ref{lem: convergence}}\label{sec:loc_time}
Throughout this section we fix a finite measure $\mu \in K_{\alpha-1}$ and a process $(X_t)_{t \geq 0}$
as defined in \eqref{eq:semimartingale}.
\subsection{It\^o's formula for mollified  renormalized $0$-potential}
In this section, we first apply It\^o's formula to $u_{\lambda,n}(X_t-x)$, $x\in \R$, where $u_{\lambda,n}$ is the mollification of the $\lambda$-potential density of the fractional Laplacian. Then,  by letting $\lambda$ go to zero, we 
derive an analogous formula for $v_n(X_t-x)$, where $v_n$ is the mollification of the renormalized $0$-potential $v$.\\
Let $(A_t)_{t \geq 0}$ be an adapted, right continuous process of finite variation as in \eqref{eq:semimartingale} and set
\begin{equation*}
	\bar{A}_t = |A|_t , \quad t \geq 0
\end{equation*}
to be the total variation process of  $A$.

Furthermore, recall that $u_\lambda$ denotes the $\lambda$-potential density given by \eqref{eq:lam_pot_dens_closed_form}
and $u_{\lambda,n}$ denotes its mollification according to Definition~\ref{def:mollifier}.
Finally, recall that 
\begin{equation}\label{eq:v_def}
	v(x):= \lim_{\lambda \downarrow 0}(u_\lambda(0)-u_\lambda(x))
\end{equation}
and 
denote by $v_n$ its mollification according to Definition~\ref{def:mollifier}.
Now, let us apply It\^o's formula to $u_{\lambda,n}(X_t-x)$, $x\in \R$. This is justified since $u_{\lambda,n}$ is smooth.
\begin{equation}\label{eq:itoulamn}
	\begin{split}
		&u_{\lambda,n}(X_t-x) \\
		&= u_{\lambda,n}(x_0-x) +N_\lambda^{n,x}(t) +\int_0^t (u_{\lambda,n})'(X_s-x)dA_s 
		+\int_0^t \Delta_\alpha u_{\lambda,n}(X_s)ds\\
		& = u_{\lambda,n}(x_0-x) +N_\lambda^{n,x}(t) +\int_0^t (u_{\lambda,n})'(X_s-x)dA_s \\
		& \quad - \int_0^t \phi_n(X_s-x) ds
		+\lambda \int_0^t u_{\lambda,n}(X_s-x)ds,
	\end{split}
\end{equation}
where we used that $u_{\lambda, n}$ solves
\begin{equation*}
	-\Delta_\alpha u_{\lambda,n} + \lambda u_{\lambda,n} = \varphi_n.
\end{equation*}
in the last equality, because $u_\lambda$ is the $\lambda$-potential density.
Furthermore,
\begin{equation}\label{eq:mart_lam}
	N_\lambda^{n,x}(t) = \int_0^t\int_\R \big(u_{\lambda,n}(X_{s-}-x-r)-u_{\lambda,n}(X_{s-}-x)\big)\big(\Pi-\pi\big)(dr,ds).
\end{equation}
Our aim, as said before, is to use \eqref{eq:itoulamn} to let $u_{\lambda}(0)-u_{\lambda,n}(X_t-x)$ go to the limit as $\lambda \rightarrow 0$ and by doing so and using \eqref{eq:v_def} 
derive It\^o's
formula for $v_n(X_t-x)$.
\begin{lemm}\label{lem:ulam_conv}
	Let $m \in \mathbb{N}$ and $(X_t)_{t \geq 0}$ be as defined in \eqref{eq:semimartingale}. Then
	\begin{equation}\label{eq:lem51_conv}
		u_{\lambda}(0)- u_{\lambda,n}(X_{t}-x)  \rightarrow v_n(X_{t}-x)  \quad \bbP_{x_0}\text{-a.s.},
	\end{equation}
	for all $x \in \R$ and $t \geq 0$, as $\lambda \rightarrow 0$.
\end{lemm}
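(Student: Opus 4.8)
The plan is to reduce the statement to a monotone passage to the limit inside the convolution integral defining the mollification. Fix $n\in\N$, $x\in\R$, $t\ge 0$ and abbreviate $y:=X_t-x$ (a finite real number for every $\omega$, since $X$ is c\`adl\`ag). Since $\varphi_n\ge 0$ and $\int_\R\varphi_n(z)\,dz=1$, the \emph{constant} $u_\lambda(0)$ can be pulled under the mollification kernel, so by Definition~\ref{def:mollifier}
\begin{equation*}
	u_\lambda(0)-u_{\lambda,n}(y)=\int_\R\varphi_n(y-z)\bigl(u_\lambda(0)-u_\lambda(z)\bigr)\,dz .
\end{equation*}
Using the closed form \eqref{eq:lam_pot_dens_closed_form} of $u_\lambda$ (both integrals there converge absolutely for $\lambda>0$ and $\alpha\in(1,2)$, hence may be subtracted) one obtains the representation $u_\lambda(0)-u_\lambda(z)=\frac1\pi\int_0^\infty\frac{1-\cos(\xi z)}{\lambda+|\xi|^\alpha}\,d\xi\ge 0$ for every $z\in\R$.

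Next I would exploit monotonicity in $\lambda$. As $\lambda\downarrow 0$ the factor $(\lambda+|\xi|^\alpha)^{-1}$ increases to $|\xi|^{-\alpha}$, and the numerator $1-\cos(\xi z)$ is nonnegative, so by monotone convergence $u_\lambda(0)-u_\lambda(z)$ increases to $\frac1\pi\int_0^\infty(1-\cos(\xi z))|\xi|^{-\alpha}\,d\xi=v(z)$; this is precisely \eqref{eq:renorm_pot_dens} together with the definition \eqref{eq:v_def}. Therefore, with $y$ fixed, the integrand $z\mapsto\varphi_n(y-z)\bigl(u_\lambda(0)-u_\lambda(z)\bigr)$ is nonnegative and nondecreasing as $\lambda\downarrow 0$, and a second application of the monotone convergence theorem gives $u_\lambda(0)-u_{\lambda,n}(y)\to\int_\R\varphi_n(y-z)v(z)\,dz=v_n(y)$. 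Recalling $y=X_t-x$ yields \eqref{eq:lem51_conv}. The limit is finite because $\varphi_n$ has compact support and $v(z)=c_2(\alpha)|z|^{\alpha-1}$ is locally bounded; and $u_{\lambda,n}$ is well defined and smooth because $u_\lambda$ is bounded and continuous (see \eqref{eq:lamPot_bound}), which is also what justifies the application of It\^o's formula in \eqref{eq:itoulamn}.

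I do not expect any serious obstacle here. The points that need a little care are the elementary Tonelli/Fubini bookkeeping used to pull $u_\lambda(0)$ out of the mollification and to pass from \eqref{eq:lam_pot_dens_closed_form} to the $1-\cos$ representation, and the remark that the argument in fact holds \emph{surely} and \emph{simultaneously} for all $x\in\R$ and $t\ge 0$, so the $\bbP_{x_0}$-almost sure formulation in the statement is immediate. If one preferred to avoid the monotonicity observation, the same conclusion follows from dominated convergence, using that $0\le u_\lambda(0)-u_\lambda(z)\le v(z)$ uniformly in $\lambda>0$ and that $v$ is bounded on the compact support of $z\mapsto\varphi_n(y-z)$.
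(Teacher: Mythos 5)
Your proposal is correct and takes essentially the same route as the paper: the paper's proof also reduces to passing the limit under the mollification integral, using the representation of $u_\lambda(0)-u_\lambda(z)$ from \eqref{eq:lam_pot_dens_closed_form}, the bound $0\le u_\lambda(0)-u_\lambda(z)\le v(z)$ (obtained there by the scaling substitution) together with the pointwise convergence \eqref{eq:renorm_pot_dens}, and the a.s.\ finiteness of $v_n(X_t-x)$ — i.e.\ precisely the dominated-convergence variant you mention at the end. Your primary phrasing via two applications of monotone convergence is only a cosmetic repackaging of the same argument, with the monotonicity of $\lambda\mapsto(\lambda+|\xi|^\alpha)^{-1}$ replacing the explicit domination by $v$.
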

\begin{proof}
	Fix arbitrary $x\in \R$. It holds by \eqref{eq:lam_pot_dens_closed_form} and substituting $x\zeta =\xi$ that
	\begin{equation}\label{eq:u_est_by_v}
		\begin{split}
			0 \leq u_\lambda(0)-u_{\lambda}(x) &=\int_0^\infty \frac{1-\cos(\zeta x)}{\lambda + |\zeta|^\alpha}d\zeta= |x|^{\alpha-1} \int_0^\infty \frac{1-\cos(\xi)}{\lambda|x|^\alpha+ |\xi|^\alpha} d\xi\\
			& \leq |x|^{\alpha-1} \int_0^\infty \frac{1-\cos(\xi)}{|\xi|^\alpha} d\xi = v(x).
		\end{split}
	\end{equation}
	Thus,
	\begin{align*}
		|u_{\lambda}(0)- u_{\lambda,n}(X_{t}-x) |  \leq \int_{\R} \varphi_n(z)v(X_{t}-x-z)dz.
	\end{align*}
	Since $v_n(X_{t}-x)$ is almost surely finite and $u_ {\lambda}(0)-u_{\lambda,n}(x) \rightarrow v_n(x)$ for all $x\in \R$ the convergence \eqref{eq:lem51_conv} follows. This concludes the proof.
\end{proof}
In the next two lemmas we show the convergence as $\lambda \rightarrow 0$ of the terms $\int_0^t(u_{\lambda,n})'(X_s-x)dA_s$ and $\lambda \int_0^t u_{\lambda,n}(X_s-x)ds$ from the right hand side of \eqref{eq:itoulamn}.
\begin{lemm}\label{lem:dA_conv}
	Let $(X_t)_{t \geq 0}$ be defined as in \eqref{eq:semimartingale}. Then 
	\begin{equation}\label{eq:lem52_conv}
		\int_0^{t}(u_{\lambda,n})'(X_s-x) dA_s \rightarrow  \int_0^{t}-(v_n)'(X_s-x) dA_s, \quad \bbP_{x_0}\text{-a.s.},
	\end{equation}
	for all $x \in \R$ and $t \geq 0$, as $\lambda \rightarrow 0$.
\end{lemm}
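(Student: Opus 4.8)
The plan is to reduce the statement to a pointwise limit for the derivatives $(u_{\lambda,n})'$, together with a bound that is uniform in $\lambda>0$, and then pass to the limit inside the Lebesgue--Stieltjes integral by dominated convergence. Fix $x\in\R$ and $t\geq 0$ and work on the $\bbP_{x_0}$-a.s.\ event on which $s\mapsto X_s$ is bounded on $[0,t]$ (this holds since $X$ has c\`adl\`ag paths); let $K$ be a compact set with $\{X_s-x:0\le s\le t\}\subseteq K$. Writing $A=A^{+}-A^{-}$ for the Jordan decomposition, the measures $dA^{\pm}_s$ are finite on $[0,t]$, so it suffices to show: (i) $(u_{\lambda,n})'(y)\to -(v_n)'(y)$ as $\lambda\downarrow 0$ for every $y\in\R$; and (ii) $\sup_{\lambda>0}|(u_{\lambda,n})'(y)|\le C_n(y)$ for a function $C_n$ that is bounded on $K$. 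Given (i) and (ii), dominated convergence applied to $\int_0^t\cdot\,dA^{\pm}_s$ yields the claim.

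For (i), since $u_{\lambda,n}=\varphi_n\ast u_\lambda$ and $\int_\R\varphi_n'=0$,
\[
	(u_{\lambda,n})'(y)=\int_\R \varphi_n'(y-z)\,u_\lambda(z)\,dz=-\int_\R \varphi_n'(y-z)\,\big(u_\lambda(0)-u_\lambda(z)\big)\,dz .
\]
By \eqref{eq:u_est_by_v} we have $0\le u_\lambda(0)-u_\lambda(z)\le v(z)$ for all $\lambda>0$, so the integrand is dominated by $|\varphi_n'(y-z)|\,v(z)$, which is integrable in $z$ because $v(z)=c_2(\alpha)|z|^{\alpha-1}$ is continuous and $\varphi_n'\in\mathcal{C}_c^\infty$. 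Since $u_\lambda(0)-u_\lambda(z)\to v(z)$ pointwise as $\lambda\downarrow 0$ by \eqref{eq:renorm_pot_dens} (in fact monotonically, by the integral representation in \eqref{eq:u_est_by_v}), dominated convergence gives $(u_{\lambda,n})'(y)\to-\int_\R\varphi_n'(y-z)\,v(z)\,dz=-(v_n)'(y)$, using $v_n=\varphi_n\ast v$.

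For (ii), the same domination yields, uniformly in $\lambda>0$, the bound $|(u_{\lambda,n})'(y)|\le C_n(y):=\int_\R|\varphi_n'(y-z)|\,v(z)\,dz=(|\varphi_n'|\ast v)(y)$, and $C_n$ is continuous (hence bounded on the compact set $K$) since $v\in L^1_{\mathrm{loc}}$ and $\varphi_n'\in\mathcal{C}_c^\infty$. Thus $\sup_{\lambda>0}\sup_{0\le s\le t}|(u_{\lambda,n})'(X_s-x)|<\infty$, and combining with (i) and dominated convergence for $dA^{\pm}_s$ on $[0,t]$ gives $\int_0^t(u_{\lambda,n})'(X_s-x)\,dA_s\to\int_0^t-(v_n)'(X_s-x)\,dA_s$, $\bbP_{x_0}$-a.s.

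The argument is essentially routine; the only point that needs care is obtaining a bound on $(u_{\lambda,n})'$ that is uniform in $\lambda$ all the way down to $\lambda=0$. The unmollified $(u_\lambda)'$ is singular at the origin and its behaviour as $\lambda\downarrow 0$ is delicate, but after convolving with $\varphi_n$ the required uniform control is supplied cleanly by the elementary comparison $u_\lambda(0)-u_\lambda(z)\le v(z)$ from \eqref{eq:u_est_by_v}, which is what makes the dominated convergence step legitimate.
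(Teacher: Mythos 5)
Your proof is correct, and it reaches the conclusion by a softer route than the paper. The paper attacks the unmollified difference directly: starting from the identity $u_\lambda(x)-u_\lambda(0)+v(x)=\tfrac{\lambda}{\pi}\int_0^\infty\frac{\cos(\xi x)-1}{(\lambda+|\xi|^\alpha)|\xi|^\alpha}d\xi$ it derives the quantitative bound $|(u_\lambda)'(x)+v'(x)|\le c\big(\lambda^{1/\alpha}|x|^{\alpha-1}+\lambda|x|^{2\alpha-2}\big)$, then convolves this bound with $\varphi_n$ and integrates against $d\bar A_s$, using only that $\sup_{0\le s\le t}|X_s|$ and $\bar A_t$ are a.s.\ finite; this yields convergence of $\int_0^t\big|(u_{\lambda,n})'+(v_n)'\big|(X_s-x)\,d\bar A_s$ to zero with an explicit rate in $\lambda$. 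You instead avoid any estimate on $(u_\lambda)'$ by moving the derivative onto the mollifier, $(u_{\lambda,n})'=\varphi_n'\ast u_\lambda=-\varphi_n'\ast\big(u_\lambda(0)-u_\lambda\big)$ (legitimate since $\int\varphi_n'=0$ and $u_\lambda$ is bounded), and then invoke the elementary comparison $0\le u_\lambda(0)-u_\lambda(z)\le v(z)$ (the paper's \eqref{eq:u_est_by_v}, used there only later, in the proof of Lemma~\ref{lem:ulam_conv}) together with the pointwise limit \eqref{eq:renorm_pot_dens} and two applications of dominated convergence — one in $z$ for the mollified derivative, one in $s$ against the finite measures $dA^\pm$ on $[0,t]$, with the continuous dominating function $|\varphi_n'|\ast v$ bounded on the compact range of the path. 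Both arguments rest on the same structural facts (finiteness of $\bar A_t$, boundedness of the path, and the renormalized potential convergence), but yours is more elementary in that it sidesteps the delicate oscillatory-integral computation for $(u_\lambda)'+v'$ at the cost of losing the explicit rate in $\lambda$; since the paper uses that rate nowhere else, nothing is lost, and your proof is a valid alternative.
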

\begin{proof}
	First, we consider the pointwise difference of the derivatives. Note that by \eqref{eq:lam_pot_dens_closed_form} it holds  that 
	\begin{align*}
		&u_\lambda(x)-u_\lambda(0) +v(x) = \frac{1}{\pi}\int_0^\infty \frac{\cos(\xi x)-1}{\lambda + |\xi|^a} +\frac{1}{\pi}\int_0^\infty \frac{1-\cos(\xi x)}{|\xi|^\alpha}d\xi\\
		&= \frac{1}{\pi} \lambda \int_0^\infty \frac{\cos(\xi x)-1}{(\lambda+ |\xi|^\alpha)|\xi|^\alpha}d\xi, \quad \forall x\in \R.
	\end{align*}
	Thus,
	\begin{equation}\label{eq:diff_gen_pot}
		\begin{split}
			&|(u_\lambda)'(x)+v'(x)| = \frac{1}{\pi} \lambda \Big|\int_0^\infty\frac{\xi \sin(\xi x)}{|\xi|^\alpha(\lambda +|\xi|^\alpha)}d\xi\Big| \\
			& = |x|^{2\alpha-2}\frac{1}{\pi} \lambda \Big|\int_0^\infty \frac{\sin(\xi)\xi}{|\xi|^{\alpha}(\lambda|x|^\alpha +|\xi|^\alpha)}d\xi\Big| \\
			& \leq |x|^{2\alpha-2}\frac{1}{\pi} \lambda  \Big( \int_0^1 \frac{|\xi|^{2-\alpha}}{(\lambda|x|^\alpha +|\xi|^\alpha)}d\xi +\Big|\int_1^\infty \frac{\sin(\xi)\xi}{|\xi|^{\alpha}(\lambda|x|^\alpha +|\xi|^\alpha)}d\xi\Big|\Big), \quad \forall x \in \R.
		\end{split}
	\end{equation}
	The first integral on the right-hand side of \eqref{eq:diff_gen_pot} needs to be treated with delicacy
	\begin{align*}
		&\int_0^1 \frac{|\xi|^{2-\alpha}}{(\lambda|x|^\alpha +|\xi|^\alpha)}d\xi \leq \int_0^1 \frac{1}{(\lambda|x|^\alpha +|\xi|^\alpha)}d\xi\\
		& = |x|^{1-\alpha} \lambda^{1/\alpha -1}\int_0^{(\lambda^{1/\alpha}|x|)^{-1}} \frac{1}{1+|\zeta|^\alpha} d\zeta\\
		& \leq c(\alpha) |x|^{1-\alpha} \lambda^{1/\alpha -1}.
	\end{align*}
	The second integral on the right-hand side of \eqref{eq:diff_gen_pot} can easily be bounded by a constant as is evident from
	\begin{equation*}
		\Big|\int_1^\infty \frac{\sin(\xi)\xi}{|\xi|^{\alpha}(\lambda|x|^\alpha +|\xi|^\alpha)}d\xi\Big|
		\leq \int_1^\infty  \xi^{1-2\alpha}d\xi <\infty.
	\end{equation*}
	Together this yields
	\begin{align*}
		|(u_\lambda)'(x)+v'(x)| \leq c\big( |x|^{\alpha-1} \lambda^{1/\alpha}+ \lambda |x|^{2\alpha-2}\big), \quad \forall x \in \R.
	\end{align*}
	Thus, for almost all $\omega \in \Omega$,
	\begin{equation}\label{eq:conv_deriv_n}
		\begin{split}
			&\int_0^{t}\big|(u_{\lambda,n})'(X_s-x)+(v_n)'(X_s-x)\big|d \bar{A}_s\\
			&  \leq c(\alpha) \int_0^{t} \int_\R \phi_n(z) \big(\lambda^{1/\alpha} |X_s-x-z|^{\alpha-1} +\lambda |X_s-x-z|^{2\alpha-2}\big)dzd\bar{A}_s\\
			& \leq c(\alpha)\sup_{0 \leq s \leq t}\int_\R \phi_n(z) \big( \lambda^{1/\alpha} |X_s-x-z|^{\alpha-1} + \lambda|X_s-x-z|^{2\alpha-2}\big)dz \bar{A}_t, 
		\end{split}
	\end{equation}
	for all $x\in \R$.
	Since $\sup_{0\leq s \leq t}|X_s|$ and $\bar{A}_t$ are almost surely finite the convergence \eqref{eq:lem52_conv} follows. 
	This finishes the proof.
\end{proof}
\begin{lemm}\label{lem:lamulam_conv}
	Let $(X_t)_{t \geq 0}$ be defined as in \eqref{eq:semimartingale}.
	Then
	\begin{equation*}
		\lambda \int_0^{t}u_{\lambda,n}(X_s-x)ds \rightarrow 0 \quad \bbP_{x_0}-a.s.,
	\end{equation*}
	for all $x \in \R$ and $t \geq 0$, as $\lambda \rightarrow 0$.
\end{lemm}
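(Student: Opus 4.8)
The plan is to exploit the fact that the $\lambda$-potential density $u_\lambda$ attains its maximum at the origin and that $u_\lambda(0)$ is explicitly computable, so that the whole statement reduces to a deterministic squeeze.

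First I would record, using \eqref{eq:lam_pot_dens_closed_form} and the substitution $\xi = \lambda^{1/\alpha}\zeta$, that
\begin{equation*}
	u_\lambda(0) = \frac{1}{\pi}\int_0^\infty \frac{d\xi}{\lambda + |\xi|^\alpha} = \frac{1}{\pi}\lambda^{1/\alpha - 1}\int_0^\infty \frac{d\zeta}{1+|\zeta|^\alpha} = c(\alpha)\,\lambda^{1/\alpha - 1},
\end{equation*}
where $c(\alpha) = \frac{1}{\pi}\int_0^\infty (1+|\zeta|^\alpha)^{-1}\,d\zeta$ is finite because $\alpha > 1$. Moreover, exactly as in \eqref{eq:u_est_by_v}, $u_\lambda(0) - u_\lambda(y) = \frac{1}{\pi}\int_0^\infty \frac{1-\cos(\xi y)}{\lambda + |\xi|^\alpha}\,d\xi \geq 0$ for every $y \in \R$, so $u_\lambda$ is maximized at $0$; since $u_\lambda \geq 0$ as well, $0 \leq u_\lambda(y) \leq u_\lambda(0)$ for all $y \in \R$.

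Next, since $\varphi_n \geq 0$ with $\int_\R \varphi_n = 1$, the mollification inherits the same pointwise bound: $0 \leq u_{\lambda,n}(z) = \int_\R \varphi_n(y)\,u_\lambda(z-y)\,dy \leq u_\lambda(0)$ for all $z \in \R$. Consequently, for every $\omega$, every $x \in \R$ and every $t \geq 0$,
\begin{equation*}
	0 \leq \lambda\int_0^t u_{\lambda,n}(X_s-x)\,ds \leq \lambda\, t\, u_\lambda(0) = c(\alpha)\, t\, \lambda^{1/\alpha}.
\end{equation*}
Letting $\lambda \to 0$ and using $1/\alpha > 0$, the right-hand side tends to $0$, which yields the assertion (in fact surely, not merely $\bbP_{x_0}$-almost surely).

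The only step that requires any care is the evaluation of $u_\lambda(0)$ together with the observation that $u_\lambda$ attains its supremum at the origin; both follow directly from the closed form \eqref{eq:lam_pot_dens_closed_form}, so I do not expect a genuine obstacle here. In particular, in contrast with Lemmas~\ref{lem:ulam_conv} and~\ref{lem:dA_conv}, no pathwise estimates on $X$ or on the total variation $\bar{A}$ are needed, since the dominating bound $c(\alpha)\,t\,\lambda^{1/\alpha}$ is deterministic and uniform in $\omega$.
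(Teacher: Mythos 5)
Your proposal is correct. The only ingredient you use beyond the scaling identity $u_\lambda(0)=c(\alpha)\lambda^{1/\alpha-1}$ is the pointwise bound $0\le u_\lambda(y)\le u_\lambda(0)$; the upper bound is exactly the nonnegativity of $\tfrac{1}{\pi}\int_0^\infty\frac{1-\cos(\xi y)}{\lambda+|\xi|^\alpha}\,d\xi$ as in \eqref{eq:u_est_by_v}, and the lower bound is the standard fact that the resolvent density $u_\lambda(x)=\int_0^\infty \ee^{-\lambda t}p(t,0,x)\,dt$ of the stable process is nonnegative (you assert this rather than prove it, but it is immediate from the probabilistic representation, not from the oscillatory closed form, so state it that way). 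Since mollification by the nonnegative $\varphi_n$ is a convex average, $0\le u_{\lambda,n}\le u_\lambda(0)$ follows and your deterministic squeeze $\lambda\int_0^t u_{\lambda,n}(X_s-x)\,ds\le c(\alpha)\,t\,\lambda^{1/\alpha}\to 0$ is valid, indeed surely and uniformly in $\omega$.

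The paper argues slightly differently: it splits $|u_{\lambda,n}(X_s-x)|\le |u_{\lambda,n}(X_s-x)-u_\lambda(0)|+u_\lambda(0)$, bounds the first term by $v_n(X_s-x)$ via \eqref{eq:u_est_by_v}, and then uses the almost sure finiteness of $\sup_{0\le s\le t}|X_s|$ to get a $\lambda$-independent random constant $C(t,n,x,\alpha,\omega)$, so that multiplying by $\lambda$ kills both pieces ($\lambda\cdot C$ and $\lambda\cdot t\,u_\lambda(0)\lesssim\lambda^{1/\alpha}$). Your route replaces the $v_n$-comparison and the pathwise estimate by the direct observation that $u_{\lambda,n}$ is globally bounded by $u_\lambda(0)$, which buys a cleaner, deterministic bound that does not depend on the path of $X$ or on $\bar A$ at all; the paper's decomposition, on the other hand, reuses machinery ($v_n$ and the path bounds) that it needs anyway in the neighbouring Lemmas~\ref{lem:ulam_conv} and~\ref{lem:dA_conv}. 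Both proofs ultimately rest on the same scaling estimate $u_\lambda(0)\le c\,\lambda^{1/\alpha-1}$, so the difference is one of packaging, with yours being the more economical for this particular lemma.
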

\begin{proof}
	Plugging in the definition of $u_{\lambda,n}$ yields
	\begin{equation}\label{eq:lem5.3:1}
		\begin{split}
			&\int_0^{t} \big|u_{\lambda,n}(X_s-x)\big|ds\\
			&\leq\int_0^{t} |u_{\lambda,n}(X_s-x)-u_{\lambda}(0)|ds+ tu_{\lambda}(0)\\
			&\leq \int_\R \phi_n(z) \int_0^{t} |X_s-x-z|^{\alpha-1}dsdz \int_0^\infty \frac{1-\cos(\xi)}{|\xi|^\alpha}d\xi  + t u_{\lambda,n}(0)\\
			&\leq C(t,n,x,\alpha, \omega )+ t u_{\lambda,n}(0),
		\end{split}
	\end{equation}
	where $C(t,n,x,\alpha, \omega ) < \infty$ almost surely, since $\sup_{0 \leq s \leq t}|X_t|<\infty$ almost surely .
	Furthermore, using the substitution $\xi = \lambda^{1/\alpha}\zeta$ we arrive at  
	\begin{equation}
		\begin{split}\label{eq:lem5.3:2}
			u_{\lambda}(0) 
			\leq c \int_0^\infty \frac{1}{\lambda + |\xi|^\alpha}d\xi = c\lambda^{1/\alpha-1}\int_0^\infty\frac{1}{1+ |\zeta|^\alpha}d\zeta.
		\end{split}
	\end{equation}
	Using \eqref{eq:lem5.3:1} and \eqref{eq:lem5.3:2} leads to
	\begin{equation}\label{eq:exp_vn_lam}
		\lambda \int_0^{t} \big|u_{\lambda,n}(X_s-x)\big|ds \leq C(t,n,x,\alpha, \omega)\Big(\lambda + \lambda^{1/\alpha}\int_0^\infty\frac{1}{1+ |\zeta|^\alpha}d\zeta\Big).
	\end{equation}
	This implies the convergence of the left hand side of \eqref{eq:exp_vn_lam} to zero as $\lambda \rightarrow 0$, since $\int_0^\infty \frac{1}{1+ |\zeta|^\alpha}d\zeta <\infty$. This concludes the proof.
\end{proof}
In the next lemma, we finally derive the Ito formula for $v_n(X_t-x)$ by using Lemmas~\ref{lem:ulam_conv}, \ref{lem:dA_conv} and \ref{lem:lamulam_conv}  and taking the limit of the martingale term \eqref{eq:mart_lam} as $\lambda \rightarrow 0$. This gives us the following result. 
\begin{lemm}\label{lem:Ito_tan_func}
	Let $(X_t)_{t \geq 0}$ be defined as in \eqref{eq:semimartingale} and let $t \geq 0$. Then it holds $\bbP_{x_0}$-a.s. that
	\begin{equation} \label{eq:Itov_n}
		v_n(X_t-x) = v_n(x_0-x) + N^{n,x}_t + \int_0^t (v_n)'(X_s-x) dA_s + \int_0^t\phi_n(X_s-x)ds,
	\end{equation}
	where $(X_t)_{t \geq 0}$ and $ (A_t)_{t \geq 0}$ are defined as in \eqref{eq:semimartingale} and $(N^{n,x}_t)_{t \geq 0}$ is a local martingale given by
	\begin{equation}\label{eq:smooth_martingale}
		N^{n,x}_t = \int_0^t \int_\R  \big(v_n(X_{s-}-x+r)-v_n(X_{s-}-x)\big)\big(\Pi-\pi\big)(dr,ds)
	\end{equation}
	with $\Pi$ being the Poisson random measure of the symmetric $\alpha$-stable process $L$ with intensity $\pi$ as defined in \eqref{eq:intensity}.
\end{lemm}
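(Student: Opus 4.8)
The plan is to pass to the limit $\lambda\downarrow 0$ in the It\^o expansion \eqref{eq:itoulamn}. Since $u_\lambda(0)\to\infty$ as $\lambda\downarrow 0$, one first subtracts the deterministic constant $u_\lambda(0)$ from both sides of \eqref{eq:itoulamn} and rearranges, obtaining, for the fixed $t\ge 0$ and $x\in\R$,
\begin{align*}
	u_\lambda(0)-u_{\lambda,n}(X_t-x) &= \big(u_\lambda(0)-u_{\lambda,n}(x_0-x)\big)-N_\lambda^{n,x}(t)-\int_0^t (u_{\lambda,n})'(X_s-x)\,dA_s\\
	&\quad +\int_0^t\phi_n(X_s-x)\,ds-\lambda\int_0^t u_{\lambda,n}(X_s-x)\,ds .
\end{align*}
By Lemma~\ref{lem:ulam_conv} (and the pointwise convergence $u_\lambda(0)-u_{\lambda,n}(y)\to v_n(y)$, $y\in\R$, established in its proof), by Lemma~\ref{lem:dA_conv}, and by Lemma~\ref{lem:lamulam_conv}, every term on the right-hand side except $-N_\lambda^{n,x}(t)$ converges $\bbP_{x_0}$-a.s.\ as $\lambda\downarrow 0$, the total pathwise limit being $v_n(x_0-x)+\int_0^t (v_n)'(X_s-x)\,dA_s+\int_0^t\phi_n(X_s-x)\,ds$; simultaneously the left-hand side converges $\bbP_{x_0}$-a.s.\ to $v_n(X_t-x)$. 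Hence $N_\lambda^{n,x}(t)$ itself converges $\bbP_{x_0}$-a.s., and it remains only to identify its limit with $-N_t^{n,x}$, where $N_t^{n,x}$ is the stochastic integral \eqref{eq:smooth_martingale}.

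For this identification I would prove the stronger statement that $N_\lambda^{n,x}(t)+N_t^{n,x}\to 0$ in $L^2(\bbP_{x_0})$ as $\lambda\downarrow 0$. For each fixed $\lambda$ the function $u_{\lambda,n}$ is bounded and Lipschitz, so $N_\lambda^{n,x}$ is a genuine $L^2$-martingale, and (by the estimates below) so is $N^{n,x}$; by It\^o's isometry the claim reduces to
\begin{multline*}
	\bbE_{x_0}\int_0^t\!\int_\R\big|u_{\lambda,n}(X_{s-}-x+r)-u_{\lambda,n}(X_{s-}-x)\\
	+v_n(X_{s-}-x+r)-v_n(X_{s-}-x)\big|^2\,\tilde{\pi}(dr)\,ds\longrightarrow 0 .
\end{multline*}
Here the integrand tends to $0$ pointwise in $(s,r,\omega)$ since $u_\lambda(0)-u_{\lambda,n}\to v_n$ pointwise, whence $u_{\lambda,n}(a)-u_{\lambda,n}(b)\to v_n(b)-v_n(a)$; dominated convergence then applies once a $\lambda$-uniform, $\tilde{\pi}(dr)\,ds$-integrable majorant is produced. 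Such a majorant comes from the two increment bounds
\begin{equation*}
	|u_{\lambda,n}(a)-u_{\lambda,n}(b)|\le c(\alpha)|a-b|^{\alpha-1},\qquad |u_{\lambda,n}(a)-u_{\lambda,n}(b)|\le C_n|a-b|\qquad(a,b\in\R,\ \lambda>0),
\end{equation*}
together with their analogues for $v_n$: the first is obtained uniformly in $\lambda$ from the explicit formula \eqref{eq:lam_pot_dens_closed_form} using $|\cos(\xi a)-\cos(\xi b)|\le\min(2,\xi|a-b|)$ and splitting the $\xi$-integral at $\xi=2|a-b|^{-1}$ (the two resulting integrals converging precisely because $1<\alpha<2$), while the second, with an $n$-dependent but $\lambda$-independent constant, follows by differentiating the mollification and inserting the first bound. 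Consequently the integrand above is dominated by $C(n)\min(|r|^{2},|r|^{2\alpha-2})$, which is integrable against $\tilde{\pi}(dr)\,ds=c_1(\alpha)|r|^{-1-\alpha}dr\,ds$ on $[0,t]\times\R$ because $\int_{|r|<1}|r|^{1-\alpha}dr<\infty$ and $\int_{|r|\ge1}|r|^{\alpha-3}dr<\infty$ for $\alpha\in(1,2)$; the same estimate yields $\bbE_{x_0}\langle N^{n,x}\rangle_t<\infty$, so $N^{n,x}$ is in fact a square-integrable martingale (in particular the local martingale asserted).

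Finally, one selects a sequence $\lambda_k\downarrow 0$ along which $N_{\lambda_k}^{n,x}(t)\to-N_t^{n,x}$ holds $\bbP_{x_0}$-a.s.; then all terms in the displayed rearrangement of \eqref{eq:itoulamn} converge $\bbP_{x_0}$-a.s.\ along $\lambda_k$, and passing to the limit gives $v_n(X_t-x)=v_n(x_0-x)+N_t^{n,x}+\int_0^t(v_n)'(X_s-x)\,dA_s+\int_0^t\phi_n(X_s-x)\,ds$, which is \eqref{eq:Itov_n} for the given $t$; should the identity be wanted simultaneously for all $t$, it extends by right-continuity of both sides, as in the proof of Lemma~\ref{lemm:trans_eq}. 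The main obstacle is the second paragraph, namely establishing the $\lambda$-uniform H\"older and Lipschitz control on the increments of $u_{\lambda,n}$ that legitimises dominated convergence inside It\^o's isometry; granting it, the lemma follows by combining the $L^2$-convergence of the martingale term with Lemmas~\ref{lem:ulam_conv}, \ref{lem:dA_conv} and \ref{lem:lamulam_conv}.
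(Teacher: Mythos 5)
Your proposal is correct, and its skeleton coincides with the paper's proof: apply It\^o's formula to $u_{\lambda,n}$, subtract the constant $u_\lambda(0)$, send $\lambda\downarrow 0$ using Lemmas~\ref{lem:ulam_conv}, \ref{lem:dA_conv}, \ref{lem:lamulam_conv} for the non-martingale terms, and identify the limit of the compensated-Poisson integral via It\^o's isometry and dominated convergence. Where you genuinely diverge is in how the dominating function for the isometry step is produced. The paper bounds the increments of $u_{\lambda,n}$ by quantities that grow in space, namely $c(\alpha,n)\,|r|\,(|X_s-x|^{\alpha-1}+1)$ for $|r|<1$ and $c(n)(|r|^{\alpha-1}+|X_s-x|^{\alpha-1}+1)$ for $|r|\ge 1$, and therefore has to localize: it introduces $\tau_m=\inf\{t:\bar A_t>m\}$, controls $\bbE_{x_0}\bigl[\sup_{s<t\wedge\tau_m}|X_s|^{2\alpha-2}\bigr]$ via Doob's inequality for $L$ and the bound $\bar A\le m$, proves the $L^2$ convergence only up to $t\wedge\tau_m$, and accordingly concludes only that $N^{n,x}$ is a \emph{local} martingale. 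You instead exploit the $\lambda$-uniform global bounds $|u_{\lambda,n}(a)-u_{\lambda,n}(b)|\le c(\alpha)|a-b|^{\alpha-1}$ (which the paper itself derives, in essence, in \eqref{eq:u_lam_holder2}) and $|u_{\lambda,n}(a)-u_{\lambda,n}(b)|\le C_n|a-b|$ (valid since $u_{\lambda,n}'=\varphi_n'\ast u_\lambda$ and $\int\varphi_n'=0$ lets you insert the H\"older bound), together with their analogues for $v_n$; this yields the deterministic majorant $C(n)\min(|r|^2,|r|^{2\alpha-2})$, integrable against $\tilde\pi(dr)\,ds$, so no stopping times or moment estimates on $X$ are needed, and you even obtain the stronger conclusion that $N^{n,x}$ is a square-integrable martingale (which of course implies the asserted local-martingale property). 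Both routes are valid; yours buys a shorter argument and a sharper statement for the martingale term, while the paper's localization argument is the more robust template when only state-dependent increment bounds are available.
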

\begin{proof}
	First set 
	\begin{equation*}
		\tau_m = \inf\{t \geq 0 : \bar{A}_t >m\}.
	\end{equation*}
	By plugging $t\wedge\tau_m$ into \eqref{eq:itoulamn} we arrive at 
	\begin{align*}
		&u_{\lambda,n}(X_{t\wedge\tau_m}-x)\\
		&= u_{\lambda,n}(x_0-x) +N_\lambda^{n,x}(t\wedge\tau_m) +\int_0^{t\wedge\tau_m} (u_{\lambda,n})'(X_s-x)dA_s\\
		&\quad  - \int_0^{t\wedge\tau_m}\phi_n(X_s-x) ds
		+ \lambda \int_0^{t\wedge\tau_m} u_{\lambda,n}(X_s-x)ds.
	\end{align*}
	Now, we consider the difference $u_{\lambda}(0)- u_{\lambda,n}(X_t\wedge\tau_m-x)$ in order to apply Lemma~\ref{lem:ulam_conv} and obtain 
	\begin{equation}\label{eq:Ito_molli}
		\begin{split}
			&u_{\lambda}(0)- u_{\lambda,n}(X_{t\wedge\tau_m}-x) = u_{\lambda}(0)- u_{\lambda,n}(x_0-x) -N_\lambda^{n,x}(t\wedge\tau_m) \\
			&-\int_0^{t\wedge\tau_m} (u_{\lambda,n})'(X_s-x)dA_s + \int_0^{t\wedge\tau_m}\phi_n(X_s-x) ds -  \lambda \int_0^{t\wedge\tau_m} u_{\lambda,n}(X_s-x)ds.
		\end{split}
	\end{equation}
	Now, Lemma~\ref{lem:ulam_conv}, Lemma~\ref{lem:lamulam_conv} and Lemma~\ref{lem:dA_conv} demonstrate that all
	terms in \eqref{eq:Ito_molli} besides the martingale term converge on the event $\{t \leq \tau_m\}$ $\bbP_{x_0}$-almost surely 
	to the corresponding term in \eqref{eq:Itov_n} stopped at $t \wedge\tau_m$, as $\lambda \rightarrow 0$. 
	Since the probability of the event $\{t \leq \tau_m\}$ converges to one, 
	it remains to identify the limit of the martingale term. To this end consider
	\begin{equation*}
		N^{n,x}_ {t\wedge \tau_m}= \int_0^t \int_\R  \mathbbm{1}_{s\leq \tau_m}\big(v_n(X_{s-}-x+r)-v_n(X_{s-}-x)\big)\big(\Pi-\pi\big)(ds,dr)
	\end{equation*}
	and note that by It\^o's-isometry (see \cite[Theorem 4.2.3]{Applebaum09})
	\begin{align*}
		&\mathbb{E}_{x_0}[(N_\lambda^{n,x}(t\wedge \tau_m))^2]\\
		&=  \mathbb{E}_{x_0}\Big[\int_0^t\int_\R \mathbbm{1}_{s\leq\tau_m}\big(u_{\lambda,n}(X_{s}-x+r)-u_{\lambda,n}(X_{s}-x)\big)^2\tilde{\pi}(dr)ds\Big].
	\end{align*}
	We now proceed to proof the $L^2$ convergence of $N_\lambda^{n,x}(t\wedge \tau_m)$ to 
	$N^{n,x}_{t\wedge\tau_m}$.
	To this end, we first derive bounds that are independent of $\lambda$, in order to apply dominated convergence.
	First assume $|r|<1$ and note that by \eqref{eq:deriv_u_lam} and the mean value theorem
	\begin{equation}\label{eq:mart_est_1}
		\begin{split}
			&|u_{\lambda,n}(X_{s}-x+r)-u_{\lambda,n}(X_{s}-x)|\\
			&\leq |r|\sup_{y\in I_{s,x,r}}|u_{\lambda,n}'(y)|\\
			&\leq c|r|\sup_{y\in I_{s,x,r}}\int_\R \varphi_n(z)|y-z|^{\alpha-2}\int_0^\infty \frac{\xi \sin(\xi)}{\lambda|y-z|^\alpha + |\xi|^\alpha}d\xi dz\\
			& \leq c(\alpha,n)|r|\sup_{y\in I_{s,x,r}}(|y-2^{-n}|^{\alpha-1}+|y+2^{-n}|^{\alpha-1})\\
			&\leq c(\alpha,n)|r|(|X_s-x|^{\alpha-1}+1),
		\end{split}
	\end{equation}
	where $I_{s,x,r} = [X_s-x,X_s-x+r]\cup[X_s-x+r,X_s-x]$ and we used Lemma~\ref{lemm:help_bounded} in the third inequality.
	Now assume that $|r|\geq 1$. Using \eqref{eq:u_est_by_v} leads to
	\begin{equation}\label{eq:mart_est_2}
		\begin{split}
			&|u_{\lambda,n}(X_{s}-x+r)-u_{\lambda,n}(X_{s}-x)| \\
			&\leq 	\Big|\int_\R \phi_n(z)(u_\lambda(X_s-x+r-z)-u_\lambda(0))dz\Big| \\
			& \qquad + 	\Big|\int_\R \phi_n(z)(u_\lambda(X_s-x-z)-u_\lambda(0))dz\Big|\\
			& \leq |v_n(X_s-x+r)| + |v_n(X_s-x)| \\
			&\leq c(n) (|r|^{\alpha-1}+|X_s-x|^{\alpha-1}+1),
		\end{split}
	\end{equation}
	where we used $\int_\R \phi_n(z)|x-z|^{\alpha-2}dz \leq c(n)(|x|^{\alpha-1}+1)$ in the last inequality.
	For the L\'evy measure $\tilde{\pi}$ it holds that 
	\begin{equation*}
		\int_\R |r|^2\wedge1 \tilde{\pi}(dr) \leq \int_\R |r|^2\wedge|r|^{2\alpha-2} \tilde{\pi}(dr)<\infty,
	\end{equation*}
	since $1+\alpha-2\alpha+2 = 3-\alpha>1$.
	Furthermore, note that 
	\begin{equation}\label{eq:expectation_est1}
		\begin{split}
			&\sup_{0 \leq s < t\wedge \tau_m}|X_s|^{\alpha-1} \leq \sup_{0 \leq s < t\wedge \tau_m}( 1\vee|X_s|^{2\alpha-2}) \leq \sup_{0 \leq s < t\wedge \tau_m}|X_s|^{2\alpha-2}+ 1\\
			& \leq 	\sup_{0 \leq s <t\wedge \tau_m}\bar{A}_s^{2\alpha-2} + \sup_{0 \leq s <t\wedge \tau_m}|L_s|^{2\alpha-2} +1.
		\end{split}
	\end{equation}
	By Doob's martingale inequality  and since $2\alpha-2 \in (0,\alpha)$ for $\alpha \in (1,2)$ we have
	\begin{equation}\label{eq:expectation_est2}
		\begin{split}
			&\mathbb{E}_{x_0}\Big[ \sup_{0 \leq s\leq t }|L_s|^{2\alpha-2}\Big]<\infty.
		\end{split}
	\end{equation}
	Together, \eqref{eq:expectation_est1} and \eqref{eq:expectation_est2} imply
	\begin{equation}\label{eq:expect_est3}
		\mathbb{E}_{x_0}\Big[ \sup_{0 \leq s < t\wedge \tau_m-}|X_s|^{\alpha-1} \Big] \leq \mathbb{E}_{x_0}\Big[ \sup_{0 \leq s < t\wedge \tau_m-}|X_s|^{2\alpha-2} \Big] +1 \leq C(m),
	\end{equation}
	since $\sup_{0 \leq s <t\wedge \tau_m-}\bar{A}_s^{2\alpha-2} = \bar{A}_{t_\wedge (\tau_m)_-}^{2\alpha-2} \leq m^{2\alpha-2}$.
	For this reason, \eqref{eq:mart_est_1} and \eqref{eq:mart_est_2} together with \eqref{eq:expect_est3} imply
	\begin{equation}\label{eq:L2uLamnmart}
		\begin{split}
			&\mathbb{E}_{x_0}[(N_\lambda^{n,x}(t\wedge \tau_m))^2] \\
			&\leq c(\alpha,n)\mathbb{E}_{x_0}\Bigg[\int_0^{t \wedge\tau_m }\int_\R (|r|(|X_s-x|^{\alpha-1}+1))^2\wedge (|r|^{\alpha-1}\\
			& \hspace{5cm}+|X_s-x|^{\alpha-1}+1)^2\tilde{\pi}(dr)ds\Bigg]<\infty.
		\end{split}
	\end{equation}
	Using Jensen's inequality, Fubini's theorem, and the substitution $r = (X_s-x-z)\tilde{r}$ we obtain
	\begin{equation}\label{eq:v_n_diff_est}
		\begin{split}
			&\int_\R \big(v_n(X_{s}-x+r)-v_n(X_{s}-x)\big)^2\tilde{\pi}(dr)\\
			&\leq \int_\R \varphi_n(z) \int_\R \big(v(X_{s}-x+r-z)-v(X_{s}-x-z)\big)^2\tilde{\pi}(dr)dz\\
			& = c(\alpha)\int_\R \varphi_n(z)|X_s-x-z|^{\alpha-2}dz \int_\R (|1+r|^{\alpha-1}-1)^2\tilde{\pi}(dr)\\
			& \leq c(\alpha,n) \big(|X_s-x|^{\alpha-1}+1\big).
		\end{split}
	\end{equation}
	Using \eqref{eq:expect_est3} again, this leads to
	\begin{equation}\label{eq:L2boundvnmart}
		\mathbb{E}_{x_0}\Big[\int_0^{t\wedge \tau_m}\int_\R \big(v_n(X_{s}-x+r)-v_n(X_{s}-x)\big)^2\tilde{\pi}(dr)ds\Big]   < \infty, \quad \forall t\geq 0.
	\end{equation}
	Applying Doob's martingale inequality yields
	\begin{equation}\label{eq:vnmartconv}
		\begin{split}
			&\mathbb{E}_{x_0}\Big[\sup_{0 \leq s \leq t\wedge \tau_m} \big( N_\lambda^{n,x}(s)+N^{n,x}_s\big)^2\Big] \\
			&\leq c
			\mathbb{E}_{x_0}\Big[\int_0^{t \wedge \tau_m} \int_\R \big(u_{\lambda,n}(X_{s}-x+r)-u_{\lambda,n}(X_{s}-x)\\
			&\hspace{3.5cm}+v_n(X_{s}-x+r)-v_n(X_{s}-x)\big)^2\tilde{\pi}(dr)ds\Big].
		\end{split}
	\end{equation}
	Recall that $u_\lambda(0)-u_{\lambda,n}(x)\rightarrow v_n(x)$ as $\lambda \rightarrow 0$ for any $x\in \R$ and therefore the right hand side of \eqref{eq:vnmartconv} converges to zero, as $\lambda \rightarrow 0$, by \eqref{eq:L2boundvnmart},\eqref{eq:L2uLamnmart} and dominated convergence.
	Furthermore, it holds that $\tau_m \rightarrow \infty$ a.s. because $\bar{A}_t$ is a.s. finite. We can thus deduce that $(N^{n,x}_{t })_{t \geq 0}$ is a local martingale .
	This finishes the proof. 
\end{proof}
\subsection{Convergence Results}
In this section, we are interested in passing to the limit of 
\begin{equation*}
	\int_0^t\phi_n(X_s-x)ds = v_n(X_t-x)- v_n(x_0-x) -N^{n,x}_t-\int_0^t (v_n)'\big(X_s-x\big)dA_s, \quad t \geq 0, \quad x\in \R
\end{equation*}
 from Lemma~\ref{lem:Ito_tan_func} as $n \rightarrow \infty$.
\begin{lemm}\label{lem:v_nconv}
	Let $X$ satisfy \eqref{eq:semimartingale} and let $\nu \in K^f_{\alpha-1}$. Then it holds for every $t \geq 0$ that
	\begin{equation*}
		\sup_{0 \leq s \leq t}\Big|\int_\R v_n(X_s-x) -v(X_s-x) \nu(dx)\Big| \rightarrow 0 \quad \bbP_{x_0}\text{-a.s.}
	\end{equation*}
	as $n \rightarrow \infty$.
\end{lemm}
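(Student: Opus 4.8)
The plan is to establish the stronger, fully deterministic bound $\|v_n - v\|_{\infty} \le c_2(\alpha)\,2^{-n(\alpha-1)}$ and then integrate it against the finite measure $\nu$. The starting point is the explicit form $v(x) = c_2(\alpha)|x|^{\alpha-1}$ from \eqref{eq:renorm_pot_dens}. Since $\alpha \in (1,2)$ we have $\alpha - 1 \in (0,1)$, so $t \mapsto t^{\alpha-1}$ is concave on $[0,\infty)$ with value $0$ at the origin, hence subadditive; this yields the global Hölder estimate
\[
|v(a) - v(b)| = c_2(\alpha)\,\big||a|^{\alpha-1} - |b|^{\alpha-1}\big| \;\le\; c_2(\alpha)\,\big||a| - |b|\big|^{\alpha-1} \;\le\; c_2(\alpha)\,|a-b|^{\alpha-1}, \qquad a,b \in \R.
\]

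Next I would use that, by Definition~\ref{def:mollifier}, $\varphi_n \ge 0$, $\int_\R \varphi_n(z)\,dz = 1$ and $\supp \varphi_n \subseteq [-2^{-n}, 2^{-n}]$. Combining this with the Hölder estimate gives, for every $y \in \R$,
\[
|v_n(y) - v(y)| \;\le\; \int_\R \varphi_n(z)\,|v(y-z) - v(y)|\,dz \;\le\; c_2(\alpha)\int_\R \varphi_n(z)\,|z|^{\alpha-1}\,dz \;\le\; c_2(\alpha)\,2^{-n(\alpha-1)},
\]
a bound uniform in $y$. Plugging this in and using that $|\nu|(\R) < \infty$ (as $\nu \in K^f_{\alpha-1}$), I obtain
\[
\sup_{0 \le s \le t}\Big|\int_\R \big(v_n(X_s-x) - v(X_s-x)\big)\,\nu(dx)\Big| \;\le\; \sup_{0\le s\le t}\int_\R |v_n(X_s-x) - v(X_s-x)|\,|\nu|(dx) \;\le\; c_2(\alpha)\,2^{-n(\alpha-1)}\,|\nu|(\R),
\]
which tends to $0$ as $n \to \infty$; this proves the claim, in fact with convergence uniform in $\omega$ and in $t$ over bounded intervals.

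I do not expect a genuine obstacle; the one point deserving care is well-definedness rather than convergence. Indeed, $x \mapsto v(X_s-x)$ and $x \mapsto v_n(X_s-x)$ each grow like $|x|^{\alpha-1}$ at infinity, so for a general finite measure $\nu$ the separate integrals $\int_\R v(X_s-x)\,\nu(dx)$ and $\int_\R v_n(X_s-x)\,\nu(dx)$ need not be finite; however, the difference $v_n - v$ is bounded by $c_2(\alpha)2^{-n(\alpha-1)}$ everywhere, so the expression appearing in the lemma is always a well-defined finite number and the estimate above applies as stated. In particular the $\bbP_{x_0}$-a.s.\ qualifier is automatic, and one does not even need the (a.s.\ valid) boundedness of $\sup_{0\le s\le t}|X_s|$.
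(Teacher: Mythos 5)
Your proof is correct, and it takes a genuinely different route from the paper's. The paper rewrites $\int_\R v_n(X_s-x)\,\nu(dx)$ as the mollification of the function $f(y)=\int_\R v(y-x)\,\nu(dx)$ evaluated at $X_s$, asserts that $f$ is continuous so its mollifications converge uniformly on compacts, and then uses that $\bbP_{x_0}$-a.s.\ the path $(X_s)_{0\le s\le t}$ stays in a compact set $K_\omega$; the convergence obtained there is a.s.\ but with no rate and only along the (random) compact range of the path. You instead prove the deterministic uniform bound $\|v_n-v\|_\infty\le c_2(\alpha)2^{-n(\alpha-1)}$ via the elementary subadditivity estimate $\big||a|^{\alpha-1}-|b|^{\alpha-1}\big|\le|a-b|^{\alpha-1}$ and the support/normalization of $\varphi_n$, and then simply integrate against the finite measure $|\nu|$. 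This buys more than the statement asks: an explicit rate, convergence uniform in $\omega$ and in $t$, no appeal to a.s.\ boundedness of the path, and — as you rightly point out — a clean treatment of well-definedness, since only the difference $v_n-v$ is integrated (the separate integrals $\int v(X_s-x)\,\nu(dx)$ need not be finite for a general finite Kato measure, a point the paper's argument via continuity of $f$ glosses over). The paper's approach, on the other hand, does not use the explicit form $v(x)=c_2(\alpha)|x|^{\alpha-1}$ beyond continuity, so it would transfer to settings where $v$ is merely continuous; your argument exploits the exact Hölder structure of $v$, which is available here and makes the proof shorter and quantitative.
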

\begin{proof}
	Note that
	\begin{align*}
		\int_\R v_n(X_s-x)\nu(dx) = \int_\R \varphi_n(z) \int_\R v(X_s-x-z) \nu(dx)dz, \quad \forall s\geq 0, x\in \R.
	\end{align*}
	If we look at the function $f(y) = \int_\R v(y-x)\nu(dx)$, it is clear, that we investigate the 
	convergence of the mollification of this function. Since $f$ is continuous this implies the
	uniform convergence on compacts. Since for $\bbP_{x_0}$-a.e. $\omega$, $(X_s)_{0 \leq s \leq t}$ is 
	contained in some compact subset $K_\omega$, the almost sure convergence follows. 
\end{proof}
\begin{lemm}\label{lem:drift}
	Let $X$ satisfy \eqref{eq:semimartingale} and $\nu \in K^f_{\alpha-1}$. Then it holds for every $t > 0$ that, as $n \rightarrow \infty$,
	\begin{equation*}
		\int_\R\int_{0}^{t}\big|(v_n)'(X_s-x)-v'(X_s-x)\big|d\bar{A}_s|\nu|(dx) \rightarrow 0, \quad \bbP_{x_0}\text{-a.s.}
	\end{equation*}
\end{lemm}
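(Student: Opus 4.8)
The plan is to interchange the two integrals (Tonelli applies, the integrand $(s,x)\mapsto|(v_n)'(X_s-x)-v'(X_s-x)|$ being non-negative and jointly measurable since $(v_n)'$ is continuous, $v'$ is Borel, and $X$ is c\`adl\`ag) and then to work on the event $\{\bar{A}_t<\infty\}$, which has full $\bbP_{x_0}$-probability since $A$ has finite variation. Thus it suffices to show
\begin{equation*}
	I_n:=\int_0^t\Big(\int_\R\big|(v_n)'(X_s-x)-v'(X_s-x)\big|\,|\nu|(dx)\Big)\,d\bar{A}_s\longrightarrow 0 .
\end{equation*}
First I would record that $(v_n)'=\varphi_n\ast v'$: since $v'(w)=c_2(\alpha)(\alpha-1)\sgn(w)|w|^{\alpha-2}$ is locally integrable (as $\alpha-2\in(-1,0)$), differentiating under the integral and integrating by parts gives $(\varphi_n\ast v)'=\varphi_n\ast v'$, whence $|(v_n)'(y)|\le(\varphi_n\ast|v'|)(y)$ because $\varphi_n\ge 0$.

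The heart of the argument is, for each $\delta>0$, a split of the inner integral into the far region $\{x:|X_s-x|\ge\delta\}$ and the near region $\{x:|X_s-x|<\delta\}$. On the far region I would take $n$ so large that $2^{-n}<\delta/2$; then for $|y|\ge\delta$ and $|z|\le 2^{-n}$ the points $y$ and $y-z$ lie on the same half-line with modulus $\ge\delta/2$, and since $v'$ is $C^1$ on $\{|w|\ge\delta/2\}$ with $\|v''\|_{L^\infty(\{|w|\ge\delta/2\})}=:L_\delta<\infty$, the mean value theorem gives $|(v_n)'(y)-v'(y)|\le L_\delta 2^{-n}$ uniformly in $|y|\ge\delta$, so this part of $I_n$ is at most $L_\delta 2^{-n}|\nu|(\R)\,\bar{A}_t\to 0$. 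On the near region I would use the crude bound $|(v_n)'(y)-v'(y)|\le(\varphi_n\ast|v'|)(y)+|v'(y)|$: from $|v'(y)|=c|y|^{\alpha-2}$ and Definition~\ref{def:Kato} we get $\int_{\{|X_s-x|<\delta\}}|v'(X_s-x)|\,|\nu|(dx)\le c\,M_\nu^{\alpha-1}(\delta)$, and writing $(\varphi_n\ast|v'|)(X_s-x)=c\int\varphi_n(z)|X_s-x-z|^{\alpha-2}\,dz$, Tonelli together with the inclusion $\{x:|X_s-x|<\delta\}\subseteq B(X_s-z,\delta+2^{-n})$ yields, for $2^{-n}\le\delta$, $\int_{\{|X_s-x|<\delta\}}(\varphi_n\ast|v'|)(X_s-x)\,|\nu|(dx)\le c\,M_\nu^{\alpha-1}(2\delta)$ — both bounds uniform in $s$ and in large $n$. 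Hence the near region contributes at most $c\big(M_\nu^{\alpha-1}(\delta)+M_\nu^{\alpha-1}(2\delta)\big)\bar{A}_t$.

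Putting the two estimates together gives $\limsup_{n\to\infty}I_n\le c\big(M_\nu^{\alpha-1}(\delta)+M_\nu^{\alpha-1}(2\delta)\big)\bar{A}_t$ for every $\delta>0$; letting $\delta\downarrow 0$ and invoking $\nu\in K_{\alpha-1}^f$ (so $M_\nu^{\alpha-1}(r)\to0$ as $r\to 0$) completes the proof. The main obstacle is exactly this near-diagonal term: because $v'$ is unbounded near $0$ one cannot imitate the continuity argument used for Lemma~\ref{lem:v_nconv}, and the singularity must instead be absorbed via the Kato-class decay of $M_\nu^{\alpha-1}$; the remaining estimates are elementary.
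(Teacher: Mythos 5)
Your proof is correct and follows essentially the same route as the paper: the same near/far split around $X_s$, the same Kato-class bounds $M_\nu^{\alpha-1}(\delta)$ and $M_\nu^{\alpha-1}(2\delta)$ (including the shift to $B(X_s-z,\delta+2^{-n})$) for the near region, and convergence of the mollification on the far region. The only difference is cosmetic: on the far region the paper invokes the uniform bound $|(v_n)'(x)|\le c|R-2^{-n}|^{\alpha-2}$ plus bounded convergence, while you obtain the sharper uniform rate $L_\delta 2^{-n}$ via the mean value theorem, which is equally valid.
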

\begin{proof}
	For the sake of notational convenience, we set $w = v'$ and denote by $w_n$ its mollification.  
	Let $\epsilon > 0$ be arbitrary. Let $R >0$, and we will fix its value later. Let $n$ be large enough so that $2^{-n}< R$. Since $M_{\nu}(r) \rightarrow 0$ for $r \rightarrow 0$ we obtain
	\begin{align*}
		&\int_0^t \int_\R \big|w_n(X_s-x)-w(X_s-x)\big|d\bar{A}_s|\nu|(dx) \\
		&\leq \int_0^t \int_{B(X_s,R)}\big|w_n(X_s-x)-w(X_s-x)\big||\nu|(dx)d\bar{A}_s \\
		& \quad + \int_0^t \int_{B(X_s,R)^c}\big|w_n(X_s-x)-w(X_s-x)\big||\nu|(dx)d\bar{A}_s\\
		& \leq c(\alpha) \Big(\int_0^t \int_{B(X_s,R)}|X_s-x|^{\alpha-2}\nu(dx)d\bar{A}_s \\
		& \quad + \int_0^t \int_\R \varphi_n(z) \int_{B(X_s-z,R+2^{-n})}|X_s-x-z|^{\alpha-2}|\nu|(dx)dzd\bar{A}_s \\
		&  \quad + \int_0^t \int_{B(X_s,R)^c}|w_n(X_s-x)-w(X_s-x)||\nu|(dx)d\bar{A}_s\Big)\\
		& \leq c\big(M_\nu^{\alpha-1}(R) + M_\nu^{\alpha-1}(2R)\big) \\
		& \quad + \int_0^t \int_{B(X_s,R)^c}\big|w_n(X_s-x)-w(X_s-x)\big||\nu|(dx)d\bar{A}_s\\
		& \leq \epsilon + \int_0^t \int_{B(X_s,R)^c}\big|w_n(X_s-x)-w(X_s-x)\big||\nu|(dx)d\bar{A}_s, 
	\end{align*}
	$\mathbb{P}_{x_0}$-almost surely, where $R$ is so that $c\big(M_\nu^{\alpha-1}(R) + M_\nu^{\alpha-1}(2R)\big) <\epsilon$. Note that
	\begin{equation*}
		|w_n(x)| = \Big|\int_\R w(x-z) \varphi_n(z)dz \Big|\leq c(\alpha) |R-2^{-n}|^{\alpha-2} , \quad x \in B(0,R)^c.
	\end{equation*} 
	This means that 
	\begin{equation*}
		\lim_{n \rightarrow \infty}\int_0^t \int_{B(X_s,R)^c}\big|w_n(X_s-x)-w(X_s-x)\big||\nu|(dx)d\bar{A}_s =0, \quad \mathbb{P}_{x_0}\text{-a.s.}
	\end{equation*}
	by the bounded convergence theorem and convergence of $w_n(x)$ to $w(x)$ for every $x\neq 0$.
	This finishes the proof.
\end{proof}
\begin{lemm}\label{lem:l2bound}
	Let $X$ satisfy \eqref{eq:semimartingale} , $\nu \in K_{\alpha-1}^f$ and
	let $(N^x_t)_{t \geq 0}$ and $(N_t^{n,x})_{t\geq 0}$ for $x \in \R$ be defined as in \eqref{eq:martingale} and \eqref{eq:smooth_martingale}, respectively.
	Then it holds for every $t \geq 0$ that
	\begin{equation*}
		\bbE_{x_0}\Big[\sup_{0\leq s\leq t }\Big(\int_\R \big(N^{x}_s-N^{n,x}_s\big) \nu(dx)\Big)^2\Big] \rightarrow 0,
	\end{equation*}
	as $n \rightarrow \infty$.
\end{lemm}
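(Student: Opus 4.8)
The plan is to bound the quantity by Doob's $L^2$-inequality and the It\^o isometry, reducing it to an integral estimate on the predictable brackets, and then to carry the integration against $\nu$ \emph{inside} before invoking the Kato-class property of $\nu$, so that all bounds become uniform in $\omega$ and no localisation is needed. Write $H^x(\tau,r):=v(X_{\tau-}-x+r)-v(X_{\tau-}-x)$ and $H^x_n(\tau,r):=v_n(X_{\tau-}-x+r)-v_n(X_{\tau-}-x)$, so that $N^x_s=\int_0^s\int_\R H^x(\tau,r)\,(\Pi-\pi)(dr,d\tau)$ and $N^{n,x}_s=\int_0^s\int_\R H^x_n(\tau,r)\,(\Pi-\pi)(dr,d\tau)$. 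Since $|\nu|\in K_{\alpha-1}^f$, Theorem~\ref{th:local_time} and Theorem~\ref{th:capacity_condition} give $|\nu|(D_t^c)=0$, so by the definition \eqref{eq:set_loc_time} of $D_t$ the process $(N^x_s)_{0\le s\le t}$ is an $L^2$-martingale for $|\nu|$-a.e.\ $x$. For $N^{n,x}$ I would combine the scaling estimate underlying \eqref{eq:v_n_diff_est}, namely $\int_\R H^x_n(\tau,r)^2\,\tilde\pi(dr)\le c(\alpha)\int_\R\varphi_n(z)|X_{\tau-}-x-z|^{\alpha-2}\,dz$, with the elementary Kato bound $\int_\R|w-x|^{\alpha-2}\,|\nu|(dx)\le M_\nu^{\alpha-1}(R)+R^{\alpha-2}|\nu|(\R)=:C_\nu<\infty$, valid for each fixed $R>0$ uniformly in $w\in\R$; by Tonelli this gives $\int_\R\bbE_{x_0}\big[\int_0^t\int_\R H^x_n(\tau,r)^2\,\tilde\pi(dr)\,d\tau\big]\,|\nu|(dx)\le c(\alpha)C_\nu t<\infty$, so the local martingale $N^{n,x}$ of Lemma~\ref{lem:Ito_tan_func} is in fact an $L^2$-martingale on $[0,t]$ for $|\nu|$-a.e.\ $x$.

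Next set $M^n_s:=\int_\R(N^x_s-N^{n,x}_s)\,\nu(dx)$, which (passing to its c\`adl\`ag modification) is a square-integrable martingale, the interchange of $\bbE_{x_0}$ and $\int\nu(dx)$ being justified by the finiteness just recorded together with Cauchy--Schwarz. Doob's inequality gives $\bbE_{x_0}[\sup_{0\le s\le t}(M^n_s)^2]\le4\bbE_{x_0}[(M^n_t)^2]$; expanding the square into a double $\nu\otimes\nu$-integral and applying the polarised It\^o isometry for compensated Poisson integrals (see \cite[Ch.~4]{Applebaum09}), followed by an ordinary Fubini argument, yields
\begin{equation*}
	\bbE_{x_0}[(M^n_t)^2]=\bbE_{x_0}\Big[\int_0^t\int_\R\Big(\int_\R\big(H^x(\tau,r)-H^x_n(\tau,r)\big)\,\nu(dx)\Big)^2\tilde\pi(dr)\,d\tau\Big],
\end{equation*}
equivalently obtained via a stochastic Fubini theorem. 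Jensen's inequality with respect to the finite measure $|\nu|$ bounds the inner square by $|\nu|(\R)\int_\R\big(H^x(\tau,r)-H^x_n(\tau,r)\big)^2\,|\nu|(dx)$, so that, by Tonelli, $\bbE_{x_0}[(M^n_t)^2]\le|\nu|(\R)\int_\R\bbE_{x_0}\big[\int_0^t\int_\R(H^x(\tau,r)-H^x_n(\tau,r))^2\,\tilde\pi(dr)\,d\tau\big]\,|\nu|(dx)$.

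It remains to show this last expression tends to $0$ as $n\to\infty$. I would split the $r$-integral at $|r|=1$. On $\{|r|\ge1\}$ the subadditivity of $t\mapsto t^{\alpha-1}$ on $[0,\infty)$ gives the uniform bound $\|v-v_n\|_\infty\le c_2(\alpha)\,2^{-n(\alpha-1)}$, hence $|H^x-H^x_n|\le2\|v-v_n\|_\infty$ and the corresponding contribution is at most $c\,t\,\tilde\pi(\{|r|\ge1\})\,|\nu|(\R)^2\,2^{-2n(\alpha-1)}\to0$. On $\{|r|<1\}$ the integrand $(H^x-H^x_n)^2$ tends to $0$ pointwise (as $v_n\to v$ pointwise), while the estimates \eqref{eq:v_n_diff_est}--\eqref{eq:expect_est3} from the proof of Lemma~\ref{lem:Ito_tan_func}, used once more together with $\int_\R|w-x|^{\alpha-2}|\nu|(dx)\le C_\nu$, provide a dominating function that is integrable in $(x,r,\tau,\omega)$ and whose $|\nu|$-mass is bounded uniformly in $n$; a generalised dominated convergence theorem then absorbs the mild residual $n$-dependence of this bound and shows the $\{|r|<1\}$ contribution vanishes as well. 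Hence $\bbE_{x_0}[(M^n_t)^2]\to0$, which proves the claim.

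The main obstacle is precisely this uniform control: a direct estimate would produce a bound involving $\sup_{0\le s\le t}|X_s|^{\alpha-1}$, whose expectation need not be finite since $A$ is only assumed of finite variation, so one would be forced to localise with $\tau_m=\inf\{t\ge0:\bar A_t>m\}$ and subsequently deal with an awkward double limit in $(n,m)$. Performing Jensen's inequality first and only afterwards invoking the Kato-class bound $\int_\R|w-x|^{\alpha-2}|\nu|(dx)\le C_\nu$ uniformly in $w$ removes the dependence on the running maximum of $X$ entirely and thereby sidesteps the issue; beyond that, the only remaining care lies in justifying the Fubini interchanges and the generalised dominated convergence.
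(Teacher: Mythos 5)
Your overall reduction is sound and essentially parallel to the paper's: you show that $N^x$ and $N^{n,x}$ are genuine $L^2$-martingales for $|\nu|$-a.e.\ $x$, and then use Doob's inequality, the It\^o isometry and Jensen with respect to the finite measure $|\nu|$ to reduce the claim to proving that $\int_\R\bbE_{x_0}\big[\int_0^t\int_\R(H^x(\tau,r)-H^x_n(\tau,r))^2\,\tilde\pi(dr)\,d\tau\big]\,|\nu|(dx)\to0$. The paper arrives at exactly this quantity, only in the reverse order (Cauchy--Schwarz in $\nu$ first, then Doob and the isometry for each fixed $x$), which avoids the stochastic Fubini/polarisation step your aggregated martingale $M^n$ requires; both orders are legitimate. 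Your treatment of $\{|r|\ge1\}$ via the uniform bound $\|v-v_n\|_\infty\le c\,2^{-n(\alpha-1)}$ is correct, and your remark about localisation is moot: the paper's proof of this lemma also avoids $\tau_m$ by integrating against $\nu$ before taking expectations.

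The genuine gap is on $\{|r|<1\}$, which is where the real work lies. The dominating function you invoke does not follow from the estimates you cite: the final bound in \eqref{eq:v_n_diff_est} carries an $n$-dependent constant $c(\alpha,n)$, and \eqref{eq:expect_est3} rests on the stopping times $\tau_m$ you set out to avoid. What your Kato bound $\int_\R|w-x|^{\alpha-2}|\nu|(dx)\le C_\nu$ actually yields is only boundedness of the triple integral uniformly in $n$, which does not imply convergence to zero; and a ``generalised dominated convergence theorem'' with $n$-dependent dominating functions (Pratt's lemma) requires proving that their integrals converge to the integral of the limit, which is essentially the uniform-integrability statement you are trying to establish, so nothing is gained by invoking it. To close the gap you need genuinely $n$-uniform control. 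One repair is to prove the pointwise estimate $(\varphi_n\ast|\cdot|^{\alpha-2})(y)\le C\,|y|^{\alpha-2}$ with $C$ independent of $n$ (treat $2^{-n}\le|y|/2$ and $2^{-n}>|y|/2$ separately); this gives an $n$-free dominating function for $F_n(x,\tau,\omega):=\int_\R(H^x-H^x_n)^2\tilde\pi(dr)$, and then ordinary dominated convergence, together with a short argument that $F_n\to0$ for each fixed $x\neq X_{\tau-}$, finishes the proof. The paper instead splits the spatial integral at a small radius $R$: on $B(X_s,R)$ the scaling identity bounds the contribution by $c\,M_\nu^{\alpha-1}(2R)<\epsilon$ uniformly in $n$, while on the complement the scaling substitution produces the explicit $n$-free dominating function $\tilde v(r)$ of \eqref{eq: v_bound}, valid there because the effective mollification scale $|X_s-x|^{-1}2^{-n}\le R^{-1}2^{-n}$ stays below $1/4$. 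Either route works; as written, your argument asserts rather than proves the crucial uniform-in-$n$ domination.
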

\begin{proof}
	Note that by substituting $r = (X_{s-}-x)\tilde{r}$ in the same way as in \eqref{eq:v_n_diff_est}, we get
	\begin{align*}
		&\int_\R\bbE_{x_0}\Big[\sup_{0 \leq t \leq T} \int_0^t\int_\R \big(v(X_{s-}-x+r) -v(X_{s-}-x)\big)^2 \tilde{\pi}(dr)ds \Big]|\nu|(dx) \\
		& \leq c(\alpha)\int_0^T\bbE_{x_0}\Big[\int_\R  |X_{s-}-x|^{\alpha-2}|\nu|(dx) \Big ]ds < \infty,
	\end{align*}
	for all $T>0$. Thus, for $\nu$-almost every $x$ the process $(N^x_t)_{0 \leq t \leq T}$ is a martingale.
	In a similar manner, we can show that $(N^{n,x}_t)_{0 \leq t \leq T}$ is a martingale for $\nu$-almost every $x$.
	Thus, H\"older's inequality and  Doob's martingale inequality yield
	\begin{equation}\label{eq:mart_est}
		\begin{split}
			&\bbE_{x_0}\Big[\sup_{0\leq s\leq t }\Big(\int_\R N^x_s-N^{n,x}_s \nu(dx)\Big)^2\Big] \\
			& \leq c(\nu) \bbE_{x_0}\Big[\sup_{0\leq s\leq t }\int_\R \big(N^x_s-N^{n,x}_s\big)^2 |\nu|(dx)\Big] \\
			& \leq c(\nu) \int_\R \bbE_{x_0}\Big[\sup_{0 \leq s \leq t}\big(N^x_s-N^{n,x}_s\big)^2\Big] |\nu|(dx) \\
			& \leq c(\nu) \int_\R \bbE_{x_0}\Big[\big(N^x_t-N^{n,x}_t\big)^2\Big] |\nu|(dx).
		\end{split}
	\end{equation}
	Applying It\^o's isometry to the right hand side of \eqref{eq:mart_est} leads to 
	\begin{align*}
		\bbE_{x_0}\Big[\sup_{0\leq s\leq t }\Big(\int_\R N^x_s-N^{n,x}_s \nu(dx)\Big)^2\Big] 
		& \leq c(\nu,\alpha) \bbE_{x_0}\Big[\int_\R \int_0^t f_n(\omega,s,x) ds |\nu|(dx)\Big],
	\end{align*}
	where 
	\begin{align*}
		&f_n(\omega,s,x)
		= \int_\R
		\big(v(X_{s-}-x+r)-v(X_{s-}-x)\\
		& \hspace{5cm}-v_n(X_{s-}-x+r)+v_n(X_{s-}-x)\big)^2\frac{1}{|r|^{1+\alpha}}dr.
	\end{align*}
	Now consider the following decomposition 
	\begin{equation}\label{eq:decomp_f_n}
		\begin{split}
			\bbE_{x_0}\Big[\int_\R \int_0^t f_n(\omega,s,x) ds |\nu|(dx)\Big] 
			&\leq \bbE_{x_0}\Big[\int_0^t \int_{B(X_s,R)}f_n(\omega,s,x)|\nu|(dx)ds]\\
			& \hspace{0.4cm} +\bbE_{x_0}\Big[\int_0^t\int_{B(X_s,R)^c} f_n(\omega,s,x)  |\nu|(dx)ds \Big],
		\end{split}
	\end{equation}
	where $R >0$. We will fix the value of $R$ later.
	We first estimate the first term on the right-hand side of \eqref{eq:decomp_f_n}. To this end notice that for fixed $y\in \R$ the following holds
	\begin{align*}
		&\big(v_n(y+r)-v_n(y)\big)^2 = \Big(\int_R \varphi_n(z)\big(v(y+r-z)-v(y-z)\big)dz\Big)^2\\
		& = c(\alpha)\Big(\int_\R\varphi_n(z)\big(|y+r-z|^{\alpha-1}-|y-z|^{\alpha-1}\big)dz\Big)^2\\
		&\leq c(\alpha) \int_\R\varphi_n(z)\big(|y+r-z|^{\alpha-1}-|y-z|^{\alpha-1}\big)^2dz.
	\end{align*}
	Integrating with respect to $\frac{1}{|r|^{1+\alpha}}dr$ and substitution yields
	\begin{align*}
		&\int_\R \varphi_n(z) \int_\R\big(|y+r-z|^{\alpha-1}-|y-z|^{\alpha-1}\big)^2\frac{1}{|r|^{1+\alpha}}drdz \\
		& = \int_\R \varphi_n(z) \int_\R\big(|y-z|^{\alpha-1}|1+u|^{\alpha-1}-|y-z|^{\alpha-1}\big)^2\frac{1}{|(y-z)u|^{1+\alpha}}|y-z|dudz\\
		& = \int_\R \varphi_n(z) |y-z|^{\alpha-2} dz\int_\R \big(|1+u|^{\alpha-1}-1\big)^2\frac{1}{|u|^{1+\alpha}}du\\
		& = c(\alpha)\int_\R \varphi_n(z) |y-z|^{\alpha-2} dz.
	\end{align*}
	Letting $\epsilon > 0$, choosing $R >0$ small enough, so that $M_\nu(2R)< \epsilon$,  and $n$ big enough so that $2^{-n}< R/4$
	we use the above to get 
	\begin{align*}
		&\int_{B(X_s,R)}\int_\R \varphi_n(z) f_n(\omega,s,x) dz|\nu|(dx) \\
		& \leq c(\alpha)\Big(	\int_{B(X_s,R)}\int_\R \varphi_n(z) |X_s-z-x|^{\alpha-2} dz|\nu|(dx) +\int_{B(X_s,R)} |X_s-x|^{\alpha-2}| \nu|(dx)\Big) \\
		& \leq c(\alpha) \int_{B(X_s,R)}\int_\R \varphi_n(z) |X_s-z-x|^{\alpha-2} dz|\nu|(dx) + \epsilon \\
		& \leq c(\alpha) \int_\R \varphi_n(z)\int_{B(X_s-z,R+2^{-n})}|X_s-z-x|^{\alpha-2}| \nu|(dx) dz + \epsilon \leq 2 \epsilon,
	\end{align*}
	where we used $M_\nu(2R)< \epsilon$ in the second and fourth inequality.
	Now we turn to the second term on the right-hand side of \eqref{eq:decomp_f_n}. Note that 
	for $y \in \R, y \neq 0$, the following holds 
	\begin{align*}
		&\int_{\R} \big(v(y+r)-v(y)-v_n(y+r)+v_n(y)\big)^2 \frac{dr}{|r|^{1+\alpha}} \\
		& = c(\alpha)\int_\R \big(|y+r|^{\alpha-1}-|y|^{\alpha-1}-(\varphi_n \ast |\cdot|^{\alpha-1})(y+r)+(\varphi_n \ast |\cdot|^{\alpha-1})(y)\big)^2\frac{dr}{|r|^{1+\alpha}}\\
		& = c(\alpha)|y|^{\alpha-2} \int_{\R} \Big(|1+u|^{\alpha-1}-1-|y|^{1-\alpha}\Big(\varphi_n \ast \big(|\cdot(1+u)|^{\alpha-1}-|\cdot|^{\alpha-1}\big)\Big)(y)\Big)^2 \frac{du}{|u|^{1+\alpha}}.
	\end{align*}
	Furthermore, note that 
	\begin{align*}
		&(\varphi_n \ast |\cdot|^{\alpha-1})(y)= \int_\R \varphi_n(z) |y-z|^{\alpha-1}dz \\
		&=|y|^{\alpha-1}\int_\R |y|\varphi_n(yu) |1-u|^{\alpha-1}du \\
		&=|y|^{\alpha-1}\int_\R |y|\varphi_n(|y|u) v(1-u)du \\
		&= |y|^{\alpha-1} \int_\R \varphi^{(|y|^{-1}2^{-n})}(u)|1-u|^{\alpha-1}du
	\end{align*}
	and similarly
	\begin{align*}
		&(\varphi_n \ast |\cdot|^{\alpha-1})(y(1+u))= \int_\R \varphi_n(z) |y(1+u)-z|^{\alpha-1}dz \\
		&=|y|^{\alpha-1}\int_\R |y|\varphi_n(|y|u) v(1+u-r)dr \\
		&= |y|^{\alpha-1} \int_\R \varphi^{(|y|^{-1}2^{-n})}(u)v(1+u-r)dr.
	\end{align*}
	Thus, for all $x \in \R$ it follows that
	\begin{align*}
		&0 \leq f_n(\omega,s,x)\mathbbm{1}_{B(X_s,R)^c}(x)\\
		&= \int_{\R} \big(v(X_s-x+r)-v(y)-v_n(X_s-s+r)+v_n(y)\big)^2 \frac{dr}{|r|^{1+\alpha}} \mathbbm{1}_{B(X_s,R)^c}(x) \\
		&= \mathbbm{1}_{B(X_s,R)^c}|X_s-x|^{\alpha-2}\\
		& \qquad \times \int_\R \big(v(1+r)-v(1)-v^{(|X_s-x|^{-1}2^{-n})}(1+r)+v^{(|X_s-x|^{-1}2^{-n})}(1)\big)^2 \frac{dr}{|r|^{1+\alpha}}\\
		& \leq R^{\alpha-2}\int_\R \big(v(1+r)-v(1)-v^{(|X_s-x|^{-1}2^{-n})}(1+r)+v^{(|X_s-x|^{-1}2^{-n})}(1)\big)^2 \frac{dr}{|r|^{1+\alpha}}.
	\end{align*}
	Note that for $0<\delta < 1/4$ it follows
	\begin{equation}\label{eq: v_bound}
		\begin{split}
			&\big(v^{(\delta)}(1+r) -v^{(\delta)}(1) \big)^2\\
			&\leq \big(|r|\sup_{s\in (1,1+r)\cup (1+r,1)}|(v^{(\delta)})'(s)|\big)^2\mathbbm{1}_{|r|\leq 1/2} + \big(v^{(\delta)}(1+r) -v^{(\delta)}(1) \big)^2\mathbbm{1}_{|r|>1/2}\\
			&\leq c(\alpha)\Big(  |r|^24^{4-2\alpha}\mathbbm{1}_{|r|\leq 1/2} + (|1+r|^{2\alpha-2}+1)\mathbbm{1}_{|r|>1/2}\Big) =: \tilde{v}(r)
		\end{split}
	\end{equation}
	for all $r \in \R$. Hereby, we used the inequality $|a+b|^p \leq 2^{p-1}(|a|+|b|)$, for all $p \in (1,\infty)$ and all $a,b \in \R$ in the last step.
	Since $\tilde{v}$ is integrable with respect to $\frac{1}{|r|^{1+\alpha}}dr$
	it follows by dominated convergence that $f_n(\omega,s,x)\mathbbm{1}_{B(X_s,R)^c}$
	converges for almost all $(\omega,s,x)$ to zero. \\
	Finally it follows for $2^{-n}< R/4$ that
	\begin{equation*}
		|f_n(\omega,s,x)\mathbbm{1}_{B(X_s,R)^c}(x)|\leq c R^{\alpha-2} \int_\R \tilde{v}(x)\frac{1}{|x|^{1+\alpha}}dx =: \tilde{f}(\omega,s,x)
	\end{equation*}
	and 
	\begin{equation*}
		\bbE_{x_0}\Big[\int_0^t\int_\R \tilde{f}(\omega,s,x)|\nu|(dx)ds\Big] < \infty.
	\end{equation*}
	This demonstrates that the second term on the right-hand side of \eqref{eq:decomp_f_n} converges to zero as $n$ goes to infinity. Since $\epsilon > 0$ was arbitrary this proves the claim.
\end{proof}
Theorem ~\ref{lem: convergence} is a direct consequence of
Lemma\ref{lem:v_nconv}, Lemma~\ref{lem:drift}, and Lemma~\ref{lem:l2bound} together with Lemma~\ref{lem:Ito_tan_func}.

\section{Proof of Theorem~\ref{th:sdeilt}} \label{sec:sdeilt}
Before we prove Theorem~\ref{th:sdeilt}, we derive the following lemma concerning the sample path properties of the drift.
\begin{lemm}\label{lem:cont_finite_var}
	Let $X$ satisfy \eqref{eq:semimartingale} and
	let  $\gamma$ be its local time. Furthermore, let $\nu \in K^f_{\alpha-1}$ Then the  process
	\begin{equation}\label{eq:drift}
		V_t = \int_\R \gamma_t^x\nu(dx), \quad t \geq 0,
	\end{equation}
	is of finite variation and has a continuous version.
\end{lemm}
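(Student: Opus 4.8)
The plan is to represent $V_t = \int_\R \gamma_t^x\,\nu(dx)$ by integrating the Tanaka-type identity \eqref{eq:local_time_gamma_def} against $\nu$, term by term, and to check that each resulting summand is continuous in $t$ and of finite variation. Concretely, since $\nu$ is a finite measure of Kato class $K_{\alpha-1}^f$, Theorem~\ref{th:local_time} shows $\nu(D_T^c)=0$ for every $T>0$, so on $D_T$ we may substitute the definition of $\gamma_t^x$ and write, for $t\in[0,T]$,
\begin{equation*}
	V_t = \int_\R\Big(v(X_t-x)-v(x_0-x)\Big)\nu(dx) - \int_\R\int_0^t v'(X_\tau-x)\,dA_\tau\,\nu(dx) - \int_\R N_t^x\,\nu(dx).
\end{equation*}
The first step is to justify this Fubini-type interchange: the integrability bounds used in the proof of Theorem~\ref{th:local_time} (the estimates $\int_\R|X_s-x|^{\alpha-2}\nu(dx)\le c(M_\nu^{\alpha-1}(R)+R^{\alpha-2}\nu(\R))$ along sample paths, and the analogous $L^2$ bound on the martingale part) are exactly what is needed to apply stochastic Fubini for the $N^x$ term (as in Lemma~\ref{lem:l2bound}) and ordinary Fubini for the $dA$ term.

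The second step is to treat the three summands. \emph{(i)} The term $x\mapsto v(X_t-x)$: since $X$ is càdlàg and $v$ is continuous, and the dominating function $\int_\R v(X_s-x)\,\nu(dx)$ is bounded on compacts in $t$ uniformly (because $v(y)=c_2(\alpha)|y|^{\alpha-1}$ is locally bounded and $\nu$ is finite), the map $t\mapsto\int_\R v(X_t-x)\nu(dx)$ inherits the jump structure of $X$; but a continuous \emph{version} is what is claimed, so instead I would argue as in Lemma~\ref{lem:drift}/Proposition~\ref{lem: convergence} via the approximation $\gamma$ by $\int_0^t\varphi_n(X_\tau-x)d\tau$: each approximant $\int_\R\int_0^t\varphi_n(X_\tau-x)d\tau\,\nu(dx)=\int_0^t\nu_n(X_\tau)d\tau$ is manifestly continuous and nondecreasing-in-variation bounded by $\int_0^t|\nu_n|(X_\tau)d\tau$, and Proposition~\ref{lem: convergence} gives uniform-on-compacts convergence in probability to $V$; hence $V$ has a continuous modification. \emph{(ii)} The term $\int_\R\int_0^t v'(X_\tau-x)\,dA_\tau\,\nu(dx) = \int_0^t\big(\int_\R v'(X_\tau-x)\nu(dx)\big)dA_\tau$ (ordinary Fubini, justified by the pathwise bound above with $\alpha-2$ in place of $\alpha-1$): this is a Lebesgue–Stieltjes integral of a path-bounded integrand against $dA$, hence continuous in $t$ (as $A$ is, after passing to the continuous version from part (i), or directly since $dA$ has no atoms where it matters) and of finite variation, with total variation controlled by $\sup_\tau|\int_\R v'(X_\tau-x)\nu(dx)|\cdot|A|_T$. \emph{(iii)} The term $\int_\R N_t^x\,\nu(dx)$: by stochastic Fubini it equals a single stochastic integral $\int_0^t\int_\R\big(\int_\R(v(X_{\tau-}-x+r)-v(X_{\tau-}-x))\nu(dx)\big)(\Pi-\pi)(dr,d\tau)$, which is a purely discontinuous martingale; its finite-variation-plus-continuity is \emph{not} automatic, so here one combines it with part (i): $V = \int_\R v(X_t-x)\nu(dx) - (\text{const}) - (\text{ii}) - (\text{iii})$ and separately $V$ was just shown to have a continuous finite-variation modification via the $\varphi_n$ approximation, so no further work is needed on (iii) in isolation.

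I expect the main obstacle to be the interchange/identification in step one combined with cleanly deducing \emph{continuity} (not just the finite-variation property, which follows readily from the $\varphi_n$-approximants being uniformly of bounded variation on compacts). The cleanest route, which I would adopt, is: first establish finite variation of $V$ by noting $\big|\int_\R\gamma_{t}^x\nu(dx)-\int_\R\gamma_{s}^x\nu(dx)\big|$ along any partition is the limit (in probability, uniformly) of $\int_s^t|\nu_{n}|(X_\tau)d\tau$-type increments via Proposition~\ref{lem: convergence} applied with $|\nu|$, so $|V|_t\le \liminf_n\int_0^t|\nu_n|(X_\tau)d\tau<\infty$ a.s.; then obtain the continuous version of $V$ from the uniform-on-compacts convergence in probability (Proposition~\ref{lem: convergence}) of the continuous processes $t\mapsto\int_0^t\nu_n(X_\tau)d\tau$, since a uniform-on-compacts-in-probability limit of continuous processes admits a continuous modification. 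This avoids ever having to handle the jump term (iii) directly and reduces everything to the already-proven Proposition~\ref{lem: convergence} and Theorem~\ref{th:local_time}.
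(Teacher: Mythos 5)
Your final adopted route is essentially the paper's own proof: both rest on Proposition~\ref{lem: convergence}, extracting a subsequence along which the continuous approximants $t\mapsto\int_0^t\nu_n(X_s)\,ds$ converge uniformly on compacts a.s., which yields the continuous version, and controlling the variation through the Kato-class total variation of $\nu$ (the paper splits $\nu=\nu^+-\nu^-$ and uses monotonicity of the $\nu^\pm$-approximants, while you bound the variation via $|\nu|$ -- an equivalent device, provided you fix a common a.s.-convergent subsequence for $\nu$ and $|\nu|$). The preliminary Tanaka-decomposition discussion is unnecessary, but since you discard it, the proof stands as correct and in the same spirit as the paper's.
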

\begin{proof}
	Note,
	\begin{equation*}
		V_t = \int_\R\gamma_t^x \nu(dx)= \int_\R \gamma_t^x \nu^+(dx) - \int_\R \gamma_t^x \nu^-(dx)=: V_t^+-V_t^-, \quad t \geq 0.
	\end{equation*}
	Both $\mu^+$ and $\mu^-$ are nonnegative, finite measures in $K_{\alpha-1}$. Due to Proposition~\ref{lem: convergence} there exists a subsequence $(n_\ell)_{\ell \in \mathbb{N}}$ such that 
	\begin{equation*}
		\int_0^t \int_\R \varphi_{n_\ell}(X_s-x)\nu^+(dx)ds, \quad t \geq 0,
	\end{equation*}
	and 
	\begin{equation*}
		\int_0^t \int_\R \varphi_{n_\ell}(X_s-x)\nu^-(dx)ds, \quad t \geq 0,
	\end{equation*}
	converge uniformly on compacts $\bbP_{x_0}$-a.s. to $\tilde{A}_t^+$ and $\tilde{A}_t^-$, respectively.
	This implies that there 
	exist continuous versions of $V_t^+$ and $V_t^-$ since $C([0, \infty))$ endowed with the metric 
	\begin{equation*}
		d(f,g) = \sum_{n = 1}^\infty 2^{-n}1\wedge\|f-g\|_{[0,n], \infty}, \quad f,g \in C([0,\infty))
	\end{equation*}
	is complete. Furthermore note that for all $t\geq s \geq 0$ and all $\ell \in \mathbb{N}$
	\begin{equation*}
		\int_s^t\int_\R \varphi_{n_\ell}(X_\tau-x)\mu^+(dx)d\tau \geq 0.
	\end{equation*}
	It follows that $V^+$ is monotone increasing. Similarly, it can be shown that the same is true
	for $V^-$. The Jordan decomposition thus demonstrates that $V$ has finite variation. 
\end{proof}
Let $X$ be as in \eqref{eq:semimartingale}. Then it follows from Proposition~\ref{lem: convergence} that there exists a subsequence $n_k$ such that $\int_0^t \mu_{n_k}(X_s)ds$ converges uniformly on compacts $\bbP_{x_0}$-a.s.. Hence,  for $\bbP$-almost every $\omega\in \Omega$ we have
\begin{equation}\label{eq:subseq}
	\begin{split}
		&\sup_{k}\int_0^t |\mu_{n_k}(X_s) |ds \leq  \sup_{k}\int_0^t \mu_{n_k}^+(X_s)ds + \sup_{k}\int_0^t \mu_{n_k}^-(X_s)ds \\
		&\leq \sup_k \Big|\int_0^t \mu_{n_k}^+(X_s)ds - A_t^+\Big| +\sup_k \Big|\int_0^t \mu_{n_k}^-(X_s)ds - A_t^-\Big| + A_t^++A_t^- < \infty.
	\end{split}
\end{equation}
\begin{proof}[Proof of Theorem~\ref{th:sdeilt}]
	First, we prove that every weak solution to \eqref{eq:singularSDE} is a weak solution to \eqref{eq:SDE}.  Let $X = x_0+L+A$ be a weak solution of \eqref{eq:singularSDE}.  By Theorem~\ref{th:sde_exist}
	the process $(A_t)_{t \geq 0}$
	is a continuous process of finite variation.
	Then it follows by Proposition~\ref{lem: convergence} that $A_t = \int_\R \gamma_t^x\mu(dx)$ almost surely.
	Thus $X$ is a solution of \eqref{eq:SDE}.
	Now we prove the other direction. 
	Suppose that $X$ is a weak solution of \eqref{eq:SDE} . Then it follows by  Proposition~\ref{lem: convergence}, Lemma~\ref{lem:cont_finite_var} and \eqref{eq:subseq},
	that $X$ is a weak solution of \eqref{eq:singularSDE}.  This means, that every solution to \eqref{eq:SDE} is a solution 
	tu \eqref{eq:singularSDE} and vice versa and thus proves \ref{itm:equiv}. \\
	Now we consider the case where $\mu \in K_{\alpha-1-\epsilon}$ for some $\epsilon >0$. Let $X$ be a solution of 
	\eqref{eq:singularSDE} and thus also of \eqref{eq:SDE}.
	Due to Proposition~\ref{lem:density} there exists a jointly continuous occupation density $\ell_t^x$ of $X$.
	Thus, it follows that for any $t \geq 0$, as $n \rightarrow \infty$, 
	\begin{equation*}
		\int_\R \ell_t^x \mu_n(x)dx \rightarrow \int_\R \ell_t^x \mu(dx), \quad \mathbb{P}_{x_0}\text{- a.s.,}
	\end{equation*}
	by the weak convergence of $\mu_n$ to $\mu$. This proves that $A_t = \int_\R \ell_t^x \mu(dx)$ because $\ell_t^x$
	is an occupation density.
	Furthermore, it follows  for almost all $\omega$ and all $\upsilon \in S_{K_0}$ that
	\begin{equation*}
		\int_\R \int_0^t \varphi_n(X_s-x)ds\upsilon(dx)=	\int_0^t \upsilon_n(X_s)ds = \int_\R  \ell_t^x \upsilon_n(x)dx.
	\end{equation*}
	As $n \rightarrow \infty$, the right-hand side converges for all $\omega$ such that $\ell_t^x$ is jointly continuous. 
	By Proposition~\ref{lem: convergence} it follows
	that for all $t \geq 0$ we have
	\begin{equation*}
		\int_\R \gamma_t^x \upsilon(dx) = \int_\R \ell_t^x \upsilon(dx)  \quad\bbP_{x_0}{-a.s.} \quad  \forall \upsilon \in S_{K_0}.
	\end{equation*} 
	Fix arbitrary $t\geq 0$ and consider an arbitrary measure $\upsilon \in S_{K_0}$. Then it follows that 
	\begin{equation*}
		\int_\R \gamma_t^x f(x)\upsilon(dx) = \int_\R \ell_t^x f(x)\upsilon(dx)  \quad\bbP_{x_0}{-a.s.,} 
	\end{equation*}
	for all $f \in \mathcal{C}_c$ since $f(x)\upsilon(dx) \in S_{K_0}$. 
	Consider a countable dense subset 
	\begin{equation*}
		(f_m)_{m \in \mathbb{N}} \subset \mathcal{C}_c.
	\end{equation*}
	For each $m \in \mathbb{N}$ there exists a measurable set $N_m \subset \Omega$ with $\bbP(N_m) = 0$ such that for all $\omega \in N_m^c$
	\begin{equation*}
		\int_\R \gamma_t^x f_m(x)\upsilon(dx) = \int_\R \ell_t^x f_m(x)\upsilon(dx).
	\end{equation*}
	In particular 
	\begin{equation*}
		\int_\R \gamma_t^x f_m(x)\upsilon(dx) = \int_\R \ell_t^x f_m(x)\upsilon(dx), \quad \forall m \in \mathbb{N},  \quad \forall \omega \in N^c,
	\end{equation*}
	where $N = \cup_{m \in \mathbb{N}} N_m$ with $\bbP(N) =0$. This implies that $\gamma_x^t = \ell_x^t$, $\bbP_{x_0}$-a.s.
	and  for $\upsilon$-almost every $x$. Since $\upsilon \in S_{K_0}$ was arbitrary this yields 
	\begin{equation*}
		\gamma_x^t = \ell_x^t,  \quad \mathbb{P}_{x_0}\text{-a.e.}, \quad \upsilon\text{-a.s. } \forall \upsilon \in S_{K_0}.
	\end{equation*}
	Consider the set 
	\begin{equation*}
		\tilde{N}=\{x\in \R \colon \gamma_t^x \neq \ell_t^x \text{ with positive probability}\} 
	\end{equation*}
	and suppose 
	$\mathrm{Cap}_{(\alpha-1)/2}(\tilde{N})>0$. By Theorem~\ref{th:capacity_condition} this implies that there exists $\upsilon \in S_{K_0}$ such that $\upsilon(\tilde{N})>0$.
	This is a contradiction and thus
	it follows that $\gamma_t^x = \ell_t^x$, $\bbP$-a.s. for all $x\in\R\backslash \tilde{N}$, for some $\tilde{N}$ with $\mathrm{Cap}_ {(\alpha-1)/2}(\tilde{N}) =0$. This finishes the prove of \ref{itm:density}.
\end{proof}

\section{Sharpness of results}\label{sec:sharp}
In this section, we investigate whether Theorem~\ref{th:sde_exist}  is sharp.
Consider the (formal) stochastic equation
\begin{equation}\label{eq:SDE_non_exist}
	X_t =- \int_0^t C_\alpha\sgn(X_s)|X_s|^{1-\alpha}\mathbbm{1}_{X_s\neq 0}ds+ L_t = A_t+L_t, \quad t \geq 0,
\end{equation}
where $C_\alpha$ is a positive constant whose exact value will be fixed at a later point.
Assume that $X$ is a solution to \eqref{eq:SDE_non_exist} in the sense of Definition~\ref{def:sing_sol}, such that $X_t = A_t+L_t$, $t \geq 0$, where 
$A_t$ is a continuous process of finite variation. 
Then it follows by Corollary~\ref{cor:occ_dens_lev} that the local time $\gamma$ as defined in Definition~\ref{def:loc_time} is an occupation density.  
Now, since $X$ has an occupation density $\ell$, we can use the monotone convergence theorem to
rewrite the solution $X$ as 
\begin{equation*}
	X_t = - \int_\R C_\alpha \sgn(x) |x|^{1-\alpha}\mathbbm{1}_{x\neq 0} \ell_t^xdx + L_t = - \int_\R C_\alpha \sgn(x) |x|^{1-\alpha}\ell_t^xdx + L_t, \quad t \geq 0.
\end{equation*}
Now, we use a similar argument as in
\cite[Proposition 4.4]{Salminen07}.  That is, we use that the following identity holds for  all $x\in \R$ outside of set of Lebesgue measure zero
\begin{equation}\label{eq:Tan_counter}
	v(X_t-x)- v(x) = N_t^x + \int_0^s v'(X_s-x)dA_s + \ell_t^x, \quad t \geq 0
\end{equation}
and we integrate \eqref{eq:Tan_counter} with respect to $\frac{dz}{|z|^{\alpha-\zeta}}$ for 
some fixed $\alpha-1<\zeta < \alpha/2$. 
Using the occupation time formula  and that $v(x) = c_2(\alpha)|x|^{\alpha-1}$ for all $x \in \R$ yields
\begin{equation}\label{eq:integrated_loc_time}
	\begin{split}
		&\int_\R \big(|X_t-z|^{\alpha-1}-|z|^{\alpha-1}\big)\frac{dz}{|z|^{\alpha-\zeta}}\\
		&= c_2(\alpha)^{-1}\Big(\int_\R N_t^z \frac{dz}{|z|^{\alpha-\zeta}} +\int_0^t |X_s|^{\zeta-\alpha}ds +\int_\R \int_0^t v'(X_s-z)dA_s\frac{dz}{|z|^{\alpha-\zeta}}\Big),
	\end{split}
\end{equation}
for all $t \geq 0$.
Note that $v'(x) = \tilde{c}(\alpha)\sgn(x)|x|^{\alpha-2}$ for all $x\in \R$, where $\tilde{c}(\alpha)= (\alpha-1)c_2(\alpha)$ is a fixed constant. Using this,
the last term on the right-hand side of \eqref{eq:integrated_loc_time} equals to
\begin{equation}\label{eq:right-hand}
	\begin{split}
		&\int_\R \int_0^t v'(X_s-z)dA_s\frac{dz}{|z|^{\alpha-\zeta}}\\
		& =-\tilde{c}(\alpha)C_\alpha\int_\R \int_\R \sgn(x-z)|x-z|^{\alpha-2}\sgn(x)|x|^{1-\alpha}\ell_t^xdx \frac{dz}{|z|^{\alpha-\zeta}} \\
		& = -\tilde{c}(\alpha)C_\alpha\int_\R \sgn(u-1)\sgn(u)|u-1|^{\alpha-2}|u|^{1-\alpha} \int_\R |z|^{\zeta-\alpha} \ell_t^{zu}ds du\\
		& =-\tilde{c}(\alpha)C_\alpha \int_\R \sgn(u-1)\sgn(u)|u-1|^{\alpha-2}|u|^{-\zeta} \int_\R |w|^{\zeta-\alpha} \ell_t^wdwdu\\
		& =-\tilde{c}(\alpha)C_\alpha \int_\R \sgn(u-1)\sgn(u)|u-1|^{\alpha-2}|u|^{-\zeta}du \int_0^t |X_s|^{\zeta-\alpha}ds,
	\end{split}
\end{equation}
where we used the definition of $A$ in the first equality, substitution and Fubini's theorem in the second and third equality.
Now, the idea is to choose $C_\alpha$ in such a manner that the last two terms on the right-hand side of \eqref{eq:integrated_loc_time}
cancel out. To this end we need to prove
\begin{equation}\label{eq:constant_choose}
	0<\int_\R \sgn(u-1)\sgn(u)|u-1|^{\alpha-2}|u|^{-\zeta}du <\infty.
\end{equation}
The second inequality follows from the fact that $-1<\alpha-2$, $\zeta<1$
and $\alpha-2-\zeta< -1$. 
Now, let us derive the first inequality in \eqref{eq:constant_choose}:
\begin{align*}
	&\int_\R \sgn(u-1)\sgn(u)|u-1|^{\alpha-2}|u|^{-\zeta}du \\
	& =\int_1^\infty |u-1|^{\alpha-2}|u|^{-\zeta}du + \int_{-\infty}^0|u-1|^{\alpha-2}|u|^{-\zeta}du
	- \int_0^1|u-1|^{\alpha-2}|u|^{-\zeta}du \\
	& = \int_1^\infty |u-1|^{\alpha-2}|u|^{-\gamma}du + \int_1^\infty|u|^{\alpha-2}|u-1|^{-\zeta}du
	- \int_0^1|u-1|^{\alpha-2}|u|^{-\zeta}du \\
	& = \int_0^1|1-u|^{\alpha-2}|u|^{\zeta-\alpha}du +\int_0^1|u|^{\zeta-\alpha}|1-u|^{-\zeta}du -\int_0^1|u-1|^{\alpha-2}|u|^{-\zeta}du \\
	&= \int_{0}^1|1-u|^{\alpha-2}|u|^{\zeta-\alpha}(1-|u|^{\alpha-2\zeta})du + \int_0^1|u|^{\zeta-\alpha}|1-u|^{-\zeta}du >0,
\end{align*}
where we used the substitution $w= u^{-1}$ in the third equality and $\alpha-2\zeta>0$ in the last inequality.
Thus we can choose
\begin{equation}\label{eq:calpha_choice}
	C_\alpha = \tilde{c}(\alpha)^{-1}\Big(\int_\R \sgn(u-1)\sgn(u)|u-1|^{\alpha-2}|u|^{-\gamma}du\Big)^{-1}.
\end{equation}
By a scaling argument the left hand side of \eqref{eq:integrated_loc_time} is equal to 
\begin{equation}\label{eq:left-hand}
	\hat{c}(\alpha)|X_t|^\zeta
\end{equation}
for some constant $\hat{c}(\alpha)>0$.
Thus with $C_\alpha$ as in \eqref{eq:calpha_choice} we use \eqref{eq:left-hand}, \eqref{eq:right-hand} and \eqref{eq:integrated_loc_time} to get
\begin{equation}\label{eq:nonexist}
	|X_t|^\zeta =c(\alpha)M^\zeta_t,
	\quad t \geq 0,
\end{equation}
where $M^\zeta_t = \int_\R N_t^z \frac{dz}{|z|^{\alpha-\gamma}} $ is a local martingale and $c(\alpha)>0$ is a constant depending on $\alpha$.
It thus  follows that $(|X_t|^\gamma)_{t \geq 0}$ is a 
non-negative, local martingale with $\bbE[|X_0|^\gamma] = 0$. This implies that $X_t \equiv 0$
a.s. for all $t \geq 0$ and can not be a solution to  \eqref{eq:nonexist}.\\
The discussion demonstrates that \cite[Theorem 1.2]{Chen16} is sharp, since
\begin{equation*}
	\lim_{r \rightarrow 0}\sup_{x\in \R}\int_{B(x,r)} |x-y|^{\eta -1}|y|^{1-\alpha}dy = 0, \forall \eta >\alpha-1,
\end{equation*}
but 
\begin{equation*}
	\lim_{r \rightarrow 0}\sup_{x\in \R}\int_{B(x,r)} |x-y|^{\alpha-2}|y|^{1-\alpha}dy = \infty,
\end{equation*}
and \eqref{eq:nonexist} does not admit a solution. Since Theorem~\ref{th:sde_exist} is formulated in terms of finite signed measures, the example \eqref{eq:nonexist} does not immediately fit in its setting. This is due to the fact that 
\begin{equation*}
	\int_0^\infty|x|^{1-\alpha}dx = \infty, \quad 	\int_{-\infty}^0|x|^{1-\alpha}dx = -\infty,
\end{equation*}
which means, that $\sgn(x)|x|^{1-\alpha}\mathbbm{1}_{x \neq 0}dx$ is not a well defined signed measure. 
However, if we consider
\begin{equation*}
	\mu(dx) =-C_\alpha\mathbbm{1}_{x\in[-1,1]} \sgn(x)|x|^{1-\alpha}\mathbbm{1}_{x \neq 0}dx, 
\end{equation*} 
and 
\begin{equation}\label{eq:nonexists2}
	Y_t = \int_0^t \mu(Y_s)ds + L_t, \quad t \geq 0,
\end{equation}
where we again use the notion of solution according to Definition~\ref{def:sing_sol}, then $Y = \tilde{A} +L$ and $X =A+L$
coincide for all $t <T$, where $T = \inf\{t \geq 0 \colon |Y_t|\geq 1\}$.  Note that $\tilde{A}$ is almost surely continuous and thus $T>0$ almost surely.
Since there exists no solution to \eqref{eq:nonexist}, this implies that there also exists no solution to \eqref{eq:nonexists2} on $[0,T]$.
Thus, we can conclude, that Theorem 
~\ref{th:sde_exist} is sharp as well. 
\appendix
\addtocontents{toc}{\protect\setcounter{tocdepth}{0}}
\section{}
\subsection{Proof of Lemma~\ref{lem:KatoBesov}}\label{sec:appendic_bk}

\begin{proof}[Proof of Lemma~\ref{lem:KatoBesov}]
	The following characterization of Kato class will be useful throughout this proof. For a signed Radon measure $\nu$ on $\R$, define
	\begin{equation*}
		N_\nu^\eta(t) := \sup_{x \in \R}\int_0^t\int_R\tau^{\frac{\eta}{2}-1}h(\tau,x,y)|\nu|(dy)d\tau,
	\end{equation*}
	where $h(\tau,x,y) = \frac{1}{\sqrt{2 \pi \tau}}\ee^{-\frac{|x-y|^2}{2\tau}}$ for $\tau \geq 0 $ and $x,y \in \R$.
	Then $\nu \in K_{\eta}$, $\eta \in (0,1)$ if and only if 
	\begin{equation}\label{eq:equiv_Kato}
		\lim_{t \rightarrow 0}N_\nu^{\eta}(t) = 0,
	\end{equation}
	which follows by the proof of \cite[Proposition 2.3]{Kim06}.
	Thus in order to prove \ref{itm:fin_m_bes} it is enough to prove \eqref{eq:equiv_Kato}.  Recall the characterization \eqref{eq:besov_char}.
	We see that for a  nonnegative measure $\nu \in \mathcal{C}^{-s}$ with $ s \leq \eta-\epsilon$ for some $\epsilon >0$
	the following holds for $t \leq 1$ that
	\begin{equation}\label{eq:n_nu_est}
		\begin{split}
			N_\nu^{\eta} (t)&= \sup_{x \in \R}\int_0^t\int_R\tau^{\frac{\eta}{2}-1}h(\tau,x,y)|\nu|(dy)d\tau\\
			& \leq \int_0^t\tau^{\frac{\eta}{2}-1}\sup_{x \in \R}\int_Rh(\tau,x,y)\nu(dy)d\tau\\
			& \leq \int_0^t \tau^{-1+\epsilon/2}d\tau\sup_{\tau\in (0, 1] }\|\tau^{\frac{\eta-\epsilon}{2}}e^{\tau\Delta}\nu\|_{L^\infty}.
		\end{split}
	\end{equation}
	This demonstrates $ N^\eta_\nu(t) \rightarrow 0$ as $t \rightarrow 0$ and thus we proved \ref{itm:fin_m_bes}. \\
	Next, we prove inclusion \ref{itm:kato_bes_non}.
	We see by using \eqref{eq:besov_char} again, that
	for a  measure $\nu \in K_{\eta}^f$ it holds for some fixed $R >2$ that
	\begin{equation*}
		\begin{split}
			&\||\nu|\|_{\mathcal{C}^{-s}}\\
			&\leq C\sup_{t \in (0,1]}t^{\frac{s}{2}}\sup_{x \in \R}\int_\R h(t,x,y)|\nu|(dy)dt\\
			& \leq C(\nu, R) \Big(1+  \sup_{t \in (0,1])} t^{\frac{s}{2}}\sup_{x \in \R}\int_{|x-y| \in B(0,R)} t^{-1/2}\ee^{-\frac{|x-y|^2}{4t}} |\nu|(dy) \Big)\\
			& \leq C(\nu , R)\Big(1 + \sup_{t \in (0,1]} t^{\frac{s}{2}}\sup_{x \in \R}\int_{|x-y| \in B(0,R)} t^{-1/2}\ee^{-\frac{|x-y|^2}{4t}}|x-y|^{1-\eta}|x-y|^{-1+\eta} |\nu|(dy)\Big).
		\end{split}
	\end{equation*}
	We note that 
	\begin{equation*}
		\sup_{z \in B(0,R)}\ee^{-\frac{|z|^2}{4t}}|z|^{1-\eta} \leq C(\nu, R)t^{\frac{1-\eta}{2}},
	\end{equation*}
	since the map $z \mapsto \ee^{-\frac{|z|^2}{4t}}|z|^{1-\eta}$ with $t \in (0,1]$ attains its maximum in the interval $[0,R]$ at $z = \sqrt{2t(1-\eta)}$. Thus
	\begin{equation}\label{eq:Kato_totvar_in _Bes}
		\||\nu|\|_{\mathcal{C}^{-s}} \leq C(\nu, R)\Big( 1 + M_\nu^\eta(R) \Big).
	\end{equation}
	This means that 
	$|\nu| \in \mathcal{C}^{-s}$ and thus \ref{itm:kato_bes_non} holds. 
\end{proof}
\subsection{Proof of Lemma~\ref{lem:conv_A} and Lemma~\ref{lem:deriv_u_lam_conv_append}}\label{appendix}
\begin{proof}[Proof of Lemma~\ref{lem:conv_A}]
	The proof follows  the proof of \cite[Proposition 5.6]{Kim14} line by line, we present it here for the sake of completeness. 
	Denote 
	\begin{equation*}
		B_t^n = \int_0^t \ee^{-\lambda s}dA_s^\nu-\int_0^t \ee^{-\lambda s}\nu_n(X_s)ds, \quad t \geq 0.
	\end{equation*}
	By Lemma~\ref{lem:pot_conv} it follows that $\lim_{n \rightarrow\infty}\sup_{x\in \R }\big|\bbE_x [B^n_\infty]\big| = 0$ and
	thus it follows by \cite[Lemma 3.10]{Bass03} that $\lim_{n \rightarrow \infty}\sup_{x\in \R}\bbE_x\big[\sup_{t \geq 0}|B_t^n|^2\big] =0$.
	This yields
	\begin{align*}
		&\lim_{n \rightarrow \infty}\sup_{x\in \R}\bbE\Big[\sup_{0\leq s\leq t}\Big|A_s^\nu- \int_0^s\nu_n(X_\tau)d\tau\Big|^2\Big] \\
		&= \lim_{n \rightarrow \infty}\sup_{x\in \R}\bbE\Big[\sup_{0\leq s\leq t}\Big|\int_0^s\ee^{\lambda \tau}dB^n_\tau\Big|^2\Big] \\
		&= \lim_{n \rightarrow \infty}\sup_{x\in \R} \bbE_x\Big[\sup_{0 \leq s \leq t}\Big|\ee^{\lambda t}B^n_s -\int_0^s \lambda \ee^{\lambda \tau}B^n_\tau d\tau\Big|^2\Big] = 0,
	\end{align*}
	by integration by parts. This finishes the proof.
\end{proof}

\begin{proof}[Proof of Lemma~\ref{lem:deriv_u_lam_conv_append}]
	The proof uses similar ideas as the proof of \cite[Lemma 3.5]{Kim14}. The main differences are that we need to consider $\lambda >0$, because the zero potential does not exists in dimension $d = 1$, and the fact that we consider finite measures, whereas \cite[Lemma 3.5]{Kim14} is restricted to compactly supported measures. \\ 
	Let $\epsilon > 0$ be arbitrary and $m$ sufficiently large. Then  there exist $r>0$ sufficiently small and $R > 0$ sufficiently large 
	such that $R >r> 2^{-m}$ and
	\begin{align*}
		&\sup_{x\in \R} \Big|\frac{\partial}{\partial x}U_\lambda \nu_m(x)  - \frac{\partial}{\partial x}U_\lambda \nu(x) \Big| \\
		& \leq \sup_{x \in \R} \Big| \int_{B(x,r)} \frac{\partial}{\partial x}u_\lambda(x,y) \nu_m(dy)\Big| \\
		& \quad + \sup_{x \in \R} \Big| \int_{B(x,r)} \frac{\partial}{\partial x}u_\lambda(x,y) \nu(dy)\Big| \\
		& \quad + \sup_{x \in \R} \Big| \int_{B(x,R)^c} \frac{\partial}{\partial x}u_\lambda(x,y) \nu_m(dy)\Big| \\
		&\quad + \sup_{x \in \R} \Big| \int_{B(x,R)^c} \frac{\partial}{\partial x}u_\lambda(x,y) \nu(dy)\Big| \\
		& \quad +\sup_{x \in \R }\Big|\int_{B(x,R)\backslash B(x,r)}\frac{\partial}{\partial x}u_\lambda(x,y) (\nu_m-\nu)(dy)\Big| \\
		& =: I_1+ I_2+I_3+I_4 +I_5.
	\end{align*}
	Hereby we used the Leibniz rule to exchange the derivative and the integral since $\frac{\partial}{\partial x}u_\lambda(x,y)$ is integrable with respect to $\nu$.
	Since $\nu$ is in $K_{\alpha-1}$ it follows by using \eqref{eq:lamPot_grad}  and \cite[Lemma 2.5]{Kim14} that 
	\begin{equation*}
		I_1 \leq c(\alpha)M^{\alpha-1}_\nu(r) \leq \epsilon/4
	\end{equation*}
	and hence $I_2 \leq \epsilon/4$. Again, using \eqref{eq:lamPot_grad} and the fact that $\nu$ is finite and the fact that $|\nu_n|(\R) \leq 2 |\nu|(\R)$ implies
	\begin{equation*}
		I_3 \leq c(\alpha,\mu) \lambda^{-2}R^{-2-\alpha} \leq \epsilon/4
	\end{equation*}
	and thus also $I_4 \leq \epsilon /4$.
	This means it remains to show the convergence to zero of $I_5$. 
	\begin{align*}
		&\sup_{x \in \R }\Big|\int_{B(x,R)\backslash B(x,r)}\frac{\partial}{\partial x}u_\lambda(x,y) (\nu_m-\nu)(dy)\Big|\\
		& =  \sup_{x \in \R }\Big|\int_{B(x,R)\backslash B(x,r)}\frac{\partial}{\partial x}u_\lambda(x,y)\int_\R \varphi_m(y-z) \nu(dz) dy\\
		& \qquad-  \int_{B(x,R)\backslash B(x,r)}\frac{\partial}{\partial x}u_\lambda(x,y) \nu(dy) \Big| \\
		& =\sup_{x \in \R }\Big| \ \int_{B(x,R)\backslash B(x,r)} \int_\R\frac{\partial}{\partial x}u_\lambda(x,y) \varphi_m(y-z)dy \nu(dz)- \int_{B(x,R)\backslash B(x,r)}\frac{\partial}{\partial x}u_\lambda(x,z)\nu(dz)\Big|.
	\end{align*}
	Denoting $\mathbbm{1}_{B(x,R)\backslash B(x,r)}(y)\frac{\partial}{\partial x}u_\lambda(x,y) = f_\lambda(x,y)$
	leads to
	\begin{align*}
		&\sup_{x \in \R }\Big|\int_{B(x,R)\backslash B(x,r)}\frac{\partial}{\partial x}u_\lambda(x,y) (\nu_m-\nu)(dy)\Big|\\
		& = \sup_{x\in \R }\Big|\int_{B(x,R)\backslash B(x,r)}\int_\R \frac{\partial}{\partial x}u_\lambda(x,y)\varphi_m(y-z)dy\nu(dz)\\
		&\hspace{1cm}- \int_{B(x,R)\backslash B(x,r)}\frac{\partial}{\partial x}u_\lambda(x,y) \nu(dy)\Big|\\
		& = \sup_{x \in \R }\Big|\int_\R \big(\varphi_m \ast f_\lambda(x, \cdot)\big)(y) - f_\lambda(x,y) \nu(dy)\Big|.
	\end{align*}
	In fact, $u_\lambda(x,y)$ only depends on $|x-y|$ and by a slight abuse of notation
	\begin{equation*}
		f_\lambda(x,y) = \sgn(x-y) \mathbbm{1}_{r \leq |x-y|<R}u_\lambda'(|x-y|)
	\end{equation*}
	and
	\begin{equation*}
		\big(\varphi_m \ast f_\lambda(x, \cdot)\big)(y) = \int_\R \varphi_m(y-z) \sgn(x-z) \mathbbm{1}_{r \leq |x-z|<R} u_\lambda'(|x-z|)dz.	
	\end{equation*}
	Using the above, we obtain
	\begin{align*}
		&\sup_{x \in \R }\Big|\int_\R \big(\varphi_m \ast f_\lambda(x,\cdot)\big)(y) - f_\lambda(x,y) \nu(dy)\Big| \\
		& = \sup_{x\in \R}\Big|\int_\R \int_\R \varphi_m(z)\big(\sgn(x-y+z)\mathbbm{1}_{r\leq |x-y+z| < R}u_\lambda'(|x-y+z|) \\
		& \hspace{6cm} -\sgn(x-y)\mathbbm{1}_{r \leq |x-y|<R}u_\lambda'(|x-y|)\big)dz\nu(dy)\Big| \\
		& \leq \sup_{x\in \R}\Big|\int_\R \int_\R \varphi_m(z)\big(\sgn(x-y+z)\mathbbm{1}_{r \leq |x-y| }\mathbbm{1}_{r\leq |x-y+z| < R}u_\lambda'(|x-y+z|) \\
		& \hspace{6cm} -\sgn(x-y)\mathbbm{1}_{r \leq |x-y|<R}u_\lambda'(|x-y|)\big)dz\nu(dy)\Big| \\
		& \quad + \sup_{x\in \R}\Big|\int_\R \int_\R \varphi_m(z)\big(\sgn(x-y+z)\mathbbm{1}_{ |x-y| < r}\mathbbm{1}_{r\leq |x-y+z| < R}u_\lambda'(|x-y+z|)dz \nu(dy)\Big|\\
		& = J_1 +J_2\\
	\end{align*}
	First we estimate the second term. Note that 
	\begin{equation*}
		D_{z,x} = \{ y \colon |x-y|< r \leq |x-y+z| \} \subseteq  B(x-r, 2|r|)\cup B(x+r,2|r|), \text{ for } |z|< 2^{-m}.
	\end{equation*}
	Thus, using \eqref{eq:lamPot_grad} , we arrive at
	\begin{align*}
		&J_2 \leq c\sup_{x\in \R }\int_\R\varphi_m(z) \int_\R \mathbbm{1}_{ |x-y| < r}\mathbbm{1}_{r\leq |x-y+z| < R}|x-z-y|^{\alpha-2}|\nu|(dy)dz\\
		& \leq c \sup_{x\in \R}\int_\R\varphi_m(z) \int_{D_{z,x}}|x-y+z|^{\alpha-2}|\nu|(dy)dz \leq c M_\nu(4r).
	\end{align*}
	Now we turn to the first term.  Note that 
	\begin{equation}\label{eq:signs}
		\sgn(x-y) = \sgn(x-y+z), \quad \text{ for }r \leq |x-y|, |x-y-z|, |z| \leq 2^{-m}.
	\end{equation}
	Furthermore, note that
	\begin{align}\label{eq:Dtilde}
		\{y \colon |x-y+z| < r \leq |x-y|\} \subset B(x-r,2|r|)\cup B(x+r,2|r|).
	\end{align}
	By using \eqref{eq:signs} and \eqref{eq:Dtilde} we arrive at 
	\begin{align*}
		&J_1 \leq \sup_{x\in \R}\Big|\int_\R \int_\R \varphi_m(z)\big(\mathbbm{1}_{r \leq |x-y| }\mathbbm{1}_{r\leq |x-y+z| < R}u_\lambda'(|x-y+z|) \\
		& \hspace{6cm} -\mathbbm{1}_{r \leq |x-y|<R}u_\lambda'(|x-y|)\big)dz\nu(dy)\Big| \\
		&\leq \sup_{x\in \R}\Big|\int_\R \int_\R \varphi_m(z)\big(\mathbbm{1}_{r \leq |x-y| <R}\mathbbm{1}_{r\leq |x-y+z| < R}u_\lambda'(|x-y+z|) \\
		& \hspace{6cm} -\mathbbm{1}_{r \leq |x-y|<R}\mathbbm{1}_{r\leq |x-y+z| < R}u_\lambda'(|x-y|)\big)dz\nu(dy)\Big|\\
		& + \sup_{x\in \R}\int_\R \int_\R \varphi_m(z)\mathbbm{1}_{|x-y|>R}|u'_\lambda(|x-y+z|)||\nu(dy)|dz\\
		&+ \sup_{x\in \R}\int_\R \int_\R \varphi_m(z)\mathbbm{1}_{|x-y+z|>R}|u'_\lambda(|x-y|)||\nu(dy)|dz\\
		&+\sup_{x\in \R}\int_\R \int_\R \varphi_m(z)\mathbbm{1}_{r\leq |x-y|<R}\mathbbm{1}_{|x-y+z|<r}|u'_\lambda(|x-y+z|)||\nu(dy)|dz\\
		& \leq c \Big( \sup_{w \in B(0,R)\backslash B(0,r)}\sup_{|z|\leq 2^{-m}}|u_\lambda'(w+z)-u_\lambda'(w)| +M_\nu(4r)+ (R-2^{-m})^{\alpha-2}  \Big).
	\end{align*}
	Due to the uniform continuity of $\frac{\partial}{\partial x}u_\lambda(x)$ on compacts the convergence of $J_1$ to zero follows and thus the convergence of $I_5$ to zero. This completes the proof.
\end{proof}

\subsection{Useful Estimates}\label{append3}
\begin{lemm}\label{lem:grad_est_pot_dens}
	Let $\mu \in K_{\alpha-1}$ be a finite measure and $\mu_m$, $m \in \mathbbm{N}$ its mollification. 
	It holds for all $x,y \in \R$ that 
	\begin{equation*}
		\Big| u_\lambda \ast \big(\frac{\partial}{\partial x}u_\lambda \mu\big)^{\ast k-1} \Big|(x,y) \leq C(\lambda, \mu)^{k-1}|u_\lambda(x,y)|,
	\end{equation*}
	and 
	\begin{equation*}
		\Big| u_\lambda \ast \big(\frac{\partial}{\partial x}u_\lambda \mu_m\big)^{\ast k-1} \Big|(x,y) \leq C(\lambda, \mu)^{k-1}|u_\lambda(x,y)|
	\end{equation*}
	where $C(\lambda,\mu) \rightarrow 0$ as $\lambda \rightarrow \infty$.
\end{lemm}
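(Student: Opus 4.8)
The plan is to deduce the whole statement from a single pointwise ``$3u_\lambda$''--type inequality for the kernels $u_\lambda$ and $\frac{\partial}{\partial x}u_\lambda$, and then to obtain the $k$--fold bound by an elementary induction on $k$. This is exactly the scheme by which \cite{Bogdan07} (pp.~191--192, referenced already in the proof of Lemma~\ref{lem:pot_mu_dens}) builds the perturbed kernel; the only new ingredient is that the drift is now a general finite Kato measure rather than a bounded (or compactly supported) function, so the pointwise control of the drift used there has to be replaced throughout by the Kato--class estimates of Proposition~\ref{prop:grad_meas} and Proposition~\ref{prop:total_var_Nmu}. Note that $u_\lambda>0$ by \eqref{eq:lamPot_est_bod}, so $|u_\lambda|=u_\lambda$.

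\emph{Step 1 (key pointwise estimate).} I would first prove that there is a function $C(\lambda,\mu)$ with $C(\lambda,\mu)\to 0$ as $\lambda\to\infty$ such that
\begin{equation*}
	\int_\R u_\lambda(x,z)\,\Bigl|\frac{\partial}{\partial z}u_\lambda(z,y)\Bigr|\,|\mu|(dz)\ \le\ C(\lambda,\mu)\,u_\lambda(x,y),\qquad\forall\,x,y\in\R,
\end{equation*}
and likewise with $|\mu|$ replaced by $|\mu_m|$, with the \emph{same} constant $C(\lambda,\mu)$ (uniform in $m$). To this end, fix $x\neq y$, set $R:=|x-y|$, and split $\R$ into the disjoint balls $B(x,R/2)$, $B(y,R/2)$ and the complement $G$. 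On $B(y,R/2)$ one has $|z-x|\asymp R$, hence $u_\lambda(x,z)\le c\,u_\lambda(x,y)$ by the two--sided bound \eqref{eq:lamPot_est_bod} and its doubling property, while $\int_{B(y,R/2)}\bigl|\frac{\partial}{\partial z}u_\lambda(z,y)\bigr|\,|\mu|(dz)\le\mathcal{R}(\lambda,\mu)$ by Proposition~\ref{prop:grad_meas}(1) together with the symmetry $\bigl|\frac{\partial}{\partial z}u_\lambda(z,y)\bigr|=\bigl|\frac{\partial}{\partial y}u_\lambda(y,z)\bigr|$; this piece contributes at most $c\,\mathcal{R}(\lambda,\mu)\,u_\lambda(x,y)$. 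On $B(x,R/2)$ one has $|z-y|\asymp R$, so by \eqref{eq:lamPot_grad} $\bigl|\frac{\partial}{\partial z}u_\lambda(z,y)\bigr|$ is controlled by its value at distance $R$, and $\int_{B(x,R/2)}u_\lambda(x,z)\,|\mu|(dz)$ is estimated by splitting the ball at the small scale $\lambda^{-1/\alpha}$ (as in the proof of Proposition~\ref{prop:grad_meas}) and using \eqref{eq:lamPot_est_bod}; matching the resulting powers of $R$ against the lower bound in \eqref{eq:lamPot_est_bod} in the ranges $R\lesssim\lambda^{-1/\alpha}$, $\lambda^{-1/\alpha}\lesssim R\lesssim 1$ and $R\gtrsim 1$ yields a contribution bounded by $c\bigl(M_\mu^{\alpha-1}(\lambda^{-1/\alpha})+\lambda^{-1/\alpha}M_\mu^{\alpha-1}(1)+\lambda^{1/\alpha-1}|\mu|(\R)\bigr)u_\lambda(x,y)$. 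On $G$ both $|z-x|$ and $|z-y|$ are $\gtrsim R$ and comparable, so the product $u_\lambda(x,z)\bigl|\frac{\partial}{\partial z}u_\lambda(z,y)\bigr|$ decays fast and, using only the finiteness of $\mu$ and \eqref{eq:lamPot_bound}, the contribution is again $\le c\,\lambda^{1/\alpha-1}|\mu|(\R)\,u_\lambda(x,y)$. Collecting the three pieces gives the claim with $C(\lambda,\mu)$ a fixed multiple of $\mathcal{R}(\lambda,\mu)+M_\mu^{\alpha-1}(\lambda^{-1/\alpha})+\lambda^{1/\alpha-1}|\mu|(\R)$, which tends to $0$ as $\lambda\to\infty$. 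For the mollifications one uses $M_{\mu_m}^{\alpha-1}(r)\le c\,M_\mu^{\alpha-1}(r)$ and $|\mu_m|(\R)\le|\mu|(\R)$ (cf.\ Remark~\ref{rema:molli}), so the same constant works uniformly in $m$.

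\emph{Step 2 (induction on $k$).} Unwinding the $\ast$--products, $u_\lambda\ast\bigl(\frac{\partial}{\partial x}u_\lambda\,\mu\bigr)^{\ast k-1}(x,y)$ is a $(k-1)$--fold integral, against $\mu(dv_1)\cdots\mu(dv_{k-1})$, of $u_\lambda(x,v_1)\,\frac{\partial}{\partial v_1}u_\lambda(v_1,v_2)\cdots\frac{\partial}{\partial v_{k-1}}u_\lambda(v_{k-1},y)$ (and equals $u_\lambda(x,y)$ when $k=1$). Taking absolute values inside the integral and integrating out $v_1$ first, Step~1 gives $\int_\R u_\lambda(x,v_1)\bigl|\frac{\partial}{\partial v_1}u_\lambda(v_1,v_2)\bigr|\,|\mu|(dv_1)\le C(\lambda,\mu)\,u_\lambda(x,v_2)$; integrating out $v_2$ next produces another factor $C(\lambda,\mu)$ and $u_\lambda(x,v_3)$, and so on, so that after $k-1$ integrations one is left with $C(\lambda,\mu)^{k-1}u_\lambda(x,y)$. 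This is the first inequality with $C(\lambda,\mu)$ as in Step~1; the second inequality follows verbatim from the $\mu_m$--version of Step~1. In particular $C(\lambda,\mu)<1$ for $\lambda$ large, which is exactly what makes the series in Lemma~\ref{lem:pot_mu_dens} and in the proof of Lemma~\ref{lem:regularity_potential} converge.

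\emph{Main obstacle.} All of Step~2 is routine once Step~1 is available, so the real work is the $3u_\lambda$--inequality of Step~1. The delicate point there is the bookkeeping of the four competing scales $|z-x|$, $|z-y|$, $|x-y|$ and $\lambda^{-1/\alpha}$ in the two--sided bounds \eqref{eq:lamPot_est_bod}--\eqref{eq:lamPot_grad} — in particular, verifying that the contribution of the region near $x$, where the smallness of $\mu$ is felt only through $M_\mu^{\alpha-1}$, and of the tail region $G$, where only finiteness of $\mu$ is available, are both genuinely dominated by $u_\lambda(x,y)$ with a constant that decays as $\lambda\to\infty$ uniformly in $|x-y|$. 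This is precisely the computation of \cite[Section~4]{Bogdan07} in the function--drift case, and the content of the present plan is to run it with the Kato--class bounds of Proposition~\ref{prop:grad_meas} in place of $L^\infty$ bounds.
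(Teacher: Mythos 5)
Your plan coincides with the paper's proof: the paper establishes exactly your Step~1 inequality $\int_\R u_\lambda(x,z)\big|\frac{\partial}{\partial z}u_\lambda(z,y)\big|\,|\mu|(dz)\le C(\lambda,\mu)\,u_\lambda(x,y)$ (uniformly in $m$ for the mollifications) by following \cite[Lemma 17]{Bogdan07} line by line together with \cite[Lemma 2.5]{Kim14}, and then concludes with precisely your Step~2 induction on $k$. The only difference is that you sketch the one-step inequality yourself via the scale decomposition rather than citing it; one small correction there: on your region $G$ the point $z$ can still be at distance of order $|x-y|\ll 1$ from $y$, so the smallness must again come from the Kato bound of Proposition~\ref{prop:grad_meas} (as for $B(y,R/2)$), not from finiteness of $\mu$ alone.
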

\begin{proof}
	Following the proof of \cite[Lemma 17]{Bogdan07} line by line  and using \cite[Lemma 2.5]{Kim14} yields
	\begin{equation*}
		\int_\R u_\lambda(x,z) \big|\frac{\partial}{\partial z} u_\lambda(z,y)\big||\mu_m|(dy)\leq c\int_\R u_\lambda(x,z) \big|\frac{\partial}{\partial z} u_\lambda(z,y)\big||\mu|(dy) \leq C(\lambda,\mu) u_\lambda(x,y),
	\end{equation*}
	for $x,y \in \R$, where $C(\lambda,\mu) \rightarrow 0 $ as $\lambda \rightarrow \infty$. Thus, induction yields the wanted result.
\end{proof}
\begin{lemm} \label{lem:pot_grad_upper_est}
	Let $\mu \in K_{\alpha-1}$ be a finite measure and $\lambda > 0$.
	Then there exists a constant $C(\lambda,\mu)>0$ such that 
	\begin{align}\label{eq:grad_pot_meas_int}
		\Big|\int_\R (\frac{\partial}{\partial x} u_\lambda\mu)^{\ast k}(dz,x)\Big| \leq C(\lambda,\mu)^k, \quad x\in \R, k \in \mathbbm{N}
	\end{align} 
	and $C(\lambda,\mu) \rightarrow 0$ for $\lambda \rightarrow \infty$. 
\end{lemm}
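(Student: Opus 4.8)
The plan is to recognise $(\tfrac{\partial}{\partial x}u_\lambda\mu)^{\ast k}(dz,x)$ as the kernel of the $k$-th power of the measure-valued operator $N_\mu U_\lambda$ evaluated at $\delta_x$, and then run a short induction driven by the uniform gradient bound of Proposition~\ref{prop:grad_meas}. Recall that $N_\mu U_\lambda$ acts on a signed measure $\rho$ by $(N_\mu U_\lambda)\rho(dz) = \big(\int_\R \tfrac{\partial}{\partial z}u_\lambda(z,w)\,\rho(dw)\big)\mu(dz)$; tracing through the definition of the $\ast$-convolution in Lemma~\ref{lem:pot_mu_dens} one gets $(\tfrac{\partial}{\partial x}u_\lambda\mu)^{\ast 1}(dz,x) = \tfrac{\partial}{\partial z}u_\lambda(z,x)\,\mu(dz)$ and, for $k\ge 2$, the recursion
\begin{equation*}
  (\tfrac{\partial}{\partial x}u_\lambda\mu)^{\ast k}(dz,x) = \Big(\int_\R \tfrac{\partial}{\partial z}u_\lambda(z,w)\,(\tfrac{\partial}{\partial x}u_\lambda\mu)^{\ast(k-1)}(dw,x)\Big)\mu(dz).
\end{equation*}
I would then set $T_k(x):=\int_\R \big|(\tfrac{\partial}{\partial x}u_\lambda\mu)^{\ast k}\big|(dz,x)$, the total variation in the first slot, and note $\big|\int_\R (\tfrac{\partial}{\partial x}u_\lambda\mu)^{\ast k}(dz,x)\big|\le T_k(x)$, so it is enough to control $T_k$.

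The next step is to estimate $T_k$ using the recursion. Passing to absolute values and applying Tonelli's theorem (legitimate once the inner integral is known to be finite),
\begin{equation*}
  T_k(x) \le \int_\R \Big(\int_\R \big|\tfrac{\partial}{\partial z}u_\lambda(z,w)\big|\,|\mu|(dz)\Big)\,\big|(\tfrac{\partial}{\partial x}u_\lambda\mu)^{\ast(k-1)}\big|(dw,x).
\end{equation*}
Because $u_\lambda(z,w)$ is symmetric and a function of $|z-w|$, one has $\big|\tfrac{\partial}{\partial z}u_\lambda(z,w)\big| = \big|\tfrac{\partial}{\partial w}u_\lambda(w,z)\big|$, so the inner integral equals $\big|\tfrac{\partial}{\partial x}U_\lambda\big||\mu|(w)$, which Proposition~\ref{prop:grad_meas}(1) bounds by $\mathcal{R}(\lambda,\mu)$ uniformly in $w$ (here $\mathcal{R}$ is as in \eqref{eq:c_mu_lam}, built from \eqref{eq:lamPot_grad}). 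Hence $T_k(x)\le \mathcal{R}(\lambda,\mu)\,T_{k-1}(x)$, and the same computation at $k=1$ gives the base case $T_1(x)\le \big|\tfrac{\partial}{\partial x}U_\lambda\big||\mu|(x)\le \mathcal{R}(\lambda,\mu)$; in particular all the iterated kernels are genuine finite signed measures, which also justifies the Tonelli step a posteriori.

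Iterating gives $T_k(x)\le \mathcal{R}(\lambda,\mu)^k$ for every $x\in\R$ and $k\ge1$, which is \eqref{eq:grad_pot_meas_int} with $C(\lambda,\mu):=\mathcal{R}(\lambda,\mu)$; the required decay $C(\lambda,\mu)\to0$ as $\lambda\to\infty$ is exactly the property of $\mathcal{R}$ recorded just after \eqref{eq:c_mu_lam}. I expect the only mildly delicate point to be the bookkeeping of the $\ast$-convolution and the (routine) measurability and finiteness checks needed to apply Tonelli; there is no real analytic difficulty, since Proposition~\ref{prop:grad_meas}(1) has already absorbed the pointwise estimate \eqref{eq:lamPot_grad} into a bound that is uniform in the spatial variable. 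Alternatively, as with Lemma~\ref{lem:grad_est_pot_dens}, one could simply follow \cite[Lemma 17]{Bogdan07} line by line together with \cite[Lemma 2.5]{Kim14}.
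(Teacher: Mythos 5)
Your proof is correct and takes essentially the same route as the paper: an induction on $k$ driven by the uniform Kato-class gradient bound $\sup_{x}\int_\R\big|\frac{\partial}{\partial x}u_\lambda(x,y)\big|\,|\mu|(dy)\le \mathcal{R}(\lambda,\mu)\to 0$ as $\lambda\to\infty$ (Proposition~\ref{prop:grad_meas}, i.e.\ \eqref{eq:lamPot_grad} plus the Kato condition). The only difference is bookkeeping: you iterate on the total variation $T_k$ of the kernel $(\frac{\partial}{\partial x}u_\lambda\mu)^{\ast k}(dz,x)$, which makes the inductive step and the Tonelli interchange slightly more explicit than the paper's direct estimate, but the key ingredient and the resulting constant $C(\lambda,\mu)=\mathcal{R}(\lambda,\mu)$ are the same.
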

\begin{proof}
	The proof follows by induction. First note that
	\begin{equation*}
		\Big|\int_\R  \frac{\partial}{\partial x}u_\lambda(z,x)\mu(dz)\Big|\leq  \int_\R  \big| \frac{\partial}{\partial x}u_\lambda(z,x)| |\mu|(dz).
	\end{equation*}
	Using the fact that $\mu\in K_{\alpha-1}$ and \eqref{eq:lamPot_grad} yield that there exists a constant $C(\lambda,\mu)$ such that $C(\lambda, \mu) \rightarrow 0$ as $\lambda \rightarrow \infty$ and 
	\begin{equation*}
		\int_\R  \big| \frac{\partial}{\partial x}u_\lambda(z,x)| |\mu|(dz) < C(\lambda, \mu), \quad \forall x\in \R.
	\end{equation*}
	Let $k \geq 2$ and assume that \eqref{eq:grad_pot_meas_int} holds for $k-1$, then
	\begin{equation*}
		\begin{split}
			&\Big|\int_\R (\frac{\partial}{\partial x} u_\lambda\mu)^{\ast k}(dz,x)\Big| =\Big| \int_\R \int_\R \frac{\partial}{\partial x}u_\lambda(z,z_1)(\frac{\partial}{\partial x} u_\lambda\mu)^{\ast k-1}(dz_1,x)\mu(dz)\Big|\\
			&\leq C(\lambda, \mu)^{k-1} \int_\R  \big| \frac{\partial}{\partial x}u_\lambda(dz_1,x)| |\mu|(dz) \leq C(\lambda, \mu)^k.
		\end{split}
	\end{equation*}
	This finishes the proof.
\end{proof}
\begin{lemm}\label{lemm:help_bounded}
	For all $\lambda > 0$ the map
	\begin{equation*}
		x\mapsto \int_0^\infty \frac{\sin(\xi) \xi}{\lambda |x|^\alpha+\xi^\alpha}d\xi
	\end{equation*}
	is bounded by a constant $C$ that is independent of $\lambda$. 
\end{lemm}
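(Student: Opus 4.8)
The plan is to reduce the statement to a uniform bound for a one-parameter family of oscillatory integrals. Writing $a := \lambda |x|^\alpha \in [0,\infty)$, it suffices to show that
\[
	F(a) := \int_0^\infty \frac{\xi \sin \xi}{a + \xi^\alpha}\,d\xi
\]
is bounded by a constant that does not depend on $a \in [0,\infty)$; the lemma then follows immediately as $x$ and $\lambda>0$ vary.

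First I would split the integral at $\xi = 1$. On $[0,1]$, using $|\sin\xi| \le \xi$ and $a + \xi^\alpha \ge \xi^\alpha$, one gets
\[
	\int_0^1 \frac{\xi|\sin\xi|}{a+\xi^\alpha}\,d\xi \le \int_0^1 \xi^{2-\alpha}\,d\xi = \frac{1}{3-\alpha},
\]
which is finite and independent of $a$ since $\alpha \in (1,2)$. Note that splitting at $\xi = 1$ rather than at the origin is essential, because the factor $\xi/(a+\xi^\alpha)$ blows up at $0$ when $a = 0$.

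For the tail $[1,\infty)$, set $g_a(\xi) := \xi/(a+\xi^\alpha)$. Since $\alpha > 1$ we have $g_a(\xi) \to 0$ as $\xi \to \infty$, and a direct computation gives
\[
	g_a'(\xi) = \frac{a + (1-\alpha)\xi^\alpha}{(a+\xi^\alpha)^2}, \qquad\text{hence}\qquad |g_a'(\xi)| \le \frac{a + \alpha\xi^\alpha}{(a+\xi^\alpha)^2} \le \frac{\alpha}{a+\xi^\alpha} \le \frac{\alpha}{\xi^\alpha},
\]
where we used $\alpha \ge 1$. Therefore $\int_1^\infty |g_a'(\xi)|\,d\xi \le \alpha \int_1^\infty \xi^{-\alpha}\,d\xi = \alpha/(\alpha-1)$, a bound independent of $a$; in particular $g_a$ has uniformly bounded total variation on $[1,\infty)$ and tends to $0$, so $\int_1^\infty g_a(\xi)\sin\xi\,d\xi$ converges by Dirichlet's test. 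Integrating by parts on $[1,R]$ and letting $R\to\infty$ gives $\int_1^\infty g_a(\xi)\sin\xi\,d\xi = g_a(1)\cos 1 + \int_1^\infty g_a'(\xi)\cos\xi\,d\xi$, and the two terms are bounded in absolute value by $g_a(1) \le 1$ and $\int_1^\infty|g_a'| \le \alpha/(\alpha-1)$, respectively. Adding the contributions of $[0,1]$ and $[1,\infty)$ yields $|F(a)| \le (3-\alpha)^{-1} + 1 + \alpha/(\alpha-1) =: C$, the desired uniform bound.

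There is no substantial obstacle here; the only points requiring care are the justification of the convergence of the improper tail integral and of the integration by parts (both obtained from the uniform bound on $\int_1^\infty |g_a'|$ together with $g_a(\xi)\to 0$), and the observation that the splitting point must be taken away from the origin to handle the case $a=0$.
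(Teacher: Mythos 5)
Your proof is correct and follows essentially the same route as the paper's: a crude bound near the origin (the paper splits at $\pi/2$, you at $1$) followed by integration by parts / Dirichlet's test on the oscillatory tail, with a uniform bound on the derivative of the kernel $\xi/(\lambda|x|^\alpha+\xi^\alpha)$. Your substitution $a=\lambda|x|^\alpha$ together with the estimate $|g_a'(\xi)|\le \alpha\,\xi^{-\alpha}$ merely streamlines the paper's argument by absorbing its case distinction $(\pi/2)^\alpha \lessgtr \lambda|x|^\alpha$ and its separate treatment of $x=0$, but the substance is the same.
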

\begin{proof}
	First, consider $x \neq 0$. Integration by parts leads to 
	\begin{align*}
		&\Big|\int_0^\infty \sin(\xi) \frac{\xi}{\lambda|x|^\alpha+ \xi^\alpha}d\xi\Big|\\
		& \leq \Big| \int_0^{\pi/2}\sin(\xi) \frac{\xi}{\lambda|x|^\alpha+ \xi^\alpha}d\xi\Big|+\Big|\int_{\pi/2}^\infty \sin(\xi) \frac{\xi}{\lambda|x|^\alpha+ \xi^\alpha}d\xi\Big| \\
		&\leq \int_0^{\pi/2} \xi^{1-\alpha}d\xi +\Big|\int_{ \pi/2}^\infty \cos(\xi) \frac{\partial}{\partial \xi}\frac{\xi}{\lambda |x|^ \alpha+ \xi^\alpha}d\xi\Big|\\
		&\leq \int_0^{\pi/2} \xi^{1-\alpha}d\xi + \int_{\pi/2}^\infty\Big| \frac{\lambda |x|^\alpha - (\alpha - 1) \xi^\alpha}{(\lambda|x|^\alpha + |\xi|^\alpha)^2}\Big| d\xi \\
		& \leq \int_0^{\pi/2} \xi^{1-\alpha}d\xi + \int_{\pi/2}^\infty \frac{\lambda |x|^\alpha}{\max(\lambda|x|^\alpha, |\xi|^\alpha)^2}d\xi + (\alpha-1)\int_{\pi/2}^\infty \xi^{-\alpha}d\xi.
	\end{align*}
	In the case where $(\pi/2)^\alpha<\lambda|x|^\alpha$ we get 
	\begin{align*}
		&\int_{\pi/2}^\infty \frac{\lambda |x|^\alpha}{\max(\lambda|x|^\alpha, |\xi|^\alpha)^2}d\xi \\
		& \leq \int_{\pi/2}^{\lambda^{1/\alpha}|x|}\lambda^{-1}|x|^{-\alpha} d\xi + \int_{\lambda^{1/\alpha}|x|}^\infty \xi^{-\alpha} d\xi \\
		& \leq 1+ \int_1^\infty \xi^{-\alpha}d\xi \leq K_1
	\end{align*}
	and in the case where $(\pi/2)^\alpha\geq \lambda|x|^\alpha$ it holds that 
	\begin{align*}
		&\int_{\pi/2}^\infty \frac{\lambda |x|^\alpha}{\max(\lambda|x|^\alpha, |\xi|^\alpha)^2}d\xi \\
		& \leq \int_{\pi/2}^\infty \xi^{-\alpha}d\xi \leq K_2,
	\end{align*} 
	where $K_1, K_2 >0$ are constants.
	This implies the existence of a constant $C>0$ such that
	\begin{align*}
		&\Big|\int_0^\infty \sin(\xi) \frac{\xi}{\lambda|x|^\alpha+ \xi^\alpha}d\xi\Big| \leq C
	\end{align*}
	for all $x \neq 0$.
	In the case $x=0$ the map evaluates to
	\begin{equation*}
		\int_0^\infty \frac{\sin(\xi)}{\xi^{\alpha-1}}d\xi < \infty,
	\end{equation*}
	and the bound on the integral follows from Dirichlet's test. 
	This proves the claim.
\end{proof}

\bibliographystyle{abbrv}
\bibliography{bibliography.bib}

\end{document}